\documentclass[a4paper,10pt,draft]{amsart}


\usepackage{amssymb}
\usepackage{amsbsy}
\usepackage{amscd}
\usepackage[matrix,arrow]{xy}
\usepackage[mathscr]{eucal}
\usepackage{verbatim}
\usepackage{version}
\usepackage{color}

\newenvironment{NB}{
\color{red}{\bf NB}. \footnotesize 
}{}
\excludeversion{NB}
\newenvironment{NB2}{
\color{blue}
}{}


\def\cal{\mathcal}
\def\Bbb{\mathbb}
\def\frak{\mathfrak}

\newcommand{\Amp}{\operatorname{Amp}}
\newcommand{\Coh}{\operatorname{Coh}}

\newcommand{\Ext}{\operatorname{Ext}}
\newcommand{\Hilb}{\operatorname{Hilb}}
\newcommand{\Hom}{\operatorname{Hom}}
\newcommand{\NS}{\operatorname{NS}}
\newcommand{\Pic}{\operatorname{Pic}}
\newcommand{\Quot}{\operatorname{Quot}}

\newcommand{\alg}{\operatorname{alg}}
\newcommand{\ch}{\operatorname{ch}}
\newcommand{\dslash}{/\!\!/} 

\newcommand{\im}{\operatorname{im}}

\newcommand{\rk}{\operatorname{rk}}
\newcommand{\td}{\operatorname{td}}


\theoremstyle{plain}
 \newtheorem{thm}{Theorem}[subsection]
 \newtheorem{lem}[thm]{Lemma}
 \newtheorem{prop}[thm]{Proposition}
 \newtheorem{cor}[thm]{Corollary}
 
\theoremstyle{definition}
 \newtheorem{defn}[thm]{Definition}
 
\theoremstyle{remark}
 \newtheorem{rem}[thm]{Remark}




\setlength{\topmargin}{-2.0cm}
\setlength{\oddsidemargin}{-0.25cm}
\setlength{\evensidemargin}{-0.25cm}
\setlength{\textheight}{25.5cm}
\setlength{\textwidth}{16.5cm}

\numberwithin{equation}{section}
\pagestyle{plain}

\setcounter{section}{-1}

\begin{document}


\title{Some moduli spaces of Bridgeland's stability conditions}
\author{Hiroki Minamide, Shintarou Yanagida, K\={o}ta Yoshioka}
\address{Department of Mathematics, Faculty of Science,
Kobe University,
Kobe, 657, Japan
}
\email{minamide@math.kobe-u.ac.jp, yanagida@math.kobe-u.ac.jp,
yoshioka@math.kobe-u.ac.jp}

\thanks{The second author is supported by JSPS Fellowships 
for Young Scientists (No.\ 21-2241). 
The third author is supported by the Grant-in-aid for 
Scientific Research (No.\ 22340010), JSPS}

\subjclass[2010]{14D20}

\begin{abstract}
We shall study some moduli spaces of Bridgeland's semi-stable 
objects on abelian surfaces and K3 surfaces with Picard number 1.
Under some conditions,
we show that the moduli spaces are 
isomorphic to the moduli spaces of Gieseker semi-stable sheaves. 
We also study the ample cone of the moduli spaces.
\end{abstract}

\maketitle

\section{Introduction.}

Let $X$ be an abelian surface or a $K3$ surface over a field ${\frak k}$.
Denote by $\Coh(X)$ the category of coherent sheaves on $X$,
by ${\bf D}(X)$ the bounded derived category of $\Coh(X)$
and by $K(X)$ the Grothendieck group of ${\bf D}(X)$.

Let us fix an ample divisor $H$ on $X$.
For $\beta \in \NS(X)_{\Bbb Q}$ and $\omega \in {\Bbb Q}_{>0}H$,
Bridgeland \cite{Br:3} constructed a stability condition
$\sigma_{\beta,\omega}=({\frak A}_{(\beta,\omega)},Z_{(\beta,\omega)})$ 
on ${\bf D}(X)$.
Here ${\frak A}_{(\beta,\omega)}$ is a tilting of $\Coh(X)$, 
and $Z_{(\beta,\omega)}:K(X) \to {\Bbb C}$ is a group homomorphism 
called the stability function.
In terms of the Mukai lattice 
$(H^*(X,{\Bbb Z})_{\alg}, \langle \cdot,\cdot \rangle)$, 
$Z_{(\beta,\omega)}$ is given by 
\begin{align*}
Z_{(\beta,\omega)}(E)=\langle e^{\beta+\sqrt{-1}\omega},v(E) \rangle,\quad
E \in K(X).
\end{align*}
Here $v(E):=\ch(E)\sqrt{\td_X}$ is the Mukai vector of $E$.
Hereafter for an object $E \in {\bf D}(X)$, we abbreviately write 
$Z_{(\beta,\omega)}(E) := Z_{(\beta,\omega)}([E])$, 
where $[E]$ is the class of $E$ in $K(X)$.
Let $\phi_{(\beta,\omega)}:{\frak A}_{(\beta,\omega)} \setminus \{ 0\}
\to (0,1]$ be the phase function, which is defined to be 
$Z_{(\beta,\omega)}(E)=|Z_{(\beta,\omega)}(E)|
e^{\pi \sqrt{-1}\phi_{(\beta,\omega)}(E)}$ 
for $0 \ne E \in {\frak A}_{(\beta,\omega)}$.

Let $w_1$ be a primitive isotropic Mukai vector of an object 
in ${\frak A}_{(\beta,\omega)}$.   
In this note, we shall study semi-stable objects $E$ with respect to
$(\beta,\omega)$ such that 
$\phi_{(\beta,\omega)}(E)=\phi_{(\beta,\omega)}(w_1)$.
Assume that there is a coarse moduli scheme $M_{(\beta,\omega)}(w_1)$ 
of stable objects and
$M_{(\beta,\omega)}(w_1)$ is projective.
In the case where $X$ is an abelian surface,
\cite{MYY} implies that this assumption is satisfied 
for any pair $(\beta,\omega)$.
Indeed $M_{(\beta,\omega)}(w_1)$ is the moduli space of
semi-homogeneous sheaves (up to shift). 

We set $X_1:=M_{(\beta,\omega)}(w_1)$.
Let ${\bf E}$ be a universal family 
as a complex of twisted sheaves on $X \times X_1$.
Let $\Phi_{X \to X_1}^{{\bf E}^{\vee}[1]}:
{\bf D}(X) \to {\bf D}^\alpha(X_1)$ be a twisted
Fourier-Mukai transform by ${\bf E}$, 
where 
$(\cdot)^\vee := {\bf R}{\cal H}om(\cdot,{\cal O})$ is the derived dual, 
and $\alpha$ is a representative of a suitable Brauer
class $[\alpha] \in H^2_{\text{\'{e}t}}(X_1,{\cal O}_{X_1}^{\times})$.
For simplicity,
we set $\Phi:=\Phi_{X \to X_1}^{{\bf E}^{\vee}[1]}$
and $\widehat{\Phi}:=\Phi_{X_1 \to X}^{{\bf E}[1]}$.

For $v \in H^*(X,{\Bbb Z})_{\alg}$ and $\beta \in \NS(X)_{\Bbb Q}$,
we set
\begin{equation}\label{eq:v:rda}
r_\beta(v):=-\langle v,\varrho_X \rangle,\quad
a_\beta(v):=-\langle v,e^\beta \rangle,\quad
d_\beta(v):=\frac{\langle v,H+(H,\beta)\varrho_X \rangle}{(H^2)}.
\end{equation}
Then
\begin{equation}\label{eq:v}
v=r_\beta(v)e^\beta+a_\beta(v) \varrho_X +
(d_\beta(v) H+D_\beta(v))+(d_\beta(v) H+D_\beta(v),\beta)\varrho_X,\;
D_\beta(v) \in H^{\perp} \cap \NS(X)_{\Bbb Q}.
\end{equation}

For a pair $(\beta,\omega)$,
${\cal M}_{(\beta,\omega)}(v)$ denotes the moduli stack of 
$\sigma_{(\beta,\omega)}$-semi-stable objects $E$ with $v(E)=v$.
Then we have the following result.
\begin{thm}[Theorem \ref{thm:isom}]\label{thm:main}
Let $X$ be an abelian surface, 
or a $K3$ surface with $\NS(X)={\Bbb Z}H$.
Assume the following conditions:
\begin{itemize}
\item[(1)]
There is a smooth projective surface 
$X_1$ which is the moduli space $M_{(\beta,\omega)}(w_1)$ 
of stable objects $E$ with $v(E)=w_1$.

\item[(2)]
$(\beta,\omega)$ satisfies
\begin{equation*}
\langle d_\beta(v)w_1-d_\beta(w_1)v,e^{\beta+\sqrt{-1}\omega} \rangle=0.
\end{equation*}

\item[(3)]
$(\beta,\omega)$ does not belong to any wall for $v$, 
or $w_1$ defines a wall $W_{w_1}$ for $v$ and $(\beta,\omega)$ 
belongs to exactly one wall $W_{w_1}$.
\end{itemize}
Then for a general $(\beta',\omega')$ in a neighborhood of
$(\beta,\omega)$ such that $(\beta',H)=(\beta,H)$, 
there is an ample 
divisor $H_1$ on $X_1$
such that 
${\cal M}_{(\beta',\omega')}(v)$ is
isomorphic to the moduli stack 
${\cal M}_{H_1}(u)^{ss}$ of Gieseker semi-stable
(twisted) sheaves with the Mukai vector $u$,
where $u=\Phi(v)$ or $u=\Phi(v)^{\vee}$.
In particular, there is a coarse moduli scheme
$M_{(\beta',\omega')}(v)$ which is isomorphic to
the moduli scheme of Gieseker semi-stable sheaves 
$\overline{M}_{H_1}(u)$. 
\end{thm}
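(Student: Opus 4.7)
The plan is to transport the problem to $X_1$ via the twisted Fourier--Mukai transform $\Phi$, showing first that the $\sigma_{(\beta,\omega)}$-semistable objects $E$ with $v(E)=v$ and $\phi_{(\beta,\omega)}(E)=\phi_{(\beta,\omega)}(w_1)$ are sent by $\Phi$ (possibly followed by the derived dual) to honest $\alpha$-twisted coherent sheaves on $X_1$, and then identifying Bridgeland stability with Gieseker $H_1$-semistability for an appropriate ample divisor $H_1$. The wall/chamber hypothesis (3), together with the phase-matching condition (2), is what makes this reduction work.

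First I would analyze the heart ${\frak A}_{(\beta,\omega)}$ along the ray of phase $\phi_{(\beta,\omega)}(w_1)$. Condition (2) is precisely the statement that $Z_{(\beta,\omega)}(v)$ and $Z_{(\beta,\omega)}(w_1)$ lie on the same ray, so objects with Mukai vector $v$ can in principle be semistable of the same phase as the $w_1$-objects parametrized by $X_1$. The universal family ${\bf E}$ provides the twisted FM kernel whose associated transform realizes an equivalence between the abelian subcategory of $\sigma_{(\beta,\omega)}$-semistable objects of this fixed phase and the category of $\alpha$-twisted sheaves on $X_1$ (up to a shift). The WIT-type computation, together with the fact that $w_1$-stable objects are spherical/isotropic generators, should force every $E$ in this subcategory to be $\Phi$-WIT in a uniform degree; the choice between $u=\Phi(v)$ and $u=\Phi(v)^\vee$ corresponds to this WIT index.

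Second, I would show that Bridgeland $\sigma_{(\beta',\omega')}$-stability for a small perturbation with $(\beta',H)=(\beta,H)$ is converted by $\Phi$ into Gieseker $H_1$-stability for a suitable ample divisor $H_1$ on $X_1$. Under the FM equivalence, the stability function $Z_{(\beta',\omega')}$ pulls back to a linear form on $H^*(X_1,{\Bbb Z})_{\alg}$ whose real and imaginary parts are, up to normalization, the Gieseker rank and slope with respect to some $(\beta_1,H_1)$; the condition $(\beta',H)=(\beta,H)$ fixes the normalization of the perturbation, so that varying $\omega'$ keeps us on a one-parameter family of stability conditions whose $\Phi$-image is a one-parameter family of Gieseker chambers on $X_1$. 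Ampleness of $H_1$ for a general enough perturbation follows from the hypothesis that $X_1$ is smooth projective and that $(\beta',\omega')$ lies inside a genuine Bridgeland chamber (no wall except possibly $W_{w_1}$, by (3)). The direction of perturbation then selects the chamber for $H_1$ on the corresponding side of the Gieseker wall.

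The main obstacle is the wall-crossing bookkeeping permitted by (3): I must show that any potential destabilizing subobject $F\subset E$ in ${\frak A}_{(\beta',\omega')}$ corresponds, under $\Phi$, exactly to an $H_1$-destabilizing subsheaf of $\Phi(E)$, and conversely. When $(\beta,\omega)$ lies on the wall $W_{w_1}$, strictly semistable objects of phase $\phi_{(\beta,\omega)}(w_1)$ can have Jordan--H\"older factors with Mukai vector proportional to $w_1$; I would need to verify that such factors are sent to $H_1$-semistable twisted sheaves and that no new destabilizers appear on either side. The twisted-sheaf formalism and the shift/dualization in the choice of kernel add some notational care, but the crux is checking that no Bridgeland wall other than $W_{w_1}$ is crossed in the perturbation, so that the chamber of $(\beta',\omega')$ maps cleanly onto a Gieseker chamber on $X_1$.
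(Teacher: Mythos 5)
Your overall strategy coincides with the paper's: transport along $\Phi$, show semistable objects of phase $\phi_{(\beta,\omega)}(w_1)$ become (twisted) sheaves on $X_1$, and match perturbed Bridgeland stability with Gieseker stability for the ample class ($H_1=\widehat{L}$ of Definition \ref{defn:L}, whose ampleness uses the Picard rank one or abelian surface hypothesis). But there is a genuine gap in your second step. You assert that $Z_{(\beta',\omega')}$ pulls back under $\Phi$ to a linear form whose real and imaginary parts are ``the Gieseker rank and slope,'' so that Bridgeland chambers map onto Gieseker chambers. Gieseker (twisted) semistability is not cut out by any single central charge: it is the lexicographic refinement of slope semistability by the reduced Euler characteristic $\chi(E(-\gamma'))/\rk E$. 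What the pullback of the central charge actually gives you (this is \S\,\ref{subsect:another}) is another \emph{Bridgeland} stability condition on $X_1$, and on the wall itself Theorem \ref{thm:mu-semi-stable} only identifies $\sigma_{(\beta,\omega)}$-semistability with $\mu$-semistability with respect to $\widehat{L}$, not with Gieseker semistability. The refinement to Gieseker semistability is exactly where hypothesis (3) enters, and your proposal does not use it at the point where it is needed.

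Concretely, the missing ingredient is Lemma \ref{lem:wall-general} and Corollary \ref{cor:wall-general}: because $(\beta,\omega)$ lies on \emph{exactly one} wall $W_{w_1}$, any subobject $F_1\subset F$ of the same phase must satisfy $c_1(F_1(-\gamma))/a_\gamma(F_1)=c_1(F(-\gamma))/a_\gamma(F)$. Only because of this rigidity does the first-order change of $\phi_{(\beta',\omega')}(F_1)-\phi_{(\beta',\omega')}(F)$ under the perturbation reduce to the sign of
\begin{equation*}
\frac{\chi(E_1(-\gamma'))}{\rk E_1}-\frac{\chi(E(-\gamma'))}{\rk E},
\end{equation*}
which is the Gieseker comparison (Lemmas \ref{lem:C_+:1}--\ref{lem:C_+:3} and \ref{lem:C_-:1}--\ref{lem:C_-:2}). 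Without this step a destabilizer could have $c_1$ not proportional to $c_1(v)$, the phase comparison after perturbation would be governed by a different wall, and the claimed equivalence with $\gamma'$-twisted (resp.\ $(-\gamma')$-twisted, after dualizing, for ${\cal C}_-$) semistability would fail. You also need the case split $d_\gamma(v)\ne 0$ versus $d_\gamma(v)=0$, which dictates whether the admissible perturbation moves $\omega$ or $\beta$ (this is why the theorem restricts to $(\beta',H)=(\beta,H)$); your proposal treats the perturbation as a single generic deformation and does not justify that it stays inside ${\cal C}_\pm$ while crossing no other wall.
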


As a corollary of this theorem, we get the following.
\begin{thm}[Theorem \ref{thm:projective}]
Let $X$ be an abelian surface or a $K3$ surface with $\Pic(X)={\Bbb Z}H$.
Assume that $(\beta,\omega)$ is general with respect to $v$.
Then there is a coarse moduli scheme $M_{(\beta,\omega)}(v)$
which is isomorphic to the projective scheme 
$\overline{M}_{\widehat{H}}(\Phi(v))$, where $\widehat{H}$ is a natural
ample class on $X_1$ associated to $H$.
\end{thm}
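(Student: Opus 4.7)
My plan is to derive the corollary from Theorem~\ref{thm:main} by choosing a primitive isotropic Mukai vector $w_1$ adapted to $(\beta,\omega)$, and then using the openness of chambers to transfer the resulting isomorphism back to $(\beta,\omega)$ itself. Since $(\beta,\omega)$ is general with respect to $v$, it lies in the interior of a chamber, so every point of a small open neighborhood gives the same moduli stack ${\cal M}_{(\beta',\omega')}(v)={\cal M}_{(\beta,\omega)}(v)$.

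First I would exhibit a primitive isotropic $w_1$ such that $X_1=M_{(\beta,\omega)}(w_1)$ is a smooth projective surface, thereby verifying hypothesis~(1) of Theorem~\ref{thm:main}. Under the assumption $\Pic(X)={\Bbb Z}H$ the algebraic Mukai lattice has rank~$3$ (K3 case) or rank~$4$ (abelian case), and in either case isotropic primitive vectors are abundant and the corresponding moduli are classical: semi-homogeneous sheaves on an abelian surface (as recalled in the introduction), or a Mukai moduli of stable sheaves on a K3 surface, well-known to again be a smooth projective K3.

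Next I would find $(\beta^*,\omega^*)$ satisfying condition~(2) and still in the chamber of $(\beta,\omega)$. Under $\Pic(X)={\Bbb Z}H$ the component $D_\beta(\cdot)\in H^{\perp}\cap\NS(X)_{\Bbb Q}$ in~\eqref{eq:v} vanishes, and combining this with the identity $d_\beta(d_\beta(v)w_1-d_\beta(w_1)v)=0$ reduces condition~(2) to a \emph{single real} equation in $\omega^*$ alone (noting that for Picard rank~$1$ the constraint $(\beta^*,H)=(\beta,H)$ already forces $\beta^*=\beta$). A short computation with the Mukai pairing gives the concrete form $(\omega^*)^2=2a_\beta(u)/r_\beta(u)$, where $u=d_\beta(v)w_1-d_\beta(w_1)v$. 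Allowing $w_1$ to vary over the countable set of primitive isotropic Mukai vectors yields a dense set of such values, so $(\beta^*,\omega^*)$ may be chosen arbitrarily close to $(\beta,\omega)$, in particular inside the chamber. Hypothesis~(3) is then automatic.

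Theorem~\ref{thm:main} at this $(\beta^*,\omega^*)$ produces, for a general nearby $(\beta',\omega')$ (still in the chamber), an ample class $H_1$ on $X_1$ and an isomorphism ${\cal M}_{(\beta',\omega')}(v)\cong{\cal M}_{H_1}(u)^{ss}$ with $u=\Phi(v)$ or $u=\Phi(v)^{\vee}$; the derived dual on $X_1$ identifies the two resulting Gieseker moduli, so we may take $u=\Phi(v)$. Since $(\beta',\omega')$ lies in the same chamber as $(\beta,\omega)$, the left-hand side is ${\cal M}_{(\beta,\omega)}(v)$, and projectivity of the Gieseker moduli yields the desired coarse scheme $\overline{M}_{H_1}(\Phi(v))$. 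It remains to identify $H_1$ with the ``natural'' ample class $\widehat H$ on $X_1$ arising from the determinant line bundle construction applied to $H$, which is a direct Mukai-lattice computation using~\eqref{eq:v:rda}. The main obstacle I anticipate is establishing the density claim for $(\omega^*)^2$ as $w_1$ runs over primitive isotropic vectors, together with careful tracking of the Brauer twist $\alpha$ in passing between twisted and untwisted Gieseker moduli.
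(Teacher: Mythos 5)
Your overall strategy is the paper's: keep $\beta$ fixed, choose a primitive isotropic $w_1=r_1e^{\beta+\lambda H}$ so that the locus where $Z_{(\beta,\omega^*)}(w_1)$ and $Z_{(\beta,\omega^*)}(v)$ are ${\Bbb R}$-dependent meets the chamber of $(\beta,\omega)$, and then invoke the comparison theorem. Your formula $(\omega^*)^2=2a_\beta(u)/r_\beta(u)$ is exactly the paper's function $f$, and the density you flag as the main obstacle is settled there not by an abstract density argument over all isotropic vectors but by showing that $\lambda\mapsto f(\lambda)$ is a bijection from an explicit interval onto ${\Bbb R}_{>0}$, so rational $\lambda$ already suffices. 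One structural difference: the paper applies Corollary \ref{cor:mu-semi-stable} rather than Theorem \ref{thm:isom}, which gives $u=\Phi(v)$ outright together with $\delta$-twisted semistability for every $\delta$, avoiding your detour through $\Phi(v)^{\vee}$; your dualization step is harmless only because Corollary \ref{cor:mu-semi-stable} (1) makes $\Phi(F)$ locally free, a point worth making explicit.

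The genuine gap is your verification of hypothesis (1) in the K3 case. $M_{(\beta,\omega_\lambda)}(w_1)$ is a moduli space of Bridgeland-stable \emph{objects}, not of sheaves, and its existence as a smooth projective surface is not classical: it is precisely what Propositions \ref{prop:assumption-K3} and \ref{prop:isotropic-K3} establish, by classifying the $(-2)$-classes and isotropic classes sharing the phase $\phi_{(\beta,\omega_\lambda)}(w_1)$ and then running an induction over walls via spherical twists. Moreover the first $w_1$ you write down may admit properly semistable objects (exactly when a spherical class has the same phase), in which case it must be replaced by the companion isotropic vector supplied by Proposition \ref{prop:assumption-K3} before $X_1$ and $\Phi$ can be constructed; the paper's ``replacing $w_1$ if necessary'' is doing real work at this point. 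In the abelian case your appeal to semi-homogeneous sheaves is fine, but note that there the hypothesis $\Pic(X)={\Bbb Z}H$ is not assumed, so one cannot argue via the rank of the Mukai lattice; what saves the day is that $\gamma-\beta=\lambda H$ forces $\nu=0$, hence $\widehat L\in{\Bbb Q}_{>0}\widehat H$.
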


\begin{NB}
Assume that $X$ is an abelian surface. Then
every Fourier-Mukai transform $\Phi_{X \to X'}^{{\bf F}^{\vee}}
:{\bf D}(X) \to {\bf D}(X')$
is induced by
the moduli space $X'$ of stable sheaves on $X$ by Orlov.
Let $H'$ be the ample class of $X'$ corresponding to $H$
via $\Phi_{X \to X'}^{{\bf F}^{\vee}}$.  
Then $\Phi_{X \to X'}^{{\bf F}^{\vee}}$ induces an isomorphism
$M_{(\beta,\omega)}(v) \to M_{(\beta',\omega')}(v')$,
where $v'=\Phi_{X \to X'}^{{\bf F}^{\vee}}(v)$ and
$\beta' \in \NS(X')_{\Bbb Q}$ and $\omega' \in {\Bbb R}_{>0}H'$.
In particular, we may say that
Bridgeland's stability condition of this type is the
correct notion of stability in the relation of 
the symmetry of ${\bf D}(X)$.

\end{NB}

If $X$ is an abelian surface, we can also study the ample cone 
of $M_{(\beta,\omega)}(v)$ by using this theorem
(Corollary \ref{cor:ample}).

\section{Preliminaries}

As in the introduction,
let $X$ be an abelian surface or a $K3$ surface over a field ${\frak k}$,
and fix an ample divisor $H$ on $X$.

\subsection{Notations for Mukai lattice}

We set $A^*_{\alg}(X)=\oplus_{i=0}^2 A^{i}_{\alg}(X)$ 
to be the quotient of the cycle group of $X$ 
by the algebraic equivalence.
Then we have $A^{0}_{\alg}(X)\cong {\Bbb Z}$,
$A^{1}_{\alg}(X)\cong \NS(X)$ and 
$A^{2}_{\alg}(X)\cong {\Bbb Z}$.
We denote the fundamental class of $A^{2}_{\alg}(X)$ by $\varrho_X$,
and express an element $x\in A^{*}_{\alg}(X)$ 
by $x=x_0+x_1+x_2 \varrho_X$ 
with $x_0 \in {\Bbb Z}$, $x_1 \in \NS(X)$ and $x_2 \in {\Bbb Z}$.
The lattice structure $\langle \cdot,\cdot\rangle$
of $A^*_{\alg}(X)$ is given 
by  
\begin{equation}\label{eq:mukai_pairing}
 \langle x,y \rangle:= (x_1,y_1)-(x_0 y_2+x_2 y_0),
\end{equation}
where $x=x_0+x_1+x_2 \varrho_X$ and $y=y_0+y_1+y_2 \varrho_X$. 
We will call $(A^*_{\alg}(X), \langle \cdot,\cdot \rangle)$ 
the Mukai lattice for $X$.
In the case of ${\frak k}={\Bbb C}$, this lattice is 
sometimes denoted by $H^{*}(X,{\Bbb Z})_{\text{alg}}$ in literature. 
In this paper, we will use the symbol $H^{*}(X,{\Bbb Z})_{\text{alg}}$ 
even when ${\frak k}$ is arbitrary.

The Mukai vector $v(E)\in H^*(X,{\Bbb Z})_{\alg}$ 
for $E \in \Coh(X)$ is defined by 
\begin{equation*}
\begin{split}
v(E):=&\ch(E) \sqrt{\td_X}\\
=&\rk E+c_1(E)+(\chi(E)-\varepsilon \rk E)\varrho_X \in H^*(X,{\Bbb Z})_{\alg}
\end{split}
\end{equation*}
where $\varepsilon=0,1$ 
according as $X$ is an abelian surface or a $K3$ surface.
For an object $E$ of ${\bf D}(X)$, 
$v(E)$ is defined by $\sum_{k}(-1)^k v(E^k)$,
where $(E^k)=(\cdots \to E^{-1} \to E^0 \to E^1 \to \cdots)$
is the bounded complex representing the object $E$.

For $\beta \in \NS(X)_{\Bbb Q}$,
we define the $\beta$-twisted semi-stability
replacing the usual Hilbert polynomial
$\chi(E(nH))$ by $\chi(E(-\beta+nH))$.

For a Mukai vector $v$, ${\cal M}_H^\beta(v)^{ss}$ denotes
the moduli stack of $\beta$-twisted semi-stable sheaves $E$ on $X$
with $v(E)=v$.
$\overline{M}_H^\beta(v)$ denotes
the moduli scheme of $S$-equivalence classes of
$\beta$-twisted semi-stable sheaves $E$ on $X$
with $v(E)=v$
and $M_H^\beta(v)$ denotes the open subscheme consisting of
$\beta$-twisted stable sheaves.
If $\beta=0$, then
we write $\overline{M}_H(v):=\overline{M}_H^\beta(v)$.

\subsection{Stability conditions and wall/chamber structure.}

Let us recall the stability conditions given in \cite[\S\,1]{MYY}.
For $E \in K(X)$ with \eqref{eq:v}, we have
\begin{equation}\label{eq:stab_func}
\begin{split}
Z_{(\beta,\omega)}(E)=&
\langle e^{\beta+\sqrt{-1}\omega},v(E) \rangle\\
=& -a_\beta(E)+\frac{(\omega^2)}{2}r_\beta(E)+d_\beta(E)(H,\omega)\sqrt{-1}.
\end{split}
\end{equation}
Then ${\frak A}_{(\beta,\omega)}$ is the tilt of $\Coh(X)$
with respect to
a torsion pair $({\frak T}_{(\beta,\omega)},{\frak F}_{(\beta,\omega)})$
defined by
\begin{enumerate}
\item
${\frak T}_{(\beta,\omega)}$ is generated by
$\beta$-twisted stable sheaves with $Z_{(\beta,\omega)}(E) \in {\Bbb H} \cup
{\Bbb R}_{<0}$.
\item
${\frak F}_{(\beta,\omega)}$ is generated by
$\beta$-twisted stable sheaves with $-Z_{(\beta,\omega)}(E) \in {\Bbb H} \cup
{\Bbb R}_{<0}$,
\end{enumerate}
where ${\Bbb H}:=\{z \in {\Bbb C} \mid \mathrm{Im}\, z>0 \}$
 is the upper half plane.

For a chosen $\beta \in \NS(X)_{\Bbb Q}$, 
let us write $b:=(\beta,H)/(H^2)\in{\Bbb Q}$.
Then $\beta=bH+\eta$ with $\eta \in H^{\perp} \cap \NS(X)_{\Bbb Q}$.
Now let us set 
\begin{align*}
{\frak H}:=
\{(\eta,\omega) \mid 
  \eta \in \NS(X)_{\Bbb Q},\ (\eta,H)=0,\ 
  \omega \in {\Bbb Q}_{>0}H \}.
\end{align*}

In \cite[\S\,1.4]{MYY}, we showed that 
the category ${\frak A}_{(bH+\eta,\omega)}$ changes only when 
$(\eta,\omega)$ moves across the \emph{wall for categories}.
Let us recall its definition.

\begin{defn}
Set
\begin{align*}
{\frak R}:=
\{u \in A^*_{\alg}(X) \mid 
  u \in (H+(H,bH)\varrho_X)^{\perp},\ 
  \langle u^2 \rangle=-2 \}.
\end{align*}
\begin{enumerate}
\item[(1)]
For $u \in {\frak R}$, 
we define a \emph{wall $W_u$ for categories} by
\begin{align*}
W_u := 
\{(\eta,\omega) \in {\frak H}_{\Bbb R} \mid
  \rk u \cdot (\omega^2)= -2 \langle  e^{bH+\eta},u \rangle \},
\end{align*}
where ${\frak H}_{\Bbb R}$ is the enlarged parameter space defined by
\begin{align*}
{\frak H}_{\Bbb R}:=
\{(\eta,\omega) \mid 
  \eta \in \NS(X)_{\Bbb R},\ (\eta,H)=0,\ 
  \omega \in {\Bbb R}_{>0}H \}.
\end{align*}

\item[(2)]
A connected component of
${\frak H}_{\Bbb R} \setminus \cup_{u \in {\frak R}} W_u$
is called a \emph{chamber for categories}.
\end{enumerate}
\end{defn}
If $X$ is an abelian surface or $(\omega^2)>2$, then
${\frak A}_{(bH+\eta,\omega)}$ does not depend on the choice of
$\omega$.

Now, the pair 
$\sigma_{(\beta,\omega)}=({\frak A}_{(\beta,\omega)},Z_{(\beta,\omega)})$ 
satisfies the requirement of stability conditions on ${\bf D}(X)$,
as mentioned in the introduction 
(see \cite[\S\,1.3]{MYY} for the proof).
In particular, the (semi-)stability of objects in 
${\frak A}_{(\beta,\omega)}$ with respect to $Z_{(\beta,\omega)}$
is well-defined.

\begin{NB}
If $(\eta,\omega),(\eta',\omega') \in {\frak H}_{\Bbb R}$
belong to the same chamber for categories, then
${\frak A}_{(bH+\eta,\omega)}={\frak A}_{(bH+\eta',\omega')}$.
If $X$ is an abelian surface or $(\omega^2)>2$, then
\begin{align*}
{\frak A}_{(bH+\eta,\omega)}=
\{E \in {\bf D}(X) \mid H^i(E)=0 \,  (i \ne -1, 0),\ 
\mu_{\max}(H^{-1}(E)) \leq b(H^2),\ \mu_{\min}(H^0(E))>b(H^2) \}
\end{align*}
for any $(\eta,\omega) \in {\frak H}_{\Bbb R}$.
Here $H^*$ is the cohomology with respect to ${\frak C}$. 
Thus ${\frak A}_{(bH+\eta,\omega)}$ does not depend on the choice
of $(\eta,\omega)$.
\end{NB}

\begin{defn}
$E \in {\bf D}(X)$ is called \emph{semi-stable} of phase $\phi$,
if there is an integer $n$ such that
$E[-n]$ is a semi-stable object of ${\frak A}_{(\beta,\omega)}$
with $\phi_{(\beta,\omega)}(E[-n])=\phi-n$. 
If we want to emphasize the dependence on the stability condition,
we say that $E$ is \emph{$\sigma_{(\beta,\omega)}$-semi-stable}.
\end{defn}

\begin{defn}\label{defn:phase(v)}
For a non-zero Mukai vector $v \in H^*(X,{\Bbb Z})_{\alg}$, 
we define $Z_{(\beta,\omega)}(v) \in {\Bbb C}$ and 
$\phi_{(\beta,\omega)}(v) \in (0,2]$ by 
\begin{equation}
\begin{split}
Z_{(\beta,\omega)}(v):=& \langle e^{\beta+\sqrt{-1} \omega},v \rangle 
= |Z_{(\beta,\omega)}(v)|e^{\pi \sqrt{-1}\phi_{(\beta,\omega)}(v)}.
\end{split}
\end{equation}
Then
$$
\phi_{(\beta,\omega)}(v(E))
=\phi_{(\beta,\omega)}(E) 
$$
for $0 \ne 
E \in {\frak A}_{(\beta,\omega)} \cup {\frak A}_{(\beta,\omega)}[1]$. 
\end{defn}

\begin{defn}
For a Mukai vector $v$,
${\cal M}_{(\beta,\omega)}(v)$ denotes the moduli stack of 
$\sigma_{(\beta,\omega)}$-semi-stable objects $E$ of 
${\frak A}_{(\beta,\omega)}$ with $v(E)=v$.
$M_{(\beta,\omega)}(v)$ denotes the moduli scheme of
the $S$-equivalence classes of  
$\sigma_{(\beta,\omega)}$-semi-stable objects $E$ of 
${\frak A}_{(\beta,\omega)}$ with $v(E)=v$, if it exists.
\end{defn}

Next we recall the wall/chamber structure 
\emph{for stabilities} (see \cite[\S\,3.1]{MYY} for details).
Let us set the rational number
\begin{equation*}
\begin{split}
d_{\beta,\min}:= 
\frac{1}{(H^2)}\min \{ \deg(E(-\beta))>0 \mid E \in K(X) \}
\in \frac{1}{\mathrm{d}[(\beta,H)] (H^2)}{\Bbb Z},\\
\end{split}
\end{equation*}
where $\mathrm{d}[x]$ is the denominator of $x \in {\Bbb Q}$.
Then $d_{\beta}(E) \in {\Bbb Z}d_{\beta,\min}$ for any $E \in K(X)$.

\begin{defn}\label{defn:wall:stability}
Let ${\cal C}$ be a chamber for categories, that is,
${\frak A}_{(bH+\eta,\omega)}$ is constant for 
$(\eta,\omega) \in {\cal C} \cap {\frak H}$.
For a Mukai vector $v$, 
let us set $r:=r_{bH+\eta}(v)$, $d:=d_{bH+\eta}(v)$ and 
$a:=a_{bH+\eta}(v)$ using \eqref{eq:v:rda}. 
\begin{enumerate}
\item[(1)]
Let $v_1$ be a Mukai vector, and set 
$r_1:=r_{bH+\eta}(v_1)$, $d_1:=d_{bH+\eta}(v_1)$ and
$a_1:=a_{bH+\eta}(v_1)$. 
For $v_1$ satisfying
\begin{enumerate}
\item
$0<d_1 <d$,
\item
$\langle v_1^2 \rangle<(d_1/d)\langle v^2 \rangle+
2d d_1 \varepsilon/ d_{b H+\eta,\min}^2$,
\item
$\langle v_1^2 \rangle \geq 
-2d_1^2 \varepsilon/d_{bH+\eta, \min}^2$,
\end{enumerate}
we define the \emph{wall for stabilities of type $v_1$} 
as the set of 
$$
W_{v_1}:=\{(\eta,\omega) \in {\frak H}_{\Bbb R} \mid
(\omega^2)(dr_1-d_1 r)=2(-d \langle e^{bH+\eta},v_1 \rangle
+d_1 \langle e^{bH+\eta},v \rangle) \}. 
$$ 
\item[(2)]
A \emph{chamber for stabilities} is a connected component of 
${\cal C} \setminus \cup_{v_1} W_{v_1}$.  
\end{enumerate}
\end{defn}

If it is necessary to emphasize the dependence on $v$, then
we call $W_{v_1}$ by the \emph{wall for $v$}.
By \cite[Lemma~3.1.6]{MYY}, 
if $(\eta,\omega)$ and $(\eta',\omega')$ 
belong to the same chamber, then
${\cal M}_{(bH+\eta,\omega)}(v)=
{\cal M}_{(bH+\eta',\omega')}(v)$.
As we explained in \cite{MYY}, the above conditions (a),(b),(c) are
necessary numerical conditions for the walls of stability conditions.
Thus there may exist $W_{v_1}$ such that the stability condition does
not change by crossing $W_{v_1}$.
For an abelian surface,
we will give a necessary and sufficient condition of the wall 
where the stability condition does change
in \S \ref{subsect:dependence}. 

For the later discussions, we prepare  
\begin{defn}\label{defn:BBD}
For a complex $F \in {\bf D}(X)$,
let $^\beta H^p(F) \in {\frak A}_{(\beta,\omega)}$
denote the $p$-th cohomology group of $F$
with respect to the $t$-structure of ${\bf D}(X)$ associated to
${\frak A}_{(\beta,\omega)}$. 
\end{defn}
By \cite[Thm.~1.3.6]{BBD}, 
$^\beta H^p$ is a cohomological functor.

\subsection{Some calculation of Mukai vector}

As mentioned at \eqref{eq:v} in the introduction,
if $\beta \in \NS(X)_{\Bbb Q}$ is chosen, 
then any $v \in H^*(X,{\Bbb Z})_{\alg}$ can be expressed as 
\begin{equation*}
v=r_\beta(v)e^\beta+a_\beta(v) \varrho_X +
(d_\beta(v) H+D_\beta(v))+(d_\beta(v) H+D_\beta(v),\beta)\varrho_X,\quad
D_\beta(v) \in H^{\perp} \cap \NS(X)_{\Bbb Q}.
\end{equation*}
with $r_\beta(v),a_\beta(v),d_\beta(v)$ given by \eqref{eq:v:rda}.
The next lemma calculates the dependence of $a_\beta(v),d_\beta(v)$
on $\beta$, 
which will be repeatedly used in our discussion.

\begin{lem}\label{lem:beta-gamma}
For $v \in H^*(X,{\Bbb Z})_{\alg}$ and
$\beta, \gamma \in \NS(X)_{\Bbb Q}$,
\begin{equation*}
\begin{split}
d_\beta(v)H+D_\beta(v)
&=r_\gamma(v)(\gamma-\beta)+d_\gamma(v)H+D_\gamma(v),
\\
a_\beta(v)
&=a_\gamma(v)+(d_\gamma(v)H+D_\gamma(v),\gamma-\beta)
  +\frac{r_\gamma(v)}{2}((\beta-\gamma)^2).
\end{split}
\end{equation*}
In particular,
\begin{equation*}
d_\beta(v)=d_\gamma(v)+r_\gamma(v)\frac{\deg(\gamma-\beta)}{(H^2)}.
\end{equation*}
\end{lem}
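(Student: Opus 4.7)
The plan is to prove Lemma \ref{lem:beta-gamma} by writing down the two expressions of $v$ coming from expansion \eqref{eq:v} at $\beta$ and at $\gamma$, and comparing them component by component in the graded decomposition $A^0_{\alg}(X)\oplus A^1_{\alg}(X)\oplus A^2_{\alg}(X)$.

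First I would observe the trivial but essential fact that $r_\beta(v)=-\langle v,\varrho_X\rangle$ does not involve $\beta$ at all, so $r_\beta(v)=r_\gamma(v)=:r$. Next, using $e^\beta=1+\beta+\tfrac{(\beta^2)}{2}\varrho_X$ (and similarly for $\gamma$), I would rewrite
\[
v=r\,e^\beta+a_\beta(v)\varrho_X+(d_\beta(v)H+D_\beta(v))+(d_\beta(v)H+D_\beta(v),\beta)\varrho_X
\]
and the analogous expression with $\gamma$, then extract the $A^1_{\alg}(X)$-component. Since only the terms $r\beta$, $d_\beta(v)H+D_\beta(v)$ (respectively $r\gamma$, $d_\gamma(v)H+D_\gamma(v)$) contribute, equating them immediately yields the first displayed equation
\[
d_\beta(v)H+D_\beta(v)=r(\gamma-\beta)+d_\gamma(v)H+D_\gamma(v).
\]
Taking the pairing of this identity with $H$, and using $(D_\beta(v),H)=(D_\gamma(v),H)=0$, gives the "In particular" formula for $d_\beta(v)$ at once.

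For the $a_\beta$ formula I would extract the coefficient of $\varrho_X$, which requires accounting for three separate sources of $\varrho_X$-contribution: $a_\beta(v)$, $\tfrac{r}{2}(\beta^2)$ from expanding $e^\beta$, and $(d_\beta(v)H+D_\beta(v),\beta)$ from the last term of \eqref{eq:v}. Equating these for $\beta$ and for $\gamma$ produces
\[
a_\beta(v)-a_\gamma(v)=\tfrac{r}{2}\big((\gamma^2)-(\beta^2)\big)+(d_\gamma(v)H+D_\gamma(v),\gamma)-(d_\beta(v)H+D_\beta(v),\beta).
\]
Substituting the first identity into $(d_\beta(v)H+D_\beta(v),\beta)=r(\gamma-\beta,\beta)+(d_\gamma(v)H+D_\gamma(v),\beta)$ and collecting the terms quadratic in $\beta,\gamma$ via the identity $\tfrac12((\gamma^2)-(\beta^2))-(\gamma-\beta,\beta)=\tfrac12((\gamma-\beta)^2)$ gives precisely the claimed formula.

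The proof is entirely formal bookkeeping, so there is no real obstacle; the only place one needs to be careful is in collecting the quadratic-in-$\beta,\gamma$ contributions in the $a_\beta$ computation to recognize them as $\tfrac{r}{2}((\beta-\gamma)^2)$, and in not forgetting the "twist" term $(d_\beta(v)H+D_\beta(v),\beta)\varrho_X$ in \eqref{eq:v} when extracting the $\varrho_X$-component.
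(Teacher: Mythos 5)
Your proof is correct and is essentially the same formal bookkeeping as the paper's: the paper expands $e^\gamma=e^\beta e^{\gamma-\beta}$ and reads off the coefficients of $e^\beta$, $\varrho_X$, and the divisor part, which amounts to exactly your component-by-component comparison (including the same quadratic identity producing $\tfrac{r}{2}((\beta-\gamma)^2)$). No gap.
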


\begin{proof}
We note that
\begin{equation*}
\begin{split}
e^\gamma
&=e^\beta e^{\gamma-\beta}
 =e^\beta+(\gamma-\beta+(\gamma-\beta,\beta)\varrho_X)
  +\frac{(\gamma-\beta)^2}{2}\varrho_X, 
\\
(d_\gamma(v)H+D_\gamma(v),\gamma)
&=(d_\gamma(v)H+D_\gamma(v),\beta)+(d_\gamma(v)H+D_\gamma(v),\gamma-\beta).
\end{split}
\end{equation*}
Hence we get
\begin{equation*}
\begin{split}
v&=r_\gamma(v) e^{\gamma}+a_\gamma(v) \varrho_X 
   +\left(d_\gamma(v)H+D_\gamma(v)+\bigl(d_\gamma(v)H+D_\gamma(v),\gamma\bigr)
    \varrho_X\right)
\\
&=r_\gamma(v) e^{\beta}
  +\left(a_\gamma(v)+(d_\gamma(v)H+D_\gamma(v),\gamma-\beta)
         +\frac{r_\gamma(v)}{2}((\beta-\gamma)^2)
   \right)\varrho_X 
\\
&\phantom{=}
  +\left(r_\gamma(v)(\gamma-\beta)+d_\gamma(v)H+D_\gamma(v)+
    \bigl(r_\gamma(v)(\gamma-\beta)+d_\gamma(v)H+D_\gamma(v),\beta\bigr)
   \varrho_X\right),
\end{split}
\end{equation*}
which implies the claims.
\end{proof}

%
%
%
%

\subsection{The homological correspondence.}

For $\gamma \in \NS(X)_{\Bbb Q}$,
let
\begin{equation}
\begin{split}
w_1:=& r_1 e^{\gamma}\\
=& r_1 \left(e^{\beta}+\frac{(\beta-\gamma)^2}{2} \varrho_X
+(\gamma-\beta+(\gamma-\beta,\beta)\varrho_X)\right)
\in H^*(X,{\Bbb Z})_{\alg}
\end{split}
\end{equation}
be a primitive isotropic Mukai vector 
such that $r_1 (\gamma-\beta,H)>0$.

Assume that there is a coarse moduli scheme 
$X_1:=M_{(\beta,\omega)}(w_1)$ of
stable objects and
$X_1$ is projective.
Let ${\bf E}$ be a universal object on $X \times X_1$
as a complex of 
twisted sheaves. 
We have $v({\bf E}_{|X \times \{x_1 \}})=w_1 $, $x_1 \in X_1$.
We set $v({\bf E}_{|\{x \} \times X_1}^{\vee})=r_1 e^{\gamma'}$,
$x \in X$.
\begin{defn}
For $C \in \NS(X)_{\Bbb Q}$,
we define $\widehat{C} \in \NS(X_1)_{\Bbb Q}$ by 
\begin{equation}
\Phi(C+(C,\gamma)\varrho_X)=
\begin{cases}
\widehat{C}+(\widehat{C},\gamma')\varrho_{X_1}, & r_1>0, \\
-(\widehat{C}+(\widehat{C},\gamma')\varrho_{X_1}), & r_1<0. 
\end{cases}
\end{equation} 
\end{defn}

\begin{lem}\label{lem:FM-w_1}
\begin{enumerate}
\item[(1)]
If $C$ belongs to the positive cone, then $\widehat{C}$ belongs to
the positive cone.
\item[(2)]
Let $H$ be an ample divisor on $X$.
Assume that one of the following conditions holds:
\begin{enumerate}
\item 
$X$ is an abelian surface,
\item
$\NS(X)={\Bbb Z}H$, 
\item
$M_{(\beta,\omega)}(w_1)$ is the moduli of $\mu$-stable vector bundles, 
that is,
$$
M_{(\beta,\omega)}(w_1)=
\begin{cases}
M_H(w_1),& \rk w_1>0,\\
M_H(-w_1), & \rk w_1<0.
\end{cases}
$$
\end{enumerate}
Then
$\widehat{H}$ is ample.
Thus we have the following.  
\begin{enumerate}
\item[1.]
If $r_1>0$, then
\begin{align*}
&\Phi(e^{\gamma})=-\frac{1}{r_1}\varrho_{X_1},\quad 
 \Phi(\varrho_X)=-r_1 e^{\gamma'},\\
&\Phi(dH+D+(dH+D,\gamma)\varrho_X)=
d\widehat{H}+\widehat{D}+(d \widehat{H}+\widehat{D},\gamma')\varrho_{X_1},
\end{align*}
where $D \in \NS(X)_{\Bbb Q} \cap H^\perp$.
\item[2.]
If $r_1<0$, then
\begin{align*}
&\Phi(e^{\gamma})=-\frac{1}{r_1}\varrho_{X_1},\quad 
 \Phi(\varrho_X)=-r_1 e^{\gamma'},\\
&\Phi(dH+D+(dH+D,\gamma)\varrho_X)=
-\left(d\widehat{H}+\widehat{D}+(d \widehat{H}+\widehat{D},\gamma')
       \varrho_{X_1}\right),
\end{align*}
where $D \in \NS(X)_{\Bbb Q} \cap H^\perp$.
\end{enumerate}
\end{enumerate}
\end{lem}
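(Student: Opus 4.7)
The plan is to establish the explicit formulas in part (2) first, then use them to derive the ampleness claim and part (1).

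For $\Phi(\varrho_X) = -r_1 e^{\gamma'}$, I would apply $\Phi$ directly to the Mukai vector of a skyscraper $\mathcal{O}_x$: its image ${\bf E}^{\vee}[1]_{|\{x\}\times X_1}$ has Mukai vector $-v({\bf E}^{\vee}_{|\{x\}\times X_1}) = -r_1 e^{\gamma'}$ by the hypothesis on ${\bf E}$. Symmetry gives $\widehat{\Phi}(\varrho_{X_1}) = -r_1 e^{\gamma}$, so composing with $\Phi$, which is quasi-inverse to $\widehat{\Phi}$ up to a shift, yields $\Phi(e^{\gamma}) = -\varrho_{X_1}/r_1$. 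For the remaining formula, set $v := C + (C,\gamma)\varrho_X$; a direct computation gives $\langle v, e^{\gamma}\rangle = 0 = \langle v,\varrho_X\rangle$. Since the cohomological Fourier--Mukai transform is a Mukai isometry up to a global sign, $\Phi(v)$ is orthogonal to $\varrho_{X_1}$ and $e^{\gamma'}$, and therefore must have the form $C' + (C',\gamma')\varrho_{X_1}$ with $C' \in \NS(X_1)_{\Bbb Q}$. Setting $\widehat{C} := C'$ (with the outer sign controlled by the sign of $r_1$ as in the definition) gives the two displayed formulas.

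For ampleness of $\widehat{H}$, I would treat the three alternatives separately. Under (a), the positive cone coincides with the ample cone on an abelian surface, so it is enough to know that $\widehat{H}$ lies in the positive cone: this follows from $(\widehat{H}^2) = (H^2) > 0$ together with a test-family argument fixing the connected component. Under (b), Orlov's theorem on derived equivalences preserves the transcendental lattice, hence the Picard rank, so $\NS(X_1)_{\Bbb Q} = {\Bbb Q}\widehat{H}$, and $(\widehat{H}^2)>0$ forces $\widehat{H}$ to be a positive rational multiple of the ample generator. Under (c), $X_1$ is a moduli space of $\mu_H$-stable bundles, and here I would invoke the Donaldson--Mukai determinant line bundle construction (cf.\ Li, Simpson) producing an ample line bundle on $M_H(\pm w_1)$, then verify directly from the universal family ${\bf E}$ that its numerical class coincides with $\widehat{H}$. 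I expect this last compatibility check in case (c) to be the main technical obstacle, since it requires unpacking the definition of $\widehat{C}$ and matching it with a Grothendieck--Riemann--Roch style computation on the universal family.

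Finally, part (1) follows from the formulas in (2). Checking the isometry on the pair $\varrho_X, e^{\gamma}$ shows that the cohomological transform preserves the Mukai pairing with sign $+1$, so restricting to the slice $C \mapsto C + (C,\gamma)\varrho_X$ and its counterpart on $X_1$ yields $(\widehat{C}^2) = (C^2)$. Since $C \mapsto \widehat{C}$ is $\Bbb Q$-linear, continuous, and sends $H$ to the ample class $\widehat{H}$ already established to lie in the positive cone, convexity and connectedness of the positive cone imply that every $\widehat{C}$ coming from $C$ in the positive cone lies in the positive cone of $\NS(X_1)_{\Bbb R}$.
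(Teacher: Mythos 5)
Your derivation of the explicit formulas (via $\Phi(\varrho_X)=v({\bf E}^{\vee}[1]_{|\{x\}\times X_1})=-r_1e^{\gamma'}$, the quasi-inverse relation, and the orthogonality of $C+(C,\gamma)\varrho_X$ to $e^{\gamma}$ and $\varrho_X$) is fine and matches what the paper records. The genuine gap is in the one step that carries all the content of the lemma: deciding \emph{which} connected component of $\{x\in\NS(X_1)_{\Bbb R}\mid (x^2)>0\}$ the class $\widehat{H}$ (and more generally $\widehat{C}$ for $C$ in the positive cone) lands in. Knowing that $C\mapsto\widehat{C}$ is an isometry only gives $(\widehat{H}^2)=(H^2)>0$, which is compatible with both $\widehat{H}$ and $-\widehat{H}$. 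In case (b) your assertion that ``$(\widehat{H}^2)>0$ forces $\widehat{H}$ to be a \emph{positive} rational multiple of the ample generator'' is simply false as stated: if $\NS(X_1)={\Bbb Z}H_1$ then $\widehat{H}=cH_1$ with $c^2(H_1^2)>0$ for any $c\ne 0$, so the sign of $c$ is not determined. In case (a) you defer exactly this point to ``a test-family argument fixing the connected component'' without supplying it. Your proof of part (1) then inherits the same gap, since it is bootstrapped from $\widehat{H}$ lying in the correct component.

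The paper closes this gap by invoking the theorem that the cohomological Fourier--Mukai transform preserves the \emph{orientation} of the Mukai lattice (Huybrechts--Macr\`{i}--Stellari, or Proposition \ref{prop:isotropic-K3}, for a K3 surface; \cite{Y:birational} for an abelian surface); this is the nontrivial input that pins down the component, gives part (1) directly, and then yields (2)(a),(b) because for an abelian surface or a surface with $\NS(X)={\Bbb Z}H$ the ample cone coincides with the positive cone. Note also that part (1) is stated without the hypotheses (a)--(c), so deducing it from the ampleness of $\widehat{H}$ in those three cases, as you do, would prove it only under those extra assumptions; the paper's order of argument ((1) first via orientation, then (2)) avoids this. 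Your treatment of case (c) via the determinant line bundle on the moduli of $\mu$-stable bundles is consistent with the paper's appeal to the construction of the moduli space.
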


\begin{proof}
(1) 
If $r_1>0$, then
\begin{equation*}
\Phi(e^{\gamma})=-\frac{1}{r_1}\varrho_{X_1},\quad
\Phi(\varrho_X)=-r_1 e^{\gamma'},\quad
\Phi(C+(C,\gamma)\varrho_X)=
\widehat{C}+(\widehat{C},\gamma')\varrho_{X_1}.
\end{equation*}
If $r_1<0$, then
\begin{equation*}
\Phi(e^{\gamma})=-\frac{1}{r_1}\varrho_{X_1},\quad
\Phi(\varrho_X)=-r_1 e^{\gamma'},\quad
\Phi(C+(C,\gamma)\varrho_X)=
-(\widehat{C}+(\widehat{C},\gamma')\varrho_{X_1}).
\end{equation*}
Since $\Phi$ preserves the orientation of the Mukai lattice
(\cite{HMS} or Proposition \ref{prop:isotropic-K3} for a K3 surface, 
and \cite{Y:birational} for an abelian surface),
if $C$ belongs to the positive cone, then
$\widehat{C}$ also belongs to the positive cone.

(2) Assume that (a) or (b) holds. Then
$\widehat{C} \in \NS(X_1)_{\Bbb Q}$ is ample if and only if
$\widehat{C}$ belongs to the positive cone.
Since $H$ is ample, (1) implies the claim.
If (c) holds, then it is known that $\widehat{H}$ is ample
by the construction of the moduli space.  
\end{proof}


\begin{NB}
\begin{lem}\label{lem:FM-w_1:2}
For $v \in H^*(X,{\Bbb Z})_{\alg}$,
$$
\Phi(v)=-r_1 a_\gamma(v) e^{\gamma'}-\frac{r_\gamma(v)}{r_1}\varrho_{X_1}
+\frac{r_1}{|r_1|}(d_\gamma(v) \widehat{H}+\widehat{D_\gamma(v)}+
(d_\gamma(v) \widehat{H}+\widehat{D_\gamma(v)},\gamma')\varrho_{X_1}).
$$
In  particular,
\begin{equation}
\begin{split}
r_{\gamma'}(\Phi(v))=& -r_1 a_\gamma(v)\\
a_{\gamma'}(\Phi(v))=& -\frac{r_\gamma(v)}{r_1}\\
d_{\gamma'}(\Phi(v))=& \frac{r_1}{|r_1|}d_\gamma(v)\\
D_{\gamma'}(\Phi(v))=& \frac{r_1}{|r_1|}\widehat{D_\gamma(v)}.
\end{split}
\end{equation}
\end{lem}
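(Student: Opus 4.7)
The plan is a direct linear computation: decompose $v$ in the $\gamma$-basis, apply $\Phi$ term by term using Lemma~\ref{lem:FM-w_1}, and then read off the coefficients of the resulting decomposition at $\gamma'$.

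First, I would invoke the general identity \eqref{eq:v} with $\beta$ replaced by $\gamma$ to write
\begin{equation*}
v=r_\gamma(v)\,e^{\gamma}+a_\gamma(v)\,\varrho_X
+\bigl(d_\gamma(v)H+D_\gamma(v)\bigr)
+\bigl(d_\gamma(v)H+D_\gamma(v),\gamma\bigr)\varrho_X,
\qquad D_\gamma(v)\in H^{\perp}\cap\NS(X)_{\Bbb Q}.
\end{equation*}
Now I would apply $\Phi$ to each of the three pieces separately. The first two are handled by the computation $\Phi(e^{\gamma})=-\tfrac{1}{r_1}\varrho_{X_1}$ and $\Phi(\varrho_X)=-r_1 e^{\gamma'}$ from Lemma~\ref{lem:FM-w_1}(2); the third, being precisely of the form $C+(C,\gamma)\varrho_X$ with $C=d_\gamma(v)H+D_\gamma(v)$, is transformed (with a sign $r_1/|r_1|$) into $d_\gamma(v)\widehat{H}+\widehat{D_\gamma(v)}+(d_\gamma(v)\widehat{H}+\widehat{D_\gamma(v)},\gamma')\varrho_{X_1}$, again by Lemma~\ref{lem:FM-w_1}(2). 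Summing gives the asserted formula for $\Phi(v)$.

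For the ``in particular'' assertion, I would verify that the expression I obtained is already in the $\gamma'$-normal form \eqref{eq:v} on $X_1$, so that the coefficients of $e^{\gamma'}$, $\varrho_{X_1}$, and $\widehat{H}$, and the $H$-perpendicular part of the divisor, are respectively $r_{\gamma'}(\Phi(v))$, $a_{\gamma'}(\Phi(v))$, $d_{\gamma'}(\Phi(v))$, and $D_{\gamma'}(\Phi(v))$. The only non-trivial check here is that $\widehat{D_\gamma(v)}\in\widehat{H}^{\perp}$; this follows because $\Phi$ preserves the Mukai pairing, so
\begin{equation*}
(\widehat{H},\widehat{D_\gamma(v)})
=\langle \widehat{H}+(\widehat{H},\gamma')\varrho_{X_1},\,
        \widehat{D_\gamma(v)}+(\widehat{D_\gamma(v)},\gamma')\varrho_{X_1}\rangle
=\langle H+(H,\gamma)\varrho_X,\,D_\gamma(v)+(D_\gamma(v),\gamma)\varrho_X\rangle
=(H,D_\gamma(v))=0.
\end{equation*}
Once this is known, comparing the two decompositions yields all four identities for $r_{\gamma'}(\Phi(v))$, $a_{\gamma'}(\Phi(v))$, $d_{\gamma'}(\Phi(v))$ and $D_{\gamma'}(\Phi(v))$ simultaneously.

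Since every ingredient (the action of $\Phi$ on the three generators $e^{\gamma},\varrho_X,C+(C,\gamma)\varrho_X$, the orthogonality check, and the decomposition \eqref{eq:v}) is either already recorded in Lemma~\ref{lem:FM-w_1} or is formal, there is no real obstacle here; the only thing to watch is the sign $r_1/|r_1|$ in case $r_1<0$, which must be carried through consistently in both the statement and the read-off of $d_{\gamma'}$ and $D_{\gamma'}$.
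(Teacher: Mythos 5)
Your computation is correct and is exactly the intended argument: the paper states this lemma without proof (it is an immediate consequence of Lemma \ref{lem:FM-w_1}), namely decompose $v$ in the $\gamma$-normal form \eqref{eq:v}, apply the three formulas $\Phi(e^{\gamma})=-\tfrac{1}{r_1}\varrho_{X_1}$, $\Phi(\varrho_X)=-r_1e^{\gamma'}$, $\Phi(C+(C,\gamma)\varrho_X)=\pm(\widehat{C}+(\widehat{C},\gamma')\varrho_{X_1})$, and read off coefficients. Your extra check that $\widehat{D_\gamma(v)}\in\widehat{H}^{\perp}$ via the isometry property of $\Phi$ on the Mukai lattice is the one point that needs saying, and you said it correctly (the sign $r_1/|r_1|$ squares away in the pairing).
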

\end{NB}

\subsection{A lemma on angles of stability functions}

In this paper, we often compare the phases 
$\phi_{(\beta,\omega)}(E)$ and $\phi_{(\beta,\omega)}(F)$ of two objects 
$E,F \in{\bf D}(X)$.
For that, it is convenient to use the next function,
which was introduced in \cite[\S\,1.3]{MYY}.

\begin{defn}\label{defn:area}
For $E, E' \in K(X)$,
we set
\begin{equation*}
\Sigma_{(\beta,\omega)}(E',E):=\det
\begin{pmatrix}
\mathrm{Re} Z_{(\beta,\omega)}(E') & \mathrm{Re}Z_{(\beta,\omega)}(E)\\
\mathrm{Im} Z_{(\beta,\omega)}(E') & \mathrm{Im} Z_{(\beta,\omega)}(E)
\end{pmatrix}.
\end{equation*}
We also set
$\Sigma_{(\beta,\omega)}(v',v):=\Sigma_{(\beta,\omega)}(E',E)$
for $v(E)=v,v(E')=v'$.
\end{defn}

Then we have $\Sigma_{(\beta,\omega)}(E',E) \ge 0$ 
if and only if $\phi_{(\beta,\omega)}(E)-\phi_{(\beta,\omega)}(E')\ge0$
(see \cite[Remark~1.3.5]{MYY}).

Next, let us prepare some notations for the phase of the stability
function.  
For $\phi \in {\Bbb R}$,
$P(\phi)$ denotes the category of semi-stable objects
$E \in {\bf D}(X)$ with $\phi_{(\beta,\omega)}(E)=\phi$.

By the Harder-Narasimhan property of 
the stability function $Z_{(\beta,\omega)}$,
for any $0 \neq E \in {\bf D}(X)$ we have a collection of triangles
\begin{align*}
 \xymatrix{
    0=E_0   \ar[rr]         &
  & E_1     \ar[dl] \ar[rr] &
  & E_2     \ar[r]  \ar[dl] & \cdots \ar[r]     
  & E_{n-1} \ar[rr]         &
  & E_n =E  \ar[dl]
 \\
                            & A_1 \ar[ul]^{[1]} 
  &                         & A_2 \ar[ul]^{[1]}
  &                         &                     
  &                         & A_n \ar[ul]^{[1]}
 }
\end{align*}
such that $A_i \in P(\phi_i)$ with $\phi_1 > \phi_2 > \cdots >\phi_n$. 
Let us denote $\phi_{\max}(E) := \phi_1$ and $\phi_{\min}(E) := \phi_n$.



Now we want to state the main Lemma~\ref{lem:range-of-phi} in this subsection.
Let us recall the notations in the introduction:
$\Phi:=\Phi_{X \to X_1}^{{\bf E}^{\vee}[1]}$
and $\widehat{\Phi}:=\Phi_{X_1 \to X}^{{\bf E}[1]}$.

\begin{lem}\label{lem:range-of-phi}
We set $\phi:=\phi_{(\beta,\omega)}$.
For a torsion free (twisted) sheaf $E$ on $X_1$,
$F:=\widehat{\Phi}(E)$ satisfies the following properties.
\begin{enumerate}
\item[(1)]
$\Hom({\bf E}_{|X \times \{ x_1 \}},F[k])=0$ for $k \ne 0,1$
and $\Hom({\bf E}_{|X \times \{ x_1 \}},F)=0$ except
finitely many points $x_1 \in X_1$.
\item[(2)]
$\phi(w_1)-1<\phi_{\min}(F) \leq \phi_{\max}(F)<\phi(w_1)+1$.
\item[(3)]
Assume that $Z_{(\beta,\omega)}(F) \in 
{\Bbb R}Z_{(\beta,\omega)}(w_1)$, i.e.,
$\Sigma_{(\beta,\omega)}(F,w_1)=0$. 
If $F$ is not a $\sigma_{(\beta,\omega)}$-semi-stable object of
${\frak A}_{(\beta,\omega)}$,
then there is an exact sequence of torsion free sheaves
$$
0 \to E_1 \to E \to E_2 \to 0
$$
such that 
$$
\phi(w_1)+1>\phi_{\max}(\widehat{\Phi}(E_1))
\geq \phi_{\min}(\widehat{\Phi}(E_1)) >\phi(w_1)
$$
and
$$
\phi(w_1) \geq \phi_{\max}(\widehat{\Phi}(E_2))
\geq \phi_{\min}(\widehat{\Phi}(E_2))>\phi(w_1)-1.
$$
In particular, $\Sigma_{(\beta,\omega)}(w_1,\widehat{\Phi}(E_1))>0$ and
$\Sigma_{(\beta,\omega)}(\widehat{\Phi}(E_2),w_1)>0$.
\end{enumerate}
\end{lem}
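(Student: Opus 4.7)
The plan is to transfer each statement about the complex $F=\widehat{\Phi}(E)\in{\bf D}(X)$ into a statement about the torsion free sheaf $E$ on $X_1$ via the Fourier--Mukai equivalence $\Phi$, whose quasi-inverse is $\widehat{\Phi}$ up to a fixed cohomological shift $[j]$ (coming from the Serre functor $S_X=[2]$). The preliminary observation I would use throughout is that, since ${\bf E}$ is a universal family for the fine moduli space $X_1=M_{(\beta,\omega)}(w_1)$, the transform $\Phi({\bf E}_{|X\times\{x_1\}})$ equals the twisted skyscraper $k_{x_1}$ up to a fixed shift $[m]$.

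For (1), I would apply the equivalence $\Phi$ and use these two identifications to rewrite
\begin{equation*}
\Hom_X\bigl({\bf E}_{|X\times\{x_1\}},F[k]\bigr)
 \cong \Hom_{X_1}\bigl(k_{x_1},E[j+k-m]\bigr).
\end{equation*}
The right hand side is controlled by standard sheaf theory on the smooth surface $X_1$: $\Hom(k_{x_1},E)=0$ because $E$ is torsion free; $\Ext^1(k_{x_1},E)\ne 0$ only at the finitely many non-locally-free points of $E$; and $\Ext^2(k_{x_1},E)\ne 0$ for every $x_1$ by Serre duality. Choosing shifts so that these correspond to $k=0$ and $k=1$ yields (1). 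For (2), I would argue by contradiction using (1) together with the spanning property of $\{{\bf E}_{|X\times\{x_1\}}\}_{x_1\in X_1}$: assuming $\phi_{\max}(F)\ge\phi(w_1)+1$, a phase analysis of the top Harder--Narasimhan factor of $F$, combined with Serre duality to control the obstructions arising from the lower HN pieces, produces a nonzero class in $\Hom({\bf E}_{|X\times\{x_1\}},F[-1])$ for some $x_1$, contradicting (1); the bound $\phi_{\min}(F)>\phi(w_1)-1$ is obtained symmetrically via $\Hom({\bf E}_{|X\times\{x_1\}},F[2])$.

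For (3), since $Z_{(\beta,\omega)}(F)$ lies on the real ray spanned by $Z_{(\beta,\omega)}(w_1)$, the failure of semi-stability combined with (2) forces the HN filtration of $F$ to have factors strictly on both sides of $\phi(w_1)$. Splitting the HN filtration at $\phi(w_1)$ produces a triangle $F_1\to F\to F_2\to F_1[1]$ with the stated phase ranges. I would then verify that both $F_i$ lie in the same heart $\widehat{\Phi}(\Coh^\alpha(X_1))$ as $F$, namely the tilt of the Bridgeland heart ${\frak A}_{(\beta,\omega)}$ obtained by splitting at phase $\phi(w_1)$; the triangle would then become a short exact sequence $0\to E_1\to E\to E_2\to 0$ in $\Coh^\alpha(X_1)$. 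Torsion freeness of $E_i$ would follow because any torsion subsheaf $T\subset E_i$ sends via $\widehat{\Phi}$ to a nonzero iterated extension of the objects ${\bf E}_{|X\times\{x\}}[1]$ of phase $\phi(w_1)+1$ sitting inside $F_i$, violating the strict open upper bound on the phases of $F_i$. I expect the main technical obstacle to be precisely this middle step in (3): verifying that $F_1$ and $F_2$ each lie in the heart $\widehat{\Phi}(\Coh^\alpha(X_1))$ so that the triangle translates to an honest short exact sequence of sheaves. This requires a careful identification of the image of the standard heart under $\widehat{\Phi}$, and the strict open inequalities of (2) are used in an essential way at the boundary phases $\phi(w_1)\pm 1$ to prevent each $\Phi(F_i)$ from spilling into an adjacent cohomological degree.
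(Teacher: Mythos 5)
Your parts (1) and the skeleton of (3) (HN truncation at phase $\phi(w_1)$, transport back to $X_1$) follow the paper's route, but there is a genuine gap in your argument for (2), and it propagates into (3). For (2) you claim that $\phi_{\max}(F)\ge\phi(w_1)+1$ would "produce a nonzero class in $\Hom({\bf E}_{|X\times\{x_1\}},F[-1])$", contradicting (1). This mechanism does not work: the correct conclusion is that the truncation $F_1$ of $F$ with $\phi_{\min}(F_1)\ge\phi(w_1)+1$ is itself zero, so there is no nonzero class to exhibit. What the phase bounds and Serre duality actually give you is only \emph{vanishing}: $\Hom({\bf E}_{|X\times\{x_1\}},F_1[k])=0$ for $k\ge 2$ at every point and for $k=1$ at all but finitely many points; and then, feeding the triangle $F_1\to F\to F_2$ into (1), the same groups vanish for $k\le -1$ at every point and for $k=0$ at all but finitely many points. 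The spanning-class property of the fibres of ${\bf E}$ is consistent with nonvanishing in the two remaining degrees at finitely many points, so it alone does not force $F_1=0$. The missing step -- and the actual content of the paper's proof -- is that these vanishings show $\Phi(F_1)$ is represented by a two-term complex $V_{-1}\to V_0$ of locally free sheaves which is generically exact, hence an isomorphism (its determinant is a section of a line bundle vanishing at only finitely many points of a surface), so $\Phi(F_1)=0$ and $F_1=0$. Without some version of this "concentration in two adjacent degrees plus generic vanishing implies zero" argument, (2) is not proved.

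The same mechanism is what you need in (3) to know that $E_1:=\Phi(F_1)$ and $E_2:=\Phi(F_2)$ are sheaves: the paper deduces this from the fibrewise $\Ext$-vanishings obtained from the phase ranges and part (1), whereas you defer it to an unproven identification of $\widehat{\Phi}(\Coh^{\alpha}(X_1))$ with a tilt of ${\frak A}_{(\beta,\omega)}$ (the paper only establishes such a statement in the appendix, in the untwisted fine-moduli case, via \cite[Prop.~10.3]{Br:3}). Two further small points in (3): your torsion-freeness argument only excludes $0$-dimensional torsion, since a $1$-dimensional torsion subsheaf $T\subset E_2$ is \emph{not} an iterated extension of the ${\bf E}_{|X\times\{x\}}$ under $\widehat{\Phi}$; what rules it out is that $\Ext^1({\frak k}_{x_1},E_2)=\Hom({\bf E}_{|X\times\{x_1\}},F_2)$ would then be nonzero along a whole curve, contradicting the generic vanishing. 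Also note that (3) first requires showing $\phi_{\max}(F)>\phi(w_1)>\phi_{\min}(F)$ (using $Z_{(\beta,\omega)}(F)\in{\Bbb R}Z_{(\beta,\omega)}(w_1)$ and the failure of semi-stability) before the truncation at $\phi(w_1)$ yields two nonzero pieces; your sketch asserts this but it deserves the short argument the paper gives.
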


\begin{proof}
(1)
We note that 
\begin{equation*}
\begin{split}
\Hom({\bf E}_{|X \times \{ x_1 \}},F[k])=
\Hom(\Phi({\bf E}_{|X \times \{ x_1 \}}),\Phi(F)[k])
=
\Hom({\frak k}_{x_1}[-1],E[k]).
\end{split}
\end{equation*}
Hence $\Hom({\bf E}_{|X \times \{ x_1 \}},F[k])=0$ for
$k \ne 0,1$.
Since $E$ is torsion free, we also have 
$\Hom({\bf E}_{|X \times \{ x_1 \}},F)=0$
except for finitely many points $x_1 \in X_1$.
Thus (1) holds.

(2)
We first prove that
$\phi_{\max}(F)<\phi(w_1)+1$.
Assume that there is an exact triangle
\begin{equation}\label{seq:lem:range-of-phi}
F_1 \to F \to F_2 \to F_1[1]
\end{equation} 
with $\phi_{\min}(F_1) \geq \phi(w_1)+1$
and $\phi_{\max}(F_2) <\phi(w_1)+1$.

Since 
\begin{align*}
\Hom({\bf E}_{|X \times \{ x_1 \}},F_1[k])
=\Hom(F_1,{\bf E}_{|X \times \{ x_1 \}}[2-k])^{\vee}
\end{align*}
and $\phi({\bf E}_{|X \times \{ x_1 \}}[2-k])-\phi_{\min}(F_1) \leq 1-k$,
we have 
\begin{align*}
&\Hom({\bf E}_{|X \times \{ x_1 \}},F_1[k])=0\ \text{ for } k\ge 2, 
\\
&\Hom({\bf E}_{|X \times \{ x_1 \}},F_1[1])=0
\text{ except for finitely many } x_1 \in X_1.
\end{align*}
We also have  
$\Hom({\bf E}_{|X \times \{ x_1 \}},F_2[k])=0$ for $k \leq -1$,
since $\phi_{\max}(F_2[k])-\phi({\bf E}_{|X \times \{ x_1 \}}) < 1+k \leq 0$.
Then taking $\Hom({\bf E}_{|X \times \{ x_1 \}},\bullet)$ 
of \eqref{seq:lem:range-of-phi} and using (1),
we find that 
\begin{align*}
&\Hom({\bf E}_{|X \times \{ x_1 \}},F_1[k])=0 \ \text{ for } k \leq -1,
\\
&\Hom({\bf E}_{|X \times \{ x_1 \}},F_1)=0 \ 
\text{ except for finitely many } x_1 \in X_1.
\end{align*}

By $\Hom({\bf E}_{|X \times \{ x_1 \}},F_1[k])=0$ for $k \ne 0,1$
and all $x_1 \in X_1$,
$\Phi(F_1)$ is represented by a complex $V_{-1} \to V_0$
of locally free sheaves.
Since $\Hom({\bf E}_{|X \times \{ x_1 \}},F_1[k])=0$, $k=0,1$
except for finitely many 
$x_1 \in X_1$, $\Phi(F_1)=0$.
Thus $F_1=0$.

We next prove that
$\phi_{\min}(F)>\phi(w_1)-1$.
Assume that there is an exact triangle
\begin{equation}\label{seq:lem:range-of-phi2}
F_1 \to F \to F_2 \to F_1[1]
\end{equation} 
such that $\phi_{\max}(F_2) \leq \phi(w_1)-1$
and $\phi_{\min}(F_1) >\phi(w_1)-1$.
Then 
$$
\Hom({\bf E}_{|X \times \{ x_1 \}},F_1[k])=
\Hom(F_1,{\bf E}_{|X \times \{ x_1 \}}[2-k])^{\vee}=0
$$ 
for $k \geq 3$.
We also have  
\begin{align*}
&\Hom({\bf E}_{|X \times \{ x_1 \}},F_2[k])=0\ \text{ for } k \leq 0,
\\ 
&\Hom({\bf E}_{|X \times \{ x_1 \}},F_2[1])=0\  
\text{ except for finitely many } x_1 \in X_1.
\end{align*}
Then by \eqref{seq:lem:range-of-phi2},
$\Hom({\bf E}_{|X \times \{ x_1 \}},F_2[k])=0$
for $k \ne 1$, which implies that
$\Phi(F_2)$ is locally free.
On the other hand,
$\Hom({\bf E}_{|X \times \{ x_1 \}},F_2[1])=0$ except for finitely many
$x_1 \in X_1$.
Therefore $F_2=0$.

(3)
We first prove that
$\phi_{\max}(F)>\phi(w_1)$ and $\phi_{\min}(F)<\phi(w_1)$.
If $\phi_{\max}(F) \leq \phi(w_1)$, then
since $Z_{(\beta,\omega)}(F) \in {\Bbb R}Z_{(\beta,\omega)}(w_1)$, 
we see that
$\phi_{\min}(F)=\phi_{\max}(F)=\phi(w_1)$.
Thus $F$ is a $\sigma_{(\beta,\omega)}$-semi-stable object of
${\frak A}_{(\beta,\omega)}$, 
which is a contradiction.
If $\phi_{\min}(F) \geq \phi(w_1)$, then
we also get that
$\phi_{\min}(F)=\phi_{\max}(F)=\phi(w_1)$,
which contradict
our assumption on $F$.
Therefore the claims hold.

Then
we have an exact triangle
\begin{equation}
F_1 \to F \to F_2 \to F_1[1]
\end{equation} 
such that $\phi_{\min}(F_1)>\phi(w_1)$ and
$\phi_{\max}(F_2) \leq \phi(w_1)$.
By (2), we have
$\phi_{\max}(F_1)<\phi(w_1)+1$ and 
$\phi_{\min}(F_2)>\phi(w_1)-1$.
Since $\phi_{\min}(F_1)>\phi(w_1)$ and
$\phi_{\max}(F_2) \leq \phi(w_1)$,
we have $\Hom({\bf E}_{|X \times \{ x_1 \}},F_1[k])=0$
for $k \geq 2$ and
$\Hom({\bf E}_{|X \times \{ x_1 \}},F_2[k])=0$ for $k<0$.
Moreover $\Hom({\bf E}_{|X \times \{ x_1 \}},F_2)=0$ except
finitely many $x_1 \in X_1$.
We set $E_i:=\Phi_{X \to X_1}^{{\bf E}^{\vee}[1]}(F_i)$.
Then $E_i$ are torsion free sheaves fitting in an exact sequence
$$
0 \to E_1 \to E \to E_2 \to 0.
$$ 
Since $\widehat{\Phi}(E_i)=F_i$, $i=1,2$,
we get the claim.
\end{proof}

\section{Relation with $\mu$-semi-stability.}\label{sect:mu-semi-stable}

In this section, we fix the pair $(\beta,\omega)$ and set
$\phi:=\phi_{(\beta,\omega)}$.
We shall study the relation of Bridgeland stability with $\mu$-semi-stability.
The main statement is given in Theorem~\ref{thm:mu-semi-stable}.
We shall freely use the notations 
$\Phi:=\Phi_{X \to X_1}^{{\bf E}^{\vee}[1]}$
and $\widehat{\Phi}:=\Phi_{X_1 \to X}^{{\bf E}[1]}$.

\subsection{A polarization of $X_1$.}

In this subsection, we introduce a ${\Bbb Q}$-divisor 
$\widehat{L}$ on $X_1$ and
show that it is ample under suitable assumptions.
\begin{lem}\label{lem:JHF}
For a $\sigma_{(\beta,\omega)}$-semi-stable object $F$ 
of ${\frak A}_{(\beta,\omega)}$
with
$\phi(F)=\phi(w_1)$,
we have an exact sequence in ${\frak A}_{(\beta,\omega)}$
\begin{equation}\label{eq:JHF}
0 \to F_1 \to F \to F_2 \to 0
\end{equation}
such that $F_1$ is a $\sigma_{(\beta,\omega)}$-semi-stable 
object of ${\frak A}_{(\beta,\omega)}$
satisfying
$\phi(F_1)=\phi(w_1)$ and 
$\Hom(F_1,{\bf E}_{|X \times \{x_1 \}})=0$
for all $x_1 \in X_1$, and
$F_2$ is $S$-equivalent to 
$\oplus_i {\bf E}_{|X \times \{x_i \}}$, $x_i \in X_1$.
Since $\Hom(F_1,F_2)=0$, \eqref{eq:JHF} is uniquely determined by
$F$.
\end{lem}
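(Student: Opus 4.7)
The plan is to extract $F_1$ as a canonically defined subobject of $F$ in the finite-length abelian category $P(\phi(w_1))$ of $\sigma_{(\beta,\omega)}$-semi-stable objects of phase $\phi(w_1)$, and then to identify $F_2 := F/F_1$ as a successive extension of the simple objects ${\bf E}_{|X \times \{x\}}$.

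First, I would note that $P(\phi(w_1))$ is an abelian subcategory of ${\frak A}_{(\beta,\omega)}$ of finite length, whose simple objects are exactly the $\sigma_{(\beta,\omega)}$-stable objects of phase $\phi(w_1)$. Since $w_1$ is primitive isotropic and $X_1 = M_{(\beta,\omega)}(w_1)$ is the coarse moduli of stable objects with $v = w_1$, each ${\bf E}_{|X \times \{x_1\}}$ is such a simple object. In particular, for any object $G \in P(\phi(w_1))$, the vanishing $\Hom(G, {\bf E}_{|X \times \{x_1\}}) = 0$ is equivalent to the assertion that no head composition factor of $G$ is isomorphic to ${\bf E}_{|X \times \{x_1\}}$.

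Next, I would construct $F_1$ iteratively: set $F^{(0)} := F$, and as long as there exists a nonzero morphism $F^{(n)} \to {\bf E}_{|X \times \{x\}}$ for some $x \in X_1$ (necessarily surjective, by simplicity), set $F^{(n+1)} := \ker(F^{(n)} \to {\bf E}_{|X \times \{x\}})$. Since the length strictly decreases, this process terminates at some $F_1 := F^{(N)}$, and by construction $\Hom(F_1, {\bf E}_{|X \times \{x_1\}}) = 0$ for all $x_1 \in X_1$, while $F_2 := F/F_1$ carries a filtration whose successive quotients are of the form ${\bf E}_{|X \times \{x_i\}}$. Since $P(\phi(w_1))$ is abelian, $F_1$ is automatically $\sigma_{(\beta,\omega)}$-semi-stable of phase $\phi(w_1)$, and the JH filtration of $F_2$ shows it is $S$-equivalent to $\bigoplus_i {\bf E}_{|X \times \{x_i\}}$.

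For $\Hom(F_1, F_2) = 0$ and uniqueness, I would argue by dévissage along the filtration of $F_2$: since $\Hom(F_1, {\bf E}_{|X \times \{x_i\}}) = 0$ for every factor, the long exact sequences give $\Hom(F_1, F_2) = 0$. For any second decomposition $0 \to F_1' \to F \to F_2' \to 0$ of the same type, the same dévissage applied to the filtration of $F_2'$ yields $\Hom(F_1, F_2') = 0$, so the composition $F_1 \hookrightarrow F \twoheadrightarrow F_2'$ is zero and $F_1 \subseteq F_1'$; by symmetry $F_1 = F_1'$. The only point that requires care is confirming that the ${\bf E}_{|X \times \{x_1\}}$ really lie in $P(\phi(w_1))$ as simple objects, which uses the coarse moduli hypothesis together with the primitivity and isotropy of $w_1$; once that is in place, the remainder of the argument is formal manipulation in a finite-length abelian category, analogous to the torsion pair (``torsion-free'' versus ``${\bf E}_{|X \times \{x\}}$-torsion'') formalism inside $P(\phi(w_1))$.
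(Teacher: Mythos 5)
Your proposal is correct and follows essentially the same route as the paper: the authors also build $F_1$ by successively taking kernels of the (automatically surjective) nonzero maps $F \to {\bf E}_{|X\times\{x_1\}}$, terminating by finite length of the category of semi-stable objects of phase $\phi(w_1)$, so that $F_2=F/F_1$ is a successive extension of the ${\bf E}_{|X\times\{x_i\}}$. Your dévissage argument for $\Hom(F_1,F_2)=0$ and for uniqueness just fills in details the paper leaves implicit.
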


\begin{proof}
For a non-zero morphism
$\varphi:F \to {\bf E}_{|X \times \{x_1 \}}$,
$\phi(F)=
\phi({\bf E}_{|X \times \{x_1 \}})$ implies
that $\varphi$ is surjective and $\ker \varphi$
is a $\sigma_{(\beta,\omega)}$-semi-stable 
object of ${\frak A}_{(\beta,\omega)}$
with $\phi(\ker \varphi)=
\phi(w_1)$.
Apply this procedure successively, we finally obtain
a subobject $F_1$ of $F$ such that
$\phi(F_1)=\phi(w_1)$ and 
$\Hom(F_1,{\bf E}_{|X \times \{x_1 \}})=0$
for all $x_1 \in X_1$.
Then $F_2:=F/F_1$ is $S$-equivalent to 
$\oplus_i {\bf E}_{|X \times \{x_i \}}$, $x_i \in X_1$.
\end{proof}

We set $\gamma-\beta:=\lambda H+\nu$, $(\nu,H)=0$.
Then we see that
\begin{equation}
\begin{split}
& (a_\beta(v)d_\beta(w_1)-a_\beta(w_1)d_\beta(v))(H^2)\\
=& r_1 a_\gamma(v)\deg(\gamma-\beta)+
r_1(d_\gamma(v)H+D_\gamma(v),\gamma-\beta)
\deg(\gamma-\beta)-r_1 d_\gamma(v) (H^2)\frac{((\gamma-\beta)^2)}{2}\\
=& r_1 a_\gamma(v)\lambda(H^2)+r_1 \lambda^2 \frac{(H^2)^2}{2}d_\gamma(v)
+r_1(D_\gamma(v),\nu)\lambda(H^2)-r_1 d_\gamma(v)(H^2)\frac{(\nu^2)}{2}
\end{split}
\end{equation}
and
\begin{equation}
r_\beta(v)d_\beta(w_1)-r_\beta(w_1)d_\beta(v)=
r_\gamma(v)d_\gamma(w_1)-r_\gamma(w_1)d_\gamma(v)=-r_\gamma(w_1)d_\gamma(v).
\end{equation}
Then
\begin{equation}\label{eq:Sigma}
\begin{split}
\frac{\Sigma_{(\beta,\omega)}(v,w_1)}{(H,\omega)}
=&
(r_\beta(v)d_\beta(w_1)-r_\beta(w_1)d_\beta(v))\frac{(\omega^2)}{2}-
(a_\beta(v)d_\beta(w_1)-a_\beta(w_1)d_\beta(v))\\
=& 
-\frac{r_1}{2}\left((\omega^2)+\lambda^2(H^2)-(\nu^2) \right)d_\gamma(v)
-r_1 \lambda(a_\gamma(v)+(D_\gamma(v),\nu)). 
\end{split}
\end{equation}

\begin{defn}\label{defn:L}
For $\gamma-\beta:=\lambda H+\nu$, $\nu \in H^{\perp}$,
we set
$$
L:=\frac{(\omega^2)+\lambda^2(H^2)-(\nu^2)}{2(H^2)}H
+\lambda \nu \in \NS(X)_{\Bbb Q}.
$$
\end{defn}
By \eqref{eq:Sigma}, we get the following lemma.
\begin{lem}\label{lem:Sigma-L}
\begin{equation}
\begin{split}
\frac{\Sigma_{(\beta,\omega)}(v,w_1)}{(H,\omega)}
=
-r_1(c_1(v)-(\rk v)\gamma,L)-r_1 \lambda a_\gamma(v).
 \end{split}
\end{equation}

\end{lem}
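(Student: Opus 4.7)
The plan is to show that Lemma~\ref{lem:Sigma-L} is a direct algebraic rewriting of the already-derived formula \eqref{eq:Sigma}, using the definition of $L$ and the decomposition of $v$ in the $e^\gamma$-basis. So my strategy is purely a substitution and bookkeeping exercise, with no genuine obstacle.

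First, I would recall that the $c_1$-part of the decomposition \eqref{eq:v} applied with $\beta$ replaced by $\gamma$ gives $c_1(v) = r_\gamma(v)\gamma + d_\gamma(v)H + D_\gamma(v)$, since $e^\gamma$ contributes $\gamma$ in degree one. Because $r_\gamma(v) = \rk v$, this yields the crucial identity
\begin{equation*}
c_1(v)-(\rk v)\gamma = d_\gamma(v)H + D_\gamma(v),
\end{equation*}
with $D_\gamma(v)\in H^\perp\cap \NS(X)_{\Bbb Q}$.

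Next, I would compute the pairing $(c_1(v)-(\rk v)\gamma, L)$ using the definition of $L$ in Definition~\ref{defn:L}. Because $D_\gamma(v)\in H^\perp$, the cross term $(D_\gamma(v),H)$ vanishes, and because $(H,\nu)=0$, the term $(d_\gamma(v)H,\lambda\nu)$ also vanishes. The surviving contributions collapse to
\begin{equation*}
(c_1(v)-(\rk v)\gamma, L) = \tfrac{1}{2}\bigl((\omega^2)+\lambda^2(H^2)-(\nu^2)\bigr)d_\gamma(v) + \lambda(D_\gamma(v),\nu).
\end{equation*}

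Finally, multiplying by $-r_1$ and subtracting $r_1\lambda a_\gamma(v)$ reproduces precisely the right-hand side of \eqref{eq:Sigma}, divided by $(H,\omega)$. So the assertion follows. The only point of possible mild concern is tracking signs and the $H^\perp$-orthogonality correctly when expanding $(d_\gamma(v)H+D_\gamma(v),\tfrac{(\omega^2)+\lambda^2(H^2)-(\nu^2)}{2(H^2)}H+\lambda\nu)$, but each cancellation is immediate from $(H,\nu)=0$ and $(D_\gamma(v),H)=0$. Hence the lemma is proved by direct substitution, with no genuinely hard step.
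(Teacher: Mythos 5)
Your proposal is correct and is exactly the paper's (implicit) argument: the paper states Lemma \ref{lem:Sigma-L} as an immediate consequence of \eqref{eq:Sigma}, and your computation just spells out the substitution, correctly using $c_1(v)-(\rk v)\gamma=d_\gamma(v)H+D_\gamma(v)$ and the orthogonality relations $(H,\nu)=(D_\gamma(v),H)=0$ to expand $(c_1(v)-(\rk v)\gamma,L)$. Nothing is missing.
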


Let us study the properties of $L$.
We note that 
\begin{equation}
\begin{split}
(L^2)=& \frac{1}{4(H^2)}
\left[\left((\omega^2)+\lambda^2(H^2)-(\nu^2) \right)^2+4\lambda^2(H^2)(\nu^2)
\right]\\
>& \frac{1}{4(H^2)}
\left(\lambda^2(H^2)+(\nu^2) \right)^2 \geq 0.
\end{split}
\end{equation}
\begin{NB}
$L$ depends on $(\omega^2)$.
\end{NB}
Since $(L,H)>0$, $L$ belongs to the positive cone.
\begin{lem}
Assume that one of the following conditions holds:
\begin{enumerate}
\item
$X$ is an abelian surface.
\item
$\NS(X)={\Bbb Z}H$.
\item
$\nu=0$.
\end{enumerate}
Then $L$ is an ample ${\Bbb Q}$-divisor.
\end{lem}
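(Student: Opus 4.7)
The plan is to dispose of cases (2) and (3) directly, and then reduce case (1) to a standard fact about abelian surfaces.

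First I would treat case (3): if $\nu=0$, then the formula from Definition \ref{defn:L} gives $L=\frac{(\omega^2)+\lambda^2(H^2)}{2(H^2)}H$, a positive rational multiple of $H$. Since $(\omega^2)>0$, the coefficient is strictly positive, so $L$ is ample. Case (2) is then immediate: the hypothesis $\NS(X)={\Bbb Z}H$ forces $\nu\in H^{\perp}\cap\NS(X)_{\Bbb Q}=0$, so we are back in case (3).

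The substantive case is (1). The calculation just before the statement already shows $(L^2)>0$, and $(L,H)=\frac{(\omega^2)+\lambda^2(H^2)-(\nu^2)}{2}+0$; since $\nu\in H^\perp$ on a surface with $H$ ample, the Hodge index theorem gives $(\nu^2)\le 0$, so $(L,H)>0$. Hence $L$ lies in the positive cone, i.e.\ in the connected component of $\{D\in\NS(X)_{\Bbb R}\mid (D^2)>0\}$ that contains $H$. For an abelian surface it is well known that the ample cone coincides with this positive cone, so $L$ is ample. I would briefly indicate the reason: every irreducible curve $C$ on an abelian surface satisfies $(C^2)\ge 0$ (translations move $C$ in a nontrivial algebraic family, so $C$ is not rigid), and then Hodge index together with $(L^2)>0$, $(L,H)>0$, $(C,H)>0$ forces $(L,C)>0$, so Nakai--Moishezon applies.

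The main obstacle is really only case (1), and even there the work has already been done by the preceding positivity computation; what remains is to cite (or reproduce in one line) the identification of the positive cone with the ample cone on an abelian surface. There is no genuine difficulty, so the proof will be short.
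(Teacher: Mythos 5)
Your proof is correct and follows essentially the same route as the paper: the case $\nu=0$ is handled by observing $L\in{\Bbb Q}_{>0}H$, and the remaining cases by noting that $(L^2)>0$ and $(L,H)>0$ place $L$ in the positive cone, which coincides with the ample cone for an abelian surface or a surface with $\NS(X)={\Bbb Z}H$. The only cosmetic difference is that you reduce case (2) to case (3) instead of invoking the positive-cone argument there, and you spell out the Nakai--Moishezon justification that the paper leaves implicit.
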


\begin{proof}
Since $L$ belongs to the positive cone,
$L$ is an ample divisor, if $X$ is an abelian surface or
$\NS(X)={\Bbb Z}H$.
If $\nu=0$, then 
$L \in {\Bbb Q}_{>0} H$.
Thus $L$ is also an ample divisor.
\end{proof}

\begin{rem}
\begin{enumerate}
\item[(1)]
Assume that $X$ is an abelian surface.
Then since $(\widehat{L}^2)=(L^2)>0$ and
$(\widehat{L},\widehat{H})=(L,H)>0$,
$\widehat{L}$ is ample.
\item[(2)]
Assume that $X$ is a $K3$ surface and
one of the conditions (b), (c) of Lemma \ref{lem:FM-w_1} (2) holds.
If $\nu=0$, then $\widehat{L} \in {\Bbb Q}_{>0}\widehat{H}$ is also
ample.  
\end{enumerate}
\end{rem}

\begin{NB}
We set 
$$
\frac{c_1(F)-\rk F \gamma}{\chi(F(-\gamma))}=xH+y, y \in H^{\perp}.
$$
Then 
$$
x\frac{((\omega^2)+\lambda^2 (H^2)-(\nu^2))}{2}+\lambda(\nu,y)=-\lambda.
$$
As an equation of $(\omega,\nu) \in {\frak H}_{\Bbb R}$,
it defines a half sphere ${\frak H}_{\Bbb R}$.
For a given half sphere, $xH+y$ is uniquely determined.
\end{NB}

\begin{lem}\label{lem:slope-condition}
For $E \in K(X_1)$, 
$Z_{(\beta,\omega)}(\widehat{\Phi}(E))
\in {\Bbb R}Z_{(\beta,\omega)}(w_1)$
if and only if 
\begin{equation}\label{eq:slope}
(c_1(E)-\rk E \gamma',\widehat{L})=\frac{\lambda}{|r_1|}\rk E.
\end{equation}
\begin{NB}
Moreover if $\langle v(E)^2 \rangle \geq 0$ and
$v(\Phi_{X_1 \to X}^{{\bf E}[1]}(E)) \not \in {\Bbb Z}w_1$, 
then \eqref{eq:slope}
implies that
$\rk E>0$ if and only if $d_\beta(\Phi_{X_1 \to X}^{{\bf E}[1]}(E))>0$.
Thus $Z_{(\beta,\omega)}(\Phi_{X_1 \to X}^{{\bf E}[1]}(E))
\in {\Bbb R}_{>0} Z_{(\beta,\omega)}(w_1)$.
\end{NB}
\end{lem}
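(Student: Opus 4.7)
The plan is to reduce the equation $Z_{(\beta,\omega)}(\widehat{\Phi}(E))\in{\Bbb R}Z_{(\beta,\omega)}(w_1)$ to the Mukai-pairing condition produced by Lemma~\ref{lem:Sigma-L}, and then transfer that condition to $X_1$ via the Fourier--Mukai transform.

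First, I would observe that $Z_{(\beta,\omega)}(\widehat{\Phi}(E))\in{\Bbb R}Z_{(\beta,\omega)}(w_1)$ is equivalent to $\Sigma_{(\beta,\omega)}(\widehat{\Phi}(E),w_1)=0$. Setting $v:=v(\widehat{\Phi}(E))$, Lemma~\ref{lem:Sigma-L} turns this into
\begin{equation*}
(c_1(v)-(\rk v)\gamma,L)+\lambda a_\gamma(v)=0,
\end{equation*}
so everything reduces to translating the two summands into invariants of $E$ on $X_1$. A direct expansion in the basis \eqref{eq:v}, using the explicit form of the Mukai pairing \eqref{eq:mukai_pairing}, shows that both summands are themselves Mukai pairings on $X$:
\begin{equation*}
(c_1(v)-(\rk v)\gamma,L)=\langle v,\,L+(L,\gamma)\varrho_X\rangle,\qquad a_\gamma(v)=-\langle v,e^\gamma\rangle.
\end{equation*}

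Second, I would invoke Parseval for the pair $(\Phi,\widehat{\Phi})$: since $\widehat{\Phi}$ is (up to a shift) the quasi-inverse of $\Phi$, one has $\langle v(\widehat{\Phi}(E)),x\rangle=\epsilon\langle v(E),\Phi(x)\rangle$ for $x\in H^*(X,{\Bbb Z})_{\alg}$, with a sign $\epsilon\in\{\pm 1\}$ coming from the $[1]$ shift in the kernel. Applying Lemma~\ref{lem:FM-w_1} gives
\begin{equation*}
\Phi\bigl(L+(L,\gamma)\varrho_X\bigr)=\tfrac{r_1}{|r_1|}\bigl(\widehat{L}+(\widehat{L},\gamma')\varrho_{X_1}\bigr),\qquad \Phi(e^\gamma)=-\tfrac{1}{r_1}\varrho_{X_1},
\end{equation*}
where the sign $\tfrac{r_1}{|r_1|}$ accounts for the case distinction in Lemma~\ref{lem:FM-w_1}(2).

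Third, I would compute the resulting pairings on $X_1$ using the analogous expansion \eqref{eq:v} for $v(E)$ relative to $\gamma'$, obtaining $\langle v(E),\widehat{L}+(\widehat{L},\gamma')\varrho_{X_1}\rangle=(c_1(E)-\rk E\cdot\gamma',\widehat{L})$ and $\langle v(E),\varrho_{X_1}\rangle=-\rk E$. Substituting back into $(c_1(v)-(\rk v)\gamma,L)+\lambda a_\gamma(v)=0$, the global sign $\epsilon$ cancels, while the sign $\tfrac{r_1}{|r_1|}$ combines with $\tfrac{1}{r_1}$ to produce the absolute value, yielding exactly
\begin{equation*}
(c_1(E)-\rk E\cdot\gamma',\widehat{L})=\tfrac{\lambda}{|r_1|}\rk E.
\end{equation*}

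The main obstacle will be the bookkeeping of signs: the sign from the $[1]$ in the definition of $\widehat{\Phi}$, the sign $\tfrac{r_1}{|r_1|}$ coming from Lemma~\ref{lem:FM-w_1}(2), and the sign from $\Phi(\varrho_X)=-r_1 e^{\gamma'}$ must conspire to produce the $|r_1|$ in the denominator on the right-hand side, rather than $r_1$. Once these signs are handled carefully in both cases $r_1>0$ and $r_1<0$, the argument is a mechanical substitution.
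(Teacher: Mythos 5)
Your proposal is correct and follows essentially the same route as the paper: reduce the condition to $\Sigma_{(\beta,\omega)}(\widehat{\Phi}(E),w_1)=0$, rewrite it via Lemma~\ref{lem:Sigma-L}, and transfer the two terms to $X_1$ using the cohomological formulas of Lemma~\ref{lem:FM-w_1} together with the fact that $\Phi$ is an isometry of the Mukai lattice (which is what makes $(\widehat{c_1(F(-\gamma))},\widehat{L})=(c_1(F(-\gamma)),L)$ and $\rk E=-r_1 a_\gamma(F)$ hold). Your explicit Parseval/adjunction formulation is just a more verbose packaging of the paper's use of the $\widehat{\phantom{C}}$ notation, and the sign bookkeeping works out exactly as you anticipate.
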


\begin{proof}
We set $F:=\widehat{\Phi}(E)$.
By Lemma \ref{lem:FM-w_1}, we have
$$
\widehat{c_1(F(-\gamma))}=\frac{r_1}{|r_1|}
c_1(\Phi(F)(-\gamma'))=\frac{r_1}{|r_1|} c_1(E(-\gamma')).
$$
By using Lemma \ref{lem:Sigma-L}, we see that
\begin{equation}\label{eq:Sigma-Lhat}
\begin{split}
\frac{\Sigma_{(\beta,\omega)}(F,w_1)}{(H,\omega)}=&
-r_1(c_1(F(-\gamma)),L)-r_1 \lambda a_\gamma(F)\\
= &-r_1 (\widehat{c_1(F(-\gamma))},\widehat{L})+\lambda \rk E \\
=& -|r_1|(c_1(E(-\gamma')),\widehat{L})+\lambda \rk E.
\end{split}
\end{equation}
Therefore \eqref{eq:slope} holds.
\begin{NB}
We take $v=\pm v(\Phi_{X_1 \to X}^{{\bf E}[1]}(E))$
with $d_\beta(v)>0$.
Since $\langle v^2 \rangle \geq 0$,
there is a $\sigma_{(\beta,\omega)}$-semi-stable 
object $F \in {\frak A}_{(\beta,\omega)}$ 
with $v(F)=v$. 
Since $v \not \in {\Bbb Z}w_1$,
we can find a $\sigma_{(\beta,\omega)}$-semi-stable object $F'$ 
such that
$\Hom({\bf E}_{|X \times \{x_1 \}},F')=
\Hom(F',{\bf E}_{|X \times \{x_1 \}},F')=0$
for all $x_1 \in X_1$ and $v(F')=v-mw_1$, $m \in {\Bbb Z}_{ \geq 0}$.
Then
$\Phi_{X \to X_1}^{{\bf E}^{\vee}[1]}(F')$ is a locally free sheaf on $X_1$
with $v(\Phi_{X \to X_1}^{{\bf E}^{\vee}[1]}(F'))=\pm v(E)+m \varrho_{X_1}$. 
Hence $\rk E>0$ if and only if 
$d_\beta(\Phi_{X_1 \to X}^{{\bf E}[1]}(E))>0$.
\end{NB}
\end{proof}

\begin{NB}
Assume that $\lambda \ne 0$.
Then $r_1 \lambda>0$.
If $Z_{(\beta,\omega)}(v) \in {\Bbb R}Z_{(\beta,\omega)}(w_1)$,
then $d_\beta(v) \ne 0$.
If $d_\gamma(v)=d_\beta(v)-r \lambda=0$, then
$0<d_\beta(v)=\frac{r}{r_1}r_1 \lambda$.
Since $\langle v^2 \rangle=(D^2)-2\frac{r}{r_1}r_1 a_\gamma(v)$,
$r_1 a_\gamma(v) \leq 0$ and the equality holds if
$D=0$ and $\langle v^2 \rangle=0$, which is equivalent to
$v \in {\Bbb R} w_1$.
Hence we assume that $d_\gamma(v) \ne 0$.

\begin{equation}
\begin{split}
-\frac{r_1^2}{2}d_\gamma(v)^2(\omega^2)=&
\frac{r_1^2}{2}d_\gamma(v)^2(\lambda^2(H^2)-(\nu^2))
+r_1^2 \lambda d_\gamma(v) a_\gamma(v)
+r_1^2 \lambda d_\gamma(v)(D,\nu)\\
\geq & \frac{r_1^2}{2}d_\gamma(v)^2(\lambda^2(H^2)
-(\nu^2))+r_1^2 \lambda d_\gamma(v) a_\gamma(v)
-r_1^2 \lambda d_\gamma(v) \sqrt{-(D^2)}\sqrt{-(\nu^2)}\\
=& \frac{r_1^2}{2}d_\gamma(v)^2\lambda^2(H^2)+
\frac{r_1^2}{2}
\left(d_\gamma(v) \sqrt{-(\nu^2)}-\lambda \sqrt{-(D^2)}\right)^2
+\frac{r_1^2}{2} \lambda^2(D^2)+r_1^2 \lambda d_\gamma(v) a_\gamma(v)\\
=& \frac{r_1^2}{2}\lambda^2(d_\gamma(v)^2(H^2)+(D^2))
+\frac{r_1^2}{2}
\left(d_\gamma(v) \sqrt{-(\nu^2)}-\lambda \sqrt{-(D^2)}\right)^2
+r_1^2 \lambda d_\gamma(v) a_\gamma(v)\\
=& \frac{r_1^2}{2}\lambda^2 \langle v^2 \rangle+
r_1^2\lambda^2 r a_\gamma(v)
+\frac{r_1^2}{2}
\left(d_\gamma(v) \sqrt{-(\nu^2)}-\lambda \sqrt{-(D^2)}\right)^2
+r_1^2 \lambda d_\gamma(v) a_\gamma(v)\\
 \geq & r_1 \lambda(\lambda r+d_\gamma(v))r_1 a_\gamma(v)
= r_1 \lambda d_\beta(v) r_1 a_\gamma(v). 
\end{split}
\end{equation}
Hence $d_\beta(v) (r_1 a_\gamma(v))< 0$.
\end{NB}

We also have the following as a consequence of \eqref{eq:Sigma-Lhat}.

\begin{lem}\label{lem:slope-phase}
For $w \in H^*(X,{\Bbb Z})_{\alg}$ with $\rk \Phi(w)=-r_1 a_\gamma(w)>0$,
$$
\Sigma_{(\beta,\omega)}(w,w_1) \underset{(<)}{>}0
\Longleftrightarrow
\frac{(c_1(\Phi(w)(-\gamma')),\widehat{L})}{\rk \Phi(w)}
\underset{(>)}{<}
\frac{\lambda}{|r_1|}.
$$
\end{lem}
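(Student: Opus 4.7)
The plan is to observe that this lemma is essentially a direct consequence of the computation carried out inside the proof of Lemma~\ref{lem:slope-condition}, once we relax the equality $Z_{(\beta,\omega)}(\widehat\Phi(E))\in{\Bbb R}Z_{(\beta,\omega)}(w_1)$ to an inequality. Since $\Sigma_{(\beta,\omega)}(\,\cdot\,,\,\cdot\,)$ depends only on Mukai vectors (Definition~\ref{defn:area}), we may freely replace the torsion-free sheaf $E$ appearing there by any class $u\in H^*(X_1,{\Bbb Z})_{\alg}$ with prescribed Mukai vector.

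First, I would choose $u:=\Phi(w)\in H^*(X_1,{\Bbb Z})_{\alg}$ and set $F$ to be any object with $v(F)=w$, so that $v(\widehat\Phi(u))=v(F)=w$ by the Fourier--Mukai inversion (together with the sign conventions absorbed into Lemma~\ref{lem:FM-w_1}). The identity \eqref{eq:Sigma-Lhat}, which is a purely numerical computation in the Mukai lattice, then gives
\begin{equation*}
\frac{\Sigma_{(\beta,\omega)}(w,w_1)}{(H,\omega)}
= -|r_1|\bigl(c_1(\Phi(w)(-\gamma')),\widehat{L}\bigr)+\lambda\,\rk\Phi(w).
\end{equation*}
Here I am using that $\rk\Phi(w)=-r_1 a_\gamma(w)$, which is the rank component in the expansion of $\Phi(w)$ at $e^{\gamma'}$ (compare the suppressed Lemma \ref{lem:FM-w_1:2}).

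Next, since $(H,\omega)>0$ by the very definition of ${\frak H}$, and $|r_1|\rk\Phi(w)>0$ by the hypothesis of the lemma, I can divide the above equation by the positive quantity $|r_1|(H,\omega)\rk\Phi(w)$ without reversing the inequality. This rearranges directly into
\begin{equation*}
\Sigma_{(\beta,\omega)}(w,w_1)>0
\iff
\frac{\bigl(c_1(\Phi(w)(-\gamma')),\widehat{L}\bigr)}{\rk\Phi(w)}
<\frac{\lambda}{|r_1|},
\end{equation*}
and the same chain of equivalences applies with strict reversed inequalities, giving the parenthetical version.

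There is no real obstacle here: the whole content is already packaged in Lemma~\ref{lem:Sigma-L} and the computation \eqref{eq:Sigma-Lhat}. The only small point to check is the sign convention in passing from $c_1(F(-\gamma))$ on $X$ to $c_1(\Phi(w)(-\gamma'))$ on $X_1$, which is the input of Lemma~\ref{lem:FM-w_1} guaranteeing $\widehat{c_1(F(-\gamma))}=\frac{r_1}{|r_1|}c_1(\Phi(F)(-\gamma'))$; this ensures the factor $-|r_1|$ in front of the pairing with $\widehat L$ is correct regardless of the sign of $r_1$.
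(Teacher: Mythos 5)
Your proof is correct and is essentially the paper's own argument: the paper states Lemma \ref{lem:slope-phase} as an immediate consequence of the identity \eqref{eq:Sigma-Lhat}, and your derivation (applying that identity to $v(F)=w$, then dividing by the positive quantities $(H,\omega)$, $|r_1|$, and $\rk\Phi(w)$) is exactly how that consequence is obtained.
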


\subsection{$\mu$-semi-stability and 
$\sigma_{(\beta,\omega)}$-semi-stability.}

From now on, we assume that $\widehat{L}$ is ample.
\begin{thm}\label{thm:mu-semi-stable}
Assume that $\widehat{L}$ is ample.
Let $F$ be an object of
${\bf D}(X)$
such that
$Z_{(\beta,\omega)}(F) \in {\Bbb R}Z_{(\beta,\omega)}(w_1)$. 
\begin{enumerate}
\item[(1)]
$F$ is a $\sigma_{(\beta,\omega)}$-semi-stable object 
of ${\frak A}_{(\beta,\omega)}$
if and only if
$\Phi(F)$ fits in an exact triangle
$$
E_1 \to \Phi(F) \to E_2[-1] \to E_1[1],
$$
where $E_1$ is a $\mu$-semi-stable torsion free sheaf
on $X_1$ and $E_2$ is a 0-dimensional sheaf on $X_1$. 
\item[(2)]
Assume that $F$ is a $\sigma_{(\beta,\omega)}$-semi-stable 
object of ${\frak A}_{(\beta,\omega)}$.
\begin{enumerate}
\item
$\Phi(E)$ is a torsion free sheaf on $X_1$ if and only if
$\Hom(F,{\bf E}_{|X \times \{ x_1 \}})=0$ for all $x_1 \in X_1$.    
\item
$\Phi(E)^{\vee}$ is a torsion free sheaf on $X_1$ if and only if
$\Hom({\bf E}_{|X \times \{ x_1 \}},F)=0$ for all $x_1 \in X_1$.    
\end{enumerate}
\end{enumerate}
\end{thm}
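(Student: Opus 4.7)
The approach is to use the Fourier--Mukai equivalence $\Phi$ to translate the problem into a statement about sheaves on $X_1$. The key dictionary is $\Phi({\bf E}_{|X \times \{x_1\}}) = {\frak k}_{x_1}[-1]$, so any $\Hom$-vanishing between $F$ and the universal fibers on $X$ becomes an $\Ext$-vanishing between $\Phi(F)$ and skyscraper sheaves on $X_1$, which in turn controls the cohomological range and torsion behavior of $\Phi(F)$.

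For the forward direction of (1), I would apply Lemma \ref{lem:JHF} to obtain a sequence $0 \to F_1 \to F \to F_2 \to 0$ with $F_2$ $S$-equivalent to $\oplus_i {\bf E}_{|X \times \{x_i\}}$ and $\Hom(F_1, {\bf E}_{|X \times \{x_1\}}) = 0$ for all $x_1$. Then $\Phi(F_2) = E_2[-1]$ with $E_2$ zero-dimensional, being an iterated extension of ${\frak k}_{x_i}[-1]$'s. To show that $E_1 := \Phi(F_1)$ is a torsion-free sheaf in degree $0$, I combine the identity $\Hom(\Phi(F_1), {\frak k}_{x_1}[k]) \cong \Hom(F_1, {\bf E}_{|X \times \{x_1\}}[k+1])$ with phase comparison and Serre duality on the surface (which has trivial canonical) to obtain $\Ext^k(\Phi(F_1), {\frak k}_{x_1}) = 0$ for $k \le -1$ (Lemma \ref{lem:JHF} covers $k = -1$) and for $k \ge 2$. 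The hypercohomology spectral sequence $E_2^{p,q} = \Ext^p(\mathcal{H}^{-q}(\Phi(F_1)), {\frak k}_{x_1}) \Rightarrow \Ext^{p+q}(\Phi(F_1), {\frak k}_{x_1})$ then kills $\mathcal{H}^j(\Phi(F_1))$ for $j \ge 1$; to kill $\mathcal{H}^{-1}(\Phi(F_1))$ (and lower), I argue directly: a non-zero inclusion $V \hookrightarrow \mathcal{H}^{-1}(\Phi(F_1))$ would, via $\widehat{\Phi}$, produce a non-zero morphism $\widehat{\Phi}(V)[1] \to F_1$ whose source has all phases strictly above $\phi(F_1)$ by Lemma \ref{lem:range-of-phi}(2), contradicting the $\sigma_{(\beta,\omega)}$-semi-stability of $F_1$. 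Finally, $\mu$-semi-stability of $E_1$ with respect to $\widehat{L}$ (which forces torsion-freeness) follows from Lemma \ref{lem:slope-phase}: any $\widehat{L}$-destabilizing subsheaf of $E_1$ would, via $\widehat{\Phi}$, give a subobject of $F_1$ with strictly larger phase, contradicting $\sigma_{(\beta,\omega)}$-semi-stability of $F$.

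For the reverse direction of (1), I would apply $\widehat{\Phi}$ to the given triangle. The image $\widehat{\Phi}(E_2[-1])$ is an iterated extension of ${\bf E}_{|X \times \{x_i\}}$'s, hence $\sigma_{(\beta,\omega)}$-semi-stable of phase $\phi(w_1)$, while Lemma \ref{lem:range-of-phi}(2) confines the phases of $\widehat{\Phi}(E_1)$ to $(\phi(w_1)-1, \phi(w_1)+1)$. If $\widehat{\Phi}(E_1)$ failed to be $\sigma_{(\beta,\omega)}$-semi-stable, Lemma \ref{lem:range-of-phi}(3) would produce a short exact sequence of torsion-free sheaves $0 \to E_1' \to E_1 \to E_1'' \to 0$ whose $\widehat{\Phi}$-images have strictly separated phases; translated through Lemma \ref{lem:slope-phase} this would contradict $\mu$-semi-stability of $E_1$. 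Hence $\widehat{\Phi}(E_1)$ is $\sigma_{(\beta,\omega)}$-semi-stable, and $F$ inherits this property as an extension of two semi-stable pieces of common phase.

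For part (2), I use the triangle from (1) together with the $\Hom$ dictionary. In (a), $\Phi(F)$ is torsion-free as a sheaf iff $E_2 = 0$ iff $F_2 = 0$, which by Lemma \ref{lem:JHF} is equivalent to $\Hom(F, {\bf E}_{|X \times \{x_1\}}) = 0$ for every $x_1$. In (b), dualizing the triangle of (1) shows that $\Phi(F)^\vee$ is a torsion-free sheaf precisely when $E_1$ is locally free and the dual connecting map $E_1^{*} \to \mathcal{E}xt^2(E_2, \mathcal{O})$ is surjective; the long exact sequence of $\Ext^{*}({\frak k}_{x_1}, -)$ applied to the original triangle rewrites these combined conditions as the single equation $\Ext^1({\frak k}_{x_1}, \Phi(F)) = 0$ for all $x_1$, which under $\Phi$ is precisely $\Hom({\bf E}_{|X \times \{x_1\}}, F) = 0$. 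The main technical obstacle lies in paragraph 2, where the spectral-sequence bookkeeping and the phase-based argument must be combined carefully to pin $\Phi(F_1)$ to degree $0$ as a torsion-free sheaf from only a finite list of $\Ext$-vanishings.
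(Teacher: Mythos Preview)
Your overall strategy matches the paper's: it proves (1) as the conjunction of Proposition \ref{prop:mu-semi-stable} (the reverse implication, via Lemma \ref{lem:range-of-phi}(3) and Lemma \ref{lem:slope-phase}) and Proposition \ref{prop:mu-semi-stable2} (the forward implication, via Lemma \ref{lem:JHF}), and (2) is read off from the explicit triangle produced in the forward direction. Your treatment of the reverse direction and of (2) is essentially the paper's.

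There is, however, a genuine oversimplification in your forward argument for $\mu$-semi-stability of $E_1=\Phi(F_1)$. The sentence ``any $\widehat L$-destabilizing subsheaf of $E_1$ would, via $\widehat\Phi$, give a subobject of $F_1$ with strictly larger phase'' is not justified: $\widehat\Phi(E_1')$ does not in general lie in the heart $\mathfrak A_{(\beta,\omega)}$, so it does not produce a subobject of $F_1$, and Lemma \ref{lem:slope-phase} only compares the phase of the \emph{Mukai vector} $v(\widehat\Phi(E_1'))$ with $\phi(w_1)$, which a priori says nothing about the Harder--Narasimhan phases of the object $\widehat\Phi(E_1')$. The paper closes this gap by applying the $t$-structure cohomology ${}^\beta H^{*}$ of Definition \ref{defn:BBD} to the triangle $\widehat\Phi(E_1')\to F_1\to\widehat\Phi(E_1'')$. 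The phase bounds of Lemma \ref{lem:range-of-phi}(2) confine ${}^\beta H^{p}$ of both ends to $p\in\{-1,0,1\}$; the resulting five-term exact sequence in $\mathfrak A_{(\beta,\omega)}$, together with the semi-stability of $F_1$, forces $\phi({}^\beta H^{0}(\widehat\Phi(E_1')))\le\phi(w_1)$, and since $\phi({}^\beta H^{1}(\widehat\Phi(E_1'))[-1])\in(\phi(w_1)-1,0]$ one obtains $\phi(w_1)-1<\phi(v(\widehat\Phi(E_1')))\le\phi(w_1)$. Only then does Lemma \ref{lem:slope-phase} convert this into the slope inequality. This $t$-structure long exact sequence is the technical heart of the forward direction and is precisely what your one-line summary elides. (A related remark: the parenthetical ``which forces torsion-freeness'' is backwards --- torsion-freeness of $E_1$ is a prerequisite for $\mu$-semi-stability, not a consequence, and the paper infers it directly from $\Hom(F_1,\mathbf E_{|X\times\{x_1\}})=0$ together with both objects lying in the heart.)
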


\begin{rem}\label{rem:mu-semi-stable}
We take $\beta' \in \NS(X_1)_{\Bbb Q}$ with
$(\beta'-\gamma',\widehat{L})=\frac{\lambda}{|r_1|}$
and $\omega' \in {\Bbb R}_{>0}\widehat{L}$ with
$(\widehat{L}^2)>2$.
Let $F$ be an object of ${\bf D}(X)$ in Theorem \ref{thm:mu-semi-stable}.
Then Theorem \ref{thm:mu-semi-stable} implies that
$F$ is a 
$\sigma_{(\beta,\omega)}$-semi-stable object of
${\frak A}_{(\beta,\omega)}$
if and only if
$\Phi(F)[1]$ is a $\sigma_{(\beta',\omega')}$-semi-stable
object of ${\frak A}_{(\beta',\omega')}$ with 
$\phi_{(\beta',\omega')}(\Phi(F)[1])=1$. 
In \S \ref{subsect:another}, we give a more precise relation
with Bridgeland stability on $X_1$.  
\end{rem}

\begin{cor}\label{cor:mu-semi-stable}
Assume that $w_1$ does not define a wall for $v$.
\begin{enumerate}
\item[(1)]
Every $\sigma_{(\beta,\omega)}$-semi-stable object $F$ with $v(F)=v$
satisfies
$$
\Hom({\bf E}_{|X \times \{ x_1 \}},F)=
\Hom(F,{\bf E}_{|X \times \{ x_1 \}})=0
$$
 for all $x_1$.
Hence $\Phi(F)$ is a locally free $\mu$-semi-stable sheaf.
\item[(2)]
Moreover if $(\beta,\omega)$ does not belong any wall for $v$, then
$\Phi(F)$ is $\delta$-twisted semi-stable for all $\delta$.
\end{enumerate}
\begin{NB}
For a $\mu$-semi-stable sheaf $E$ with $v(E)=\Phi(v)$,
we have an exact triangle
$$
A[-1] \to E \to E^{**} \to A,
$$
where $A$ is a 0-dimensional sheaf and $E^{**}$
is a locally free $\mu$-semi-stable sheaf.
Then we have an exact sequence
$$
0 \to \Phi_{X_1 \to X}^{\bf E}(A) \to \widehat{\Phi}(E) \to
\widehat{\Phi}(E^{**}) \to 0
$$
of $\sigma_{(\beta,\omega)}$-semi-stable objects.
Since $w_1$ does not define a wall for $v$,
we get $A=0$. Thus $E$ is locally free.
\end{NB}
\end{cor}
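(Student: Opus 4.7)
The plan is to deduce both parts from Theorem \ref{thm:mu-semi-stable} by exploiting the wall hypothesis to rule out $0$-dimensional pieces in $\Phi(F)$ and, for part (2), equal-slope subsheaves. Note that Condition (2) of Theorem \ref{thm:main} forces $Z_{(\beta,\omega)}(v) \in {\Bbb R} Z_{(\beta,\omega)}(w_1)$, placing any $\sigma_{(\beta,\omega)}$-semistable $F$ with $v(F)=v$ at phase $\phi(w_1)$.

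For part (1), the key step is to establish
\begin{equation*}
\Hom(F, {\bf E}_{|X\times\{x_1\}}) = 0 = \Hom({\bf E}_{|X\times\{x_1\}}, F)
\end{equation*}
for every $x_1 \in X_1$. Suppose a nonzero map $F \to {\bf E}_{|X\times\{x_1\}}$ exists. Since ${\bf E}_{|X\times\{x_1\}}$ is $\sigma_{(\beta,\omega)}$-stable of the same phase as $F$, semistability forces this map to be an epimorphism in ${\frak A}_{(\beta,\omega)}$, producing a short exact sequence $0 \to F' \to F \to {\bf E}_{|X\times\{x_1\}} \to 0$ in ${\frak A}_{(\beta,\omega)}$ with $F'$ semistable of phase $\phi(w_1)$ and $v(F') = v - w_1$. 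This decomposition is precisely the witness for $w_1$ defining the wall $W_{w_1}$ for $v$ at $(\beta,\omega)$, contradicting the hypothesis; the symmetric argument with a monomorphism ${\bf E}_{|X\times\{x_1\}} \hookrightarrow F$ dispatches the other $\Hom$. By Theorem \ref{thm:mu-semi-stable}(2), both $\Phi(F)$ and $\Phi(F)^{\vee}$ are then torsion-free, so $\Phi(F)$ is locally free on the smooth surface $X_1$, and the cohomology long exact sequence of the triangle $E_1 \to \Phi(F) \to E_2[-1] \to E_1[1]$ from Theorem \ref{thm:mu-semi-stable}(1) forces $E_2 = 0$. Hence $\Phi(F) = E_1$ is $\mu$-semistable.

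For part (2), we upgrade to $\delta$-twisted semistability for every $\delta$. If this fails, there is a proper subsheaf $G \subset \Phi(F)$ with $\mu(G) = \mu(\Phi(F))$ but with $v(G)$ not proportional to $v(\Phi(F))$. Applying $\widehat{\Phi}$ produces a subobject $\widehat{\Phi}(G) \hookrightarrow F$ in ${\frak A}_{(\beta,\omega)}$ of phase $\phi(w_1)$ with non-proportional Mukai vector $u := v(\widehat{\Phi}(G))$. For $(\beta',\omega')$ in a small neighborhood of $(\beta,\omega)$ (within the same chamber for categories) but off the locus $\{Z_{(\beta',\omega')}(\cdot) \in {\Bbb R} Z_{(\beta',\omega')}(w_1)\}$, the phases of $\widehat{\Phi}(G)$ and $F$ separate in a definite direction, so $\widehat{\Phi}(G)$ defines a wall for $v$ passing through $(\beta,\omega)$, contradicting the assumption.

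The hard part will be extracting, from the abstract categorical arguments above, a genuine destabilizing short exact sequence whose Mukai vectors meet the numerical conditions (a)-(c) of Definition \ref{defn:wall:stability}, so that it indeed defines a wall in the technical sense. In part (1) one may need to apply the Jordan-H\"older procedure of Lemma \ref{lem:JHF} first to peel off a pure $w_1$-factor before invoking the wall assumption. In part (2), the analogous numerical check for $u = v(\widehat{\Phi}(G))$ should combine the Bogomolov inequality for subsheaves of the $\mu$-semistable sheaf $\Phi(F)$ with Lemma \ref{lem:slope-condition}, which ties the equal-slope condition on $X_1$ to the phase-parallelism condition on $X$.
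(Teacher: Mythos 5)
Your proposal is correct and follows essentially the same route as the paper: part (1) is exactly the intended (and in the paper, omitted) argument that a nonzero $\Hom$ to or from ${\bf E}_{|X\times\{x_1\}}$ yields a same-phase decomposition $v=(v-w_1)+w_1$, so that $w_1$ would define a wall, after which Theorem \ref{thm:mu-semi-stable}(2) gives local freeness and $E_2=0$; and your part (2) is the paper's three-line proof verbatim in substance (an equal-slope subsheaf with non-proportional Mukai vector transforms under $\widehat{\Phi}$ into a wall for $v$ through $(\beta,\omega)$). The residual issue you flag at the end --- checking that the resulting $v_1$ satisfies the numerical conditions (a)--(c) of Definition \ref{defn:wall:stability} --- is likewise left implicit in the paper, so it is not a gap relative to the paper's own level of detail.
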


\begin{proof}
(2) Let $E$ be a $\mu$-semi-stable sheaf with $v(E)=\Phi(v)$.
For an exact sequence 
$$
0 \to E_1 \to E \to E_2 \to 0
$$
such that $E_i$ are torsion free sheaves with the same slope,
if $v(E_i) \not \in {\Bbb Q}v(E)$, then
$\widehat{\Phi}(E_i)$ defines a wall for $v$.
Hence $v(E_i) \in {\Bbb Q}v(E)$. Then $E$ is $\delta$-twisted 
semi-stable for any $\delta$.
\end{proof}

We divide the proof of Theorem \ref{thm:mu-semi-stable}
into the proofs of Proposition \ref{prop:mu-semi-stable}
and Proposition \ref{prop:mu-semi-stable2} below. 

\begin{prop}\label{prop:mu-semi-stable}
Assume that $\widehat{L}$ is ample.
Let $E$ be a torsion free sheaf on $X_1$
with
\begin{equation}\label{eq:slope-gamma'}
\frac{(c_1(E)-\rk E \gamma',\widehat{L})}{\rk E}=
\frac{\lambda}{|r_1|}.
\end{equation}
If $E$ is $\mu$-semi-stable with respect to $\widehat{L}$, then
$F:=\widehat{\Phi}(E) 
\in {\frak A}_{(\beta,\omega)}$ and $F$ is 
$\sigma_{(\beta,\omega)}$-semi-stable
with
$\phi(F)=\phi(w_1)$.
\end{prop}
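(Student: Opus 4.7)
\emph{Plan.} I proceed by contradiction, using the dictionary between Bridgeland semi-stability on $X$ and slope semi-stability on $X_1$ furnished by the universal object ${\bf E}$. By Lemma~\ref{lem:slope-condition}, the slope hypothesis \eqref{eq:slope-gamma'} is equivalent to $\Sigma_{(\beta,\omega)}(F,w_1)=0$, i.e.\ $Z_{(\beta,\omega)}(F)\in{\Bbb R}\,Z_{(\beta,\omega)}(w_1)$. In particular, the hypothesis of Lemma~\ref{lem:range-of-phi}(3) is met.

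Suppose for contradiction that $F$ is not a $\sigma_{(\beta,\omega)}$-semi-stable object of ${\frak A}_{(\beta,\omega)}$. This formulation subsumes the possibility $F\notin{\frak A}_{(\beta,\omega)}$, since Lemma~\ref{lem:range-of-phi}(3) is proved via the Harder--Narasimhan filtration of $F$ with respect to the slicing of $\sigma_{(\beta,\omega)}$ and does not require $F$ to lie in the heart a priori. Lemma~\ref{lem:range-of-phi}(3) then produces a short exact sequence of torsion free sheaves on $X_1$,
\[
 0 \to E_1 \to E \to E_2 \to 0,
\]
with $E_1, E_2 \ne 0$ (hence $\rk E_1, \rk E_2 >0$) and with $\Sigma_{(\beta,\omega)}(w_1,\widehat{\Phi}(E_1))>0$, $\Sigma_{(\beta,\omega)}(\widehat{\Phi}(E_2),w_1)>0$. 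Applying Lemma~\ref{lem:slope-phase} to $w:=v(\widehat{\Phi}(E_1))$ (after possibly replacing $w$ by $-w$ so as to enforce $\rk\Phi(w)>0$), the antisymmetry $\Sigma_{(\beta,\omega)}(\widehat{\Phi}(E_1),w_1)<0$ translates into the slope inequality
\[
 \frac{(c_1(E_1)-\rk E_1\cdot\gamma',\widehat{L})}{\rk E_1} > \frac{\lambda}{|r_1|} = \frac{(c_1(E)-\rk E\cdot\gamma',\widehat{L})}{\rk E},
\]
contradicting the $\mu$-semistability of $E$ with respect to $\widehat{L}$. (The quotient part $E_2$ gives the same contradiction symmetrically.) Therefore $F$ is $\sigma_{(\beta,\omega)}$-semi-stable in ${\frak A}_{(\beta,\omega)}$, and in particular $F\in{\frak A}_{(\beta,\omega)}$. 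Finally, since $Z_{(\beta,\omega)}(F)$ and $Z_{(\beta,\omega)}(w_1)$ are non-zero, ${\Bbb R}$-proportional, and both lie in ${\Bbb H}\cup{\Bbb R}_{<0}$, they are positive real multiples of each other, yielding $\phi_{(\beta,\omega)}(F)=\phi_{(\beta,\omega)}(w_1)$.

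\emph{Main obstacle.} The technical heart lies in the application of Lemma~\ref{lem:slope-phase}: one must track the action of $\Phi\circ\widehat{\Phi}$ on Mukai vectors (whose sign differs according as $r_1>0$ or $r_1<0$, in view of Lemma~\ref{lem:FM-w_1}) and confirm the rank-positivity hypothesis in order to ensure that the translated inequality has the correct direction to contradict $\mu$-semistability. Once these sign conventions are pinned down, the contradiction is immediate.
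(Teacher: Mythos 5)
Your proposal is correct and follows essentially the same route as the paper's proof: reduce the slope hypothesis to $Z_{(\beta,\omega)}(F)\in{\Bbb R}Z_{(\beta,\omega)}(w_1)$ via Lemma~\ref{lem:slope-condition}, invoke Lemma~\ref{lem:range-of-phi}(3) to produce the destabilizing short exact sequence of torsion free sheaves, and convert the resulting sign of $\Sigma_{(\beta,\omega)}$ into a slope inequality via Lemma~\ref{lem:slope-phase} that contradicts $\mu$-semi-stability. The only (immaterial) difference is that you read off the contradiction from the subsheaf $E_1$ where the paper uses the quotient $E_2$; also note that the rank-positivity hypothesis of Lemma~\ref{lem:slope-phase} is automatic here since $E_1$ is a nonzero torsion free sheaf, so no sign replacement of $w$ is needed.
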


\begin{proof}
Let $E$ be a torsion free sheaf 
on $X_1$ with \eqref{eq:slope-gamma'}
such that $E$ is $\mu$-semi-stable with respect to
$\widehat{L}$.
By Lemma \ref{lem:slope-condition},
\begin{equation}\label{eq:Z(F)}
Z_{(\beta,\omega)}(\widehat{\Phi}(E)) 
\in {\Bbb R}Z_{(\beta,\omega)}(w_1).
\end{equation}
Assume that $F:=\widehat{\Phi}(E)$ 
is not $\sigma_{(\beta,\omega)}$-semi-stable.
By Lemma \ref{lem:range-of-phi} (3),
we have an exact sequence of torsion free sheaves
\begin{equation}
0 \to E_1 \to E \to E_2 \to 0
\end{equation}
such that 
$\Sigma_{(\beta,\omega)}(w_1,F_1)>0$ and 
$\Sigma_{(\beta,\omega)}(F_2,w_1)>0$,
\begin{NB}
$\phi(w_1)+1>\phi(F_1)>\phi(F)=\phi(w_1)>\phi(F_2)>\phi(w_1)-1$,
\end{NB}
where $F_1=\widehat{\Phi}(E_1)$ and $F_2=\widehat{\Phi}(E_2)$.

Applying Lemma \ref{lem:slope-phase} to $w=v(F_2)$ and
$w=v(F)$, we get
\begin{equation}
\begin{split}
\frac{(c_1(E_2(-\gamma')),\widehat{L})}
{\rk E_2} < \frac{\lambda}{|r_1|}= 
\frac{(c_1(E(-\gamma')),\widehat{L})}{\rk E},
\end{split}
\end{equation}
which implies that $E$ is not $\mu$-semi-stable with respect to
$\widehat{L}$.
Therefore $F$ is a $\sigma_{(\beta,\omega)}$-semi-stable 
object of ${\frak A}_{(\beta,\omega)}$.
Then by \eqref{eq:Z(F)},
$\phi(F)=\phi(w_1)$.
\end{proof}

\begin{NB}
Then \eqref{eq:ineq-phi} implies that
\begin{equation}
\begin{split}
-r_1 (c_1(v)-\rk v \gamma,L)
= & r_1 \lambda a_\gamma(v), \\
-r_1 (c_1(F_2)-\rk F_2 \gamma,L)
> & r_1 \lambda a_\gamma(F_2). 
\end{split}
\end{equation}
Hence
\begin{equation}
-r_1 (c_1(F_2)-\rk F_2 \gamma,L)
>  a_\gamma(F_2) \frac{-r_1 (c_1(v)-\rk v \gamma,L)}{a_\gamma(v)}.
\end{equation}
If $r_1>0$, then
\begin{equation}
\frac{r_1^2 (c_1(\Phi(F_2))-\rk \Phi(F_2) \gamma',\widehat{L})}
{\rk \Phi(F_2)}=
\frac{-r_1 (c_1(F_2)-\rk F_2 \gamma,L)}{a_\gamma(F_2)}<
 \frac{-r_1 (c_1(v)-\rk v \gamma,L)}{a_\gamma(v)},
\end{equation}
which implies that $E$ is not $\mu$-semi-stable
with respect to $\widehat{L}$.
If $r_1<0$, then
$c_1(\Phi(F_2))-\rk \Phi(F_2) \gamma'=
-(\widehat{c_1(F_2)}-\rk F_2 \widehat{\gamma})$.
Hence
\begin{equation}
\begin{split}
-\frac{r_1^2 (c_1(\Phi(F_2))-\rk \Phi(F_2) \gamma',\widehat{L})}
{\rk \Phi(F_2)}=&
\frac{-r_1 (c_1(F_2)-\rk F_2 \gamma,L)}{a_\gamma(F_2)}\\
>&
 \frac{-r_1 (c_1(v)-\rk v \gamma,L)}{a_\gamma(v)}\\
=& -\frac{r_1^2 (c_1(E)-\rk E \gamma',\widehat{L})}
{\rk E},
\end{split}
\end{equation}
which implies that $E$ is not $\mu$-semi-stable with
respect to $\widehat{L}$.
\end{NB}

\begin{prop}\label{prop:mu-semi-stable2}
Let $F$ be an object of
${\frak A}_{(\beta,\omega)}$
such that
$\phi(F)=\phi(w_1)$ 
and $F$ is $\sigma_{(\beta,\omega)}$-semi-stable.
Then $\Phi(F)$ fits in an exact triangle
$$
E_1 \to \Phi(F) \to E_2[-1] \to E_1[1],
$$
where $E_1$ is a $\mu$-semi-stable torsion free sheaf
on $X_1$ and $E_2$ is a 0-dimensional sheaf on $X_1$. 
Moreover $E_1$ is $\Phi(F)$ is a $\mu$-stable sheaf, if
$F$ is a $\sigma_{(\beta,\omega)}$-stable object. 
\end{prop}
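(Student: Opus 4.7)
The plan is to split $F$ via Lemma \ref{lem:JHF} into a short exact sequence $0\to F_1\to F\to F_2\to 0$ in ${\frak A}_{(\beta,\omega)}$ with $F_2$ $S$-equivalent to $\bigoplus_i{\bf E}_{|X\times\{x_i\}}$ and $\Hom(F_1,{\bf E}_{|X\times\{x_1\}})=0$ for every $x_1\in X_1$, and then to analyze the two pieces separately via the triangle $\Phi(F_1)\to\Phi(F)\to\Phi(F_2)\to\Phi(F_1)[1]$ on $X_1$. Since $\Phi({\bf E}_{|X\times\{x_i\}})={\frak k}_{x_i}[-1]$ (as used in the proof of Lemma \ref{lem:range-of-phi}(1)), induction on the length of a Jordan--H\"older filtration of $F_2$ combined with the long exact sequence of cohomology sheaves shows that $\Phi(F_2)$ is concentrated in degree $1$ with $0$-dimensional cohomology; writing $\Phi(F_2)=E_2[-1]$ supplies the quotient term of the asserted triangle.

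The heart of the proof is to show that $\Phi(F_1)$ is a torsion free sheaf $E_1$. I would combine three Ext vanishings for $F_1$: $\sigma_{(\beta,\omega)}$-semi-stability together with $\phi(F_1)=\phi(w_1)$ forces $\Hom(F_1,{\bf E}_{|X\times\{x_1\}}[k])=0$ for $k<0$ and, via Serre duality on $X$, also for $k>2$; Lemma \ref{lem:JHF} supplies the $k=0$ vanishing. Applying $\Phi$ translates these into $\Hom(\Phi(F_1),{\frak k}_{x_1}[j])=0$ for $j\notin\{0,1\}$ and all $x_1\in X_1$. By the Ext analysis already used in Lemma \ref{lem:range-of-phi}(1), now with ${\frak k}_{x_1}$ in place of ${\bf E}_{|X\times\{x_1\}}$ and Serre duality on $X_1$, this forces $\Phi(F_1)$ to be a coherent sheaf concentrated in degree $0$ with no $0$-dimensional subsheaf, i.e.\ a torsion free sheaf $E_1$; Lemma \ref{lem:slope-condition} then pins down its $\widehat{L}$-slope to $\lambda/|r_1|$.

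For $\mu$-semi-stability of $E_1$ I would argue by contradiction. A saturated subsheaf $E_1'\subset E_1$ with strictly larger $\widehat{L}$-slope would, by Lemma \ref{lem:slope-phase}, satisfy $\phi_{(\beta,\omega)}(\widehat{\Phi}(E_1'))>\phi(w_1)=\phi(F_1)$. Using Lemma \ref{lem:range-of-phi}(2) to confine the phases of $\widehat{\Phi}(E_1')$ within the window $(\phi(w_1)-1,\phi(w_1)+1)$, the cohomology ${}^\beta H^0(\widehat{\Phi}(E_1'))$ in the tilt ${\frak A}_{(\beta,\omega)}$ would map nontrivially into $F_1$ as a subobject of strictly larger phase, contradicting $\sigma_{(\beta,\omega)}$-semi-stability of $F_1$. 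For the moreover, $\sigma_{(\beta,\omega)}$-stability of $F$ forces $F_2=0$ (any summand ${\bf E}_{|X\times\{x_i\}}$ of $F_2$ would be a quotient of $F$ of the same phase but a different Mukai vector), so $E_1=\Phi(F)$; strengthening $\leq$ to $<$ in the preceding slope argument then yields $\mu$-stability.

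The main obstacle is the penultimate step, where one must ensure that a slope destabilizer $E_1'$ on $X_1$ really descends through $\widehat{\Phi}$ to a genuine ${\frak A}_{(\beta,\omega)}$-subobject of $F_1$. Since $\widehat{\Phi}$ does not preserve torsion pairs a priori, this requires the phase window provided by Lemma \ref{lem:range-of-phi}(2) together with a careful truncation argument to extract the correct ${\frak A}_{(\beta,\omega)}$-cohomology sheaf of $\widehat{\Phi}(E_1')$ and verify that the resulting map to $F_1$ is injective in ${\frak A}_{(\beta,\omega)}$; this is the same technical device already used implicitly in Lemma \ref{lem:range-of-phi}(3).
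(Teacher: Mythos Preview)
Your proposal is correct and follows the paper's argument essentially verbatim: the JHF splitting of $F$, the identification of $\Phi(F_2)=E_2[-1]$ with $E_2$ zero-dimensional, the torsion-freeness of $E_1=\Phi(F_1)$ from the vanishing $\Hom(F_1,{\bf E}_{|X\times\{x_1\}})=0$, and the use of Lemma~\ref{lem:range-of-phi}~(2) plus the long exact sequence of ${}^\beta H^p$ to compare slopes via Lemma~\ref{lem:slope-phase}. The paper runs the $\mu$-semi-stability step in the opposite direction---for an \emph{arbitrary} short exact sequence $0\to E_1'\to E_1\to E_1''\to 0$ of torsion free sheaves it bounds $\phi(\widehat{\Phi}(E_1'))\le\phi(w_1)$ using the semi-stability of $F_1$ and then reads off the slope inequality---which is exactly the contrapositive of your argument and makes the truncation step you flagged as the ``main obstacle'' completely explicit; it also yields the $\mu$-stability statement directly from the equality case, without needing to argue separately that $F_2=0$.
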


\begin{proof}
By Lemma \ref{lem:JHF},
we have an exact sequence
\begin{equation}\label{eq:JHF2}
0 \to F_1 \to F \to F_2 \to 0
\end{equation}
in ${\frak A}_{(\beta,\omega)}$, where
$F_1$ is a $\sigma_{(\beta,\omega)}$-semi-stable 
object of ${\frak A}_{(\beta,\omega)}$
such that
$\phi(F_1)=\phi(w_1)$ and 
$\Hom(F_1,{\bf E}_{|X \times \{x_1 \}})=0$
for all $x_1 \in X_1$, and
$F_2$ is $S$-equivalent to 
$\oplus_i {\bf E}_{|X \times \{x_i \}}$, $x_i \in X_1$.
Applying $\Phi_{X \to X_1}^{{\bf E}^{\vee}[1]}$,
we have an exact triangle
$$
\Phi_{X \to X_1}^{{\bf E}^{\vee}[1]}(F_1) \to 
\Phi_{X \to X_1}^{{\bf E}^{\vee}[1]}(F) \to
\Phi_{X \to X_1}^{{\bf E}^{\vee}[2]}(F_2)[-1] \to
\Phi_{X \to X_1}^{{\bf E}^{\vee}[1]}(F_1)[1].
$$
Obviously 
$E_2:=
\Phi_{X \to X_1}^{{\bf E}^{\vee}[2]}(F_2)$ is a 0-dimensional sheaf on $X_1$.

We shall prove that 
$E_1:=\Phi_{X \to X_1}^{{\bf E}^{\vee}[1]}(F_1)$ 
is a $\mu$-semi-stable
sheaf on $X_1$.
Since $\Hom(F_1,{\bf E}_{|X \times \{x_1 \}})=0$ for all $x_1 \in X_1$,
$E_1$ is a torsion free 
sheaf on $X_1$.
Assume that $E_1$ is not $\mu$-semi-stable 
with respect to $\widehat{L}$.
Then we have an exact sequence
\begin{equation}\label{eq:destab1}
0 \to E_1' \to E_1 \to E_1'' \to 0
\end{equation}
such that $E_1'$ and $E_1''$ are torsion free sheaves.
Then we have an exact triangle
$$
\widehat{\Phi}(E_1') \to \widehat{\Phi}(E_1) \to \widehat{\Phi}(E_1'')
\to \widehat{\Phi}(E_1')[1].
$$
By Lemma \ref{lem:range-of-phi},
we get
\begin{equation}
\begin{split}
\phi(w_1)-1<
\phi_{\min}(\widehat{\Phi}(E_1')) \leq \phi_{\max}(\widehat{\Phi}(E_1'))
<\phi(w_1)+1\\
\phi(w_1)-1<
\phi_{\min}(\widehat{\Phi}(E_1'')) \leq \phi_{\max}(\widehat{\Phi}(E_1''))
<\phi(w_1)+1.
\end{split}
\end{equation}
In particular, using $^\beta H^p$ in Definition \ref{defn:BBD},
$^\beta H^p(\widehat{\Phi}(E_1'))
={^\beta H^p}(\widehat{\Phi}(E_1''))=0$ except for
$p=-1,0,1$.
Since $\widehat{\Phi}(E_1)=F_1 \in {\frak A}_{(\beta,\omega)}$, 
we have $^\beta H^{-1}(\widehat{\Phi}(E_1'))=
{^\beta H^1}(\widehat{\Phi}(E_1''))=0$ and 
an exact sequence
$$
0 \to {^\beta H^{-1}}(\widehat{\Phi}(E_1'')) \to
{^\beta H^0}(\widehat{\Phi}(E_1')) \overset{\psi}{\to} F_1 \to
{^\beta H^0}(\widehat{\Phi}(E_1''))\to {^\beta H^1}(\widehat{\Phi}(E_1'))
\to 0
$$
 in ${\frak A}_{(\beta,\omega)}$.
Then $\phi(^\beta H^{-1}(\widehat{\Phi}(E_1'')))<\phi(w_1)$.
By the $\sigma_{(\beta,\omega)}$-semi-stability of $F_1$, 
$\phi(\im \psi) \leq \phi(w_1)$.
Hence 
$$
\phi(^\beta H^0(\widehat{\Phi}(E_1'))) \leq \phi(w_1).
$$
Since $0 \geq \phi(^\beta H^1(\widehat{\Phi}(E_1'))[-1])>\phi(w_1)-1$,
we have
$$
\phi(w_1)-1<\phi(\widehat{\Phi}(E_1')) \leq \phi(w_1).
$$
Assume that $\phi(\widehat{\Phi}(E_1'))<\phi(w_1)$.
By Lemma \ref{lem:slope-phase}, we have
$$
\frac{(c_1(E_1'),\widehat{L})}{\rk E_1'}<
\frac{(c_1(E_1),\widehat{L})}{\rk E_1}.
$$
If $\phi(\widehat{\Phi}(E_1'))=\phi(w_1)$,
then we have $^\beta H^{-1}(\widehat{\Phi}(E_1''))=
{^\beta H^1}(\widehat{\Phi}(E_1'))=0$.
In this case, we have
\begin{equation}\label{eq:properly-E_1}
\frac{(c_1(E_1'),\widehat{L})}{\rk E_1'}=
\frac{(c_1(E_1),\widehat{L})}{\rk E_1}.
\end{equation}
Hence $E_1$ is $\mu$-semi-stable with respect to 
$\widehat{L}$.
Moreover 
if $F$ is $\sigma_{(\beta,\omega)}$-stable, then
$\Phi(F)$ is a $\mu$-stable torsion free sheaf on $X_1$. 
\end{proof}

\section{Relation with Gieseker semi-stability.}

We shall study the relation of Bridgeland stability 
with Gieseker semi-stability
by refining the arguments in the last subsection.
After we prepare some calculations 
on the properly semi-stable objects in \S\,\ref{subsect:proper_ss},
we introduce 
the adjacent chambers ${\cal C}_{\pm}$ to a wall for stabilities 
in \S\,\ref{subsect:C+}.
The wall crossing behavior for these chambers is studied 
in \S\,\ref{subsect:C+} and \S\,\ref{subsect:C-},
and the main theorem (Theorem \ref{thm:isom}) will be obtained. 
In the final \S\,\ref{subsect:proj}, 
we study the assumption in Theorem \ref{thm:isom}
for $K3$ surfaces.

\subsection{Properly semi-stable objects.}\label{subsect:proper_ss}

\begin{lem}\label{lem:wall-general}
Let $v \in H^*(X,{\Bbb Z})_{\alg}$ be a Mukai vector with
$d_\beta(v)>0$. 
Assume that $(\beta,\omega)$ belongs to exactly one wall
$W_{w_1}$ for $v$.
For a subobject $F_1$ of $F$ with $v(F)=v$ and 
$\phi_{(\beta,\omega)}(F_1)=\phi_{(\beta,\omega)}(F)$,
$$
\frac{c_1(F_1(-\gamma))}{a_\gamma(F_1)}=\frac{c_1(F(-\gamma))}{a_\gamma(F)}.
$$
\end{lem}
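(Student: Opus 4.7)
The plan is to leverage the ``exactly one wall'' hypothesis to force $v(F_1)$ into the rational span ${\Bbb Q}w_1+{\Bbb Q}v$ inside the Mukai lattice, after which the identity follows from a short computation using the exponential form $w_1=r_1e^\gamma$. First, since $(\beta,\omega)\in W_{w_1}$, the wall equation gives $\phi_{(\beta,\omega)}(w_1)=\phi_{(\beta,\omega)}(v)$, and combined with the hypothesis $\phi_{(\beta,\omega)}(F_1)=\phi_{(\beta,\omega)}(F)$ this yields $\Sigma_{(\beta,\omega)}(v(F_1),v)=0=\Sigma_{(\beta,\omega)}(w_1,v)$. I would then check that $v_1:=v(F_1)$ defines a wall for $v$ in the sense of Definition \ref{defn:wall:stability}: the assumption $d_\beta(v)>0$ together with $F_1,F/F_1\in{\frak A}_{(\beta,\omega)}$ forces $0<d_\beta(F_1)<d_\beta(F)$, giving condition (a), while conditions (b),(c) are inherited from the existence of the destabilizing short exact sequence $0\to F_1\to F\to F/F_1\to 0$. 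The ``exactly one wall'' hypothesis then forces $W_{v_1}=W_{w_1}$.

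The crucial step is translating this equality of walls into the lattice statement $v_1\in{\Bbb Q}w_1+{\Bbb Q}v$. I would view the map $u\mapsto\Sigma_{(\cdot)}(u,v)$ as a ${\Bbb Q}$-linear map from the Mukai lattice to the space of polynomial functions on ${\frak H}_{\Bbb R}$ (linear in $\omega^2$, quadratic in $\eta$) and argue that its kernel is exactly ${\Bbb Q}v$: the condition $Z_{(\beta',\omega')}(u)\in{\Bbb R}Z_{(\beta',\omega')}(v)$ for all $(\beta',\omega')\in{\frak H}_{\Bbb R}$ forces $u$ to be proportional to $v$. Since the wall-defining polynomials $\Sigma_{(\cdot)}(v_1,v)$ and $\Sigma_{(\cdot)}(w_1,v)$ cut out the same locus $W_{w_1}$ and have the same bidegree, they must be rationally proportional, so $v_1-cw_1$ lies in the kernel, giving $v_1=cw_1+c'v$ for some $c,c'\in{\Bbb Q}$.

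To finish, I exploit $w_1=r_1e^\gamma$: directly, $c_1(w_1)-(\rk w_1)\gamma=r_1\gamma-r_1\gamma=0$, while $\langle w_1,e^\gamma\rangle=r_1\langle e^\gamma,e^\gamma\rangle=0$, so $a_\gamma(w_1)=0$. Substituting $v_1=cw_1+c'v$ then yields $c_1(F_1(-\gamma))=c'\,c_1(F(-\gamma))$ and $a_\gamma(F_1)=c'\,a_\gamma(F)$, which is the stated identity in cross-multiplied form. The main obstacle will be the second paragraph: rigorously upgrading ``$W_{v_1}=W_{w_1}$'' to the algebraic splitting $v_1\in{\Bbb Q}w_1+{\Bbb Q}v$ requires pinning down $\ker\Sigma_{(\cdot)}(\cdot,v)={\Bbb Q}v$ and checking that the wall-defining polynomial is non-degenerate enough that two inequivalent Mukai vectors cannot cut out the same locus, a potential subtlety when the leading coefficient $r_{v_1}d_\beta(v)-r_\beta(v)d_{v_1}$ vanishes.
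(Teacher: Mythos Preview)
Your proposal is correct and follows essentially the same route as the paper. Both arguments reduce to showing $v_1:=v(F_1)\in{\Bbb Q}w_1+{\Bbb Q}v$; the paper phrases this as ``$d_\beta(v_1)v-d_\beta(v)v_1$ and $d_\beta(w_1)v-d_\beta(v)w_1$ define the same wall, hence are linearly dependent'' and then extracts the identity by a coordinate computation in the $e^\gamma$-basis, whereas you write $v_1=cw_1+c'v$ and read off $c_1(F_1(-\gamma))=c'\,c_1(F(-\gamma))$, $a_\gamma(F_1)=c'\,a_\gamma(F)$ directly from $a_\gamma(w_1)=0$ and $c_1(w_1(-\gamma))=0$. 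Your final step is arguably cleaner than the paper's determinant manipulation.

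Regarding the obstacle you flag: it is not a genuine problem. Writing $u:=d_\beta(v)v_1-d_\beta(v_1)v$ (so $d_\beta(u)=0$), the wall equation over ${\frak H}_{\Bbb R}$ takes the form
\[
\tfrac{(\omega^2)-((\eta-\eta_0)^2)}{2}\,r_\beta(u)-a_\beta(u)-(D_\beta(u),\eta_0-\eta)=0,
\]
so the zero locus determines the triple $(r_\beta(u),a_\beta(u),D_\beta(u))$ up to a common scalar, and since $d_\beta(u)=0$ this triple determines $u$ itself up to scalar. Hence $W_{v_1}=W_{w_1}$ forces $u$ and $d_\beta(v)w_1-d_\beta(w_1)v$ to be proportional, i.e.\ $v_1\in{\Bbb Q}w_1+{\Bbb Q}v$, even when $r_\beta(u)=r_{v_1}d_\beta(v)-r_\beta(v)d_{v_1}$ vanishes (the remaining coefficients $a_\beta(u),D_\beta(u)$ still pin things down). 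The paper simply asserts this linear dependence without spelling out the computation.
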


\begin{proof}
We set
\begin{equation}
\begin{split}
v=& v(F)=r_\gamma(v)e^\gamma+a_\gamma(v) \varrho_X+
(\xi+(\xi,\gamma)\varrho_X),\\
= & e^\gamma(r_\gamma(v)+a_\gamma(v) \varrho_X+\xi)\\
v_1=& v(F_1)=r_\gamma(v_1)e^\gamma+a_\gamma(v_1) \varrho_X+
(\xi_1+(\xi_1,\gamma)\varrho_X)\\
=& e^\gamma(r_\gamma(v_1)+a_\gamma(v_1) \varrho_X+\xi_1).
\end{split}
\end{equation}
Since 
$d_\beta(v_1)v-d_\beta(v)v_1$ and
$d_\beta(w_1)v-d_\beta(v)w_1$ define the same wall,
they are linearly dependent.

We note that
\begin{equation}
\begin{split}
& d_\beta(v_1)v-d_\beta(v)v_1\\
=&
e^\gamma[(d_\beta(v_1)r_\gamma(v)-d_\beta(v)r_\gamma(v_1))
+(d_\beta(v_1)a_\gamma(v)-d_\beta(v)a_\gamma(v_1))\varrho_X
+(d_\beta(v_1)\xi-d_\beta(v)\xi_1)]
\end{split}
\end{equation} 
and
\begin{equation}
\begin{split} d_\beta(w_1) v-d_\beta(v)w_1
=
e^\gamma[
(d_\beta(w_1)r_\gamma(v)-d_\beta(v)r_1)
+d_\beta(w_1)a_\gamma(v) \varrho_X+d_\beta(w_1)\xi].
\end{split}
\end{equation} 
Since $d_\beta(w_1)>0$ and $a_\gamma(v) \ne 0$,
we have
\begin{equation}
\begin{split}
0 = & d_\beta(w_1)a_\gamma(v)(d_\beta(v_1)\xi-d_\beta(v)\xi_1)-
(d_\beta(v_1)a_\gamma(v)-d_\beta(v)a_\gamma(v_1))d_\beta(w_1)\xi \\
= & d_\beta(w_1)d_\beta(v)(a_\gamma(v_1)\xi-a_\gamma(v)\xi_1).
\end{split}
\end{equation}
Hence the claim holds.
\end{proof}

\begin{NB}
An old proof:
\begin{proof}
We set
\begin{equation}
\begin{split}
v=& v(F)=r_\gamma(v)e^\gamma+a_\gamma(v) \varrho_X+
(\xi+(\xi,\gamma)\varrho_X),\\
v_1=& v(F_1)=r_\gamma(v_1)e^\gamma+a_\gamma(v_1) \varrho_X+
(\xi_1+(\xi_1,\gamma)\varrho_X).
\end{split}
\end{equation}
Since 
$d_\beta(v_1)v-d_\beta(v)v_1$ and
$d_\beta(w_1)v-d_\beta(v)w_1$ define the same wall,
$d_\beta(v_1)v-d_\beta(v)v_1$ and
$d_\beta(w_1)v-d_\beta(v)w_1$
are linearly dependent.

We note that
\begin{equation}
\begin{split}
& \deg_\beta(v_1)v-\deg_\beta(v)v_1\\
=&
(r_\gamma(v_1)\deg(\gamma-\beta)+\deg_\gamma(v_1))
(r_\gamma(v)e^\gamma+a_\gamma(v)\varrho_X+\xi+(\xi,\gamma)\varrho_X)\\
&-
(r_\gamma(v)\deg(\gamma-\beta)+\deg_\gamma(v))
(r_\gamma(v_1)e^\gamma+a_\gamma(v_1)\varrho_X+\xi_1+(\xi_1,\gamma)\varrho_X)\\
=& (\deg_\gamma(v_1)r_\gamma(v)-\deg_\gamma(v)r_\gamma(v_1))e^\gamma+
(r_\gamma(v_1)a_\gamma(v)-r_\gamma(v)a_\gamma(v_1))
\deg(\gamma-\beta)\varrho_X\\
& +
(\deg_\gamma(v_1)a_\gamma(v)-\deg_\gamma(v)a_\gamma(v_1))\varrho_X\\
& +(r_\gamma(v_1)\xi-r_\gamma(v)\xi_1)\deg(\gamma-\beta)
+(\deg_\gamma(v_1)\xi-\deg_\gamma(v)\xi_1)
\end{split}
\end{equation} 
and
\begin{equation}
\begin{split}
& \deg_\beta(w_1)v-\deg_\beta(v)w_1\\
=&
(r_1\deg(\gamma-\beta)+\deg_\gamma(w_1))
(r_\gamma(v)e^\gamma+a_\gamma(v)\varrho_X+\xi+(\xi,\gamma)\varrho_X)\\
&-
(r_\gamma(v)\deg(\gamma-\beta)+\deg_\gamma(v))
r_1 e^\gamma\\
=& -\deg_\gamma(v)r_1 e^\gamma+
r_1 a_\gamma(v)\deg(\gamma-\beta)\varrho_X
+r_1\xi \deg(\gamma-\beta).
\end{split}
\end{equation} 
Assume that $\deg_\gamma(v) \ne 0$.
Then 
\begin{equation}
\begin{split}
& (\deg_\gamma(v_1)r_\gamma(v)-\deg_\gamma(v)r_\gamma(v_1))
r_1 \xi \deg(\gamma-\beta)\\
&+
r_1 \deg_\gamma(\xi)\left\{
(\deg_\gamma(\xi_1)\xi-\deg_\gamma(\xi)\xi_1)+
\deg(\gamma-\beta)(r_\gamma(v_1)\xi-r_\gamma(v)\xi_1) \right\}
=0,\\
& (\deg_\gamma(v_1)r_\gamma(v)-\deg_\gamma(v)r_\gamma(v_1))
r_1 a_\gamma(v)\deg(\gamma-\beta)\\
& +\deg_\gamma(\xi)r_1
\left\{(r_\gamma(v_1)a_\gamma(v)-r_\gamma(v)a_\gamma(v_1))
\deg(\gamma-\beta)+
(\deg_\gamma(v_1)a_\gamma(v)-\deg_\gamma(v)a_\gamma(v_1))
\right\}=0.
\end{split}
\end{equation}
By the first equality,
we have
\begin{equation}
r_1(\deg_\gamma(\xi)+r_\gamma(v)\deg(\gamma-\beta))
(\deg_\gamma(v_1)\xi-\deg_\gamma(v)\xi_1)=0.
\end{equation}
Since $(\deg_\gamma(\xi)+r_\gamma(v)\deg(\gamma-\beta))=\deg_\beta(v)>0$,
we have
\begin{equation}
\deg_\gamma(v_1)\xi-\deg_\gamma(v)\xi_1=0.
\end{equation}
Then by the second equality,
we have
\begin{equation}
r_1 \deg_\beta(v)(\deg_\gamma(v_1)a_\gamma(v)-\deg_\gamma(v)a_\gamma(v_1))
=0,
\end{equation}
which implies that
\begin{equation}
\deg_\gamma(v_1)a_\gamma(v)-\deg_\gamma(v)a_\gamma(v_1)
=0.
\end{equation}
Therefore 
\begin{equation}
\xi_1 a_\gamma(v)-\xi a_\gamma(v_1)
=0.
\end{equation}
If $\deg_\gamma(v)=0$, then
$\deg_\gamma(v_1)r_\gamma(v)=0$. 
Since $\deg_\beta(v)=r_\gamma(v)\deg(\gamma-\beta)>0$,
$\deg_\gamma(v_1)=0$.
Then we have
\begin{equation}
\begin{split}
& (r_\gamma(v_1)a_\gamma(v)-r_\gamma(v)a_\gamma(v_1))
r_1 \xi \deg(\gamma-\beta)
-r_1 a_\gamma(v)\deg(\gamma-\beta)(r_\gamma(v_1)\xi-r_\gamma(v)\xi_1)\\ 
=&  r_1 r_\gamma(v)\deg(\gamma-\beta)(a_\gamma(v)\xi_1-a_\gamma(v_1)\xi)=0.
\end{split}
\end{equation}
Therefore 
\begin{equation}
\xi_1 a_\gamma(v)-\xi a_\gamma(v_1)
=0.
\end{equation}
\end{proof}
\end{NB}

\begin{cor}\label{cor:wall-general}
For the subobject $F_1$ of $F$ in Lemma \ref{lem:wall-general},
we have 
\begin{equation}
\begin{split}
r_\gamma(F_1)d_\gamma(F)-r_\gamma(F)d_\gamma(F_1)=&
 \left(\rk F_1-\rk F \frac{-r_1 a_\gamma(F_1)}{-r_1 a_\gamma(F)}
\right)d_\gamma(F).
\end{split}
\end{equation}
\end{cor}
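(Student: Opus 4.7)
The plan is to deduce the identity directly from Lemma~\ref{lem:wall-general} by pairing its conclusion with $H$. Observe first that $r_\gamma(v)=r_\beta(v)=\rk v$ is independent of the choice of reference divisor (it equals $-\langle v,\varrho_X \rangle$ by \eqref{eq:v:rda}), and the factor $-r_1$ appearing in both numerator and denominator on the right-hand side of the statement is cosmetic. Thus the asserted identity reduces, after cancelling the common term $\rk F_1 \cdot d_\gamma(F)$ on both sides, to the single scalar relation
\begin{equation*}
a_\gamma(F)\cdot d_\gamma(F_1)=a_\gamma(F_1)\cdot d_\gamma(F).
\end{equation*}

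To establish this, I would cross-multiply the conclusion of Lemma~\ref{lem:wall-general} to obtain
\begin{equation*}
a_\gamma(F)\cdot c_1(F_1(-\gamma))=a_\gamma(F_1)\cdot c_1(F(-\gamma))
\end{equation*}
in $\NS(X)_{\Bbb Q}$. Invoking the decomposition of \eqref{eq:v}, namely $c_1(v)-\rk v\cdot \gamma=d_\gamma(v)H+D_\gamma(v)$ with $D_\gamma(v)\in H^\perp$, this rewrites as
\begin{equation*}
a_\gamma(F)\bigl(d_\gamma(F_1)H+D_\gamma(F_1)\bigr)=a_\gamma(F_1)\bigl(d_\gamma(F)H+D_\gamma(F)\bigr).
\end{equation*}
Taking the intersection pairing with $H$ on both sides and using $(D_\gamma(\cdot),H)=0$ eliminates the $D_\gamma$ contributions, giving $a_\gamma(F)d_\gamma(F_1)(H^2)=a_\gamma(F_1)d_\gamma(F)(H^2)$, from which the desired scalar identity follows upon dividing by $(H^2)\neq 0$. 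Substituting back yields exactly the right-hand side of the corollary.

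There is essentially no obstacle at this stage: the genuine content lies in Lemma~\ref{lem:wall-general}, and the corollary is obtained by unpacking the Mukai-vector decomposition and projecting onto the $H$-direction. One should only note that the argument tacitly requires $a_\gamma(F)\neq 0$, which is already built into the hypotheses (and the proof) of Lemma~\ref{lem:wall-general}; in the degenerate case $\rk F=0$ both sides of the asserted identity vanish trivially.
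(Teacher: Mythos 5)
Your proof is correct and follows essentially the same route as the paper: both extract from Lemma~\ref{lem:wall-general} the proportionality $d_\gamma(F_1)=\tfrac{a_\gamma(F_1)}{a_\gamma(F)}\,d_\gamma(F)$ (the paper by rewriting $v(F_1)$ in the form $\rk F_1\, e^\gamma+a_\gamma(F_1)(\varrho_X+\cdots)$, you by pairing $c_1(F_1(-\gamma))$ with $H$ and using $(D_\gamma(\cdot),H)=0$) and then substitute, the factors $-r_1$ cancelling. The only nitpick is that for $\rk F=0$ the two sides coincide (both equal $\rk F_1\, d_\gamma(F)$) rather than vanish, but this does not affect the argument.
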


\begin{proof}
By Lemma \ref{lem:wall-general},
$$
v(F_1)=\rk F_1 e^\gamma+a_\gamma(F_1)
\left(\varrho_X+\frac{1}{a_\gamma(v)}
(d_\gamma(v)H+D_\gamma(v)+(d_\gamma(v)H+D_\gamma(v),\gamma)\varrho_X)
\right).
$$
\end{proof}

\begin{NB}
For $E_1:=\Phi(F_1)$, we have 
$$
v(E_1)=-r_1 a_\gamma(F_1)e^{\gamma'}-\frac{\rk F_1}{r_1}\varrho_{X_1}
+\frac{r_1}{|r_1|}\frac{a_\gamma(F_1)}{a_\gamma(v)}
(d_\gamma(v)H+D_\gamma(v)+(d_\gamma(v)H+D_\gamma(v),\gamma)\varrho_X).
$$
\end{NB}

\subsection{Gieseker semi-stability and semi-stable objects
in a chamber.}\label{subsect:C+}

From now on,
we assume that $(\beta,\omega)$ belongs to exactly one wall
$W_{w_1}$ for $v$.
Then there are two chambers 
${\cal C}_\pm$ 
which are adjacent to $W_{w_1}$
in a neighborhood of $(\beta,\omega)$:
\begin{equation}
\begin{split}
{\cal C}_+:=& \{(\beta',\omega')|
\phi_{(\beta',\omega')}(v)-\phi_{(\beta',\omega')}(w_1)>0 \},\\
{\cal C}_-:=& \{(\beta',\omega')|
\phi_{(\beta',\omega')}(v)-\phi_{(\beta',\omega')}(w_1)<0 \}.
\end{split}
\end{equation}
We shall study the Bridgeland semi-stability for 
${\cal C}_\pm$.
This subsection is devoted to the study for ${\cal C}_+$.
The case ${\cal C}_-$ is treated in the next subsection.

\begin{lem}\label{lem:C_+:1}
For a $\gamma'$-twisted stable sheaf $E$ of $v(E)=\Phi(v)$
with respect to $\widehat{L}$, we set $F:=\widehat{\Phi}(E)$.
Assume that $d_\gamma(F) \ne 0$. 
We take $\omega_+$ such that $(\beta,\omega_+) \in {\cal C}_+$,
that is,
$\phi_{(\beta,\omega_+)}(w_1)<\phi_{(\beta,\omega_+)}(F)$.
Then $F$ is a $\sigma_{(\beta,\omega_+)}$-semi-stable object of
${\frak A}_{(\beta,\omega)}$. 
\end{lem}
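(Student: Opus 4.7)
The plan is first to identify $F$ as $\sigma_{(\beta,\omega)}$-semi-stable at the wall, and then to rule out destabilizing subobjects when $\omega$ is perturbed to $\omega_+$. Since $E$ is $\gamma'$-twisted stable it is in particular $\mu_{\widehat{L}}$-semi-stable; the relation $v(E) = \Phi(v)$ together with $(\beta,\omega) \in W_{w_1}$ gives $\Sigma_{(\beta,\omega)}(F, w_1) = 0$, hence the slope equality \eqref{eq:slope-gamma'} by Lemma \ref{lem:slope-condition}. Proposition \ref{prop:mu-semi-stable} then yields $F \in {\frak A}_{(\beta,\omega)}$ with $\phi_{(\beta,\omega)}(F) = \phi_{(\beta,\omega)}(w_1)$ and $\sigma_{(\beta,\omega)}$-semi-stability. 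By choosing $\omega_+$ close to $\omega$ and in the same chamber for categories (which we may, since walls for categories are a discrete locus generically distinct from $W_{w_1}$), we also have $F \in {\frak A}_{(\beta,\omega_+)}$.

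Suppose for contradiction there exists a subobject $F_1 \hookrightarrow F$ in ${\frak A}_{(\beta,\omega_+)}$ with $\phi_{(\beta,\omega_+)}(F_1) > \phi_{(\beta,\omega_+)}(F)$. The $\sigma_{(\beta,\omega)}$-semi-stability of $F$ forces $\phi_{(\beta,\omega)}(F_1) \leq \phi_{(\beta,\omega)}(F)$, and together with the strict inequality at $\omega_+$ this gives $\phi_{(\beta,\omega)}(F_1) = \phi_{(\beta,\omega)}(F) = \phi_{(\beta,\omega)}(w_1)$, with $v(F_1)$ not proportional to $v(F)$. Hence $v(F_1)$ defines a wall for $v$ through $(\beta,\omega)$; under the uniqueness hypothesis this wall must be $W_{w_1}$. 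Lemma \ref{lem:wall-general} now gives $c_1(F_1(-\gamma))/a_\gamma(F_1) = c_1(F(-\gamma))/a_\gamma(F)$, and Corollary \ref{cor:wall-general} pins down $v(\Phi(F_1))$ in terms of $\rk F_1$ and $a_\gamma(F_1)$. Proposition \ref{prop:mu-semi-stable2} applied to $F_1$ (which is $\sigma_{(\beta,\omega)}$-semi-stable with the same phase as $F$) produces a triangle $E_1' \to \Phi(F_1) \to E_2'[-1] \to E_1'[1]$ with $E_1'$ torsion-free and $\mu_{\widehat{L}}$-semi-stable of the same slope as $E$, and $E_2'$ zero-dimensional; composing with $\Phi(F_1) \to \Phi(F) \simeq E$ one obtains a nonzero morphism of torsion-free sheaves $E_1' \to E$.

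The main obstacle is the final bookkeeping: converting the strict inequality $\phi_{(\beta,\omega_+)}(F_1) > \phi_{(\beta,\omega_+)}(F)$ into a strict inequality of reduced $\gamma'$-twisted Hilbert polynomials of $E_1'$ and $E$, thereby contradicting the $\gamma'$-twisted stability of $E$. Concretely, since the two phases agree at $(\beta,\omega)$, the sign of their infinitesimal variation along the path from $\omega$ to $\omega_+$ should be read off from the explicit Mukai-vector formula of Corollary \ref{cor:wall-general} via Lemma \ref{lem:Sigma-L}. The hypothesis $d_\gamma(F) \neq 0$ guarantees that the imaginary part of $Z_{(\beta,\cdot)}$ does not degenerate on $F$, so this sign is unambiguous and corresponds to the $\gamma'$-twisted Gieseker direction characterizing ${\cal C}_+$. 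Combined with the equality of $\mu_{\widehat{L}}$-slopes, one obtains the desired destabilization of $E$, contradicting its $\gamma'$-twisted stability.
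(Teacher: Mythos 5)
Your overall strategy coincides with the paper's: reduce to a subobject $F_1\subset F$ with $\phi_{(\beta,\omega)}(F_1)=\phi_{(\beta,\omega)}(F)=\phi_{(\beta,\omega)}(w_1)$, use the single-wall hypothesis together with Lemma \ref{lem:wall-general} and Corollary \ref{cor:wall-general} to control $v(F_1)$, and transport by $\Phi$ to contradict the $\gamma'$-twisted stability of $E$. But the step you yourself label ``the main obstacle'' is exactly the content of the lemma, and you have not carried it out. The required computation is not a matter of routine bookkeeping: one must first deduce from $\Sigma_{(\beta,\omega_+)}(w_1,F)>0=\Sigma_{(\beta,\omega)}(w_1,F)$ that $-r_1 d_\gamma(F)(\omega_+^2)<-r_1 d_\gamma(F)(\omega^2)$, so that the sign of $(\omega_+^2)-(\omega^2)$ is governed by the sign of $r_1 d_\gamma(F)$; then, from $\phi_+(F_1)>\phi_+(F)$ and $\phi(F_1)=\phi(F)$, extract the sign of $r_\gamma(F_1)d_\gamma(F)-r_\gamma(F)d_\gamma(F_1)$, which is \emph{opposite} in the two cases $r_1 d_\gamma(F)>0$ and $r_1 d_\gamma(F)<0$; finally, Corollary \ref{cor:wall-general} rewrites this quantity as
$$
-r_1\left(\chi(E_1(-\gamma'))-\chi(E(-\gamma'))\frac{\rk E_1}{\rk E}\right)d_\gamma(F),
$$
and the extra factor $d_\gamma(F)$ flips the sign back so that \emph{both} cases yield $\chi(E_1(-\gamma'))/\rk E_1>\chi(E(-\gamma'))/\rk E$. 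Asserting that the sign ``should be read off'' and ``corresponds to the $\gamma'$-twisted Gieseker direction'' is not a proof of this cancellation; without it the lemma is not established.

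A secondary gap: you invoke Proposition \ref{prop:mu-semi-stable2} to produce a triangle $E_1'\to\Phi(F_1)\to E_2'[-1]$ and then only a ``nonzero morphism'' $E_1'\to E$. A nonzero map from a merely $\mu$-semi-stable sheaf of larger reduced twisted Hilbert polynomial does not by itself contradict $\gamma'$-twisted stability of $E$ (the image need not inherit the polynomial inequality, since $E_1'$ is not assumed Gieseker-semi-stable). The paper avoids this by observing that $\phi_+(F_1)>\phi_+(w_1)$ forces $\Hom(F_1,{\bf E}_{|X\times\{x_1\}})=0$ for all $x_1$, so by Theorem \ref{thm:mu-semi-stable}(2) the object $E_1=\Phi(F_1)$ is an honest torsion-free sheaf and $0\to E_1\to E\to E_2\to 0$ is a genuine exact sequence of sheaves; the inequality on reduced twisted Hilbert polynomials of the \emph{subsheaf} $E_1$ then directly contradicts stability. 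You should incorporate this observation rather than routing through the weaker conclusion of Proposition \ref{prop:mu-semi-stable2}.
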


\begin{proof}
We set $\phi_+:=\phi_{(\beta,\omega_+)}$ and $\phi:=\phi_{(\beta,\omega)}$.
Let $F_1$ be a subobject of $F$ with 
$\phi(F_1)=\phi(F)$.
Then $F_1$ and $F_2:=F/F_1$ are $\sigma_{(\beta,\omega)}$-semi-stable
objects of ${\frak A}_{(\beta,\omega)}$ with the phase
$\phi(F)$.
Assume that $F_1$ is a $\sigma_{(\beta,\omega_+)}$-semi-stable 
object with
$\phi_+(F_1)>\phi_+(F)$.
Since $\phi_+(F_1)>\phi_+(w_1)$,
$\Hom(F_1,{\bf E}_{|X \times \{x_1 \}})=0$
for all $x_1 \in X_1$.
Then 
Theorem \ref{thm:mu-semi-stable} implies that
$E_i:=\Phi(F_i)$ $(i=1,2)$ are $\mu$-semi-stable torsion free
sheaves on $X_1$ fitting in an exact sequence
\begin{equation}\label{eq:Gieseker:exact}
0 \to E_1 \to E \to E_2 \to 0.
\end{equation} 
Since $\phi_+(F)>\phi_+(w_1)$ and $\phi(F)=\phi(w_1)$,
$\Sigma_{(\beta,\omega_+)}(w_1,F)>0=\Sigma_{(\beta,\omega)}(w_1,F)$
implies that
\begin{equation}
\begin{split}
-r_1 d_\gamma(F)(\omega_+^2)=&
(r_\beta(F)d_\beta(w_1)-r_\beta(w_1)d_\beta(F))(\omega^2_+)\\
<&
2(a_\beta(F)d_\beta(w_1)-a_\beta(w_1)d_\beta(F))\\
=& (r_\beta(F)d_\beta(w_1)-r_\beta(w_1)d_\beta(F))(\omega^2)
=-r_1 d_\gamma(F)(\omega^2).
\end{split}
\end{equation}
Now we divide the argument into two cases.
\begin{enumerate}
\item[(i)]  
Assume that $r_1 d_\gamma(F)>0$.
We have $(\omega_+^2)>(\omega^2)$.
Then $\phi_+(F_1)>\phi_+(F)$ implies that
\begin{equation}
\begin{split}
&(r_\beta(F_1)d_\beta(F)-r_\beta(F)d_\beta(F_1))(\omega^2_+)\\
<&
2(a_\beta(F_1)d_\beta(F)-a_\beta(F)d_\beta(F_1)).
\end{split}
\end{equation}
Hence
$$
r_\gamma(F_1)d_\gamma(F)-r_\gamma(F)d_\gamma(F_1)=
r_\beta(F_1)d_\beta(F)-r_\beta(F)d_\beta(F_1)<0.
$$
Since $-r_1\chi(E_1(-\gamma'))=\rk F_1$ and
$-r_1\chi(E(-\gamma'))=\rk F$, we get 
\begin{equation}
\begin{split}
0> r_\gamma(F_1)d_\gamma(F)-r_\gamma(F)d_\gamma(F_1)=&
 \left(\rk F_1-\rk F \frac{-r_1 a_\gamma(F_1)}{-r_1 a_\gamma(F)}
\right)d_\gamma(F)\\
=& 
-r_1 \left(\chi(E_1(-\gamma'))-\chi(E(-\gamma')) 
\frac{\rk E_1}{\rk E}
\right)d_\gamma(F)
\end{split}
\end{equation}
by Corollary \ref{cor:wall-general}.
Hence
$$
\frac{\chi(E_1(-\gamma'))}{\rk E_1}>\frac{\chi(E(-\gamma'))}{\rk E},
$$
which is a contradiction.
\item[(ii)]
Assume that 
$r_1 d_\gamma(F)<0$.
Then
\begin{equation}
\begin{split}
0< r_\gamma(F_1)d_\gamma(F)-r_\gamma(F)d_\gamma(F_1)=&
 \left(\rk F_1-\rk F \frac{-r_1 a_\gamma(F_1)}{-r_1 a_\gamma(F)}
\right)d_\gamma(F)\\
=& 
-r_1 \left(\chi(E_1(-\gamma))-\chi(E(-\gamma')) 
\frac{\rk E_1}{\rk E}
\right)d_\gamma(F).
\end{split}
\end{equation}
Hence
$$
\frac{\chi(E_1(-\gamma'))}{\rk E_1}>\frac{\chi(E(-\gamma'))}{\rk E},
$$
which is a contradiction.
\end{enumerate}
Therefore $\phi_+(F_1) \leq \phi_+(F)$.
Thus $F$ is $\sigma_{(\beta,\omega_+)}$-semi-stable.
\end{proof}

In order to treat the case where $d_\gamma(v)=0$,
we need to choose $(\beta',\omega) \in {\cal C}_+$.
We set $\gamma-\beta':=\lambda H+\nu'$, $(\nu,H)=0$. 
We need the following claim.
\begin{lem}\label{lem:d(F)=d(F_1)=0}
If $d_\gamma(F)=d_\gamma(F_1)=0$, then
\begin{equation}
\begin{split}
& a_{\beta'}(F)d_{\beta'}(F_1)-a_{\beta'}(F_1)d_{\beta'}(F) \\
=& 
\lambda \left(
(a_\gamma(F)+(D_\gamma(F),\nu'))\rk F_1-
(a_\gamma(F_1)+(D_\gamma(F_1),\nu'))\rk F \right).
\end{split}
\end{equation}
\end{lem}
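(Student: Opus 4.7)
The plan is to derive the identity by direct calculation, leveraging Lemma~\ref{lem:beta-gamma} to express $a_{\beta'}$ and $d_{\beta'}$ in terms of the $\gamma$-quantities, and then exploiting the vanishing hypothesis $d_\gamma(F) = d_\gamma(F_1) = 0$ together with the orthogonality $(\nu',H)=0$.

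First, I would decompose $\gamma-\beta' = \lambda H+\nu'$ and record the consequences
\[
\deg(\gamma-\beta') = \lambda(H^2),\quad ((\beta'-\gamma)^2) = \lambda^2(H^2)+(\nu'^2),\quad (H,\nu')=0,
\]
so that for $D_\gamma(v) \in H^{\perp}$ we have $(D_\gamma(v),\gamma-\beta') = (D_\gamma(v),\nu')$. Applying Lemma~\ref{lem:beta-gamma} with $\beta$ replaced by $\beta'$ gives
\[
d_{\beta'}(v) = d_\gamma(v)+r_\gamma(v)\lambda,\quad
a_{\beta'}(v) = a_\gamma(v)+d_\gamma(v)\lambda(H^2)+(D_\gamma(v),\nu')+\tfrac{r_\gamma(v)}{2}\bigl(\lambda^2(H^2)+(\nu'^2)\bigr).
\]

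Next, I would plug in $v=F$ and $v=F_1$ and use the hypothesis $d_\gamma(F)=d_\gamma(F_1)=0$. This collapses the $d_{\beta'}$ terms to $d_{\beta'}(F)=r_\gamma(F)\lambda$ and $d_{\beta'}(F_1)=r_\gamma(F_1)\lambda$, and simplifies the $a_{\beta'}$ terms by killing the $d_\gamma(v)\lambda(H^2)$ contribution. Then I would form
\[
a_{\beta'}(F)d_{\beta'}(F_1)-a_{\beta'}(F_1)d_{\beta'}(F)
= \lambda\bigl[a_{\beta'}(F)r_\gamma(F_1)-a_{\beta'}(F_1)r_\gamma(F)\bigr].
\]

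The key observation in the final step is that the quadratic contribution $\tfrac{r_\gamma(v)}{2}(\lambda^2(H^2)+(\nu'^2))$ enters $a_{\beta'}(F)r_\gamma(F_1)$ and $a_{\beta'}(F_1)r_\gamma(F)$ symmetrically, so it cancels on subtraction. Using $r_\gamma(F)=\rk F$ and $r_\gamma(F_1)=\rk F_1$, what survives is exactly
\[
\lambda\bigl[(a_\gamma(F)+(D_\gamma(F),\nu'))\rk F_1-(a_\gamma(F_1)+(D_\gamma(F_1),\nu'))\rk F\bigr],
\]
as claimed. There is no real obstacle here; the only thing to be careful about is ensuring the cancellation of the quadratic $\lambda^2(H^2)+(\nu'^2)$ term, which is guaranteed by its symmetric appearance in the two $a_{\beta'}$ expressions.
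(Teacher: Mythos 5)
Your computation is correct and is exactly what the paper intends: its proof of this lemma is the one-line remark that it follows from Lemma \ref{lem:beta-gamma}, and your expansion of $d_{\beta'}$ and $a_{\beta'}$ via that lemma, the use of $(\nu',H)=0$ and $D_\gamma\in H^\perp$, and the symmetric cancellation of the $\tfrac{r_\gamma}{2}(\lambda^2(H^2)+(\nu'^2))$ term (together with $r_\gamma(F)=\rk F$) is precisely the intended calculation.
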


\begin{proof}
It is a consequence of Lemma \ref{lem:beta-gamma}.
\end{proof}

\begin{NB}
Then
\begin{equation}
\begin{split}
& a_{\beta'}(F)d_{\beta'}(F_1)-a_{\beta'}(F_1)d_{\beta'}(F) \\
=& \left(a_\gamma(F)+(d_\gamma(F)H+D,\gamma-\beta')
+\frac{\rk F}{2}((\beta'-\gamma)^2)\right)(d_\gamma(F_1)+\rk F_1 \lambda)\\
& -
\left(a_\gamma(F_1)+(d_\gamma(F_1)H+D_1,\gamma-\beta')
+\frac{\rk F_1}{2}((\beta'-\gamma)^2)\right)(d_\gamma(F)+\rk F \lambda)\\
=& \left(a_\gamma(F)(d_\gamma(F_1)+\rk F_1 \lambda)-
a_\gamma(F_1)(d_\gamma(F)+\rk F \lambda) \right)\\
& +\left( 
(d_\gamma(F)H+D,\gamma-\beta')(d_\gamma(F_1)+\rk F_1 \lambda)-
(d_\gamma(F_1)H+D_1,\gamma-\beta')(d_\gamma(F)+\rk F \lambda)\right)\\
& +\frac{((\beta'-\gamma)^2)}{2}
(\rk F d_\gamma(F_1)-\rk F_1 d_\gamma(F)).
\end{split}
\end{equation}
\end{NB}
We take $\beta'$ such that $(D_\gamma(F),\nu')>(D_\gamma(F),\nu)$.
Since $d_\gamma(F)=d_\gamma(w_1)=0$ and 
$\phi_{(\beta,\omega)}(F)=\phi_{(\beta,\omega)}(w_1)$, 
we have
$\phi_{(\beta',\omega)}(F)>\phi_{(\beta',\omega)}(w_1)$.

\begin{lem}\label{lem:C_+:2}
For a $\gamma'$-twisted stable sheaf $E$ of $v(E)=\Phi(v)$
with respect to $\widehat{L}$, we set $F:=\widehat{\Phi}(E)$.
Assume that $d_\gamma(F)=0$ and take $\beta'$ such that
$(\beta',\omega) \in {\cal C}_+$, that is,
$\phi_{(\beta',\omega)}(F)>\phi_{(\beta',\omega)}(w_1)$.
Then $F$ is a $\sigma_{(\beta',\omega)}$-semi-stable object of
${\frak A}_{(\beta',\omega)}$.
\end{lem}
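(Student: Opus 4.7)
I mirror the proof of Lemma~\ref{lem:C_+:1}, but perturb in the $\beta$-direction rather than in $\omega$. Since $W_{w_1}$ is not a wall for categories ($w_1$ being isotropic, not of square $-2$), we have ${\frak A}_{(\beta',\omega)}={\frak A}_{(\beta,\omega)}$ for $\beta'$ close to $\beta$; moreover $F=\widehat{\Phi}(E)$ is a $\sigma_{(\beta,\omega)}$-semi-stable object of this category with $\phi_{(\beta,\omega)}(F)=\phi_{(\beta,\omega)}(w_1)$ by Proposition~\ref{prop:mu-semi-stable}. If $F$ failed to be $\sigma_{(\beta',\omega)}$-semi-stable, then by continuity of phases any destabilizing subobject $F_1\subset F$ at $(\beta',\omega)$ would satisfy $\phi_{(\beta,\omega)}(F_1)=\phi_{(\beta,\omega)}(F)$, and both $F_1$ and $F_2:=F/F_1$ would then be $\sigma_{(\beta,\omega)}$-semi-stable of phase $\phi_{(\beta,\omega)}(w_1)$.

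Since $(\beta',\omega)\in {\cal C}_+$ gives $\phi_{(\beta',\omega)}(F)>\phi_{(\beta',\omega)}(w_1)$, the $\sigma_{(\beta',\omega)}$-semi-stability of $F_1$ forces $\Hom(F_1,{\bf E}_{|X\times\{x_1\}})=0$ for every $x_1\in X_1$. Theorem~\ref{thm:mu-semi-stable}(1) and (2)(a) together identify $E_1:=\Phi(F_1)$ as a $\mu$-semi-stable torsion free sheaf on $X_1$; taking cohomology of the exact triangle $\Phi(F_1)\to\Phi(F)\to\Phi(F_2)\to\Phi(F_1)[1]$, exactly as in the proof of Lemma~\ref{lem:C_+:1}, then yields a short exact sequence $0\to E_1\to E\to E_2\to 0$ of torsion free sheaves on $X_1$.

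The remaining task is to convert $\phi_{(\beta',\omega)}(F_1)>\phi_{(\beta',\omega)}(F)$ into a contradiction with the $\gamma'$-twisted stability of $E$. Because $d_\gamma(F)=d_\gamma(F_1)=0$, one has $d_{\beta'}(F_i)/\rk F_i=\lambda$ and so the $(\omega^2)$-part of $\Sigma_{(\beta',\omega)}(F,F_1)$ vanishes, leaving Lemma~\ref{lem:d(F)=d(F_1)=0} to compute the $a_{\beta'}$-part. Lemma~\ref{lem:wall-general} applied to $F_1\subset F$ supplies the proportionality $D_\gamma(F_1)\cdot a_\gamma(F)=D_\gamma(F)\cdot a_\gamma(F_1)$, and the result factors as
\begin{equation*}
\Sigma_{(\beta',\omega)}(F,F_1)/(H,\omega)=-\frac{\lambda}{a_\gamma(F)}\bigl(a_\gamma(F)+(D_\gamma(F),\nu')\bigr)\bigl(a_\gamma(F)\rk F_1-a_\gamma(F_1)\rk F\bigr).
\end{equation*}
Applying the same lemma with $F_1$ replaced by $w_1$ (using $a_\gamma(w_1)=0=D_\gamma(w_1)$), the hypothesis $(\beta',\omega)\in {\cal C}_+$ reads $\lambda r_1\bigl(a_\gamma(F)+(D_\gamma(F),\nu')\bigr)>0$; combined with $\rk E=-r_1 a_\gamma(F)>0$ this pins down the sign of the first two factors as positive. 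Then $\Sigma_{(\beta',\omega)}(F,F_1)>0$ is equivalent to $a_\gamma(F)\rk F_1>a_\gamma(F_1)\rk F$, which via the identifications $\rk E_i=-r_1 a_\gamma(F_i)$ and the standard expression of $\chi(E_i(-\gamma'))$ in terms of $\rk F_i$ and $\rk E_i$ rearranges to $\chi(E_1(-\gamma'))/\rk E_1>\chi(E(-\gamma'))/\rk E$, contradicting the $\gamma'$-twisted stability of $E$. The main obstacle is this sign bookkeeping: one must coordinate the two applications of Lemma~\ref{lem:d(F)=d(F_1)=0}, the $D_\gamma$-proportionality from Lemma~\ref{lem:wall-general}, and the positivity of $\rk E$ to ensure the final Hilbert-polynomial inequality lands in the direction that genuinely contradicts $\gamma'$-twisted stability.
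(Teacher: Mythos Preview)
Your proposal is correct and follows essentially the same approach as the paper's proof: both perturb in the $\beta$-direction, reduce to a destabilizing subobject $F_1\subset F$ with $\phi_{(\beta,\omega)}(F_1)=\phi_{(\beta,\omega)}(F)$, use $\phi_{(\beta',\omega)}(F_1)>\phi_{(\beta',\omega)}(w_1)$ to get the short exact sequence of sheaves on $X_1$ via Theorem~\ref{thm:mu-semi-stable}, and then combine Lemma~\ref{lem:d(F)=d(F_1)=0} with the proportionality $D_\gamma(F_1)=a_\gamma(F_1)D_\gamma(F)/a_\gamma(F)$ from Lemma~\ref{lem:wall-general} to extract the inequality $\rk F_1/a_\gamma(F_1)>\rk F/a_\gamma(F)$, contradicting $\gamma'$-twisted stability. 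Your factorization of $\Sigma_{(\beta',\omega)}(F,F_1)$ is just an equivalent repackaging of the paper's computation; note that $d_\gamma(F_1)=0$ (needed to invoke Lemma~\ref{lem:d(F)=d(F_1)=0}) follows from Lemma~\ref{lem:wall-general} applied to $F_1\subset F$ together with $d_\gamma(F)=0$, a point you use implicitly.
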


\begin{proof}

Since $\phi_{(\beta',\omega)}(F)>\phi_{(\beta',\omega)}(w_1)$,
\begin{equation}
\begin{split}
0=(r_{\beta'}(F)d_{\beta'}(w_1)-r_{\beta'}(w_1)d_{\beta'}(F))
\frac{(\omega^2)}{2}<
r_1 \lambda (a_\gamma(F)+(D_\gamma(F),\nu')).
\end{split}
\end{equation}
Let $F_1$ be a $\sigma_{(\beta',\omega)}$-semi-stable subobject of
$F$ with $\phi_{(\beta,\omega)}(F_1)=\phi_{(\beta,\omega)}(F)$.
Assume that $\phi_{(\beta',\omega)}(F_1)>\phi_{(\beta',\omega)}(F)$.
Then we have the same exact sequence in \eqref{eq:Gieseker:exact}.
By Lemma \ref{lem:d(F)=d(F_1)=0},
\begin{equation}
\lambda \left(
(a_\gamma(F)+(D_\gamma(F),\nu'))\rk F_1-
(a_\gamma(F_1)+(D_\gamma(F_1),\nu'))\rk F \right)<0.
\end{equation}
By our choice of $(\beta,\omega)$, Lemma \ref{lem:wall-general}
implies that
$D_\gamma(F_1)=a_\gamma(F_1)D_\gamma(F)/a_\gamma(F)$.
Hence 
$$
\lambda a_\gamma(F_1)\left(a_\gamma(F)+(D_\gamma(F),\nu') \right)
\left( \frac{\rk F_1}{a_\gamma(F_1)}-\frac{\rk F}{a_\gamma(F)}
\right)<0.
$$
Since $r_1 a_\gamma(F_1)<0$ and $r_1 \lambda>0$,
we have
$$
\frac{r_1^2 \chi(E_1(-\gamma'))}{\rk E_1}=
\frac{\rk F_1}{a_\gamma(F_1)}>\frac{\rk F}{a_\gamma(F)}
=\frac{r_1^2 \chi(E(-\gamma'))}{\rk E},
$$
which is a contradiction.
Therefore $F$ is $\sigma_{(\beta',\omega)}$-semi-stable.
\end{proof}

The converse relation also holds:
\begin{lem}\label{lem:C_+:3}
We take $(\beta',\omega') \in {\cal C}_+$.
Let $F$ be a $\sigma_{(\beta',\omega')}$-semi-stable object
with $v(F)=v$.
Then $\Phi(F)$ is a $\gamma'$-twisted semi-stable sheaf on $X_1$.
\end{lem}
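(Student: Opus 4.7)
The plan is to reverse the arguments of Lemmas \ref{lem:C_+:1} and \ref{lem:C_+:2}. Given $(\beta',\omega') \in {\cal C}_+$ close to $(\beta,\omega)$ and a $\sigma_{(\beta',\omega')}$-semi-stable object $F$ with $v(F) = v$, I first observe that $F$ remains $\sigma_{(\beta,\omega)}$-semi-stable at the wall: the defining inequalities for semi-stability pass to the limit as $(\beta',\omega') \to (\beta,\omega)$, turning strict inequalities into non-strict ones. In particular $F$ lies in the set-up of Theorem~\ref{thm:mu-semi-stable}, since $Z_{(\beta,\omega)}(F) \in {\Bbb R} Z_{(\beta,\omega)}(w_1)$ by $v(F) = v$ and $(\beta,\omega) \in W_{w_1}$.

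Next, since $(\beta',\omega') \in {\cal C}_+$ we have $\phi_{(\beta',\omega')}(F) > \phi_{(\beta',\omega')}(w_1) = \phi_{(\beta',\omega')}({\bf E}_{|X \times \{x_1\}})$ for every $x_1 \in X_1$. The $\sigma_{(\beta',\omega')}$-semi-stability of $F$ therefore forces $\Hom(F, {\bf E}_{|X \times \{x_1\}}) = 0$ for all $x_1$, because ${\bf E}_{|X \times \{x_1\}}$ is $\sigma_{(\beta,\omega)}$-stable and any nonzero map would produce a quotient of $F$ of strictly smaller phase. Applying Theorem~\ref{thm:mu-semi-stable}~(2)(a), I conclude that $E := \Phi(F)$ is a torsion free sheaf on $X_1$, and applying Theorem~\ref{thm:mu-semi-stable}~(1) that $E$ is $\mu$-semi-stable with respect to $\widehat{L}$.

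It remains to upgrade $\mu$-semi-stability to $\gamma'$-twisted semi-stability. Suppose toward a contradiction that there is a subsheaf $0 \ne E_1 \subsetneq E$ with $\mu_{\widehat{L}}(E_1) = \mu_{\widehat{L}}(E)$ and $\chi(E_1(-\gamma'))/\rk E_1 > \chi(E(-\gamma'))/\rk E$. Set $F_1 := \widehat{\Phi}(E_1)$ and $F_2 := \widehat{\Phi}(E/E_1)$. By Proposition~\ref{prop:mu-semi-stable}, $F_1$ and $F_2$ are $\sigma_{(\beta,\omega)}$-semi-stable objects of ${\frak A}_{(\beta,\omega)}$ with phase $\phi_{(\beta,\omega)}(w_1)$, and the exact triangle obtained from the short exact sequence of sheaves is a short exact sequence $0 \to F_1 \to F \to F_2 \to 0$ in ${\frak A}_{(\beta,\omega)}$. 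Since $(\beta',\omega')$ lies in the same chamber for categories as $(\beta,\omega)$, this is also an exact sequence in ${\frak A}_{(\beta',\omega')}$, and $F_1$ is a genuine subobject of $F$ there.

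Finally, I reverse the numerical computation of Lemma~\ref{lem:C_+:1} (when $d_\gamma(F) \ne 0$) or Lemma~\ref{lem:C_+:2} (when $d_\gamma(F) = 0$): via Corollary~\ref{cor:wall-general}, the destabilizing inequality $\chi(E_1(-\gamma'))/\rk E_1 > \chi(E(-\gamma'))/\rk E$ translates through the same identities into $\phi_{(\beta',\omega')}(F_1) > \phi_{(\beta',\omega')}(F)$, contradicting the $\sigma_{(\beta',\omega')}$-semi-stability of $F$. The main obstacle will be keeping the sign conventions straight in the two sub-cases $r_1 d_\gamma(F) > 0$ and $r_1 d_\gamma(F) < 0$, as well as in the $d_\gamma(F) = 0$ case where one must use the $\beta$-direction perturbation $\beta'$ of Lemma~\ref{lem:C_+:2} instead of $\omega'$, so that the reversed inequality really produces a destabilizing subobject rather than a semi-stabilizing one.
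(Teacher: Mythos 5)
Your proposal is correct and follows essentially the same route as the paper: reduce to $\mu$-semi-stability of $\Phi(F)$ via Theorem~\ref{thm:mu-semi-stable} (using $\Hom(F,{\bf E}_{|X\times\{x_1\}})=0$ forced by $\phi_{(\beta',\omega')}(F)>\phi_{(\beta',\omega')}(w_1)$), then transport a twisted-destabilizing subsheaf back through $\widehat{\Phi}$ and run the numerical computations of Lemmas~\ref{lem:C_+:1} and~\ref{lem:C_+:2} (via Corollary~\ref{cor:wall-general}) in reverse to contradict $\sigma_{(\beta',\omega')}$-semi-stability. The only cosmetic difference is that the paper phrases the last step in terms of the quotient $E_2$ while you use the subobject $E_1$; as in the paper, one should take the destabilizing term to be $\mu$-semi-stable (e.g.\ the first step of the twisted Harder--Narasimhan filtration within the slope) so that Proposition~\ref{prop:mu-semi-stable} applies to both pieces.
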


\begin{proof}
As in Lemma \ref{lem:C_+:1} and Lemma \ref{lem:C_+:2},
we may assume that
\begin{equation}
(\beta',\omega')=
\begin{cases}
(\beta,\omega_+), & d_\gamma(v) \ne 0, \\
(\beta',\omega), & d_\gamma(v)=0.
\end{cases}
\end{equation}
By Theorem \ref{thm:mu-semi-stable},
$E$ is a $\mu$-semi-stable sheaf on $X_1$.
If $E$ is not $\gamma'$-twisted semi-stable, then
there is an exact sequence 
\begin{equation}
0 \to E_1 \to E \to E_2 \to 0
\end{equation}
with  
\begin{equation}
\frac{(c_1(E_1),\widehat{L})}{\rk E_1}=
\frac{(c_1(E),\widehat{L})}{\rk E}.
\end{equation}
Applying Theorem \ref{thm:mu-semi-stable}
to $E_1$ and $E_2$,
we have an exact sequence 
$$
0 \to F_1 \to F \to F_2 \to 0
$$
where $F_i:=\widehat{\Phi}(E_i)$ are 
$\sigma_{(\beta,\omega)}$-semi-stable objects of 
${\frak A}_{(\beta,\omega)}$ with 
$\phi(F_i)=\phi(w_1)$.
Since $\phi_{(\beta',\omega')}(\widehat{\Phi}(E_2)) \geq 
\phi_{(\beta',\omega')}(F)> \phi_{(\beta',\omega')}(w_1)$,
in the same way as in the proof of Lemma \ref{lem:C_+:1}
and Lemma \ref{lem:C_+:2}, we see that
$$
\frac{\chi(E_2(-\gamma'))}{\rk E_2} \geq \frac{\chi(E(-\gamma'))}{\rk E}.
$$
Therefore $E$ is $\gamma'$-twisted semi-stable.
\end{proof}

\subsection{Gieseker semi-stability and semi-stable objects in 
${\cal C}_-$.}\label{subsect:C-}

\begin{NB}
\begin{lem}
Let $E$ be a $\mu$-semi semi-stable
sheaf on $X_1$ with $v(E)=v^{\vee}$ with respect to
$\widehat{L}$.
Then $F:=\Phi_{X_1 \to X}^{{\bf E}[1]}(E^{\vee})$ is an object of
${\frak A}_{(\beta,\omega)}$ such that
$F$ is a $\sigma_{(\beta,\omega)}$-semi-stable object 
of ${\frak A}_{(\beta,\omega)}$ with
$\phi_{(\beta,\omega)}(F)=\phi_{(\beta,\omega)}(w_1)$. 
\end{lem}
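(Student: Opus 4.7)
The plan is to reduce to Proposition \ref{prop:mu-semi-stable} by decomposing $E^{\vee}$ into a locally free part and a zero-dimensional contribution. Since $E$ is a torsion free sheaf on the smooth surface $X_1$ with reflexive hull $E^{\vee\vee}$ and zero-dimensional quotient $T := E^{\vee\vee}/E$, and since the canonical bundle of $X_1$ is trivial, the derived dual fits in the exact triangle
\begin{equation*}
(E^{\vee\vee})^{\vee}\; \to\; E^{\vee}\; \to\; T[-1]\; \to\; (E^{\vee\vee})^{\vee}[1],
\end{equation*}
where $(E^{\vee\vee})^{\vee}$ is a locally free $\mu$-semi-stable sheaf having the same reduced Hilbert polynomial as $E$ with respect to $\widehat{L}$.

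First I would translate the Mukai-vector hypothesis. Derived dualization flips the sign of the degree-one component of a Mukai vector while preserving the rank and the top-degree component, so the hypothesis $v(E) = v^{\vee}$ (to be read in line with Theorem \ref{thm:main} as $v(E) = \Phi(v)^{\vee}$) identifies $v((E^{\vee\vee})^{\vee})$ with $\Phi(v)$ modulo a non-negative multiple of $\varrho_{X_1}$. Via Lemma \ref{lem:slope-condition}, the slope identity \eqref{eq:slope-gamma'} required by Proposition \ref{prop:mu-semi-stable} then holds for $(E^{\vee\vee})^{\vee}$, and that proposition yields that $\widehat{\Phi}((E^{\vee\vee})^{\vee})$ lies in ${\frak A}_{(\beta,\omega)}$ and is $\sigma_{(\beta,\omega)}$-semi-stable of phase $\phi(w_1)$. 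For the zero-dimensional piece, the basic computation $\widehat{\Phi}({\frak k}_{x_1}) \cong {\bf E}_{|X \times \{x_1\}}[1]$ on skyscrapers extends (since $T$ is an iterated extension of skyscrapers) to show that $\widehat{\Phi}(T[-1])$ is an iterated extension of the objects ${\bf E}_{|X \times \{x_i\}}$ over the support of $T$, each of which is by construction of $X_1$ a $\sigma_{(\beta,\omega)}$-stable object of ${\frak A}_{(\beta,\omega)}$ of phase $\phi(w_1)$.

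Applying $\widehat{\Phi}$ to the displayed triangle then realizes $F = \widehat{\Phi}(E^{\vee})$ as an extension in ${\bf D}(X)$ of two objects that both lie in the subcategory $P(\phi(w_1))$ of $\sigma_{(\beta,\omega)}$-semi-stable objects of phase $\phi(w_1)$. Since $P(\phi(w_1))$ is abelian and closed under extensions inside ${\bf D}(X)$, it follows that $F \in P(\phi(w_1)) \subset {\frak A}_{(\beta,\omega)}$ and $F$ is $\sigma_{(\beta,\omega)}$-semi-stable with $\phi_{(\beta,\omega)}(F) = \phi_{(\beta,\omega)}(w_1)$. The main obstacle I anticipate is the careful Mukai-vector and slope bookkeeping in the second paragraph, because the interactions of the sheaf-level dual, the Mukai-lattice dual, and the action of $\Phi$ recorded in Lemma \ref{lem:FM-w_1} must be tracked consistently; once this is done, closure of $P(\phi(w_1))$ under extensions finishes the argument.
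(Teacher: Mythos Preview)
Your proof is correct and follows essentially the same route as the paper's own argument. The paper phrases the triangle as
\[
{\cal H}om_{{\cal O}_{X_1}}(E,{\cal O}_{X_1}) \to E^{\vee} \to
{\cal E}xt^1_{{\cal O}_{X_1}}(E,{\cal O}_{X_1})[-1] \to
{\cal H}om_{{\cal O}_{X_1}}(E,{\cal O}_{X_1})[1],
\]
which is exactly your triangle with $(E^{\vee\vee})^{\vee}={\cal H}om(E,{\cal O}_{X_1})$ and $T\cong{\cal E}xt^1(E,{\cal O}_{X_1})$; it then applies Proposition~\ref{prop:mu-semi-stable} to the first term, identifies the image of the second term under $\widehat{\Phi}$ as an extension of the ${\bf E}_{|X\times\{x_i\}}$, and concludes by extension closure of $P(\phi(w_1))$, just as you do.
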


\begin{proof}
Let $E$ be a $\mu$-semi semi-stable
sheaf on $X_1$ with $v(E)=v^{\vee}$ with respect to
$\widehat{L}$.
Then $E^{\vee}$ fits in an exact triangle
$$
{\cal H}om_{{\cal O}_X}(E,{\cal O}_X) \to E^{\vee} \to
{\cal E}xt^1_{{\cal O}_X}(E,{\cal O}_X)[-1] \to
{\cal H}om_{{\cal O}_X}(E,{\cal O}_X)[1].
$$
Since ${\cal H}om_{{\cal O}_X}(E,{\cal O}_X)$ is $\mu$-semi-stable,
$F_1:=\Phi_{X_1 \to X}^{{\bf E}[1]}({\cal H}om_{{\cal O}_X}(E,{\cal O}_X))$
is a semi-stable object with 
$\phi_{(\beta,\omega)}(F_1)=\phi_{(\beta,\omega)}(w_1)$
with respect to
$(\beta,\omega)$.
Since ${\cal E}xt^1_{{\cal O}_X}(E,{\cal O}_X)$ is 0-dimensional,
$F_2:=
\Phi_{X_1 \to X}^{{\bf E}[1]}({\cal E}xt^1_{{\cal O}_X}(E,{\cal O}_X)[-1])$
is a semi-stable object with $\phi(F_2)=\phi(w_1)$.
Hence $F:=\Phi_{X_1 \to X}^{{\bf E}[1]}(E^{\vee})$
is a semi-stable object with 
$\phi_{(\beta,\omega)}(F)=\phi_{(\beta,\omega)}(w_1)$.
\end{proof}
\end{NB}

\begin{NB}
Assume that there is an exact triangle
$$
F_1 \to F \to F_2 \to F_1[1]
$$
such that $\phi_{\min}(F_1) \geq \phi(w_1)$
and $\phi_{\max}(F_2)<\phi(E_1)$.
Then
$\Hom(F_1,{\bf E}_{|X \times \{x_1 \}}[k])=0$
for $k<0$ and
$\Hom(F_1,{\bf E}_{|X \times \{x_1 \}})=0$
except finitely many point $x_1 \in X_1$.
We also have
$\Hom(F_2,{\bf E}_{|X \times \{x_1 \}}[k])=0$
for $k \geq 2$.
We note that
\begin{equation}
\begin{split}
\Hom(F,{\bf E}_{|X \times \{x_1 \}}[k])=&
\Hom(\Phi(F),\Phi({\bf E}_{|X \times \{x_1 \}})[k])\\
=& \Hom(E^{\vee},{\frak k}_{x_1}[k-1])\\
=& \Hom(E^{\vee},{\frak k}_{x_1}^{\vee}[k+1])\\
=& \Hom({\frak k}_{x_1},E[k+1]).
\end{split}
\end{equation}
Hence $\Hom(F,{\bf E}_{|X \times \{x_1 \}}[k])=0$ for
$k \ne 0, 1$ and
$\Hom(F,{\bf E}_{|X \times \{x_1 \}})=0$ 
except finitely many points $x_1 \in X_1$.  
Then $E_1:=\Phi(F_1)^{\vee}$ and $E_2:=\Phi(F_2)^{\vee}$
are torsion free sheaves and we have an exact sequence
$$
0 \to E_2 \to E \to E_1 \to 0.
$$
We have
$$
\frac{\deg(E_2(\gamma'))}{\rk E_2}>
\frac{\deg(E(\gamma'))}{\rk E},
$$
which is a contradiction.
Hence $F$ is a semi-stable object with respect to $(\beta,\omega)$.
\end{NB}

Let us study the relation with $(-\gamma')$-twisted semi-stability.
\begin{lem}\label{lem:C_-:1}
For a $(-\gamma')$-twisted semi-stable sheaf $E$ of $v(E)=\Phi(v)^{\vee}$
with respect to $\widehat{L}$, we set $F:=\widehat{\Phi}(E^{\vee})$.
\begin{enumerate}
\item[(1)]
Assume that $d_\gamma(F) \ne 0$. 
We take $\omega_-$ such that $(\beta,\omega_-) \in {\cal C}_-$,
that is,
$\phi_{(\beta,\omega_-)}(w_1)>\phi_{(\beta,\omega_-)}(F)$.
Then $F$ is a $\sigma_{(\beta,\omega_-)}$-semi-stable 
object of ${\frak A}_{(\beta,\omega_-)}$.
\item[(2)]
Assume that $d_\gamma(F)=0$. 
We take $\beta'$ such that $(\beta',\omega) \in {\cal C}_-$,
that is,
$\phi_{(\beta',\omega)}(w_1)>\phi_{(\beta',\omega)}(F)$.
Then $F$ is a $\sigma_{(\beta',\omega)}$-semi-stable 
object of ${\frak A}_{(\beta',\omega)}$.
\end{enumerate}
\end{lem}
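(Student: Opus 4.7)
The plan is to mimic the arguments of Lemma~\ref{lem:C_+:1} and Lemma~\ref{lem:C_+:2}, using the derived dual to exchange the roles of ${\cal C}_+$ and ${\cal C}_-$. As a preliminary step, I would check that $F = \widehat{\Phi}(E^{\vee}) \in {\frak A}_{(\beta,\omega)}$ is $\sigma_{(\beta,\omega)}$-semi-stable with $\phi_{(\beta,\omega)}(F) = \phi_{(\beta,\omega)}(w_1)$: since $E$ is $(-\gamma')$-twisted semi-stable with $v(E) = \Phi(v)^{\vee}$, the derived dual $E^{\vee}$ is a $\gamma'$-twisted semi-stable sheaf with $v(E^{\vee}) = \Phi(v)$ satisfying the slope condition of Lemma~\ref{lem:slope-condition}, so Proposition~\ref{prop:mu-semi-stable} applied to $E^{\vee}$ produces the required semi-stability of $F$.

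For part~(1), I would suppose for contradiction that $F$ is not $\sigma_{(\beta,\omega_-)}$-semi-stable and pick a destabilizing short exact sequence $0 \to F_1 \to F \to F_2 \to 0$ in ${\frak A}_{(\beta,\omega)}$ with $\phi_{(\beta,\omega)}(F_i) = \phi_{(\beta,\omega)}(w_1)$ and $\phi_{(\beta,\omega_-)}(F_1) > \phi_{(\beta,\omega_-)}(F) > \phi_{(\beta,\omega_-)}(F_2)$. Since $\Phi(F)^{\vee} = E$ is torsion free, Theorem~\ref{thm:mu-semi-stable}~(2)(b) gives $\Hom({\bf E}_{|X \times \{x_1\}}, F) = 0$ for all $x_1$, whence $\Hom({\bf E}_{|X \times \{x_1\}}, F_1) = 0$ by left-exactness of $\Hom$, and the analogous vanishing for $F_2$ follows from $\phi_{(\beta,\omega_-)}(F_2) < \phi_{(\beta,\omega_-)}(w_1)$ together with the $\sigma_{(\beta,\omega)}$-semi-stability of $F_2$. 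Theorem~\ref{thm:mu-semi-stable}~(2)(b) applied to each $F_i$ then shows $\Phi(F_i)^{\vee}$ is torsion free, and dualizing the triangle yields an exact sequence of torsion free sheaves
\begin{equation*}
0 \to \Phi(F_2)^{\vee} \to E \to \Phi(F_1)^{\vee} \to 0.
\end{equation*}

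The contradiction is then produced exactly as in the proof of Lemma~\ref{lem:C_+:1}: translating $\phi_{(\beta,\omega_-)}(F_1) > \phi_{(\beta,\omega_-)}(F)$ through Corollary~\ref{cor:wall-general} and using that derived dualization negates $c_1$ (hence converts $\gamma'$-twisting into $(-\gamma')$-twisting on the sheaves $\Phi(F_i)^{\vee}$ and $E$), one obtains a strict slope inequality violating the $(-\gamma')$-twisted semi-stability of $E$. Part~(2) proceeds identically, using the perturbation of $\beta$ handled via Lemma~\ref{lem:d(F)=d(F_1)=0} and Lemma~\ref{lem:wall-general} in place of the comparison of $(\omega_\pm^2)$, exactly as in Lemma~\ref{lem:C_+:2}.

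The main obstacle will be the sign bookkeeping: one must carefully verify that $E^{\vee}$ is $\gamma'$-twisted semi-stable whenever $E$ is $(-\gamma')$-twisted semi-stable, that $F = \widehat{\Phi}(E^{\vee})$ genuinely lies in ${\frak A}_{(\beta,\omega)}$ in the correct cohomological degree (with a possible shift dictated by $\mathrm{sign}(r_1)$), and that the sign reversals in passing from $c_1$ to $-c_1$ and from $\gamma'$ to $-\gamma'$ combine so that the destabilizing exact sequence of $E$ violates the correct twisted semi-stability rather than its dual. Once these conventions are fixed, the computation is essentially parallel to that of Lemma~\ref{lem:C_+:1}.
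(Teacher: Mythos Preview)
Your overall strategy matches the paper's, but the preliminary step contains a genuine gap that is not mere sign bookkeeping. You assert that ``the derived dual $E^{\vee}$ is a $\gamma'$-twisted semi-stable sheaf'' and then apply Proposition~\ref{prop:mu-semi-stable}. But $E^{\vee}$ is in general not a sheaf: when $E$ is torsion free but not locally free, $E^{\vee}$ has a nontrivial ${\cal E}xt^1$ contribution in cohomological degree~$1$, so Proposition~\ref{prop:mu-semi-stable} does not apply to it directly. The paper handles this by writing down the exact triangle
\[
{\cal H}om_{{\cal O}_{X_1}}(E,{\cal O}_{X_1}) \to E^{\vee} \to {\cal E}xt^1_{{\cal O}_{X_1}}(E,{\cal O}_{X_1})[-1] \to {\cal H}om_{{\cal O}_{X_1}}(E,{\cal O}_{X_1})[1],
\]
observing that the first term is a $\mu$-semi-stable torsion free sheaf with the correct slope and the second term is a shifted $0$-dimensional sheaf, and then invoking the ``if'' direction of Theorem~\ref{thm:mu-semi-stable}~(1) (rather than Proposition~\ref{prop:mu-semi-stable}) to conclude that $F=\widehat{\Phi}(E^{\vee})$ is $\sigma_{(\beta,\omega)}$-semi-stable with $\phi(F)=\phi(w_1)$. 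You should replace your preliminary step with this argument.

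After this fix, the remainder of your outline is essentially the paper's proof. Your use of Theorem~\ref{thm:mu-semi-stable}~(2)(b) to obtain torsion-freeness of $\Phi(F_i)^{\vee}$ is a clean shortcut compared to the paper's direct $\Hom$ bookkeeping, but note that to invoke it for $F_2$ you need $F_2$ to be $\sigma_{(\beta,\omega_-)}$-semi-stable (so choose the destabilizing sequence with $F_2$ the minimal Harder--Narasimhan quotient, as the paper does). The final inequality and the case analysis on the sign of $r_1 d_\gamma(F)$ then proceed exactly as in Lemma~\ref{lem:C_+:1} and Lemma~\ref{lem:C_+:2}, with quotients and subobjects interchanged under dualization.
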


\begin{proof}
(1) We set $\phi_-:=\phi_{(\beta,\omega_-)}$ and
$\phi:=\phi_{(\beta,\omega)}$.
We note that $E^{\vee}$ fits in an exact triangle
$$
{\cal H}om_{{\cal O}_X}(E,{\cal O}_X) \to E^{\vee} \to
{\cal E}xt^1_{{\cal O}_X}(E,{\cal O}_X)[-1] \to
{\cal H}om_{{\cal O}_X}(E,{\cal O}_X)[1],
$$
where ${\cal H}om_{{\cal O}_X}(E,{\cal O}_X)$ is a $\mu$-semi-stable
torsion free sheaf with \eqref{eq:slope-gamma'} and
${\cal E}xt^1_{{\cal O}_X}(E,{\cal O}_X)$ is a 0-dimensional
sheaf.
Applying Theorem \ref{thm:mu-semi-stable},
we get $F$ is a $\sigma_{(\beta,\omega)}$-semi-stable object
of ${\frak A}_{(\beta,\omega)}$
with $\phi(F)=\phi(w_1)$.

Assume that there is an exact sequence
$$
0 \to F_1 \to F \to F_2 \to 0
$$
such that $F_2$ is a semi-stable object with respect to
$(\beta,\omega_-)$ such that
$\phi_-(F_1) >\phi_-(F)>\phi_-(F_2)$
and $\phi(F_1)=\phi(F_2)=\phi(w_1)$.
Then $\Hom(F_2,{\bf E}_{|X \times \{x_1 \}}[k])=0$
for $k \geq 2$.
Since $\Hom(F_2,{\bf E}_{|X \times \{x_1 \}}[k])=0$
for $k<0$ and 
$\Hom(F_2,{\bf E}_{|X \times \{x_1 \}})=0$
except finitely many point $x_1 \in X_1$,
we find that $E_1:=\Phi(F_1)^{\vee}$ and $E_2:=\Phi(F_2)^{\vee}$
are torsion free sheaves and there exists an exact sequence
$$
0 \to E_2 \to E \to E_1 \to 0.
$$
Since $\phi_-(F_2)<\phi_-(F)<\phi_-(w_1)$,
we have
\begin{equation}
\begin{split}
-r_1 d_\gamma(F)(\omega_-^2)=&
(r_\beta(F)d_\beta(w_1)-r_\beta(w_1)d_\beta(F))(\omega^2_-)\\
>&
2(a_\beta(F)d_\beta(w_1)-a_\beta(w_1)d_\beta(F))\\
=& (r_\beta(F)d_\beta(w_1)-r_\beta(w_1)d_\beta(F))(\omega^2)
=-r_1 d_\gamma(F)(\omega^2).
\end{split}
\end{equation}
We divide the rest argument into two cases.
\begin{enumerate}
\item[(i)]
If $r_1 d_\gamma(F)>0$, then we have $(\omega_-^2)<(\omega^2)$.
Then
\begin{equation}
\begin{split}
&(r_\beta(F_2)d_\beta(F)-r_\beta(F)d_\beta(F_2))(\omega^2_-)\\
>&
2(a_\beta(F_2)d_\beta(F)-a_\beta(F)d_\beta(F_2))
\end{split}
\end{equation}
implies 
$$
r_\gamma(F_2)d_\gamma(F)-r_\gamma(F)d_\gamma(F_2)=
r_\beta(F_2)d_\beta(F)-r_\beta(F)d_\beta(F_2)<0.
$$
By Lemma \ref{lem:FM-w_1},
$\chi(E^{\vee}(-\gamma'))=-r_\gamma(F)/r_1$
and $\chi(\Phi(F_2)(-\gamma'))=-r_\gamma(F_2)/r_1$.
Hence
\begin{equation}
\begin{split}
0> & r_\gamma(F_2)d_\gamma(F)-r_\gamma(F)d_\gamma(F_2)\\
=& 
\left( \rk F_2-\rk F \frac{-r_1 a_\gamma(F_2)}{-r_1 a_\gamma(F)}\right)
d_\gamma(F)\\
=& -r_1d_\gamma(F)
\left( \chi(E_2(\gamma'))-\chi(E(\gamma)) 
\frac{-r_1 a_\gamma(F_2)}{-r_1 a_\gamma(F)}\right).
\end{split}
\end{equation} 
Hence
$$
\frac{\chi(E_2(\gamma'))}{\rk E_2}=
\frac{\chi(\Phi(F_2)(-\gamma'))}{\rk \Phi(F_2)}>
\frac{\chi(\Phi(F)(-\gamma'))}{\rk \Phi(F)}
=\frac{\chi(E(\gamma'))}{\rk E},
$$
which is a contradiction.

\item[(ii)]
If $r_1 d_\gamma(F)<0$, then
we have $(\omega_-^2)>(\omega^2)$.
Then
\begin{equation}
\begin{split}
&(r_\beta(F_2)d_\beta(F)-r_\beta(F)d_\beta(F_2))(\omega^2_-)\\
>&
2(a_\beta(F_2)d_\beta(F)-a_\beta(F)d_\beta(F_2))
\end{split}
\end{equation}
implies
$$
r_\gamma(F_2)d_\gamma(F)-r_\gamma(F)d_\gamma(F_2)=
r_\beta(F_2)d_\beta(F)-r_\beta(F)d_\beta(F_2)>0.
$$
By Lemma \ref{lem:FM-w_1}, we have
\begin{equation}
\begin{split}
0<& r_\gamma(F_2)d_\gamma(F)-r_\gamma(F)d_\gamma(F_2)\\
=& 
\left( \rk F_2-\rk F \frac{-r_1 a_\gamma(F_2)}{-r_1 a_\gamma(F)}\right)
d_\gamma(F)\\
=& -r_1d_\gamma(F)
\left( \chi(E_2(\gamma'))-\chi(E(\gamma)) 
\frac{-r_1 a_\gamma(F_2)}{-r_1 a_\gamma(F)}\right).
\end{split}
\end{equation} 
Hence
$$
\frac{\chi(E_2(\gamma'))}{\rk E_2}=
\frac{\chi(\Phi(F_2)(-\gamma'))}{\rk \Phi(F_2)}>
\frac{\chi(\Phi(F)(-\gamma'))}{\rk \Phi(F)}
=\frac{\chi(E(\gamma'))}{\rk E},
$$
which is a contradiction.
\end{enumerate}
Therefore $F$ is a $\sigma_{(\beta,\omega_-)}$-semi-stable 
object of ${\frak A}_{(\beta,\omega_-)}$.

(2)
Assume that $d_\gamma(F)=0$.
For $(\beta',\omega) \in {\cal C}_-$,
we have 
$$
0=(r_{\beta'}(F)d_{\beta'}(w_1)-r_{\beta'}(w_1)d_{\beta'}(F))
\frac{(\omega^2)}{2}
>r_1 \lambda \left(a_\gamma(F)+(D_\gamma(F),\nu') \right).
$$
Since $\phi_{(\beta',\omega)}(F_2)<\phi_{(\beta',\omega)}(F)$,
we have
$$
\lambda \left((a_\gamma(F)+(D_\gamma(F),\nu'))\rk F_2
-\left(a_\gamma(F_2)+(D_\gamma(F_2),\nu') \right)\rk F \right)>0.
$$
By the choice of $(\beta',\omega)$,
$D_\gamma(F_2)=a_\gamma(F_2)D_\gamma(F)/a_\gamma(F)$.
Hence 
$$
\lambda a_\gamma(F_2)(a_\gamma(F)+(D_\gamma(F),\nu'))
\left(\frac{\rk F_2}{a_\gamma(F_2)}-\frac{\rk F}{a_\gamma(F)} \right)>0.
$$
Therefore
$$
\frac{\rk F_2}{a_\gamma(F_2)}>\frac{\rk F}{a_\gamma(F)}.
$$
\end{proof}

The converse relation also holds:
\begin{lem}\label{lem:C_-:2}
We take $(\beta',\omega') \in {\cal C}_-$.
Let $F$ be a $\sigma_{(\beta',\omega')}$-semi-stable object
of ${\frak A}_{(\beta,\omega)}$ with $v(F)=v$.
Then $\Phi(F)^{\vee}$ is a $(-\gamma')$-twisted semi-stable
sheaf on $X_1$.
\end{lem}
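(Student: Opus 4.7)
The plan is to mirror the proof of Lemma~\ref{lem:C_+:3}, replacing $\Phi(F)$ by $\Phi(F)^{\vee}$ throughout and using the complementary half of Theorem~\ref{thm:mu-semi-stable}~(2). Exactly as in Lemma~\ref{lem:C_+:3}, I first reduce to the two canonical representatives of ${\cal C}_-$: namely $(\beta',\omega')=(\beta,\omega_-)$ when $d_\gamma(v)\ne 0$, and $(\beta',\omega')=(\beta',\omega)$ with a suitable perturbation when $d_\gamma(v)=0$. Since $(\beta,\omega)$ lies on the single wall $W_{w_1}$ for $v$, the $\sigma_{(\beta',\omega')}$-semi-stable object $F$ extends to a $\sigma_{(\beta,\omega)}$-semi-stable object of ${\frak A}_{(\beta,\omega)}$ at the wall with $\phi_{(\beta,\omega)}(F)=\phi_{(\beta,\omega)}(w_1)$, because semi-stability can fail only strictly across walls.

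Next, I invoke Theorem~\ref{thm:mu-semi-stable} to describe $\Phi(F)^{\vee}$. In the chamber ${\cal C}_-$ the inequality $\phi_{(\beta',\omega')}({\bf E}_{|X \times \{x_1\}})=\phi_{(\beta',\omega')}(w_1)>\phi_{(\beta',\omega')}(F)$ combined with $\sigma_{(\beta',\omega')}$-semi-stability of both sides (each ${\bf E}_{|X \times \{x_1\}}$ remains semi-stable in a neighborhood of $(\beta,\omega)$ since $w_1$ is primitive isotropic and defines only $W_{w_1}$) forces $\Hom({\bf E}_{|X \times \{x_1\}},F)=0$ for every $x_1 \in X_1$. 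By Theorem~\ref{thm:mu-semi-stable}~(2)(b), $\Phi(F)^{\vee}$ is therefore a torsion free sheaf on $X_1$, and its $\mu$-semi-stability with respect to $\widehat{L}$ follows from dualizing the triangle $E_1\to\Phi(F)\to E_2[-1]$ of Theorem~\ref{thm:mu-semi-stable}~(1); the sheaf $E_2$ is $0$-dimensional, so the dualization contributes only $0$-dimensional terms that do not affect $\mu$-semi-stability of the torsion free part $E_1^{\vee}$.

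Assume for contradiction that $\Phi(F)^{\vee}$ is not $(-\gamma')$-twisted semi-stable. Then there is a short exact sequence $0\to E_2\to \Phi(F)^{\vee}\to E_1\to 0$ of torsion free $\mu$-semi-stable sheaves of the same slope with
\[
\frac{\chi(E_2(\gamma'))}{\rk E_2}>\frac{\chi(\Phi(F)^{\vee}(\gamma'))}{\rk \Phi(F)^{\vee}}.
\]
Dualizing and applying $\widehat{\Phi}$, exactly as in the derivation $F_i=\widehat{\Phi}(E_i^{\vee})$ inside the proof of Lemma~\ref{lem:C_-:1}, produces a short exact sequence $0\to F_1\to F\to F_2\to 0$ in ${\frak A}_{(\beta,\omega)}$ whose subquotients are $\sigma_{(\beta,\omega)}$-semi-stable with phase $\phi(w_1)$. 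Reading the computation in the proof of Lemma~\ref{lem:C_-:1} in reverse, in the case $r_1 d_\gamma(F)>0$, in the case $r_1 d_\gamma(F)<0$, and in the degenerate case $d_\gamma(F)=0$ (via Lemma~\ref{lem:d(F)=d(F_1)=0}), the strict Gieseker-type inequality above is equivalent to $\phi_{(\beta',\omega')}(F_2)<\phi_{(\beta',\omega')}(F)$, which contradicts the $\sigma_{(\beta',\omega')}$-semi-stability of $F$.

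The main obstacle is the bookkeeping in the dualization step: one must check that applying $(\cdot)^{\vee}$ to the destabilizing sequence on $X_1$ and then $\widehat{\Phi}$ really produces an honest short exact sequence in the abelian category ${\frak A}_{(\beta,\omega)}$ (and not merely a distinguished triangle in ${\bf D}(X)$), and that the sign conventions in the slope-phase dictionary of Lemma~\ref{lem:slope-phase} and Corollary~\ref{cor:wall-general} come out correctly under duality, so that the direction of the inequality on slopes of $E_2$ vs.\ $\Phi(F)^{\vee}$ translates to the direction of the inequality on phases of $F_2$ vs.\ $F$ that we need. Once these sign checks are in place, each case reduces to the corresponding computation already carried out in Lemma~\ref{lem:C_-:1}.
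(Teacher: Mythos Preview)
Your proposal is correct and follows essentially the same route as the paper's proof: reduce to the canonical representatives $(\beta,\omega_-)$ or $(\beta',\omega)$, use Theorem~\ref{thm:mu-semi-stable} to see that $E:=\Phi(F)^{\vee}$ is a $\mu$-semi-stable torsion free sheaf, take a destabilizing short exact sequence of torsion free sheaves with equal slope, dualize and apply $\widehat{\Phi}$ to land back in ${\frak A}_{(\beta,\omega)}$, and then invoke the computations already carried out in Lemma~\ref{lem:C_-:1}. The paper phrases step~4 as ``applying Theorem~\ref{thm:mu-semi-stable} to $E_i^{\vee}$'' (which, unpacked, is exactly the argument at the start of the proof of Lemma~\ref{lem:C_-:1} showing $\widehat{\Phi}(E_i^{\vee})\in{\frak A}_{(\beta,\omega)}$ is semi-stable of phase $\phi(w_1)$); your version spells out slightly more of the Hom-vanishing that makes Theorem~\ref{thm:mu-semi-stable}~(2)(b) applicable, but the content is the same.
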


\begin{proof}
We may take the same $(\beta',\omega')$ in Lemma \ref{lem:C_-:1}. 
By Theorem \ref{thm:mu-semi-stable},
$E:=\Phi(F)^{\vee}$ is a $\mu$-semi-stable torsion free sheaf on $X_1$.
Assume that $E$ is not $(-\gamma')$-twisted semi-stable.
Then there is an exact sequence
\begin{equation}
0 \to E_1 \to E \to E_2 \to 0
\end{equation}
such that $E_i$, $i=1,2$ are torsion free sheaves with
\begin{equation}
\frac{(c_1(E_i),\widehat{L})}{\rk E_i}=\frac{(c_1(E),\widehat{L})}{\rk E}.
\end{equation}
Applying Theorem \ref{thm:mu-semi-stable} to
$E_i^{\vee}$,
we have an exact sequence
\begin{equation}
0 \to F_2 \to F \to F_1 \to 0,
\end{equation}
where $F_i:=\widehat{\Phi}(E_i^{\vee})$ are 
$\sigma_{(\beta,\omega)}$-semi-stable objects of
${\frak A}_{(\beta,\omega)}$ with $\phi_{(\beta,\omega)}(F_i)=
\Phi_{(\beta,\omega)}(w_1)$.
By using the computations
in the proof of Lemma \ref{lem:C_-:1},
it is easy to see that $F$ is $\sigma_{(\beta',\omega')}$-semi-stable
 if and only if 
$E$ is $(-\gamma')$-twisted semi-stable.
\end{proof}

Summarizing our argument, we have
\begin{thm}\label{thm:isom}
Let $X$ be an abelian surface, or a $K3$ surface with
$\NS(X)={\Bbb Z}H$.
Assume that there is a smooth projective surface 
$X_1$ which is the moduli space $M_{(\beta,\omega)}(w_1)$.
Let $v$ be a Mukai vector with $d_\beta(v)>0$.  
Assume that $(\beta,\omega)$ satisfies
\begin{equation}
\langle d_\beta(v)w_1-d_\beta(w_1)v,e^{\beta+\sqrt{-1}\omega} \rangle=0.
\end{equation}
If $(\beta,\omega)$ does not belong to any wall for $v$, 
or $w_1$ defines a wall $W_{w_1}$ for $v$ and $(\beta,\omega)$ 
belongs to exactly one wall $W_{w_1}$,
then 
${\cal M}_{(\beta',\omega')}(v) \cong {\cal M}_{\widehat{L}}(u)^{ss}$,
where $u=\Phi(v)$ for $(\beta',\omega') \in {\cal C}_+$
and $u=\Phi(v)^{\vee}$ for $(\beta',\omega') \in {\cal C}_-$.
In particular, there is a coarse moduli scheme
$M_{(\beta',\omega')}(v)$ and 
$M_{(\beta',\omega')}(v) \cong \overline{M}_{\widehat{L}}(u)$. 
\end{thm}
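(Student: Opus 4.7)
The plan is to assemble the preceding two subsections into an isomorphism of moduli stacks realised by the twisted Fourier--Mukai transform. Because ${\cal M}_{(\beta',\omega')}(v)$ is constant on each chamber by \cite[Lemma~3.1.6]{MYY}, I first reduce to the convenient representatives already used in the lemmas: $(\beta,\omega_+)$ or $(\beta,\omega_-)$ when $d_\gamma(v)\ne 0$, and a perturbation $(\beta',\omega)$ of $(\beta,\omega)$ when $d_\gamma(v)=0$. The candidate isomorphism then sends $E \mapsto \widehat{\Phi}(E)$ on ${\cal C}_+$ and $E \mapsto \widehat{\Phi}(E^{\vee})$ on ${\cal C}_-$, with formal inverses $F\mapsto \Phi(F)$ and $F\mapsto \Phi(F)^{\vee}$ respectively.

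On the level of objects almost all of the work is already done. In the ${\cal C}_+$ case, Lemmas~\ref{lem:C_+:1} and \ref{lem:C_+:2} show that if $E$ is an $\widehat{L}$-twisted Gieseker semi-stable sheaf with $v(E)=\Phi(v)$ then $\widehat{\Phi}(E)$ lies in ${\cal M}_{(\beta',\omega')}(v)$, while Lemma~\ref{lem:C_+:3} combined with Theorem~\ref{thm:mu-semi-stable} yields the converse. The ${\cal C}_-$ case is the dual statement, obtained from Lemmas~\ref{lem:C_-:1} and \ref{lem:C_-:2}. Mutual inversion follows from $\widehat{\Phi}\circ\Phi \simeq \id$ (up to the explicit shift that is absorbed by our sign convention in $\Phi = \Phi_{X\to X_1}^{{\bf E}^{\vee}[1]}$) together with the translation of Mukai vectors computed in Lemma~\ref{lem:FM-w_1}.

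To promote this to an isomorphism of stacks I would work with the relative twisted Fourier--Mukai transform $\Phi_S$ applied to an $S$-flat family $F_S$ of $\sigma_{(\beta',\omega')}$-semi-stable objects. Fibrewise, Theorem~\ref{thm:mu-semi-stable}(2) asserts that $\Phi(F_s)$ (or $\Phi(F_s)^{\vee}$) is a coherent sheaf concentrated in a single cohomological degree; by cohomology and base change this globalises to an $S$-flat family of $\widehat{L}$-twisted semi-stable sheaves on $X_1\times S$, producing morphisms of stacks that are pointwise mutually inverse. The final claim on coarse moduli schemes then follows from the standard projectivity of $\overline{M}_{\widehat{L}}(u)$, using the ampleness of $\widehat{L}$ from Lemma~\ref{lem:FM-w_1}.

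The main obstacle is precisely this last family-version step: one must upgrade the fibrewise vanishings of Theorem~\ref{thm:mu-semi-stable}(2) to a uniform statement guaranteeing that $\Phi_S(F_S)$ is itself a single sheaf on $X_1\times S$, and not merely a complex whose cohomology happens to be concentrated in one degree on each closed fibre. Cohomology and base change together with the semicontinuity of $\Hom({\bf E}_{|X\times\{x_1\}},F_s)$ handles the generic case, but some additional care is needed on the wall $W_{w_1}$: properly semi-stable objects can acquire extra homomorphisms to or from ${\bf E}_{|X\times\{x_1\}}$, and these must be tracked via the Jordan--H\"older-type filtration supplied by Lemma~\ref{lem:JHF}.
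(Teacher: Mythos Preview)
Your proposal is essentially the paper's own argument: the theorem is stated with the preamble ``Summarizing our argument, we have'', and the content is exactly the combination of Lemmas~\ref{lem:C_+:1}, \ref{lem:C_+:2}, \ref{lem:C_+:3} for ${\cal C}_+$ and Lemmas~\ref{lem:C_-:1}, \ref{lem:C_-:2} for ${\cal C}_-$, together with the reduction to the special representatives $(\beta,\omega_\pm)$ or $(\beta',\omega)$ that you describe.

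The obstacle you single out, however, is not real. You are not working on the wall $W_{w_1}$ but strictly inside the chamber ${\cal C}_+$ (respectively ${\cal C}_-$). For any $\sigma_{(\beta',\omega')}$-semi-stable $F$ with $(\beta',\omega')\in{\cal C}_+$ one has $\phi_{(\beta',\omega')}(F)>\phi_{(\beta',\omega')}(w_1)$, and since ${\bf E}_{|X\times\{x_1\}}$ is $\sigma_{(\beta',\omega')}$-stable of phase $\phi_{(\beta',\omega')}(w_1)$ this forces $\Hom(F,{\bf E}_{|X\times\{x_1\}})=0$ for \emph{every} $x_1\in X_1$ and every such $F$, properly semi-stable or not. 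By Theorem~\ref{thm:mu-semi-stable}(2)(a) this means $\Phi(F)$ is a torsion-free sheaf in a single cohomological degree, uniformly over the family; cohomology and base change then goes through without further work, and Lemma~\ref{lem:JHF} is not needed here. The dual vanishing $\Hom({\bf E}_{|X\times\{x_1\}},F)=0$ holds for the same reason on ${\cal C}_-$, giving Theorem~\ref{thm:mu-semi-stable}(2)(b). In the remaining case where $(\beta,\omega)$ lies on no wall for $v$, Corollary~\ref{cor:mu-semi-stable} gives both vanishings simultaneously.
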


\begin{rem}
Let $E$ be an $\alpha$-twisted sheaf on $X_1$ such that
$v(E)=w$ is a primitive Mukai vector. 
Assume that ${\bf F}$ is a family of 
$(-\alpha)$-twisted sheaves consisting of
$(-\gamma')$-twisted stable sheaves
of Mukai vector $r_1 e^{-\gamma'}$
with respect to $\widehat{L}$.
Then for a sufficiently large $n$,  
$E$ is a stable $\alpha$-twisted sheaf on $X_1$
if and only if  
$\Phi_{X_1 \to X}^{{\bf F}}(E(n \widehat{L}))$ is a stable
sheaf on $X$.  
In particular, ${\cal M}_{\widehat{L}}^{\gamma'}(w)^{ss}$ 
is isomorphic to
${\cal M}_{L}^{\gamma}(u)^{ss}$, 
where $u:=v(\Phi_{X_1 \to X}^{{\bf E}}(E(n \widehat{L})))$.
Moreover ${\cal M}_{L}^{\gamma}(u)^{ss}$ consists of $\mu$-stable
sheaves.
Then the second cohomology group
of $M_{(\beta',\omega')}(v)$ is described by $v^{\perp}$ and 
the albanese map $M_{(\beta',\omega')}(v) \to X \times \widehat{X}$.
\end{rem}

\begin{NB}
\subsection{Converse direction.}

Let $F$ be an object of
${\frak A}_{(\beta,\omega)}$
such that
$\phi_{(\beta,\omega)}(F)=\phi_{(\beta,\omega)}(w_1)$ 
and $F$ is semi-stable with respect to
$(\beta,\omega)$.
Assume that
$\Hom(F,{\bf E}_{|X \times \{x_1 \}})=0$
for all $x_1 \in X_1$.
Then $E:=\Phi_{X \to X_1}^{{\bf E}^{\vee}[1]}(F)$
is a torsion free sheaf on $X_1$.
Assume that $E$ is not $\mu$-semi-stable 
with respect to $\widehat{L}$.
Then we have an exact sequence
\begin{equation}\label{eq:destab1}
0 \to E_1 \to E \to E_2 \to 0
\end{equation}
such that $E_1$ and $E_2$ are torsion free sheaves.
Then we have an exact triangle
$$
\widehat{\Phi}(E_1) \to \widehat{\Phi}(E) \to \widehat{\Phi}(E_1)
\to \widehat{\Phi}(E_1)[1].
$$
\begin{defn}
For a complex $F \in {\bf D}(X)$,
let $^\beta H^p(F) \in {\frak A}_{(\beta,\omega)}$
denotes the $p$-th cohomology group of $F$
with respect to the $t$-structure associated to
${\frak A}_{(\beta,\omega)}$. 
\end{defn}
By Lemma \ref{lem:range-of-phi},
we get
\begin{equation}
\begin{split}
\phi(w_1)-1<
\phi_{\min}(\widehat{\Phi}(E_1)) \leq \phi_{\max}(\widehat{\Phi}(E_1))
<\phi(w_1)+1\\
\phi(w_1)-1<
\phi_{\min}(\widehat{\Phi}(E_2)) \leq \phi_{\max}(\widehat{\Phi}(E_2))
<\phi(w_1)+1.
\end{split}
\end{equation}
In particular, $^\beta H^p(\widehat{\Phi}(E_i))=0$, $i=1,2$ except for
$p=-1,0,1$.
Since $\widehat{\Phi}(E)=F \in {\frak A}_{(\beta,\omega)}$, 
we have $^\beta H^{-1}(\widehat{\Phi}(E_1))=
{^\beta H^1}(\widehat{\Phi}(E_2))=0$ and 
an exact sequence in ${\frak A}_{(\beta,\omega)}$:
$$
0 \to {^\beta H^{-1}}(\widehat{\Phi}(E_2)) \to
{^\beta H^0}(\widehat{\Phi}(E_1)) \overset{\psi}{\to} F \to
{^\beta H^0}(\widehat{\Phi}(E_2))\to {^\beta H^1}(\widehat{\Phi}(E_1))
\to 0.
$$
Then $\phi(^\beta H^{-1}(\widehat{\Phi}(E_2)))<\phi(w_1)$.
By the semi-stability of $F$, 
$\phi(\im \psi) \leq \phi(w_1)$.
Hence $\phi(^\beta H^0(\widehat{\Phi}(E_1))) \leq \phi(w_1)$.
Since $0 \geq \phi(^\beta H^1(\widehat{\Phi}(E_1))[-1])>\phi(w_1)-1$,
$\phi(w_1)-1<\phi(\widehat{\Phi}(E_1)) \leq \phi(w_1)$.
Assume that $\phi(\widehat{\Phi}(E_1))<\phi(w_1)$.
By Lemma \ref{lem:slope-phase}, we have
$$
\frac{(c_1(E_1),\widehat{L})}{\rk E_1}<
\frac{(c_1(E),\widehat{L})}{\rk E}.
$$
If $\phi(\widehat{\Phi}(E_1))=\phi(w_1)$,
then we have $^\beta H^{-1}(\widehat{\Phi}(E_2))=
{^\beta H^1}(\widehat{\Phi}(E_1))=0$.
ccIn this case, we have
\begin{equation}\label{eq:properly-E_1}
\frac{(c_1(E_1),\widehat{L})}{\rk E_1}=
\frac{(c_1(E),\widehat{L})}{\rk E}.
\end{equation}
Hence $E$ is $\mu$-semi-stable with respect to 
$\widehat{L}$, and
$E$ is $\mu$-stable, if $F$ is stable with respect to $(\beta,\omega)$. 
Therefore we get the following result.
\begin{lem}
Let $F$ be an object of
${\frak A}_{(\beta,\omega)}$
such that
$\phi_{(\beta,\omega)}(F)=\phi_{(\beta,\omega)}(w_1)$ 
and $F$ is semi-stable with respect to
$(\beta,\omega)$.
Assume that
$\Hom(F,{\bf E}_{|X \times \{x_1 \}})=0$
for all $x_1 \in X_1$.
Then $E:=\Phi(F)$ is a $\mu$-semi-stable torsion free sheaf on $X_1$.
Moreover if $F$ is stable, then $E$ is $\mu$-stable.
\end{lem}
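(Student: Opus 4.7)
The plan is to deduce both parts from the structural Theorem~\ref{thm:mu-semi-stable}, using the non-existence of walls to force the ``exceptional'' pieces produced by Lemma~\ref{lem:JHF} (and its dual) to vanish. Quotients $S$-equivalent to $\bigoplus{\bf E}_{|X\times\{x_i\}}$ would constitute a wall of type $w_1$ for $v$, and a sub-object analysis on $X_1$, pushed through $\widehat{\Phi}$, would produce a $\sigma_{(\beta,\omega)}$-destabilisation witnessing a wall for $v$. Both situations contradict the respective hypotheses.

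For part~(1), I start from a $\sigma_{(\beta,\omega)}$-semi-stable object $F$ with $v(F)=v$ (so $\phi(F)=\phi(v)=\phi(w_1)$), and apply Lemma~\ref{lem:JHF} to obtain
\begin{equation*}
0\to F_1\to F\to F_2\to 0
\end{equation*}
with $F_2$ $S$-equivalent to $\bigoplus_{i=1}^{k}{\bf E}_{|X\times\{x_i\}}$ and $\Hom(F_1,{\bf E}_{|X\times\{x_1\}})=0$ for every $x_1$. If $k\ge 1$, then $v(F_1)=v-kw_1$ and $v(F_2)=kw_1$ realise a proper semi-stable destabilisation of type $w_1$, which forces $w_1$ to define a wall for $v$ and contradicts the hypothesis; hence $k=0$ and $\Hom(F,{\bf E}_{|X\times\{x_1\}})=0$ for all $x_1$. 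The vanishing $\Hom({\bf E}_{|X\times\{x_1\}},F)=0$ is then obtained symmetrically, by extracting a maximal subobject of $F$ that is $S$-equivalent to a direct sum of ${\bf E}_{|X\times\{x_j\}}$ and ruling out its non-triviality by the same no-wall argument. With both vanishings, Theorem~\ref{thm:mu-semi-stable}(2) implies that $\Phi(F)$ and $\Phi(F)^{\vee}$ are torsion free, so $\Phi(F)$ is locally free; combining with the triangle in Theorem~\ref{thm:mu-semi-stable}(1), the $0$-dimensional piece $E_2$ must vanish, and $\Phi(F)=E_1$ is $\mu$-semi-stable.

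For part~(2), set $E:=\Phi(F)$, locally free and $\mu$-semi-stable by~(1). Given $0\to E_1\to E\to E_2\to 0$ with both $E_i$ torsion free of the same $\widehat{L}$-slope as $E$, Proposition~\ref{prop:mu-semi-stable} implies that $F_i:=\widehat{\Phi}(E_i)$ are $\sigma_{(\beta,\omega)}$-semi-stable objects of phase $\phi(w_1)$, and they assemble into $0\to F_1\to F\to F_2\to 0$. If $v(E_i)\notin{\Bbb Q}v(E)$, then since $\widehat{\Phi}$ is an equivalence of Mukai lattices $v(F_i)\notin{\Bbb Q}v$; combined with $\phi(F_i)=\phi(F)$, this exhibits a wall for $v$ of type $v(F_1)$ passing through $(\beta,\omega)$, contradicting the stronger hypothesis of~(2). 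Hence $v(E_1),v(E_2)\in{\Bbb Q}v(E)$, which is exactly the condition making $E$ $\delta$-twisted semi-stable for every $\delta$.

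The delicate point will be verifying that the destabilising vectors actually satisfy the numerical wall conditions (a)--(c) of Definition~\ref{defn:wall:stability} demanded by the phrase ``defines a wall''. In part~(1), condition~(a), $0<d_\beta(kw_1)<d_\beta(v)$, reduces to $d_\beta(w_1)>0$ (built into the set-up via $r_1(\gamma-\beta,H)>0$) together with the proper-subobject bound $k\,d_\beta(w_1)<d_\beta(v)$; (b) and (c) are immediate from $\langle w_1^2\rangle=0$. In part~(2), the analogous verification for $v(F_1)=v(\widehat{\Phi}(E_1))$ must be carried out via Lemma~\ref{lem:FM-w_1} and the slope/phase dictionary of Lemma~\ref{lem:slope-phase} to translate the equality of $\widehat{L}$-slopes on $X_1$ into the equality $\phi_{(\beta,\omega)}(F_1)=\phi_{(\beta,\omega)}(F)$ along with the required bound on $d_\beta(F_1)$; these estimates are routine but form the technical core of the argument.
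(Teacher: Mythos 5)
Your proposal does not prove the stated lemma; it proves a different result (essentially Corollary~\ref{cor:mu-semi-stable}), and at the one point where the lemma's actual conclusion is needed you obtain it by quoting Theorem~\ref{thm:mu-semi-stable}, which is circular. The lemma fixes no Mukai vector $v$ and carries no wall hypothesis: it takes an arbitrary $\sigma_{(\beta,\omega)}$-semi-stable $F$ of phase $\phi(w_1)$ for which $\Hom(F,{\bf E}_{|X\times\{x_1\}})=0$ is \emph{assumed}, and asks you to show that $\Phi(F)$ is a torsion free $\mu$-semi-stable sheaf, and $\mu$-stable when $F$ is stable. Your part (1) spends its effort deriving the vanishings $\Hom(F,{\bf E}_{|X\times\{x_1\}})=\Hom({\bf E}_{|X\times\{x_1\}},F)=0$ from a no-wall hypothesis that is not in the statement (the first vanishing is given, and the second is neither assumed nor needed -- the lemma claims $E$ is torsion free, not locally free), and then concludes $\mu$-semi-stability by citing Theorem~\ref{thm:mu-semi-stable}(1). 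But the forward direction of Theorem~\ref{thm:mu-semi-stable}(1) is exactly Proposition~\ref{prop:mu-semi-stable2}, of which this lemma is the special case where the $0$-dimensional piece vanishes; it cannot be used as input. Your part (2) establishes $\delta$-twisted semi-stability of $\Phi(F)$ under a no-wall hypothesis, which is Corollary~\ref{cor:mu-semi-stable}(2), not the lemma's claim that $E$ is $\mu$-stable when $F$ is $\sigma_{(\beta,\omega)}$-stable; that claim is nowhere addressed.

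What is actually required is the destabilization argument of Proposition~\ref{prop:mu-semi-stable2}. The hypothesis $\Hom(F,{\bf E}_{|X\times\{x_1\}})=0$ already makes $E:=\Phi(F)$ a torsion free sheaf. If $E$ were not $\mu$-semi-stable with respect to $\widehat{L}$, choose $0\to E_1\to E\to E_2\to 0$ with $E_i$ torsion free and $\mu_{\widehat{L}}(E_1)>\mu_{\widehat{L}}(E)$. Applying $\widehat{\Phi}$ and Lemma~\ref{lem:range-of-phi} confines the phases of $\widehat{\Phi}(E_i)$ to $(\phi(w_1)-1,\phi(w_1)+1)$, so only ${}^{\beta}H^{p}$ for $p=-1,0,1$ survive; since $\widehat{\Phi}(E)=F\in{\frak A}_{(\beta,\omega)}$, the long exact sequence of ${}^{\beta}H^{\bullet}$ produces a map ${}^{\beta}H^{0}(\widehat{\Phi}(E_1))\to F$, and the semi-stability of $F$ forces $\phi(\widehat{\Phi}(E_1))\le\phi(w_1)$; Lemma~\ref{lem:slope-phase} converts this into $\mu_{\widehat{L}}(E_1)\le\mu_{\widehat{L}}(E)$, a contradiction. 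The equality case of the same computation, in which the extreme cohomologies vanish and $\widehat{\Phi}(E_1)$ becomes an honest subobject of $F$ of the same phase, is what yields $\mu$-stability of $E$ when $F$ is stable. None of this uses a wall condition, and none of it can be outsourced to Theorem~\ref{thm:mu-semi-stable}.
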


If $E$ is not $\gamma'$-twisted semi-stable, then
there is an exact sequence \eqref{eq:destab1} with  
\eqref{eq:properly-E_1}.
Then $\widehat{\Phi}(E_1)$ is a subobject of 
$F$ with $\phi(\widehat{\Phi}(E_1))=\phi(w_1)$.
Since $\phi_{(\beta',\omega')}(\widehat{\Phi}(E_2)) \geq 
\phi_{(\beta',\omega')}(F)> \phi_{(\beta',\omega')}(w_1)$,
we see that
$$
\frac{\chi(E_2(-\gamma'))}{\rk E_2} \geq \frac{\chi(E(-\gamma'))}{\rk E}.
$$
Therefore $E$ is semi-stable.

 Let $F$ be a semi-stable object of
${\frak A}_{(\beta,\omega)}$ such that
$\Hom({\bf E}_{|X \times \{x_1 \}},F)=0$
for all $x_1 \in X_1$.
Then we have an exact sequence in ${\frak A}_{(\beta,\omega)}$
$$
0 \to F_1 \to F \to F_2 \to 0
$$
such that $\Hom(F_1,{\bf E}_{|X \times \{x_1 \}})=0$
for all $x_1 \in X_1$ and
$F_2$ is $S$-equivalent to
$\oplus_i {\bf E}_{|X \times \{x_i \}}$, $x_i \in X_1$.
Then $\Phi_{X \to X_1}^{{\bf E}^{\vee}[1]}(F_1)$ is a torsion free
$\mu$-semi-stable sheaf with respect to $\widehat{L}$,
$\Phi_{X \to X_1}^{{\bf E}^{\vee}[2]}(F_2)$ is a 0-dimensional sheaf on $X_1$
and we have an exact triangle
$$
\Phi_{X \to X_1}^{{\bf E}^{\vee}[1]}(F_1) \to 
\Phi_{X \to X_1}^{{\bf E}^{\vee}[1]}(F) \to
\Phi_{X \to X_1}^{{\bf E}^{\vee}[2]}(F_2)[-1] \to
\Phi_{X \to X_1}^{{\bf E}^{\vee}[1]}(F_1)[1].
$$
Since $\Hom({\bf E}_{|X \times \{x_1 \}},F)=0$ for all $x_1 \in X_1$,
$E:=\Phi_{X \to X_1}^{{\bf E}^{\vee}[1]}(F)^{\vee}$ is a torsion free 
sheaf.
Assume that there is an exact sequence of torsion free sheaves
$$
0 \to E_1 \to E
\to E_2 \to 0
$$
such that 
\begin{equation}\label{eq:properly-E_1:2}
\frac{(c_1(E_1),\widehat{L})}{\rk E_1}=
\frac{(c_1(E),\widehat{L})}{\rk E}.
\end{equation}
Since 
$E_i^*:={\cal H}om(E_i,{\cal O}_{X_1})$ are $\mu$-semi-stable sheaves
with 
\begin{equation}\label{eq:properly-E_1:2}
\frac{(c_1(E_i^*)-\gamma' \rk E_i^*,\widehat{L})}{\rk E_i^*}=
\frac{(c_1(E^{\vee})-\gamma' \rk E^{\vee},\widehat{L})}{\rk E^{\vee}}
=\frac{\lambda}{|r_1|},
\end{equation}
Proposition \ref{prop:mu-semi-stable} implies that
$\Phi_{X_1 \to X}^{{\bf E}[1]}({\cal H}om(E_i,{\cal O}_{X_1}))$
are semi-stable objects of ${\frak A}_{(\beta,\omega)}$. 

By the exact triangle
$$
{\cal H}om(E_i,{\cal O}_{X_1}) \to E_i^{\vee} \to 
{\cal E}xt^1(E_i,{\cal O}_{X_1})[-1] \to
{\cal H}om(E_i,{\cal O}_{X_1})[1],
$$
we have an exact sequence
$$
0 \to \Phi_{X_1 \to X}^{{\bf E}[1]}({\cal H}om(E_i,{\cal O}_{X_1}))
 \to \Phi_{X_1 \to X}^{{\bf E}[1]}(E_i^{\vee}) \to 
\Phi_{X_1 \to X}^{{\bf E}}({\cal E}xt^1(E_i,{\cal O}_{X_1})) \to
0.
$$
Thus we have an exact sequence
$$
0 \to \Phi_{X_1 \to X}^{{\bf E}[1]}(E_2^{\vee}) \to
F \to \Phi_{X_1 \to X}^{{\bf E}[1]}(E_1^{\vee})
\to 0
$$
of semi-stable objects with the same phase $\phi(w_1)$.
It is easy to see that $F$ is semi-stable with respect to
$(\beta',\omega')$ if and only if 
$E$ is $-\gamma'$-twisted semi-stable.

\end{NB}

\subsection{The projectivity of $M_{(\beta,\omega)}(w_1)$ for a $K3$ surface.}
\label{subsect:proj}

\begin{NB}
For a Mukai vector $v$,
there is $\lambda \in {\Bbb R}$ such that
$\phi_{(\beta,\omega)}(e^{\beta+\lambda H})=\phi_{(\beta,\omega)}(v)$.
Indeed 
\begin{equation}
\frac{d_\beta(e^{\beta+\lambda H}) a_\beta-
d_\beta a_\beta(e^{\beta+\lambda H})}
{d_\beta(e^{\beta+\lambda H}) r-d_\beta r_\beta(e^{\beta+\lambda})}=
\frac{\lambda\left(a_\beta-d_\beta \frac{(H^2)\lambda}{2}\right)}
{\lambda r-d_\beta}.
\end{equation}
Then we can find $\lambda \in {\Bbb R}$ satisfying 
$$
\frac{\lambda\left(a_\beta-d_\beta \frac{(H^2)\lambda}{2}\right)}
{\lambda r-d_\beta}=\frac{(\omega^2)}{2}.
$$
By the density of ${\Bbb Q}$ in ${\Bbb R}$,
we can find $\omega'$ which is sufficiently close to $\omega$
and
$\phi_{(\beta,\omega')}(e^{\beta+\lambda H})=
\phi_{(\beta,\omega')}(v)$, $\lambda \in {\Bbb Q}$.
\end{NB}
Fix a primitive isotropic Mukai
vector $w_1:=r_1 e^\gamma$ with $d_\beta(w_1)>0$.
In this subsection, let us study the assumption on $M_{(\beta,\omega)}(w_1)$
in Theorem \ref{thm:isom}.

\begin{lem}\label{lem:classify-(-2)}
Let $X$ be a $K3$ surface with $\Pic(X)={\Bbb Z}H$.
\begin{enumerate}
\item[(1)]
There is at most one Mukai vector $v$ such that
$\langle v^2 \rangle=-2$ and
$\phi_{(\beta,\omega)}(v)=\phi_{(\beta,\omega)}(w_1)$.
\item[(2)]
There is a unique primitive isotropic Mukai vector $v$ such that
$\phi_{(\beta,\omega)}(v)=\phi_{(\beta,\omega)}(w_1)$ and
$\langle v,w_1 \rangle \ne 0$.
\end{enumerate}
\end{lem}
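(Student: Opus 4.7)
The plan is to reduce both assertions to a Diophantine analysis on the rank-$2$ integer sublattice of the Mukai lattice consisting of vectors whose $Z$-image is real-proportional to $Z_{(\beta,\omega)}(w_{1})$.

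I would first introduce $\Lambda := \{v \in H^{*}(X,\Bbb{Z})_{\alg} \mid \Sigma_{(\beta,\omega)}(v,w_{1}) = 0\}$, the set of vectors $v$ with $Z_{(\beta,\omega)}(v) \in \Bbb{R} \cdot Z_{(\beta,\omega)}(w_{1})$. Since $\Sigma_{(\beta,\omega)}(\cdot, w_{1})$ is a nonzero $\Bbb{Q}$-linear form on the rank-$3$ lattice $H^{*}(X,\Bbb{Z})_{\alg}$ (rank $3$ using $\NS(X) = \Bbb{Z}H$), $\Lambda$ is a rank-$2$ saturated sublattice. The condition $\phi_{(\beta,\omega)}(v) = \phi_{(\beta,\omega)}(w_{1})$ is equivalent to $v \in \Lambda$ plus the positivity $Z_{(\beta,\omega)}(v) \in \Bbb{R}_{>0} \cdot Z_{(\beta,\omega)}(w_{1})$, a half-plane condition in $\Lambda \otimes \Bbb{R}$.

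Next I would show the Mukai form restricts to a hyperbolic form on $\Lambda$. The rational vector $v_{0} := e^{\beta} + \tfrac{(\omega^{2})}{2}\varrho_{X}$ satisfies $Z_{(\beta,\omega)}(v_{0}) = 0$, hence $v_{0} \in \Lambda \otimes \Bbb{Q}$. Writing $\gamma - \beta = \lambda H$ (possible because $\NS(X) = \Bbb{Z}H$), a direct Mukai-pairing computation gives $\langle w_{1}, v_{0} \rangle = -\tfrac{r_{1}}{2}\bigl(\lambda^{2}(H^{2}) + (\omega^{2})\bigr) \neq 0$. Hence $w_{1}$ is not in the radical of $\Lambda$, the form is non-degenerate, and, containing the isotropic $w_{1}$, it is necessarily hyperbolic of signature $(1,1)$. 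Since $w_{1}$ is primitive I extend it to an integer basis $\{w_{1}, b\}$ of $\Lambda$; setting $p := \langle w_{1}, b \rangle \neq 0$ and $q := \langle b, b \rangle \in 2\Bbb{Z}$ (evenness of the Mukai form on a $K3$ surface), for $v = x w_{1} + y b$ we get $\langle v^{2} \rangle = y(2xp + yq)$.

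Then I would solve the two arithmetic problems and apply positivity. For (1), $y(2xp + yq) = -2$ forces $y \in \{\pm 1, \pm 2\}$; since $q$ is even, $2xp + yq$ is always even, so $y = \pm 2$ is impossible, and $y = \pm 1$ determines $x = \mp(q+2)/(2p)$ uniquely. This leaves at most two integer candidates, interchanged by $v \leftrightarrow -v$. For (2), $y(2xp + yq) = 0$ with $y \neq 0$ (otherwise $\langle v, w_{1}\rangle = 0$) forces $2xp + yq = 0$, whose integer solutions form a $\Bbb{Z}$-line admitting exactly two primitive generators $\pm v_{\mathrm{iso}}$; both satisfy $\langle v_{\mathrm{iso}}, w_{1}\rangle \neq 0$ because $p \neq 0$. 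In both cases the sign ambiguity is then broken by the positivity clause, since $Z_{(\beta,\omega)}(-v) = -Z_{(\beta,\omega)}(v)$, giving at most one solution in (1) and exactly one in (2).

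The main obstacle is establishing non-degeneracy of the Mukai form on $\Lambda$ --- equivalently, that $w_{1}$ is not in its radical --- and this is precisely where the hypothesis $\NS(X) = \Bbb{Z}H$ enters, via the simple form $\gamma - \beta = \lambda H$ that makes $\langle w_{1}, v_{0}\rangle$ visibly non-zero.
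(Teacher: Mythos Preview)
Your argument is correct and follows the same overall strategy as the paper: the phase condition cuts the rank-$3$ Mukai lattice down to a rank-$2$ sublattice, and one then solves the quadratic constraint $\langle v^{2}\rangle=-2$ (resp.\ $=0$) there, with the sign fixed by the positivity $Z_{(\beta,\omega)}(v)\in\Bbb{R}_{>0}Z_{(\beta,\omega)}(w_{1})$. The difference in execution is that the paper chooses a basis $u_{0}=w_{1},u_{1},u_{2}$ of the full lattice and invokes Lemma~\ref{lem:Sigma-L} (the formula expressing $\Sigma_{(\beta,\omega)}(v,w_{1})$ in terms of the auxiliary class $L$) to obtain the linear relation determining $x_{1}/x_{2}$, whereas you work intrinsically in the kernel $\Lambda$ and establish its hyperbolicity via the rational vector $v_{0}=e^{\beta}+\tfrac{(\omega^{2})}{2}\varrho_{X}$ with $Z_{(\beta,\omega)}(v_{0})=0$. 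Your route is slightly more self-contained, avoiding the class $L$; the paper's route ties in with the machinery used in \S\ref{sect:mu-semi-stable}.

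One small point to tighten for the existence assertion in (2): you should check $Z_{(\beta,\omega)}(v_{\mathrm{iso}})\neq 0$, so that one of $\pm v_{\mathrm{iso}}$ actually carries the phase $\phi_{(\beta,\omega)}(w_{1})$. This is immediate from your own setup: the locus $\{Z_{(\beta,\omega)}=0\}$ in $\Lambda\otimes\Bbb{Q}$ is the line $\Bbb{Q}v_{0}$, and $\langle v_{0}^{2}\rangle=-(\omega^{2})\neq 0$, so $v_{\mathrm{iso}}$ cannot be proportional to $v_{0}$.
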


\begin{proof}
We set $u_0:=w_1$. 
We take a primitive Mukai vector $u_1 \in H^*(X,{\Bbb Z})_{\alg}$
such that 
\begin{equation}
u_0^{\perp} \cap H^*(X,{\Bbb Z})_{\alg}={\Bbb Z}u_0
\oplus {\Bbb Z}u_1
\subset {\Bbb Q}e^\gamma \oplus {\Bbb Q}(H+(H,\gamma)\varrho_X).
\end{equation}
We next take $u_2 \in H^*(X,{\Bbb Z})_{\alg}$ such that
\begin{equation}
-\langle u_2,w_1 \rangle=\min \{-\langle u,w_1 \rangle>0|
u \in H^*(X,{\Bbb Z})_{\alg} \}.
\end{equation}
Since $H^*(X,{\Bbb Z})_{\alg}$ is generated by
$u_0^{\perp} \cap H^*(X,{\Bbb Z})_{\alg}$ and $u_2$,
$$
H^*(X,{\Bbb Z})_{\alg}={\Bbb Z}u_0
\oplus {\Bbb Z}u_1 \oplus {\Bbb Z}u_2.
$$
(1)
If $v=\sum_{i=0}^2 x_i u_i$, $x_i \in {\Bbb Z}$ is a $(-2)$-vector, then
we get 
$$
-2=\langle (x_1 u_1+x_2 u_2)^2 \rangle+2\langle x_1 u_1+x_2 u_2,x_0 u_0
\rangle,
$$
which implies that
$\gcd(x_1,x_2)=1$.
On the other hand,
if $\phi_{(\beta,\omega)}(v)=\phi_{(\beta,\omega)}(u_0)$, then
Lemma \ref{lem:Sigma-L} implies that
$-r_1(c_1(v(-\gamma)),L)+\lambda \langle v,u_0 \rangle=0$.
Hence 
$$
\lambda x_2 \langle u_2,u_0 \rangle=r_1(x_1(c_1(u_1(-\gamma)),L)
+x_2 (c_1(u_2(-\gamma)),L)),
$$ 
which implies that
$x_1/x_2$ is determined.
Thus $v$ is unique up to sign.
Since $d_\beta(v)>0$, $v$ is unique.

The proof of (2) is similar.
\end{proof}

\begin{NB}
For a given rational number $\lambda$,
there is an isotropic Mukai vector $w_1$ such that
$\phi_{(\beta,\omega)}(w_1)=\phi_{(\beta,\omega)}(e^{\beta+\lambda H})$
and $(\beta,\omega)$ is not contained in a wall for $w_1$.
If $X$ is an abelian surface, then
there is no wall for $w_1$.
Hence we assume that $X$ is a $K3$ surface.
\end{NB}

\begin{prop}\label{prop:assumption-K3}
Let $X$ be a $K3$ surface with $\Pic(X)={\Bbb Z}H$.
For $\gamma \in {\Bbb Q}H$, 
let $w_1:=r_1 e^\gamma$ be the primitive isotropic Mukai vector
with $d_\beta(w_1)>0$.  
Let $v$ be the unique primitive isotropic Mukai vector such that
$\phi_{(\beta,\omega)}(v)=\phi_{(\beta,\omega)}(w_1)$ and
$\langle v,w_1 \rangle \ne 0$.
\begin{enumerate}
\item[(1)]
Assume that ${\cal M}_{(\beta,\omega)}(w_1)$ contains a properly
$\sigma_{(\beta,\omega)}$-semi-stable object.
\begin{enumerate}
\item[(i)]
There is a $\sigma_{(\beta,\omega)}$-stable object
$E_1$ such that $\langle v(E_1)^2 \rangle=-2$ and
$\phi_{(\beta,\omega)}(v(E_1))=\phi_{(\beta,\omega)}(w_1)$.
\item[(ii)]
$\langle w_1,v(E_1) \rangle<0$ and
$v=w_1+\langle w_1,v(E_1) \rangle v(E_1)$.
\item[(iii)]
${\cal M}_{(\beta,\omega)}(v)$
consists of stable objects. 
\end{enumerate}
\item[(2)]
If there is no $(-2)$-vector with the phase
$\phi_{(\beta,\omega)}(w_1)$, then
${\cal M}_{(\beta,\omega)}(w_1)$ and ${\cal M}_{(\beta,\omega)}(v)$
consist of stable objects. 
\end{enumerate}
\end{prop}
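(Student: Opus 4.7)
My plan is to study the sublattice $\Lambda \subset H^*(X,{\Bbb Z})_{\alg}$ consisting of Mukai vectors $u$ with $\phi_{(\beta,\omega)}(u)=\phi_{(\beta,\omega)}(w_1)$. This is a rank-$2$ sublattice of the rank-$3$ algebraic Mukai lattice, and under the stated existence of $v$, the Mukai form on $\Lambda$ has signature $(1,1)$ with exactly two isotropic lines ${\Bbb Q}w_1$ and ${\Bbb Q}v$, which pair nontrivially.

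For part~(1)(i), I will take a properly $\sigma_{(\beta,\omega)}$-semi-stable $F$ with $v(F)=w_1$ and a Jordan--H\"older filtration with distinct stable factors $E_1,\dots,E_k$ of multiplicities $m_i\geq 1$, so that $\sum m_i v(E_i)=w_1$ and the total length is $\geq 2$. Standard K3 arguments give $\langle v(E_i)^2\rangle\geq -2$, while for $i\neq j$, stability plus Serre duality yield $\Hom(E_i,E_j)=\Ext^2(E_i,E_j)=0$, hence $\langle v(E_i),v(E_j)\rangle\geq 0$. Assuming no $v(E_i)$ is a $(-2)$-vector, the expansion
\[
0=\langle w_1^2\rangle=\sum_i m_i^2\langle v(E_i)^2\rangle+2\sum_{i<j}m_i m_j\langle v(E_i),v(E_j)\rangle
\]
is a sum of nonnegative terms, each forced to vanish; so all $v(E_i)$ are isotropic and pairwise orthogonal, and in the signature-$(1,1)$ lattice $\Lambda$ they must lie on a single isotropic line. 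The line ${\Bbb Q}w_1$ is excluded because $Z_{(\beta,\omega)}(v(E_i))$ being a positive real multiple of $Z_{(\beta,\omega)}(w_1)$ forces $v(E_i)=w_1$ and hence $F=E_i$ stable, contradicting proper semi-stability; the line ${\Bbb Q}v$ is excluded since $\sum m_i v(E_i)=w_1$ would give $w_1\in{\Bbb Q}v$, contradicting $\langle v,w_1\rangle\ne 0$. So some $v(E_1)$ is a $(-2)$-vector, and Lemma~\ref{lem:classify-(-2)}(1) makes it unique.

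For (ii), set $s:=v(E_1)$ and $m:=m_1$. Isolating the self-term $m^2\langle s^2\rangle=-2m^2$, the $i=1$ cross terms $2m\sum_{i\neq 1}m_i\langle s,v(E_i)\rangle$, and the purely nonnegative remainder in the identity above yields $\sum_{i\neq 1}m_i\langle s,v(E_i)\rangle\leq m$, so
\[
\langle w_1,s\rangle=-2m+\sum_{i\neq 1}m_i\langle s,v(E_i)\rangle\leq -m<0.
\]
The reflection $s_s(w_1)=w_1+\langle w_1,s\rangle s$ is then primitive isotropic, has phase $\phi_{(\beta,\omega)}(w_1)$, and satisfies $\langle s_s(w_1),w_1\rangle=\langle w_1,s\rangle^2>0$; Lemma~\ref{lem:classify-(-2)}(2) identifies it with $v$. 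For (iii), if $F'\in{\cal M}_{(\beta,\omega)}(v)$ were properly semi-stable, applying (i)--(ii) to $F'$ (the same $\Lambda$ works since $\phi_{(\beta,\omega)}(v)=\phi_{(\beta,\omega)}(w_1)$) would produce a stable $(-2)$-factor with Mukai vector $s$ satisfying $\langle v,s\rangle<0$; but directly $\langle v,s\rangle=\langle w_1,s\rangle(1+\langle s^2\rangle)=-\langle w_1,s\rangle>0$, a contradiction. Part~(2) is then immediate: the hypothetical $(-2)$-factor manufactured in (1)(i) for either ${\cal M}_{(\beta,\omega)}(w_1)$ or ${\cal M}_{(\beta,\omega)}(v)$ cannot exist.

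The main obstacle is the strict negativity $\langle w_1,s\rangle<0$ of part~(ii): it does not follow from the mere existence of a $(-2)$-factor but requires the combinatorial identity together with $\langle v(E_i),v(E_j)\rangle\geq 0$ for distinct stable factors of the same phase, both used sharply. A secondary technicality is excluding stable factors with $v(E_i)\in{\Bbb Q}w_1$ (a priori an $n$-fold multiple of $w_1$ with $n\geq 2$ might occur), which is dispatched by the positivity of $Z_{(\beta,\omega)}$ on the heart.
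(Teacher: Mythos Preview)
Your approach via the Jordan--H\"older filtration at $(\beta,\omega)$ is genuinely different from the paper's, which instead perturbs to a nearby generic $\omega'$ and takes the Harder--Narasimhan filtration there. In the paper's argument the HN filtration is automatically two-step (the factors have $\langle v(F_i/F_{i-1})^2\rangle\le 0$, so by Lemma~\ref{lem:classify-(-2)} their Mukai vectors lie in $\{n_1 v_1,\,n_2 v_2\}$ with $v_1$ isotropic and $v_2$ the $(-2)$-vector), and the isotropic factor \emph{is} $v$ by construction---no reflection is needed. Your JH approach avoids moving off the wall and gives a very clean (i), but pays for it in (ii).

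There is a real gap in your (ii): you assert that $s_s(w_1)=w_1+\langle w_1,s\rangle s$ ``has phase $\phi_{(\beta,\omega)}(w_1)$'' and then invoke Lemma~\ref{lem:classify-(-2)}(2). But $Z_{(\beta,\omega)}(s_s(w_1))=|Z(w_1)|-|\langle w_1,s\rangle|\,|Z(s)|$ times the unit vector in the $w_1$-ray, and your inequality $|\langle w_1,s\rangle|\ge m$ together with $|Z(w_1)|\ge m|Z(s)|$ does not determine the sign; a priori you could have $s_s(w_1)=-v$. The fix is to show directly that $\langle w_1,v\rangle>0$: for two non-proportional primitive isotropic vectors $u_1,u_2\in\Lambda$ with $d_\beta(u_i)>0$, writing $a_i=d_i^2(H^2)/(2r_i)$ and imposing $\phi_{(\beta,\omega)}(u_1)=\phi_{(\beta,\omega)}(u_2)$ forces $(\omega^2)/2=-d_1 d_2(H^2)/(2r_1 r_2)$, hence $r_1 r_2<0$, and then
\[
\langle u_1,u_2\rangle=-\frac{(H^2)(r_1 d_2-r_2 d_1)^2}{2r_1 r_2}>0.
\]
With $\langle w_1,v\rangle>0$ in hand, your reflection argument goes through and $s_s(w_1)=v$. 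The paper never faces this issue because there $v$ appears as the Mukai vector of an actual HN factor in the heart, so its phase is automatic.
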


\begin{proof}
(1)
If $(\beta,\omega)$ belongs to a wall, then
we take an object $E$ of 
${\cal M}_{(\beta,\omega)}(w_1)$
such that $E$ is not $\sigma_{(\beta,\omega')}$-semi-stable, 
where $\omega'$ is sufficiently close to
$\omega$.
We take the Harder-Narasimhan filtration of $E$ with 
respect to $(\beta,\omega')$:
$$
0 \subset F_1 \subset F_2 \subset \cdots \subset F_s=E.
$$
Then $\phi_{(\beta,\omega)}(F_i/F_{i-1})=\phi_{(\beta,\omega)}(E)$
and $\langle v(F_i/F_{i-1})^2 \rangle \leq 0$.
By Lemma \ref{lem:classify-(-2)},
there are primitive Mukai vectors
$v_1, v_2 \in H^*(X,{\Bbb Z})_{\alg}$
such that 
$$
\phi_{(\beta,\omega)}(v_i)=
\phi_{(\beta,\omega)}(w_1),\quad
\langle v_1^2 \rangle=0,\quad
\langle v_2^2 \rangle=-2.
$$
Since $v(F_i/F_{i-1}) \in {\Bbb Z}v_1$ or $v(F_i/F_{i-1}) \in {\Bbb Z}v_2$,
we have
$$
s=2,\quad
\{ v(F_1),v(F_2/F_1) \}=\{n_1 v_1,n_2 v_2 \},
\quad n_1,n_2 >0.
$$
Then $0=\langle (n_1 v_1+n_2 v_2)^2 \rangle=
2n_2(n_1 \langle v_1,v_2 \rangle-n_2)$.
Hence $n_2=n_1 \langle v_1,v_2 \rangle$.
Since $w_1=v(E)=n_1 v_1+n_2 v_2$ is primitive,
$n_1=1$ and $w_1=v_1+\langle v_1,v_2 \rangle v_2$.
Since $n_2=n_1 \langle v_1,v_2 \rangle$,
$\langle w_1,v_1 \rangle=-\langle v_2,v_1 \rangle <0$.
Let $E_1$ be the $\sigma_{(\beta,\omega)}$-stable object
with $v(E_1)=v_2$. Then (i) holds.
Since $v=v_1$, (ii) holds.
Since $\langle v,v(E_1) \rangle>0$, the assumption of (1) 
does not hold for 
${\cal M}_{(\beta,\omega)}(v)$. Thus (iii) holds.   

By the proof of (1), (2) also follows.
\end{proof}

The following is a special case of \cite[Thm. 6.12]{Ha:1}, 
if the moduli space is fine.
 
\begin{prop}\label{prop:isotropic-K3}
Let $X$ be a $K3$ surface with $\Pic(X)={\Bbb Z}H$.
Let $v$ be a primitive isotropic Mukai vector with
$d_\beta(v)>0$.
If $(\beta,\omega)$ does not belong to any wall for $v$,
then there is a coarse moduli space $M_{(\beta,\omega)}(v)$
which is a $K3$ surface. 
Moreover there is a Fourier-Mukai transform $\Psi:{\bf D}(X) \to {\bf D}(X)$
inducing an isomorphism ${\cal M}_{(\beta,\omega)}(v) \to
{\cal M}_H(\Psi(v))^{ss}$.  
\end{prop}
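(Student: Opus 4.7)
The plan is to reduce the claim to Mukai's classical theorem that, for any primitive isotropic Mukai vector $u$ and any $u$-general polarization $H$ on a $K3$ surface $X$, the Gieseker moduli space $M_H(u)$ is a smooth projective $K3$ surface. The reduction will proceed by constructing a suitable Fourier--Mukai auto-equivalence $\Psi$ of ${\bf D}(X)$ that identifies $\sigma_{(\beta,\omega)}$-semistable objects of Mukai vector $v$ with $H$-Gieseker semistable sheaves of Mukai vector $\Psi(v)$.

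First, I would establish that ${\cal M}_{(\beta,\omega)}(v)$ consists only of stable objects, so that it admits a coarse moduli scheme. By Lemma~\ref{lem:classify-(-2)}, at most one $(-2)$-Mukai vector shares the phase of $v$, and since $(\beta,\omega)$ avoids every wall for $v$, the argument of Proposition~\ref{prop:assumption-K3} rules out any properly $\sigma_{(\beta,\omega)}$-semistable object of Mukai vector $v$. Thus ${\cal M}_{(\beta,\omega)}(v)$ is a $K3$-surface candidate at the stacky level; what remains is to identify it with something classical.

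Second, I would construct $\Psi$ by iterated application of Theorem~\ref{thm:isom}. Choose a path in the parameter space from $(\beta,\omega)$ to a point $(\beta^\sharp,\omega^\sharp)$ in the large-volume Gieseker chamber, where $\sigma_{(\beta^\sharp,\omega^\sharp)}$-semistability for $\Psi(v)$ coincides with $H$-Gieseker semistability. Along this path, each wall crossed is of type $W_{w_1}$ for some primitive isotropic vector $w_1$ whose phase matches the Mukai vector transported to that point; the existence and uniqueness of such $w_1$ are furnished by Lemma~\ref{lem:classify-(-2)}(2). Applying Theorem~\ref{thm:isom} at each wall yields a Fourier--Mukai transform between adjacent chambers. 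By Mukai's theorem and the Torelli theorem for Picard-rank-one $K3$ surfaces, every intermediate moduli space $M_{(\beta',\omega')}(w_1)$ must be isomorphic to $X$ itself, so each elementary transform is an auto-equivalence of ${\bf D}(X)$. The composition yields the desired $\Psi$, and iterating the isomorphisms furnished by Theorem~\ref{thm:isom} gives ${\cal M}_{(\beta,\omega)}(v) \cong {\cal M}_H(\Psi(v))^{ss}$; Mukai's theorem then supplies the $K3$-surface structure on the target.

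The main obstacle is controlling the wall-crossing procedure: ensuring that only finitely many walls are crossed, that each wall is defined by a primitive isotropic Mukai vector whose moduli space exists as an intermediate $K3$, and that these intermediate $K3$ surfaces remain isomorphic to $X$ so that $\Psi$ lives in ${\rm Aut}({\bf D}(X))$ rather than escaping to a Fourier--Mukai partner. The Picard-rank-one hypothesis is essential at every one of these points: it keeps the algebraic Mukai lattice at rank three (which sharply constrains the possible walls), and via the Torelli theorem forces every intermediate $K3$ moduli to be identified with $X$.
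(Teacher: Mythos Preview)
Your approach has a genuine circularity that you do not address. Theorem~\ref{thm:isom} requires as a hypothesis that $X_1 = M_{(\beta,\omega)}(w_1)$ already exists as a smooth projective surface. For a $K3$ surface this hypothesis is exactly what Proposition~\ref{prop:isotropic-K3} asserts, so invoking Theorem~\ref{thm:isom} at a wall deep inside the $(\beta,\omega)$-plane presupposes the result for the auxiliary isotropic vector $w_1$ at that point. Your appeal to ``Mukai's theorem and the Torelli theorem'' to conclude that the intermediate Bridgeland moduli $M_{(\beta',\omega')}(w_1)$ are $K3$ surfaces isomorphic to $X$ is precisely the gap: Mukai's theorem is about \emph{Gieseker} moduli, not Bridgeland moduli, and bridging the two is the content of the proposition. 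One could try to bootstrap inductively from the Gieseker chamber, but at each wall you would need the other isotropic vector of the same phase to already satisfy the proposition at that point; you give no argument that this terminates or is well-founded. A second difficulty is that Theorem~\ref{thm:isom} lands you in the Gieseker world on $X_1$, so iterating it along a path requires translating back to Bridgeland stability each time, which you do not explain.

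The paper avoids all of this by using a different kind of autoequivalence at each wall. By Proposition~\ref{prop:assumption-K3}, every wall for an isotropic $v$ is witnessed by a $(-2)$-vector $v(E_1)$ of the same phase, with $E_1$ a $\sigma_{(\beta,\omega)}$-stable spherical object. The associated spherical twist $\Phi_{E_1}$ is an autoequivalence of ${\bf D}(X)$ constructed directly from $E_1$, with no intermediate moduli space required. The paper shows that $\Phi_{E_1}^{\pm 2}$ carries ${\cal M}_{(\beta,\omega_+)}(v)$ isomorphically to ${\cal M}_{(\beta,\omega_-)}(v)$, so the claim propagates across each wall starting from the large-volume chamber where it holds classically. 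This sidesteps the circularity entirely: the spherical twist lives in $\operatorname{Aut}({\bf D}(X))$ from the outset.
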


\begin{proof}
By \cite[Prop. 1.6.10]{MYY},
we may assume that ${\frak A}_{(\beta,\omega)}
={\frak A}_{(\beta,tH)}$, $t >1 $.
If $(\omega^2) \gg 0$, then the claim is obvious
by \cite[Cor. 2.2.9]{MYY}.
Hence it is sufficient to show that 
the claims are preserved 
under the wall-crossing. 
Assume that $(\beta,\omega)$ belongs to a wall $W$ and
the claim holds for $(\beta,\omega_+)$ with $(\omega_+^2)>(\omega^2)$.
In a neighborhood of $\omega$, we take $\omega_-$ with
$(\omega_-^2)<(\omega^2)$.
By Proposition \ref{prop:assumption-K3} (1) (i),
there is a $\sigma_{(\beta,\omega)}$-stable object
$E_1$ with $\phi_{(\beta,\omega)}(E_1)=\phi_{(\beta,\omega)}(v)$
and $\langle v(E_1)^2 \rangle=-2$.
We note that $E_1$ satisfies
$\Hom(E_1,E_1)={\frak k}$ and $\Hom(E_1,E_1[p])=0$ $(p \ne 0,2)$.
Then we have an autoequivalence
$\Phi_{E_1}:{\bf D}(X) \to {\bf D}(X)$ (\cite[sect. 1]{MYY}).
For $E \in {\cal M}_{(\beta,\omega_+)}(v)$,
by Proposition \ref{prop:assumption-K3} (1) (ii),
we have
$\langle v,v(E_1) \rangle<0$.

We first assume that 
$\phi_{(\beta,\omega_+)}(E_1)<\phi_{(\beta,\omega_+)}(v)$.
Then $\Hom(E,E_1)=0$.
By the proof of Proposition \ref{prop:assumption-K3},
we have an exact sequence
$$
0 \to E_1^{\oplus n} \to E \to F \to 0
$$
where $n= -\langle v,v(E_1) \rangle$ and $F$ is a 
$\sigma_{(\beta,\omega)}$-stable object with
isotropic Mukai vector.
Then we see that $\Ext^1(E_1,E)=0$ and
$\Phi_{E_1}(E)=F$.
Applying $\Phi_{E_1}$ again, we have an exact sequence
$$
0 \to F \to \Phi_{E_1}(F) \to \Ext^1(E_1,F) \otimes E_1 \to 0.
$$  
Then we see that $E':=\Phi_{E_1}(F)$ is a $\sigma_{(\beta,\omega_-)}$-
semi-stable object.
Conversely for a $\sigma_{(\beta,\omega_-)}$-semi-stable
object $E'$, we have an exact sequence
$$
0 \to F \to E' \to E_1^{\oplus n} \to 0
$$
where $n= -\langle v,v(E_1) \rangle$ and $F$ is a 
$\sigma_{(\beta,\omega)}$-stable object.
Then we see that $n=-\langle v,v(E_1) \rangle$
and $\Phi_{E_1}^{-1}(E')=F$.
Moreover $\Phi_{E_1}^{-1}(F) \in {\cal M}_{(\beta,\omega_+)}(v)$.
Therefore we have a sequence of isomorphisms
$$
{\cal M}_{(\beta,\omega_+)}(v) \overset{\Phi_{E_1}}{\to}
{\cal M}_{(\beta,\omega)}(u) \overset{\Phi_{E_1}}{\to}  
{\cal M}_{(\beta,\omega_-)}(v),
$$
where $u=v+\langle v,v(E_1) \rangle v(E_1)$.
If 
$\phi_{(\beta,\omega_+)}(E_1)>\phi_{(\beta,\omega_+)}(v)$,
then by a similar argument,
we have 
a sequence of isomorphisms
$$
{\cal M}_{(\beta,\omega_+)}(v) \overset{\Phi_{E_1}^{-1}}{\to}
{\cal M}_{(\beta,\omega)}(u) \overset{\Phi_{E_1}^{-1}}{\to}  
{\cal M}_{(\beta,\omega_-)}(v).
$$
Therefore the claims hold for $(\beta,\omega_-)$. 
\end{proof}

\begin{NB}
Assume that $\widehat{L}$ is general with respect to $v$.
If ${\cal M}_{\widehat{L}}(v)^{ss}$ does not contain
locally free sheaves, then
$$
v=
\begin{cases}
(1,0,-l)e^D,\\
(l,0,-1)e^D,\\
(\rk v_0)v_0-\varrho_{X_1},
\end{cases}
$$
where $l \in {\Bbb Z}_{>0}$, $D \in \NS(X_1)$ and $\langle v_0^2 \rangle=-2$.
Moreover ${\cal M}_{\widehat{L}}(v)^{ss}$ contains
$\mu$-stable locally free sheaves,
unless $v=lv_0-a \varrho_X$, $a \rk v_0<2l$.
Hence $M_{\widehat{L}}(v)$ is birationally
equivalent to
$M_{\widehat{L}}(v^{\vee})$, unless $v=lv_0-a \varrho_X$, $a \rk v_0<2l$.

Assume that 
$v=lv_0-a \varrho_X$, $a \rk v_0<2l$.
Then $\langle v^2 \rangle=2l(-l+\rk v_0 a) \geq -2$.
Hence $l':=-l+\rk v_0 a > 0$ or $\langle v^2 \rangle=0,-2$.
Assume that $l'>0$.
Then $v_0=r_0 e^{\xi}+\frac{1}{r_0}\varrho_{X_1}$ and
$v=l r_0 e^{\xi}-b \varrho_{X_1}$,
where $r_0=\rk v_0$ and $\xi=c_1(v_0)/r_0$.
By \cite[Thm. 2.3]{Y:twist1},
we have an isomorphism
${\cal M}_{\widehat{L}}(v)^{ss} \cong 
{\cal M}_{\widehat{L}}(l' v_0^{\vee}-a \varrho_{X_1})^{ss}$.
Since $2l'<a \rk v_0$, 
${\cal M}_{\widehat{L}}(l' v_0^{\vee}-a \varrho_{X_1})^{ss}$
contains a $\mu$-stable locally free sheaf. 
Thus we have a birational map
$M_{\widehat{L}}(l' v_0^{\vee}-a \varrho_{X_1}) \cdots \to 
M_{\widehat{L}}(l' v_0-a \varrho_{X_1})$.
Combining the isomorphism
$
{\cal M}_{\widehat{L}}(l' v_0-a \varrho_{X_1})^{ss} \cong
{\cal M}_{\widehat{L}}(l v_0^{\vee}-a \varrho_{X_1})^{ss},
$
we have a birational map
$M_{\widehat{L}}(v) \cdots \to M_{\widehat{L}}(v^{\vee})$.
Therefore we get the following corollary
\begin{cor}
Assume that $(\beta,\omega)$ belongs to exactly 
one wall $W_{w_1}$. Then
the birational type of 
$M_{(\beta',\omega')}(v)$ does not depend on a general choice of
$(\beta',\omega')$.
\end{cor}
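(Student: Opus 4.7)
The plan is to reduce the birational comparison across the single wall $W_{w_1}$ to a birational comparison of two Gieseker moduli spaces on the Fourier--Mukai partner $X_1$, and then to produce the required birational map by sheaf-theoretic dualization. The setup gives us exactly two adjacent chambers $\mathcal{C}_+$ and $\mathcal{C}_-$ in a neighborhood of $(\beta,\omega)$, and within each chamber the moduli stack is constant by \cite[Lemma~3.1.6]{MYY}, so the claim is really a statement comparing $\mathcal{C}_+$ and $\mathcal{C}_-$.

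First, apply Theorem \ref{thm:isom} on both sides of the wall. For $(\beta',\omega') \in \mathcal{C}_+$ we obtain $M_{(\beta',\omega')}(v) \cong \overline{M}_{\widehat{L}}(u)$ with $u = \Phi(v)$, and for $(\beta',\omega') \in \mathcal{C}_-$ we obtain $M_{(\beta',\omega')}(v) \cong \overline{M}_{\widehat{L}}(u^{\vee})$. The problem is thus reduced to producing a birational equivalence $\overline{M}_{\widehat{L}}(u) \dashrightarrow \overline{M}_{\widehat{L}}(u^{\vee})$. If an open dense subset of $\mathcal{M}_{\widehat{L}}(u)^{ss}$ consists of $\mu$-stable locally free sheaves, then $E \mapsto E^{\vee} = \mathcal{H}om(E,\mathcal{O}_{X_1})$ sends this locus isomorphically onto an open dense subscheme of $\mu$-stable locally free sheaves of Mukai vector $u^{\vee}$, delivering the desired birational map.

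The second step is to classify the exceptional Mukai vectors for which the generic sheaf is not locally free. Using standard results for $K3$ moduli, the failure happens only for $u$ of the form $(1,0,-l)e^D$, $(l,0,-1)e^D$, or $l v_0 - a\varrho_{X_1}$ for a $(-2)$-vector $v_0$ with $a\,\rk v_0 < 2l$. In the first two cases $\overline{M}_{\widehat{L}}(u)$ is a twist of a Hilbert scheme of points, and tensoring by $\mathcal{O}_{X_1}(-2D)$ exhibits a tautological isomorphism with $\overline{M}_{\widehat{L}}(u^{\vee})$.

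The main obstacle is the remaining case $u = l v_0 - a\varrho_{X_1}$ with $a\,\rk v_0 < 2l$. Here one invokes \cite[Thm.~2.3]{Y:twist1} to obtain an isomorphism $\mathcal{M}_{\widehat{L}}(u)^{ss} \cong \mathcal{M}_{\widehat{L}}(u_1)^{ss}$, where $u_1 := l' v_0^{\vee} - a\varrho_{X_1}$ and $l' := a\,\rk v_0 - l$. Since $a\,\rk v_0 < 2l$ translates to $2l' < a\,\rk v_0^{\vee}$, the auxiliary Mukai vector $u_1$ admits a $\mu$-stable locally free representative generically, so dualization gives a birational map $\overline{M}_{\widehat{L}}(u_1) \dashrightarrow \overline{M}_{\widehat{L}}(u_1^{\vee})$. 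Applying the isomorphism of \cite[Thm.~2.3]{Y:twist1} a second time, between $u_1^{\vee}$ and $u^{\vee}$, yields the composite birational map $\overline{M}_{\widehat{L}}(u) \dashrightarrow \overline{M}_{\widehat{L}}(u^{\vee})$. The delicate point is verifying that these reflection-type isomorphisms exist at the relevant polarization $\widehat{L}$ and that the composite is well-defined on the intersection of the $\mu$-stable loci; once this is checked, the birational invariance of $M_{(\beta',\omega')}(v)$ across $W_{w_1}$ follows.
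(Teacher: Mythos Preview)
Your proposal is correct and follows essentially the same route as the paper: reduce via Theorem~\ref{thm:isom} to showing $\overline{M}_{\widehat{L}}(u)$ and $\overline{M}_{\widehat{L}}(u^{\vee})$ are birational, use dualization on the $\mu$-stable locally free locus generically, and in the exceptional case $u=l v_0-a\varrho_{X_1}$ with $a\,\rk v_0<2l$ apply the reflection isomorphism of \cite[Thm.~2.3]{Y:twist1} twice to pass through $l' v_0^{\vee}-a\varrho_{X_1}$. The only minor deviation is that the paper absorbs the first two exceptional forms into the generic case (they still admit $\mu$-stable locally free sheaves unless they also match the third form), whereas you dispose of them directly via tensoring by $\mathcal{O}_{X_1}(-2D)$; both are fine.
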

\end{NB}

\section{Applications}\label{sect:application}

\subsection{The projectivity of some moduli spaces.}
\label{subsect:projectivity}

In this subsection, we assume that $X$ is an abelian surface
or a $K3$ surface with $\Pic(X)={\Bbb Z}H$.
Let
$$
v:=r e^\beta+a_\beta \varrho_X+(d_\beta H+D)+(d_\beta H+D,\beta)\varrho_X,
\; D \in \NS(X)_{\Bbb Q} \cap H^{\perp}
$$
be a Mukai vector with $d_\beta(v)>0$ and
$\langle v^2 \rangle>0$.
Then $d_\beta>\frac{2a_\beta}{(H^2)d_\beta}r$.
Assume that 
$$
w_1:=r_1 e^{\beta+\frac{d_1}{r_1} H}=
r_1 e^\beta+d_1 (H+(H,\beta)\varrho_X)+a_1 \varrho_X.
$$
Then $a_1=\frac{d_1^2(H^2)}{2 r_1}$.
Hence 
\begin{equation}
\frac{d_1 a_\beta-d_\beta a_1}{d_1 r-d_\beta r_1}=
\frac{\frac{d_1}{r_1}\left(a_\beta-d_\beta \frac{(H^2)d_1}{2r_1}\right)}
{\frac{d_1}{r_1}r-d_\beta}.
\end{equation}

We set $f(x):=\frac{x\left(a_\beta-d_\beta \frac{(H^2)}{2}x \right)}
{xr-d_\beta}, x \in {\Bbb R}$.
Then $f(x)$ defines a bijection
$f:D \to {\Bbb R}_{>0}$, where
\begin{equation}
D:=
\begin{cases}
(x_0,\frac{d_\beta}{r}),& r > 0\\
(x_0,\infty),& r  \leq 0,
\end{cases}
\end{equation}
$x_0:=\max\{\frac{2a_\beta}{(H^2)d_\beta},0\}$. 
For $x \in D$, 
we take a unique
element $\omega_x \in {\Bbb R}_{>0}H$ such that
$\frac{(\omega_x^2)}{2}=f(x)$.
We define an injective map
\begin{equation}
\begin{matrix}
\iota_\beta:& {\Bbb R}_{>0}H & \to & {\frak H}_{\Bbb R}\\
& \omega & \mapsto & (\eta,\omega),
\end{matrix}
\end{equation} 
where $\beta=bH+\eta, \eta \in H^\perp \cap \NS(X)_{\Bbb Q}$.
Let $I \subset {\Bbb R}_{>0}H$ be the pull-back of a chamber
in ${\frak H}_{\Bbb R}$ by $\iota_\beta$.
We set 
$$
J:=\{x \in {\Bbb R}|\omega_x \in I \}.
$$
We take a rational number $\lambda \in J$. 
Then $\phi_{(\beta,\omega_\lambda)}(w_1)=\phi_{(\beta,\omega_\lambda)}(v)$
and $\omega_\lambda$ belongs to the same chamber as that of $\omega$,
where $w_1=r_1 e^{\beta+\lambda H}$ is a primitive Mukai vector
with $r_1>0$.
Hence ${\cal M}_{(\beta,\omega)}(v)^{ss}=
{\cal M}_{(\beta,\omega_\lambda)}(v)^{ss}$.
Replacing $w_1$ if necessary,
we have a primitive isotropic Mukai vector
$w_1$ such that $\phi_{(\beta,\omega_\lambda)}(w_1)=
\phi_{(\beta,\omega_\lambda)}(v)$ and 
$X_1:=M_{(\beta,\omega_\lambda)}(w_1)$
is a smooth projective surface
(Proposition \ref{prop:assumption-K3}). 
Let $\Phi$ be the Fourier-Mukai transform
in \S \ref{sect:mu-semi-stable}.
Applying Corollary \ref{cor:mu-semi-stable}, we have an isomorphism
${\cal M}_{(\beta,\omega_\lambda)}(v)
\to {\cal M}_{\widehat{H}}(u)^{ss}$,
where $u=\Phi(v)$.
\begin{NB}
Old version:
Let $E_1$ be a subobject of $E \in {\cal M}_{(\beta,\omega_{p/q})}(v)$
with
$\phi_{(\beta,\omega_{p/q})}(E_1)=\phi_{(\beta,\omega_{p/q})}(E)$.
Then $E_1$ and $E/E_1$ are semi-stable objects with 
$\phi_{(\beta,\omega_{p/q})}(E_1)=\phi_{(\beta,\omega_{p/q})}(E/E_1)$.
Since $\omega_{p/q}$ is not contained in a wall,
we have $\chi(E_1(nH))=\rk E_1 \frac{\chi(E(nH))}{\rk E}$,
$\rk E \ne 0$
for all $n$ or
$\chi(E_1(nH))=(c_1(E_1),H) \frac{\chi(E(nH))}{(c_1(E),H)}$,
$\rk E=0$ for all $n$.
\begin{NB2}
Assume that $\iota_\beta({\Bbb R}_{>0}) \subset {\frak H}_{\Bbb R}$
is not contained in a wall, where $\beta=bH+\eta$, 
$\eta \in H^{\perp}$.
Then we have $v(E_1) \in {\Bbb Q}v(E)$.
\end{NB2}
We take a small $\delta \in {\Bbb Q}$ such that
$\omega_{p/q+\delta} \in I$ and 
$\phi_{(\beta,\omega_{p/q+\delta})}(w_1)<
\phi_{(\beta,\omega_{p/q+\delta})}(v)$. 
Applying Theorem \ref{thm:isom}, we have an isomorphism
${\cal M}_{(\beta,\omega_{p/q+\delta})}(v)
\to {\cal M}_{\widehat{H}}(w)^{ss}$,
where $w=\Phi(v)$.
\end{NB}

\begin{NB}
Assume that $X$ is defined over a field $k$.
Assume that there is a $k$-rational point $P$ of $X$
and $H$ is defined over $k$.
Then $X_1:=M_H(w_1)$ is defined over $k$ and there is a universal family
${\bf E}$ on $X \times X_1$ as a twisted sheaf:
Let $Q$ be an open subscheme of a quot-scheme such that 
$X_1$ is a GIT-quotient of $Q$ by a linear group $G$. 
Let ${\cal Q}$ be the universal quotient parametrized $Q$.
Then $W:=p_*({\cal Q} \otimes F)$ is a $G$-linearized vector bundle on $Q$,
and $P/G:={\Bbb P}(W)/G$ is a projective bundle over $X_1$.
Then ${\cal Q}(-\lambda)$ descend to $X \times P/G$.
\end{NB}

\begin{NB}
The following is replaced by Corollary \ref{cor:mu-semi-stable}.
\begin{rem}
Under the assumption, 
all $\mu$-semi-stable sheaves $F$ with $v(F)=w$ are locally free and
are contained in 
${\cal M}_{\widehat{H}}^{\widehat{\gamma}}(w)^{ss}$:

Let $F_1$ be a subsheaf of $F$ such that $F_2:=F/F_1$ is torsion free
sheaf with $\frac{(c_1(F_2),\widehat{H})}{\rk F_2}=
\frac{(c_1(F),\widehat{H})}{\rk F}$.
Then 
$Z_{(\beta,\omega)}(\widehat{\Phi}(F_1)), 
Z_{(\beta,\omega)}(\widehat{\Phi}(F_2)) 
\in {\Bbb R}Z_{(\beta,\omega)}(w_1)$.
Since 
\begin{equation}
\begin{split}
d_\gamma(v(\widehat{\Phi}(F_i)))(H^2)=&
\langle v(\widehat{\Phi}(F_i)),H+(H,\gamma)\varrho_X \rangle \\
=& \langle v(F_i),\widehat{H}+(H,\widehat{\gamma})\varrho_{X_1} \rangle,
\end{split}
\end{equation}
the signatures of 
$\langle v(\widehat{\Phi}(F_i)),H+(H,\gamma)\varrho_X \rangle$, $i=1,2$
are the same. Since $d_\gamma(v)=d_\beta-r \frac{d_1}{r_1}>0$,
we get $d_\gamma(v(\widehat{\Phi}(F_i)))>0$ for $i=1,2$.
Thus 
$$
Z_{(\beta,\omega)}(\widehat{\Phi}(F_1)), 
Z_{(\beta,\omega)}(\widehat{\Phi}(F_2)) 
\in {\Bbb R}_{>0} Z_{(\beta,\omega)}(v)=
{\Bbb R}_{>0} Z_{(\beta,\omega)}(w_1).
$$
Then $E_1:=\widehat{\Phi}(F_1)$ and $E_2:=\widehat{\Phi}(F_2)$ are
semi-stable objects with respect to $(\beta,\omega)$
and we get an exact sequence
\begin{equation}
0 \to E_1 \to \widehat{\Phi}(F) \to E_2 \to 0.
\end{equation}
Since $(\beta,\omega)$ does not belong to any wall,
we have $v(E_1), v(E_2) \in {\Bbb Q}v$.
Then we have $v(F_1), v(F_2) \in {\Bbb Q}w$.
Therefore $F$ is $\widehat{\gamma}$-twisted semi-stable.
\end{rem}
\end{NB}

\begin{thm}\label{thm:projective}
Let $X$ be an abelian surface or a $K3$ surface with $\Pic(X)={\Bbb Z}H$.
Assume that $(\beta,\omega)$ is general.
There is a coarse moduli scheme $M_{(\beta,\omega)}(v)$
which is isomorphic to the projective scheme 
$\overline{M}_{\widehat{H}}(u)$, where $u=\Phi(v)$.
\end{thm}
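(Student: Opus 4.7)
The plan is to reduce the claim directly to Theorem~\ref{thm:isom} (or its corollary Corollary~\ref{cor:mu-semi-stable}) by exhibiting a primitive isotropic Mukai vector $w_1$ whose phase matches that of $v$ and whose moduli space $X_1$ is a smooth projective surface. Essentially all the work has already been done in the preceding discussion; what remains is to package it carefully and to note that the genericity of $(\beta,\omega)$ makes the wall-crossing issues go away.

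First I would exploit the parametrization set up just before the theorem. The function $f(x)=\frac{x(a_\beta-d_\beta(H^2)x/2)}{xr-d_\beta}$ bijects the interval $D$ onto ${\Bbb R}_{>0}$, so one can rewrite ``$\phi_{(\beta,\omega)}(r_1 e^{\beta+\lambda H})=\phi_{(\beta,\omega)}(v)$'' as a single equation in $\lambda$ and a choice of $\omega_\lambda\in{\Bbb R}_{>0}H$. Pulling back a chamber through $\iota_\beta$ yields an open subset $J\subset D$, and by density we may pick a rational $\lambda\in J$. Setting $w_1:=r_1 e^{\beta+\lambda H}$ with $r_1$ chosen so that $w_1$ is primitive and $r_1>0$, we obtain $\phi_{(\beta,\omega_\lambda)}(w_1)=\phi_{(\beta,\omega_\lambda)}(v)$ while $\omega_\lambda$ stays in the same chamber for stabilities as $\omega$, so that
\begin{equation*}
{\cal M}_{(\beta,\omega)}(v)={\cal M}_{(\beta,\omega_\lambda)}(v).
\end{equation*}

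Next I would establish that $X_1:=M_{(\beta,\omega_\lambda)}(w_1)$ is a smooth projective surface carrying an ample class $\widehat{H}$. In the abelian case this is the moduli of semi-homogeneous sheaves, hence smooth projective by \cite{MYY}. In the K3 case with $\Pic(X)={\Bbb Z}H$, Proposition~\ref{prop:assumption-K3} is the key: if ${\cal M}_{(\beta,\omega_\lambda)}(w_1)$ contains a properly semi-stable object then $(\beta,\omega_\lambda)$ lies on a $(-2)$-wall, and by Proposition~\ref{prop:assumption-K3}(1)(ii) one may replace $w_1$ by the unique other primitive isotropic vector of the same phase (still of the form $r_1'e^{\beta+\lambda' H}$ up to possibly adjusting $\lambda$ inside $J$), after which Proposition~\ref{prop:isotropic-K3} guarantees that $X_1$ is a smooth projective K3 surface. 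In both cases Lemma~\ref{lem:FM-w_1}(2) provides an ample class $\widehat{H}$ on $X_1$ (here the hypothesis $\Pic(X)={\Bbb Z}H$ is precisely what makes every element of the positive cone ample) and a universal (twisted) family ${\bf E}$ giving the Fourier--Mukai transform $\Phi$.

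Finally, since $(\beta,\omega_\lambda)$ lies in a chamber for $v$ by construction, it does not belong to any wall for $v$, and in particular $w_1$ does not define a wall for $v$. Corollary~\ref{cor:mu-semi-stable} then shows that every $\sigma_{(\beta,\omega_\lambda)}$-semi-stable object $F$ with $v(F)=v$ has $\Phi(F)$ a $\delta$-twisted semi-stable locally free sheaf for every $\delta$; combined with Theorem~\ref{thm:isom} applied on the chamber $\mathcal C_+$ containing $(\beta,\omega_\lambda)$, this yields the stack isomorphism ${\cal M}_{(\beta,\omega_\lambda)}(v)\cong {\cal M}_{\widehat{H}}(u)^{ss}$ with $u=\Phi(v)$. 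Passing to coarse moduli and invoking the classical projectivity of $\overline{M}_{\widehat{H}}(u)$ (Gieseker--Maruyama) completes the proof. The one step I expect to require care is the K3 case of the projectivity of $X_1$: we must arrange that no $(-2)$-class shares the phase of $w_1$ at $(\beta,\omega_\lambda)$, which is where the possibility of replacing $w_1$ by its isotropic partner given in Proposition~\ref{prop:assumption-K3}(1)(ii) becomes essential, together with the freedom to perturb $\lambda$ slightly inside the rational points of $J$.
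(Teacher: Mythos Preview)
Your proposal is correct and follows essentially the same route as the paper: the discussion immediately preceding the theorem statement in \S\ref{subsect:projectivity} is the proof, and it proceeds exactly by picking a rational $\lambda\in J$ so that $w_1=r_1 e^{\beta+\lambda H}$ has the same phase as $v$ at $(\beta,\omega_\lambda)$, replacing $w_1$ if necessary via Proposition~\ref{prop:assumption-K3} to make $X_1$ a smooth projective surface, and then applying Corollary~\ref{cor:mu-semi-stable} (note that here $\nu=0$, so $\widehat{L}\in{\Bbb Q}_{>0}\widehat{H}$ and the Gieseker moduli is with respect to $\widehat{H}$). One small imprecision: your invocation of ``the chamber ${\cal C}_+$ containing $(\beta,\omega_\lambda)$'' is slightly off, since ${\cal C}_\pm$ are only defined when $(\beta,\omega)$ lies on the wall $W_{w_1}$; but Theorem~\ref{thm:isom} explicitly covers the case where $(\beta,\omega)$ lies on no wall, and in that case $(\beta',\omega')=(\beta,\omega_\lambda)$ itself works, so this is harmless.
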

In particular, the moduli spaces in \cite{AB} are projective, 
if $\omega$ is general. 
\begin{rem}
Maciocia and Meachan showed the claim for 
$v=1+2H+n\varrho_X$, where $X$ is an abelian surface with
$\NS(X)={\Bbb Z}H$ in
\cite[Thm. 3.1]{MM}.
It is easy to see that
their proof also works for any $v$ and get the same result
for abelian surfaces. 
\end{rem}

\subsection{The dependence of walls on $\beta$.}
\label{subsect:dependence}

We shall study the structure of walls for stabilities
under the deformation of
$\beta$. In this subsection, we assume that $X$ is an abelian surface.
Let us start with the following lemma.
\begin{lem}\label{lem:positivity-of-d}
Assume that non-zero vectors
$$
v_i:=r_i e^\beta+a_i \varrho_X+d_i H+D_i+
(d_i H+D_i,\beta)\varrho_X,\quad D_i \in H^{\perp} \cap \NS(X)_{\Bbb Q}
\quad (i=1,2)
$$
satisfy (1) $\langle v_i^2 \rangle \geq 0$
and
(2) $Z_{(\beta,\omega)}(v_1)$
and $Z_{(\beta,\omega)}(v_2)$ are linearly dependent over ${\Bbb R}$.
Then $d_1 d_2 \langle v_1,v_2 \rangle> 0$ or $d_1=d_2=0$.
\end{lem}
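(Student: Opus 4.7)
The plan is to reduce the claim to a Hodge-index computation on the combination $u := d_2 v_1 - d_1 v_2$, whose $H$-degree is zero by construction. Setting $s := (\omega^2)/2 > 0$, from \eqref{eq:stab_func} the hypothesis $\Sigma_{(\beta,\omega)}(v_1,v_2) = 0$ (which is exactly the $\Bbb R$-linear dependence of $Z_{(\beta,\omega)}(v_1)$ and $Z_{(\beta,\omega)}(v_2)$) translates directly into the single relation
\begin{equation*}
d_1 a_2 - d_2 a_1 \;=\; s\,(d_1 r_2 - d_2 r_1).
\end{equation*}

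First I would dispose of the degenerate case $d_1 d_2 = 0$ with $(d_1,d_2)\neq (0,0)$: say $d_2 = 0$ and $d_1 \neq 0$. Then $\operatorname{Im} Z_{(\beta,\omega)}(v_2) = 0$ while $\operatorname{Im} Z_{(\beta,\omega)}(v_1) \neq 0$, so linear dependence forces $Z_{(\beta,\omega)}(v_2) = 0$, i.e.\ $a_2 = s r_2$. Expanding \eqref{eq:v} directly gives $\langle v_2^2 \rangle = D_2^2 - 2 r_2 a_2 = D_2^2 - (\omega^2) r_2^2$, and since $D_2 \in H^{\perp} \cap \NS(X)_{\Bbb Q}$, the Hodge index theorem gives $D_2^2 \le 0$. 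Together with $(\omega^2) > 0$ and $\langle v_2^2 \rangle \ge 0$ this forces $r_2 = 0 = D_2$, then $a_2 = 0$ and $v_2 = 0$, contradicting $v_2 \neq 0$. Hence either $d_1 = d_2 = 0$ (the second alternative of the conclusion) or both $d_i \neq 0$.

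In the main case where both $d_i \neq 0$, bilinearity applied to $u = d_2 v_1 - d_1 v_2$ yields $d_\beta(u) = 0$, $r_\beta(u) = d_2 r_1 - d_1 r_2$, $D_\beta(u) = d_2 D_1 - d_1 D_2$, and, from the displayed relation, $a_\beta(u) = d_2 a_1 - d_1 a_2 = s\,r_\beta(u)$. The key step is to read off
\begin{equation*}
\langle u^2 \rangle \;=\; D_\beta(u)^2 - 2 r_\beta(u)\,a_\beta(u) \;=\; D_\beta(u)^2 - (\omega^2)\,r_\beta(u)^2 \;\le\; 0,
\end{equation*}
again by Hodge index applied to $D_\beta(u) \in H^{\perp}\cap\NS(X)_{\Bbb Q}$ together with $(\omega^2) > 0$, and to compare with the bilinear expansion $\langle u^2 \rangle = d_2^2 \langle v_1^2 \rangle - 2 d_1 d_2 \langle v_1, v_2 \rangle + d_1^2 \langle v_2^2 \rangle$. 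Hypothesis (1) then yields $2 d_1 d_2 \langle v_1, v_2 \rangle \geq d_2^2 \langle v_1^2 \rangle + d_1^2 \langle v_2^2 \rangle \geq 0$.

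The main obstacle will be promoting this $\geq 0$ to strict positivity. Tracing the equality case shows $\langle v_i^2 \rangle = 0$ for both $i$ and $\langle u^2 \rangle = 0$, the latter forcing $D_\beta(u) = 0$ and $r_\beta(u) = 0$, hence $a_\beta(u) = 0$ and thus $u = 0$, i.e.\ $v_2 = (d_2/d_1) v_1$. Strictness fails precisely when $v_1, v_2$ are proportional isotropic vectors; this degenerate configuration is presumably excluded in the intended application (e.g.\ when the $v_i$ are Mukai vectors of distinct HN factors giving $\Bbb Q$-linearly independent classes), and invoking such a non-proportionality hypothesis completes the proof.
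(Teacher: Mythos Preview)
Your approach is essentially the same as the paper's: the paper quotes from \cite[Lem.~3.1]{MYY} the identity
\[
d_1 d_2 \langle v_1,v_2 \rangle
= -\tfrac{1}{2}\bigl((d_1 D_2-d_2 D_1)^2\bigr)
+ \tfrac{1}{2}d_2^2 \langle v_1^2 \rangle
+ \tfrac{1}{2}d_1^2 \langle v_2^2 \rangle
+ (d_1 r_2-d_2 r_1)(d_1 a_2-d_2 a_1),
\]
which is exactly what one obtains by expanding your $\langle u^2 \rangle$ in two ways, and then substitutes the linear-dependence relation to turn the last term into the nonnegative square $(d_1 r_2 - d_2 r_1)^2 (\omega^2)/2$. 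The core computation is identical; you have simply packaged it via the auxiliary vector $u = d_2 v_1 - d_1 v_2$.

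Where you are in fact more careful than the paper is in the equality case. The paper asserts that $d_1 v_2 = d_2 v_1$ together with ``$d_1 \ne 0$ or $d_2 \ne 0$'' forces $v_1 = 0$ or $v_2 = 0$; but when \emph{both} $d_i \ne 0$ this only yields proportionality, and your observation that the statement as written fails for nonzero proportional isotropic $v_i$ with $d_i \ne 0$ is correct. In every use the paper makes of the lemma this degeneracy is excluded: in the Proposition immediately following one has $v_1 \notin {\Bbb Q}v$ by hypothesis, and later (Lemma~\ref{lem:<,>}) the proportional-isotropic situation is recorded explicitly as the equality case rather than ruled out. So your proposed fix of adding a non-proportionality hypothesis is exactly what is needed and matches the intended applications.
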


\begin{proof}
Since
$Z_{(\beta,\omega)}(v_1)$ and
$Z_{(\beta,\omega)}(v_2)$ are linearly dependent, we have
$$
(d_1 r_2-d_2 r_1)\frac{(\omega^2)}{2}=(d_1 a_2-d_2 a_1).
$$
By \cite[Lem. 3.1]{MYY},
we have
\begin{equation}
\begin{split}
\langle v_1,v_2 \rangle (d_1 d_2)=& 
-\frac{1}{2}((d_1 D_2-d_2 D_1)^2)+
\frac{d_2^2 \langle v_1^2 \rangle}{2}+
\frac{d_1^2 \langle v_2^2 \rangle}{2}+
(d_1 r_2-d_2 r_1)(d_1 a_2-d_2 a_1)\\
=&
-\frac{1}{2}((d_1 D_2-d_2 D_1)^2)+
\frac{d_2^2 \langle v_1^2 \rangle}{2}+
\frac{d_1^2 \langle v_2^2 \rangle}{2}+
(d_1 r_2-d_2 r_1)^2 \frac{(\omega^2)}{2} \geq 0. 
\end{split}
\end{equation}
If the equality holds, then
$d_1 r_2-d_2 r_1=d_1 a_2-d_2 a_1=
d_1 D_2-d_2 D_1=0$.
Thus $d_1 v_2=d_2 v_1$.
If $d_1 \ne 0$ or $d_2 \ne 0$, then
$v_1=0$ or $v_2=0$, which is a contradiction.
Therefore the claim holds.
\begin{NB}
Old argument:
Assume that $d_1=0$ and $d_2 \ne 0$.
Then $Z_{(\beta,\omega)}(v_2) \not \in {\Bbb R}$ and
(2) imply that $Z_{(\beta,\omega)}(v_1)=0$.
Thus we have $a_1=r_1 \frac{(\omega^2)}{2}$. Then 
$\langle v_1^2 \rangle=-2r_1^2 \frac{(\omega^2)}{2}+(D_1^2) \leq 0$.
By (1), we get $r_1=0$ and $D_1=0$, which implies that $v_1=0$.
Therefore the claim holds.
\end{NB}
\end{proof}

\begin{NB}
If $r_1 d_2-r_2 d_1 \ne 0$, then $d_1 d_2 \ne 0$.
\end{NB}

The following characterization of the walls for stabilities
is a consequence of the Bogomolov inequality.
\begin{prop}
Assume that $\langle v^2 \rangle>0$.
For a Mukai vector
$v_1$, we set
$v_2:=v-v_1$.
Then $v_1 \not \in {\Bbb Q}v$ defines a wall in ${\frak H}_{\Bbb R}$, 
if and only if
(1) $\langle v_1^2 \rangle, \langle v_2^2 \rangle \geq 0$
and (2) $\langle v_1,v_2 \rangle>0$.
\end{prop}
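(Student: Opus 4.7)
The claim is manifestly symmetric in $v_1$ and $v_2 := v - v_1$, since the wall equation---the linear dependence of $Z_{(\beta,\omega)}(v_1)$ and $Z_{(\beta,\omega)}(v)$ over ${\Bbb R}$---is equivalent to linear dependence of $Z_{(\beta,\omega)}(v_1)$ and $Z_{(\beta,\omega)}(v_2)$. I therefore treat $v_1$ and $v_2$ on the same footing throughout.

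For the forward implication, suppose $v_1 \not\in {\Bbb Q}v$ defines a wall. Then conditions (a)--(c) of Definition~\ref{defn:wall:stability} (with $\varepsilon = 0$) hold for $v_1$, and by the symmetry above also for $v_2$. Condition (c) applied to both gives (1) immediately. At a point on the wall, $Z_{(\beta,\omega)}(v_1)$ and $Z_{(\beta,\omega)}(v_2)$ are linearly dependent over ${\Bbb R}$; combined with (1), Lemma~\ref{lem:positivity-of-d} forces $d_1 d_2 \langle v_1, v_2 \rangle > 0$, and since $d_1, d_2 > 0$ by condition (a), we obtain (2).

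For the converse, assume (1) and (2). Then $\langle v^2 \rangle = \langle v_1^2 \rangle + 2 \langle v_1, v_2 \rangle + \langle v_2^2 \rangle > 0$, matching the hypothesis. Condition (c) for $v_1$ and $v_2$ is exactly (1). Writing out (b) for $v_1$ and for $v_2$, one sees that (b) holds for both precisely when $d_1/d$ lies in the open interval $\bigl(\langle v_1^2 \rangle/\langle v^2 \rangle,\ 1 - \langle v_2^2 \rangle/\langle v^2 \rangle\bigr)$, which is non-empty precisely because of (2). Since $d_\beta(v_i) = d_0(v_i) - r_i b$ depends linearly on the $H$-component $b$ of $\beta$, a suitable choice of $b$ satisfies (a) and places $d_1/d$ in this interval. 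The wall equation then determines $(\omega^2)/2$ via the identity
\begin{equation*}
(d_1 r_2 - d_2 r_1)^2 \frac{(\omega^2)}{2} = d_1 d_2 \langle v_1, v_2 \rangle - \frac{d_2^2 \langle v_1^2 \rangle + d_1^2 \langle v_2^2 \rangle}{2} + \frac{((d_1 D_{2,\beta} - d_2 D_{1,\beta})^2)}{2}
\end{equation*}
from the proof of Lemma~\ref{lem:positivity-of-d}; one selects $\eta$ (which controls $D_{i,\beta}$) and fine-tunes $b$ so that the right-hand side is positive, yielding $(\omega^2) > 0$.

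The main obstacle is this last positivity step: the positive term $d_1 d_2 \langle v_1, v_2 \rangle$ must dominate the non-negative $(d_2^2 \langle v_1^2 \rangle + d_1^2 \langle v_2^2 \rangle)/2$ after accounting for the non-positive $H^\perp$-term. The positivity (2), combined with (1) and the freedom to vary $(\beta, \eta)$ within ${\frak H}_{\Bbb R}$, provides precisely the algebraic margin needed; the degenerate case $d_1 r_2 = d_2 r_1$ is handled separately, as the wall equation then reduces to a linear condition on $\eta$ that is verified directly from (1) and (2).
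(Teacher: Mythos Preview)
Your argument has a genuine gap in both directions, stemming from a confusion between the \emph{candidate} walls of Definition~\ref{defn:wall:stability} and the \emph{actual} walls the proposition is about. As the paper notes immediately after Definition~\ref{defn:wall:stability}, conditions (a)--(c) are only necessary; the proposition is precisely the promised sharpening for abelian surfaces, and its proof makes explicit that ``$v_1$ defines a wall'' means there exist $\sigma_{(\beta,\omega)}$-semi-stable objects $E_1,E_2\in{\frak A}_{(\beta,\omega)}$ with $v(E_i)=v_i$ and aligned phases.

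For the converse, verifying (a)--(c) therefore does not suffice; you must produce semi-stable objects. Your attempt to do so by ``choosing $b$'' is also illegitimate, since $b$ is fixed in ${\frak H}_{\Bbb R}$ and only $(\eta,\omega)$ vary. The paper's route is quite different and avoids constructing a specific point altogether: it shows that for \emph{any} $(\beta,\omega)$ on the wall locus with $Z_{(\beta,\omega)}(v)\in{\Bbb H}\cup{\Bbb R}_{<0}$, hypotheses (1) and (2) force $Z_{(\beta,\omega)}(v_1),\,Z_{(\beta,\omega)}(v_2)\in{\Bbb H}\cup{\Bbb R}_{<0}$ as well (splitting into the cases $d>0$ and $d=0$, the latter handled via a rank-analogue of Lemma~\ref{lem:positivity-of-d}). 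The existence of semi-stable objects with Mukai vectors $v_i$ then follows from the known non-emptiness of ${\cal M}_{(\beta,\omega)}(v_i)$ on an abelian surface once $\langle v_i^2\rangle\geq 0$. Your sketch never invokes this existence result, so even if your positivity estimate went through it would only exhibit a candidate wall.

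For the forward direction, your appeal to conditions (a)--(c) presupposes $d_\beta(v)>0$; when $d_\beta(v)=0$ (so $d_1=d_2=0$ and all objects have phase $1$), condition~(a) is vacuous and your deduction of $\langle v_1,v_2\rangle>0$ from Lemma~\ref{lem:positivity-of-d} breaks down. The paper treats this case separately: one has $Z_{(\beta,\omega)}(v_i)\in{\Bbb R}_{<0}$, hence $-r_i\geq 0$ and $a_i\geq 0$ from the Bogomolov inequality, and then an identity of the form $r_1 r_2\langle v_1,v_2\rangle=\tfrac{r_2^2\langle v_1^2\rangle}{2}+\tfrac{r_1^2\langle v_2^2\rangle}{2}-\tfrac12((r_2 D_1-r_1 D_2)^2)\geq 0$ together with a short case analysis yields $\langle v_1,v_2\rangle>0$.
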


\begin{proof}
We write
\begin{align*}
v&=r e^\beta+a \varrho_X+d H+D
   +(d H+D,\beta)\varrho_X,
\quad D \in H^{\perp} \cap \NS(X)_{\Bbb Q},
\\
v_i&=r_i e^\beta+a_i \varrho_X+d_i H+D_i+
      (d_i H+D_i,\beta)\varrho_X,
\quad D_i \in H^{\perp} \cap \NS(X)_{\Bbb Q}
\quad (i=1,2).
\end{align*}

(I) Assume that $v_1$ defines a wall,
that is, there are $\sigma_{(\beta,\omega)}$-semi-stable
objects $E_i$ $(i=1,2)$ of ${\frak A}_{(\beta,\omega)}$
with $v(E_i)=v_i$,
and $Z_{(\beta,\omega)}(E_i)$ $(i=1,2)$ 
are linearly dependent over ${\Bbb R}$. 
Then $d_1,d_2 \geq 0$. 
By Lemma \ref{lem:positivity-of-d},
we have (i) $d_1 d_2 \langle v_1,v_2 \rangle>0$
or (ii) $d_1=d_2=0$.
In the first case, $d \geq 0$ implies that
$d_1,d_2>0$. Hence $\langle v_1,v_2 \rangle>0$. 
So it is enough to consider the second case.
In this case,
$d_1=d_2=0$ implies that
\begin{equation}\label{eq:r}
r_1 r_2 \langle v_1,v_2 \rangle=
\frac{r_2^2 \langle v_1^2 \rangle}{2}
+\frac{r_1^2 \langle v_2^2 \rangle}{2}-
\frac{1}{2}((r_2 D_1-r_1 D_2)^2) 
\geq 0.
\end{equation}
If the equality holds, then
$r_2 D_1-r_1 D_2=0$.
Since $Z_{(\beta,\omega)}(v_1), Z_{(\beta,\omega)}(v_2) \in
{\Bbb R}_{<0}$
by \eqref{eq:stab_func}, the condition $d_1=d_2=0$ 
and the definition of semi-stable object, we have
$a_i-r_i \frac{(\omega^2)}{2}>0$
$(i=1,2)$.
By $0 \leq \langle v_i^2 \rangle=-2r_i a_i+(D_i^2) \leq -2r_i a_i $,
$-r_i \geq 0$ and $a_i \geq 0$ $(i=1,2$).
If $r_1=r_2=0$, then
$r=0$ and $\langle v^2 \rangle \leq 0$.
Therefore 
 $r_1 \ne 0$ or $r_2 \ne 0$.
If $r_1 \ne 0$ and $r_2 =0$,
then
$v_2=a_2 \varrho_X \ne 0$ and
$\langle v_1,v_2 \rangle=-r_1 a_2>0$.
If $r_1, r_2 \ne 0$ and
$r_1 r_2 \langle v_1,v_2 \rangle=0$, then
$\langle v_1^2 \rangle=\langle v_2^2 \rangle=0$
and $D_1/r_1=D_2/r_2$.
Hence $v_i=r_i e^{D_i/r_i}$ $(i=1,2)$, which implies
$v$ is isotropic. 
Therefore if $r_1, r_2 \ne 0$, then
$r_1 r_2 \langle v_1,v_2 \rangle>0$.
Hence $r_1,r_2 <0$ and $\langle v_1,v_2 \rangle>0$.

(II)
Conversely assume that 
$\langle v_1^2 \rangle, \langle v_2^2 \rangle \geq 0$ and
$\langle v_1,v_2 \rangle>0$.
For $(\beta,\omega)$ with $Z_{(\beta,\omega)}(v) 
\in {\Bbb H} \cup {\Bbb R}_{<0}$, assume that
$Z_{(\beta,\omega)}(v_i)$ $(i=1,2)$ 
are linearly dependent over ${\Bbb R}$.
We shall show that $Z_{(\beta,\omega)}(v_1),
Z_{(\beta,\omega)}(v_2) \in {\Bbb H} \cup {\Bbb R}_{<0}$.
Then there are $\sigma_{(\beta,\omega)}$-semi-stable
objects $E_i$ $(i=1,2)$ of ${\frak A}_{(\beta,\omega)}$
with $v(E_i)=v_i$. 
This means that $v_1$ defines a wall for $v$.

We first assume that $d> 0$.
In this case, Lemma \ref{lem:positivity-of-d} implies that
$d_1,d_2>0$.
Hence we get $Z_{(\beta,\omega)}(v_1),
Z_{(\beta,\omega)}(v_2) \in {\Bbb H} \cup {\Bbb R}_{<0}$.

If $d=0$, then we have
$0< \langle v^2 \rangle=-2ra+(D^2) \leq -2ra$
and 
$-Z_{(\beta,\omega)}(v)=-r\frac{(\omega^2)}{2}+a>0$.
Hence $-r,a>0$.
By Lemma \ref{lem:positivity-of-d}, $d=d_1+d_2$ and 
$\langle v_1,v_2 \rangle>0$, we have
$d_1=d_2=0$.
Then by \eqref{eq:r}, we have
$r_1 r_2 \langle v_1,v_2 \rangle>0$ 
or $r_1 r_2=0$.
For the first case $r_1 r_2 \langle v_1,v_2 \rangle>0$, 
our assumption implies that
$r_1 r_2>0$.
Since $-r_1 a_1 \geq 0$ and 
$-r_2 a_2 \geq 0$,
$-a_1/r_1,-a_2/r_2 \geq 0$.
Since 
\begin{equation}
\begin{split}
0< & -r\frac{(\omega^2)}{2}+a \\
=& -r_1 \left(\frac{(\omega^2)}{2}-\frac{a_1}{r_1} \right)
-r_2 \left(\frac{(\omega^2)}{2}-\frac{a_2}{r_2} \right),
\end{split}
\end{equation}
we have $-r_1,-r_2>0$.
Therefore $Z_{(\beta,\omega)}(v_1),Z_{(\beta,\omega)}(v_2)
\in {\Bbb R}_{<0}$.
For the second case $r_1 r_2=0$, 
we may assume that $r_1=r<0$ and $r_2=0$.
Then we see that $v_2=a_2 \varrho_X$.
Since $\langle v_1,v_2 \rangle=-r_1 a_2>0$ and $r_1<0$,
$a_2>0$.
Since $0 \leq \langle v_1^2 \rangle \leq -2r_1 a_1$,
$a_1 \geq 0$.
Therefore $Z_{(\beta,\omega)}(v_1),Z_{(\beta,\omega)}(v_2)
\in {\Bbb R}_{<0}$.
\end{proof}

\begin{NB}
Old remark:
\begin{rem}
For 
$v_1=r_1 e^\beta+(d_1 H+D_1)+(d_1 H+D_1,\beta)\varrho_X
+a_1 \varrho_X$, we set
$v_2:=v-v_1$.
Assume that (1) $\langle v_1^2 \rangle, \langle v_2^2 \rangle \geq 0$
and (2) $0<d_1<d$.
If $r d_1-r_1 d=0$, then for a general $\beta$,
we have $a d_1-a_1 d \ne 0$.
 
\end{rem}
\end{NB}

We set $\beta:=\beta_0+sH$, $s \leq d_{\beta_0}/r$.
Then $d(s):=d_\beta(v)$ and $d_i(s):=d_\beta(v_i)$ 
$(i=1,2)$ are function of $s$.
We note that the conditions in Lemma \ref{lem:positivity-of-d}  
are independent of $s$.
By Lemma \ref{lem:positivity-of-d},
we have the following. 
\begin{lem} 
Assume that $\langle v_1^2 \rangle, \langle v_2^2 \rangle \geq 0$.
We take $(s,t)$ in 
\begin{align*}
C:=\{(s,t) \mid \Sigma_{(\beta,tH)}(v,v_1)=0, \  t>0, \  d(s) \geq 0 \}.
\end{align*}
Then the following conditions are equivalent:
\begin{enumerate}
\item[(1)]
$0<d_1(s)<d(s)$ at a point $(s,t) \in C$.
\item[(2)]
$0<d_1(s)<d(s)$ for all $(s,t) \in C$ with $d(s)>0$.
\item[(3)]
$\langle v_1,v_2 \rangle>0$.
\end{enumerate}
\end{lem}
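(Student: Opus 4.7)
The plan is to derive all three equivalences from Lemma \ref{lem:positivity-of-d}. The basic observation is that $(s,t) \in C$ is exactly the condition that $Z_{(\beta,tH)}(v_1)$ and $Z_{(\beta,tH)}(v_2) := Z_{(\beta,tH)}(v) - Z_{(\beta,tH)}(v_1)$ are linearly dependent over $\mathbb{R}$, so Lemma \ref{lem:positivity-of-d} is directly applicable to the pair $(v_1, v_2)$ at every point of $C$.

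For $(1) \Rightarrow (3)$: at the hypothesised point, $0 < d_1(s) < d(s)$ forces $d_1(s), d_2(s) > 0$, which rules out the alternative $d_1 = d_2 = 0$ in Lemma \ref{lem:positivity-of-d}, leaving $d_1(s) d_2(s) \langle v_1, v_2 \rangle > 0$ and hence $\langle v_1, v_2 \rangle > 0$. For $(3) \Rightarrow (2)$: at any $(s,t) \in C$ with $d(s) > 0$, the same lemma yields either $d_1(s) d_2(s) \langle v_1, v_2 \rangle > 0$ or $d_1 = d_2 = 0$; the latter contradicts $d_1 + d_2 = d(s) > 0$, so under (3) we get $d_1(s) d_2(s) > 0$, and combined with $d_1(s) + d_2(s) > 0$ this yields $0 < d_1(s) < d(s)$.

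For $(2) \Rightarrow (1)$ it suffices to exhibit a single $(s,t) \in C$ with $d(s) > 0$, and the conclusion of (1) is then furnished by (2). To produce such a point, I would invoke Lemma \ref{lem:beta-gamma}: $d(s) = d_{\beta_0}(v) - rs$ and $d_1(s) = d_{\beta_0}(v_1) - r_1 s$ are affine-linear in $s$, and a short computation shows that $r\, d_1(s) - r_1\, d(s) = r\, d_{\beta_0}(v_1) - r_1\, d_{\beta_0}(v)$ is in fact independent of $s$. Substituting these into $\Sigma_{(\beta,tH)}(v,v_1) = 0$ therefore reduces the defining equation of $C$ to a polynomial of degree at most two in $(s, t^2)$ whose real locus (when restricted to $\{t > 0,\ d(s) > 0\}$) I expect to be non-empty under the standing hypotheses $\langle v_i^2 \rangle \geq 0$. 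This existence check is the main anticipated obstacle, since the implications coming directly from Lemma \ref{lem:positivity-of-d} are essentially immediate; the argument will hinge on computing the discriminant of the quadratic in $t^2$ and using the Bogomolov-type inequalities $\langle v_i^2 \rangle \geq 0$ (together with $\langle v_1, v_2 \rangle > 0$, which is equivalent to (3) under the already established implications) to ensure that a positive solution exists inside the half-plane $d(s) > 0$.
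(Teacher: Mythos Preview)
Your argument for $(1)\Rightarrow(3)$ and $(3)\Rightarrow(2)$ via Lemma~\ref{lem:positivity-of-d} is exactly what the paper does; the paper's entire proof is the one line ``By Lemma~\ref{lem:positivity-of-d}, we have the following.''

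Where you diverge is in $(2)\Rightarrow(1)$. You correctly note that this needs a point of $C$ with $d(s)>0$, and you propose to manufacture one by analysing the quadratic in $t^2$ and invoking the Bogomolov inequalities together with $\langle v_1,v_2\rangle>0$. Two remarks. First, there is a circularity: at that stage you are assuming $(2)$, and $(2)\Rightarrow(3)$ has not yet been established, so you are not entitled to use $\langle v_1,v_2\rangle>0$ in the existence argument. Second, and more to the point, the paper does not attempt any such existence proof at all: the sentence ``We take $(s,t)$ in $C$'' preceding the three conditions is a standing hypothesis supplying the point, and in the ambient context (walls for $v$ with $d_\beta(v)>0$) one is always sitting at a point with $d(s)>0$. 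Under that reading, $(2)\Rightarrow(1)$ is immediate by specialising $(2)$ to the given point, and the whole lemma really is a one-line corollary of Lemma~\ref{lem:positivity-of-d}.

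So your substantive deductions are correct and match the paper; the discriminant computation you anticipate is unnecessary (and, as sketched, circular). Simply read the opening clause as furnishing a point with $d(s)>0$ and the implication $(2)\Rightarrow(1)$ is trivial.
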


\begin{rem}
Keep the notation as above.
We note that $r_1 d_2(s)-r_2 d_1(s)$ does not depend on the 
choice of $s$.
Assume that the constant $r_1 d_2(s)-r_2 d_1(s) \ne 0$ 
and $\langle v_1,v_2 \rangle>0$.
Then $d_1(s) d_2(s) \langle v_1,v_2 \rangle>0$ and $d(s) \ne 0$
for all $(s,t) \in C$.

Indeed
if $d_1(s)=d_2(s)=0$, then $r_1 d_2(s)-r_2 d_1(s)=0$.
By Lemma \ref{lem:positivity-of-d},
we have $d_1(s) d_2(s) \langle v_1,v_2 \rangle>0$.
Since $\langle v_1,v_2 \rangle>0$,
$d_1(s) d_2(s)>0$, which implies that $d(s) \ne 0$.
\end{rem}

For an isotropic Mukai vector
$$
w_1=r_1 e^{\beta_0+xH}=r_1 e^{\beta+(x-s)H},
$$
we see that
\begin{equation}
\begin{split}
& \frac{(x-s)
\left(a_{\beta_0+sH}-d_{\beta_0+sH}(x-s)\frac{(H^2)}{2}\right)}
{(x-s)r-(d_{\beta_0}-rs)}\\
=& \frac{(x-s)\left(a_{\beta_0}-d_{\beta_0}s(H^2)+rs^2 \frac{(H^2)}{2}
-(d_{\beta_0}-rs)(x-s)\frac{(H^2)}{2}\right)}
{xr-d_{\beta_0}}\\
=& \frac{(x-s)\left(a_{\beta_0}-d_{\beta_0}x \frac{(H^2)}{2}+s
(rx -d_{\beta_0})\frac{(H^2)}{2}\right)}{xr-d_{\beta_0}}.
\end{split}
\end{equation}

Hence we define $\omega_{s,x}$ by 
\begin{equation}
\frac{(\omega_{s,x}^2)}{2}=
\frac{(x-s)\left(a_{\beta_0}-d_{\beta_0}x \frac{(H^2)}{2}+s
(rx -d_{\beta_0})\frac{(H^2)}{2}\right)}{xr-d_{\beta_0}}.
\end{equation}

\begin{cor}
Assume that $\omega_{s,x}$ is general with respect to $v$.
Then
${\cal M}_{(\beta,\omega_{s,x})}(v)$ does not depend on the choice of 
$s$.
\end{cor}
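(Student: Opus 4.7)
The plan is to recognize the corollary as a direct consequence of Theorem~\ref{thm:projective}, once we observe that the Fourier-Mukai data produced by its proof, when applied to the point $(\beta,\omega)=(\beta_0+sH,\omega_{s,x})$, is manifestly independent of $s$.

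Concretely, the defining formula for $\omega_{s,x}$ encodes the phase-alignment $\phi_{(\beta_0+sH,\omega_{s,x})}(v)=\phi_{(\beta_0+sH,\omega_{s,x})}(w_1)$ for the fixed isotropic vector $w_1:=r_1 e^{\beta_0+xH}$. Tracing through the proof of Theorem~\ref{thm:projective} at $\beta=\beta_0+sH$, the auxiliary parameter $\lambda$ there should be chosen as $\lambda=x-s$: then $r_1 e^{\beta+\lambda H}=r_1 e^{\beta_0+xH}=w_1$ and $\omega_\lambda=\omega_{s,x}$. Consequently the isotropic vector $w_1$, the Fourier-Mukai partner $X_1:=M(w_1)$, the universal family ${\bf E}$, the transform $\Phi$, the Mukai vector $u:=\Phi(v)$, and the canonical ample class $\widehat{H}\in\NS(X_1)_{\Bbb Q}$ furnished by that proof depend only on $\beta_0$, $x$, $r_1$, and $H$, and are therefore independent of $s$.

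By the genericity of $\omega_{s,x}$ with respect to $v$, Theorem~\ref{thm:projective} (and its underlying stack-level statement via Corollary~\ref{cor:mu-semi-stable}) yields, for each admissible $s$, an isomorphism
\begin{equation*}
{\cal M}_{(\beta_0+sH,\omega_{s,x})}(v)\xrightarrow{\sim}{\cal M}_{\widehat{H}}(u)^{ss}.
\end{equation*}
Since the right-hand side depends on none of the $s$-dependent data, the left-hand side is independent of $s$, as claimed.

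The main verification is that the implicit choice of $w_1$ in the proof of Theorem~\ref{thm:projective}, when applied to $(\beta,\omega)=(\beta_0+sH,\omega_{s,x})$, really does coincide with our fixed $w_1=r_1 e^{\beta_0+xH}$ rather than some $s$-dependent alternative. As indicated above, this is arranged by the choice $\lambda=x-s$, and one needs to check that this $\lambda$ lies in the admissibility set $J$ of Subsection~\ref{subsect:projectivity}; this holds exactly when $\omega_{s,x}$ is general with respect to $v$, because then $\iota_\beta(\omega_{s,x})$ lies in a single chamber of ${\frak H}_{\Bbb R}$ and the whole construction leading to Theorem~\ref{thm:projective} applies uniformly.
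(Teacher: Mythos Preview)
Your proposal is correct and follows essentially the same approach as the paper: you identify that the Fourier-Mukai data $(w_1,X_1,{\bf E},\Phi,\widehat{H})$ used to produce the isomorphism ${\cal M}_{(\beta_0+sH,\omega_{s,x})}(v)\cong{\cal M}_{\widehat{H}}^{\gamma'}(\Phi(v))^{ss}$ depends only on $x$ and not on $s$, so the target is $s$-independent. The paper's proof says exactly this in two sentences; your additional remarks on choosing $\lambda=x-s$ and checking $\lambda\in J$ simply spell out what is implicit there.
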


\begin{proof}
For the Mukai vector $w_1$, we set $X_1:=M_H(w_1)$ and
let ${\bf E}$ be a universal family on $X \times X_1$.
The isomorphism ${\cal M}_{(\beta,\omega_{s,x})}(v) \to
{\cal M}_{\widehat{H}}^{\gamma'}(w)^{ss}$
is defined by the Fourier-Mukai transform
$\Phi_{X \to X_1}^{{\bf E}[1]}$, 
which is independent of the choice of $s$.
Since ${\cal M}_{\widehat{H}}^{\gamma'}(w)^{ss}$
is independent of the choice of $s$,
we get the claim.
\end{proof}

\subsection{Ample line bundles on $M_{(\beta,\omega)}(v)$.}
We fix $\beta$ in this subsection.
We set 
\begin{equation}
\varphi_\omega:=
\frac{r\frac{(\omega^2)}{2}-a_\beta}{d_\beta}=
\frac{\mathrm{Re} Z_{(\beta,\omega)}(v)}{\mathrm{Im} Z_{(\beta,\omega)}(v)}
(H,\omega).
\end{equation}
Then 
$$
\{\varphi_\omega| \omega \in {\Bbb R}_{>0} H \}
=
\left(-\frac{a_\beta}{d_\beta},\infty \right).
$$
We set
\begin{equation}
\begin{split}
\xi_\omega:=& \frac{(\omega^2)}{2 d_\beta}(r(H+(H,\beta)\varrho_X)
+d_\beta(H^2)\varrho_X)-
\frac{1}{d_\beta}(a_\beta(H+(H,\beta)\varrho_X)+d_\beta(H^2)e^\beta) \\
=& \varphi_\omega 
\left(H+\left(H,\frac{c_1(v)}{r} \right)\varrho_X \right)
-(H^2) \left(e^\beta-\frac{a_\beta}{r} \varrho_X \right), \;(r \ne 0).
\end{split}
\end{equation}

For $\omega=\omega_\lambda$, $\lambda \in {\Bbb Q}$,
we set $\gamma:=\beta+\lambda H$. 
Let $w_\lambda=r_1 e^{\beta+\lambda H}$ be a primitive isotropic Mukai vector
with $r_1 \lambda>0$.
We set $X_1:=M_H^\beta(w_\lambda)$.
\begin{lem}\label{lem:FM(xi)}
By the Fourier-Mukai transform $\Phi:{\bf D}(X) \to 
{\bf D}^{\alpha}(X_1)$, we have
\begin{equation}
\Phi(\xi_\omega)=\frac{1}{|r_1|(d_\beta-\lambda r)}
(\rk w \widehat{H}+(\widehat{H},c_1(w))\varrho_{X_1}),
\end{equation}
where $w=\Phi(v)$.
We also have
\begin{equation}
-\frac{\rk w}{r}\widehat{\Phi}
\left(e^{\gamma'}+
\frac{\langle e^{\gamma'},w \rangle}{\rk w} \varrho_{X_1}\right)
=\left(e^\beta-\frac{a_\beta}{r}\varrho_X \right)+
\lambda \left(H+\left(H,\frac{c_1(v)}{r}\right)\varrho_X \right). 
\end{equation}

\end{lem}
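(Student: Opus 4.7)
The plan is to verify both identities by direct computation using Lemma~\ref{lem:FM-w_1}. The key inputs are the three formulas $\Phi(e^\gamma)=-\varrho_{X_1}/r_1$, $\Phi(\varrho_X)=-r_1 e^{\gamma'}$, and $\Phi(C+(C,\gamma)\varrho_X)=\epsilon(\widehat C+(\widehat C,\gamma')\varrho_{X_1})$ with $\epsilon:=r_1/|r_1|$, together with the corresponding identities for $\widehat\Phi$ obtained from $\widehat\Phi\circ\Phi=\mathrm{id}$ on Mukai lattices; specifically $\widehat\Phi(e^{\gamma'})=-\varrho_X/r_1$ and $\widehat\Phi(\varrho_{X_1})=-r_1 e^\gamma$.

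For (1), I first rewrite the two pieces of $\xi_\omega$ in the $\gamma$-adapted basis $\{e^\gamma,\,\varrho_X,\,H+(H,\gamma)\varrho_X\}$. Using $c_1(v)=r\beta+d_\beta H+D$ one has
\[
H+\bigl(H,c_1(v)/r\bigr)\varrho_X=\bigl(H+(H,\gamma)\varrho_X\bigr)+\tfrac{(H^2)(d_\beta-\lambda r)}{r}\varrho_X,
\]
and from $e^\gamma=e^\beta+\lambda(H+(H,\beta)\varrho_X)+\tfrac{\lambda^2(H^2)}{2}\varrho_X$,
\[
e^\beta-\tfrac{a_\beta}{r}\varrho_X=e^\gamma-\lambda\bigl(H+(H,\gamma)\varrho_X\bigr)+\Bigl(\tfrac{\lambda^2(H^2)}{2}-\tfrac{a_\beta}{r}\Bigr)\varrho_X.
\]
Applying $\Phi$ termwise, $\Phi(\xi_\omega)$ lands a priori in the span of $e^{\gamma'}$, $\widehat H+(\widehat H,\gamma')\varrho_{X_1}$, and $\varrho_{X_1}$. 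The coefficient of $e^{\gamma'}$ is a nonzero multiple of $-\varphi_\omega(d_\beta-\lambda r)+r\lambda^2(H^2)/2-a_\beta$, which vanishes because $\omega=\omega_\lambda$ is characterized by $(\omega^2)/2=\lambda(a_\beta-\lambda d_\beta(H^2)/2)/(\lambda r-d_\beta)$, i.e.\ by the phase equality $\phi_{(\beta,\omega_\lambda)}(v)=\phi_{(\beta,\omega_\lambda)}(w_1)$. The remaining $\widehat H$- and $\varrho_{X_1}$-coefficients are then checked against the claimed right hand side by expanding $w=\Phi(v)$ in the $\gamma'$-basis, which gives $\rk w=-r_1 a_\gamma(v)$ and $(\widehat H,c_1(w))=\rk w\cdot(\widehat H,\gamma')+\epsilon\, d_\gamma(v)(H^2)$, and by using Lemma~\ref{lem:beta-gamma} to translate $a_\gamma(v)$ and $d_\gamma(v)$ into the $\beta$-coordinates $a_\beta,d_\beta,r$.

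For (2), a Mukai-pairing computation (only the $\varrho_{X_1}$-part of $w$ contributes) yields $\langle e^{\gamma'},w\rangle=r/r_1$. Substituting, $-\tfrac{\rk w}{r}\bigl(e^{\gamma'}+\tfrac{\langle e^{\gamma'},w\rangle}{\rk w}\varrho_{X_1}\bigr)=\tfrac{r_1 a_\gamma(v)}{r}e^{\gamma'}-\tfrac{1}{r_1}\varrho_{X_1}$. Applying $\widehat\Phi$ with the formulas above gives $e^\gamma-\tfrac{a_\gamma(v)}{r}\varrho_X$, and the claim reduces to
\[
e^\gamma-\tfrac{a_\gamma(v)}{r}\varrho_X=\Bigl(e^\beta-\tfrac{a_\beta}{r}\varrho_X\Bigr)+\lambda\bigl(H+(H,c_1(v)/r)\varrho_X\bigr),
\]
which follows from the expansion of $e^\gamma$ above together with the relation $a_\gamma(v)=a_\beta-\lambda d_\beta(H^2)+r\lambda^2(H^2)/2$ given by Lemma~\ref{lem:beta-gamma}.

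The main obstacle is essentially bookkeeping: many coefficients in three different bases, with the sign $\epsilon=r_1/|r_1|$ tracked throughout. The only substantive algebraic point is the cancellation of the $e^{\gamma'}$-term in the computation of $\Phi(\xi_\omega)$, and this cancellation is exactly the defining equation of $\omega_\lambda$; once it is observed, everything else is a routine identification.
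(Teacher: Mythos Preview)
Your proof is correct and follows essentially the same approach as the paper's. The only cosmetic difference is the order of operations: the paper first simplifies $\xi_{\omega_\lambda}$ in the $\gamma$-basis to the two-term form $\tfrac{a_\gamma}{-d_\gamma}(H+(H,\gamma)\varrho_X)-(H^2)e^\gamma$ (so the $\varrho_X$-component already cancels on the $X$ side) and then applies $\Phi$, whereas you apply $\Phi$ termwise and observe the same cancellation afterward as the vanishing of the $e^{\gamma'}$-coefficient; both are exactly the defining relation of $\omega_\lambda$, and the computation of the second identity is identical.
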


\begin{proof}
We note that
\begin{equation}
\begin{split}
e^\gamma-\frac{a_\gamma}{r}\varrho_X 
=& e^\beta+\lambda(H+(H,\beta)\varrho_X)+
\frac{(H^2)}{2}\lambda^2 \varrho_X-\frac{a_\gamma}{r}\varrho_X\\
=&
e^\beta-\frac{a_\beta}{r}\varrho_X+
\lambda \left(H+(H,\beta)\varrho_X+
\frac{d_\beta}{r}(H^2)\varrho_X \right)\\
=& \left(e^\beta-\frac{a_\beta}{r}\varrho_X \right)+
\lambda \left(H+\left(H,\frac{c_1(v)}{r}\right)\varrho_X \right) 
\end{split}
\end{equation}
and
\begin{equation}
\varphi_{\omega_\lambda}=\frac{a_\beta-r \lambda^2 \frac{(H^2)}{2}}
{r\lambda-d_\beta}
=\frac{a_\gamma+d_\gamma \lambda (H^2)}{-d_\gamma}.
\end{equation}
Then we see that 
\begin{equation}
\begin{split}
\xi_{\omega_\lambda}=& 
\frac{a_\gamma+d_\gamma \lambda (H^2)}{-d_\gamma}
\left(H+\left(H,\frac{c_1(v)}{r}\right)\varrho_X \right)-
\left(e^\gamma-\frac{a_\gamma}{r}\varrho_X \right)(H^2)
+(H^2)\lambda \left(H+\left(H,\frac{c_1(v)}{r} \right)\varrho_X \right)\\
=& \frac{a_\gamma}{-d_\gamma}
\left(H+\left(H,\frac{c_1(v)}{r}\right)\varrho_X \right)
-\left(e^\gamma-\frac{a_\gamma}{r}\varrho_X \right)(H^2)\\
=& \frac{a_\gamma}{-d_\gamma}(H+(H,\gamma)\varrho_X)
-(H^2)e^\gamma.
\end{split}
\end{equation}

Hence
\begin{equation}
\begin{split}
\Phi(\xi_{\omega_\lambda})=& 
\frac{-r_1 a_\gamma}{|r_1| d_\gamma}(\widehat{H}+
(\widehat{H},\gamma')\varrho_{X_1})
+\frac{(H^2)}{r_1}\varrho_{X_1}\\
=& \frac{\rk w}{|r_1| d_\gamma}(\widehat{H}+
(\widehat{H},\gamma')\varrho_{X_1})
+\frac{(H,d_\gamma H)}{ r_1 d_\gamma }\varrho_{X_1}\\
=& \frac{1}{|r_1| d_\gamma}( \rk w \widehat{H}+
(\widehat{H},c_1(w))\varrho_{X_1}),
\end{split}
\end{equation}
where we used the equality
$$
c_1(w)=\frac{r_1}{|r_1|}d_\gamma \widehat{H}
+\rk w \gamma'+D_{\gamma'}(w). 
$$

We also have
\begin{equation}
\begin{split}
-\frac{\rk w}{r}
\widehat{\Phi} \left(e^{\gamma'}+
\frac{\langle e^{\gamma'},w \rangle}{\rk w} \varrho_{X_1}\right)
=& e^\gamma-\frac{a_\gamma}{r}\varrho_X\\
=& \left(e^\beta-\frac{a_\beta}{r}\varrho_X \right)+
\lambda \left(H+\left(H,\frac{c_1(v)}{r}\right)\varrho_X \right). 
\end{split}
\end{equation}

\end{proof}

Assume that 
$v_2:=r_2 e^\beta+a_2 \varrho_X+(d_2 H+D_2)+(d_2 H+D_2,\beta)\varrho_X$
satisfies
\begin{equation}
\frac{(\omega^2)}{2}=\frac{d_2 a_\beta-d_\beta a_2}{d_2 r-d_\beta r_2}.
\end{equation}
Then 
\begin{equation}
\varphi_\omega=\frac{r_2 a_\beta-r a_2}{d_2 r- d_\beta r_2}.
\end{equation}
Hence
\begin{equation}\label{eq:trivial}
\langle v_2,\xi_\omega \rangle
=\varphi_\omega \left(d_2-\frac{r_2}{r}d_\beta \right)(H^2)
-\left(\frac{a_\beta}{r}r_2-a_2 \right)(H^2)
=0.
\end{equation}

From now on, we assume that ${\frak k}={\Bbb C}$.
Then we have a homomorphism
$$
\theta_v:v^{\perp} \to H^2(M_{(\beta,\omega)}(v),{\Bbb Z})
$$
which preserves the Hodge structures.
If $X$ is a $K3$ surface and $v$ is a primitive Mukai
vector with $\langle v^2 \rangle \geq 2$,
then $M_{(\beta,\omega)}(v)$ is an irreducible symplectic
manifold deformation equivalent to
$\Hilb_X^{\langle v^2 \rangle/2+1}$
by Theorem \ref{thm:projective} and
\cite{Y:7}.
We regard $H^2(M_{(\beta,\omega)}(v),{\Bbb Z})$
as a lattice by the Beuville's bilinear form (\cite{B:1}). Then 
$\theta_v$ is an isometry. 
If $X$ is an abelian surface, 
then we have the albanese morphism 
${\frak a}:M_{(\beta,\omega)}(v) \to X \times \widehat{X}$, which is
an \'{e}tale locally trivial fibration.
Let $K_{(\beta,\omega)}(v)$ be the albanese fiber.
Assume that $v$ is primitive and
$\langle v^2 \rangle \geq 6$.
If $\omega$ is general,
then $K_{(\beta,\omega)}(v)$ is an irreducible symplectic manifold
which is deformation equivalent to the generalized Kummer variety
constructed by Beauville \cite{B:1}.
We have an isomorphism
\begin{equation}
v^{\perp}  \to
 H^2(M_{(\beta,\omega)}(v),{\Bbb Z})  \to 
H^2(K_{(\beta,\omega)}(v),{\Bbb Z})
\end{equation}
which preserves the Hodge structures
(cf. \cite{Y:7}).
We also denote this map by $\theta_v$. 
Thus we have an isomorphism
\begin{equation}
\theta_v':v^{\perp} \cap H^*(X,{\Bbb Z})_{\alg} \to 
\NS(M_{(\beta,\omega)}(v)) \to
\NS(K_{(\beta,\omega)}(v))
\end{equation}
as the restriction of $\theta_v$.

\begin{prop}\label{prop:ample}
Assume that $\omega$ belongs to a chamber $I$.
Then $\theta_v(\xi_\omega) \in \NS(M_{(\beta,\omega)}(v))_{\Bbb R}$ 
belongs to the ample cone
of $M_{(\beta,\omega)}(v)$.
\end{prop}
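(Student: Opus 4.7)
The plan is to reduce the ampleness of $\theta_v(\xi_\omega)$ to the classical ampleness of the Donaldson determinant line bundle on a Gieseker moduli space, via the Fourier-Mukai equivalence of Section~\ref{sect:mu-semi-stable}. Since the moduli space $M_{(\beta,\omega)}(v)$ is constant as $\omega$ varies over $I$ and $\xi_\omega$ depends affinely on $(\omega^2)$, and since the ample cone is open and convex in $\NS(M_{(\beta,\omega)}(v))_{\Bbb R}$, it suffices to treat $\omega=\omega_\lambda$ for $\lambda\in J\cap{\Bbb Q}$ and then sandwich any other $\omega\in I$ between two such rational values.

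Fix such a rational $\lambda$, let $w_1=r_1 e^{\beta+\lambda H}$ be the primitive isotropic Mukai vector of \S\ref{subsect:projectivity} with $r_1\lambda>0$, and set $X_1=M_{(\beta,\omega_\lambda)}(w_1)$. Since $\omega_\lambda\in I$ does not lie on any wall for $v$, Corollary~\ref{cor:mu-semi-stable} combined with Theorem~\ref{thm:projective} yields an isomorphism $\Psi:M_{(\beta,\omega_\lambda)}(v)\xrightarrow{\sim}\overline{M}_{\widehat{H}}(w)$, with $w=\Phi(v)$, induced by the Fourier-Mukai transform $\Phi$. By Lemma~\ref{lem:FM(xi)},
\begin{equation*}
\Phi(\xi_{\omega_\lambda})=\frac{1}{|r_1|(d_\beta-\lambda r)}\bigl(\rk w\cdot\widehat{H}+(\widehat{H},c_1(w))\varrho_{X_1}\bigr)\in w^{\perp},
\end{equation*}
and the scalar $(d_\beta-\lambda r)^{-1}=d_\gamma(v)^{-1}$ is positive on our chamber, since $\lambda<d_\beta/r$ when $r>0$ and is automatic for $r\le 0$ given $\lambda>0$. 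The bracketed class in $w^\perp$ has image under $\theta_w$ equal, up to a positive scalar, to the standard determinant line bundle on $\overline{M}_{\widehat{H}}(w)$ associated to the polarization $\widehat{H}$; this class is ample by the GIT construction (Huybrechts-Lehn, and the twisted K3/abelian analogue in~\cite{Y:7}).

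To conclude, one invokes the standard compatibility $\Psi^{\ast}\circ\theta_w\circ\Phi_{\ast}=\pm\theta_v$, which follows from Grothendieck-Riemann-Roch together with the fact that $\Phi$ transports the universal family on $X\times M_{(\beta,\omega_\lambda)}(v)$ to the universal family on $X_1\times\overline{M}_{\widehat{H}}(w)$. The main obstacle is the sign in this compatibility: one must ensure the sign is $+$, so that $\theta_v(\xi_{\omega_\lambda})$ itself, and not its negative, is ample. This is controlled by the orientation-preservation of $\Phi$ cited in Lemma~\ref{lem:FM-w_1}, combined with the sign convention used in defining $\widehat{\,\cdot\,}:\NS(X)_{\Bbb Q}\to\NS(X_1)_{\Bbb Q}$. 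Once the sign is fixed, pulling back an ample class along $\Psi$ gives ampleness of $\theta_v(\xi_{\omega_\lambda})$, and the convexity argument of the first paragraph extends the conclusion to all $\omega\in I$.
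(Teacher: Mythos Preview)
Your argument has a genuine gap at the key step. The class
\[
\theta_w\bigl(\rk w\cdot\widehat H + (\widehat H,c_1(w))\varrho_{X_1}\bigr)
\]
is exactly ${\cal L}(\xi_1)$ in the notation of Lemma~\ref{lem:GIT-polarization}, and that lemma only asserts it is \emph{nef}: by the GIT construction one knows that ${\cal L}(\xi_1+\varepsilon\xi_2)$ is ample for $0<\varepsilon\ll1$, hence ${\cal L}(\xi_1)$ lies on the boundary of the ample cone in general (it is the Donaldson--Le Potier line bundle inducing the Gieseker--to--Uhlenbeck morphism). The references you invoke do not give ampleness of this class. Once that claim fails, your convexity argument only produces nefness of $\theta_v(\xi_\omega)$ for all $\omega\in I$, not ampleness.

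The paper closes this gap differently. After obtaining nefness of each $\theta_v(\xi_{\omega_\lambda})$ exactly as you do, it keeps one Fourier--Mukai transform $\Phi$ fixed and observes, using the second identity in Lemma~\ref{lem:FM(xi)}, that the family $\{\Phi(\theta_v(\xi_\omega)):\omega\in I\}$ sweeps out an open cone in the $2$-plane spanned by ${\cal L}(\xi_1)$ and ${\cal L}(\xi_2)$. Since this $2$-plane contains the ample classes ${\cal L}(\xi_1+\varepsilon\xi_2)$ by Lemma~\ref{lem:GIT-polarization}, the open cone of nef classes actually meets the interior of the nef cone, and one concludes that every $\theta_v(\xi_\omega)$ with $\omega\in I$ is ample. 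The point is that moving $\omega$ away from $\omega_\lambda$ perturbs $\Phi(\xi_\omega)$ off the ray ${\Bbb R}_{>0}\xi_1$ into the $\xi_2$-direction, exactly where Lemma~\ref{lem:GIT-polarization} guarantees ampleness; varying $\lambda$ over rationals then covers all of $I$.

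You could alternatively try to salvage your approach by arguing that for these particular Mukai vectors $w=\Phi(v)$, Corollary~\ref{cor:mu-semi-stable} forces every $\mu$-semi-stable sheaf of class $w$ to be locally free with $v(E_1)\in{\Bbb Q}w$ for every destabilising subsheaf, so that the Gieseker--to--Uhlenbeck map is finite and ${\cal L}(\xi_1)$ is indeed ample. But that is an additional argument you would need to supply; it is not a consequence of the general GIT statement you cite.
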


\begin{proof}
For $\omega_\lambda \in I$, $\lambda \in {\Bbb Q}$,
we take the isomorphism
$\Phi:M_{(\beta,\omega_\lambda)}(v) \to M_{\widehat{H}}^{\gamma'}(w)$.
By Lemma \ref{lem:FM(xi)} and Lemma \ref{lem:GIT-polarization},
\begin{equation}
\begin{split}
\Phi \left(\theta_v(\xi_{\omega_\lambda}))\right)=&
\frac{\rk w}{|r_1| (d_\beta-\lambda r)}
\theta_w \left(\widehat{H}+
\left(\widehat{H},\frac{c_1(w)}{\rk w} \right)\varrho_{X_1}\right)\\
=& \frac{\rk w}{|r_1| (d_\beta-\lambda r)}{\cal L}(\xi_1)
\end{split}
\end{equation}
is a nef divisor on $M_{\widehat{H}}^{\gamma'}(w)$. 
Hence $\theta_v(\xi_{\omega_\lambda})$ 
belongs to the nef cone of $M_{(\beta,\omega)}(v)$. 
Moreover by Lemma \ref{lem:GIT-polarization},
$\Phi(\theta_v(\xi_\omega))$, $\omega \in I$
spans a 2-plane containing an ample divisor.   
Hence $\theta_v(\xi_{\omega})$, $\omega \in I$ 
belongs to the ample cone of $M_{(\beta,\omega)}(v)$.
\end{proof}

\begin{cor}\label{cor:ample}
Let $v$ be a primitive Mukai vector.
\begin{enumerate}
\item[(1)]
Assume that $X$ is a $K3$ surface with $\NS(X)={\Bbb Z}H$
and $\langle v^2 \rangle \geq 2$.
For a chamber $I=(\omega_1 ,\omega_2) \subset {\Bbb R}_{>0}H$ such that 
$\omega_1,\omega_2$ belong to walls,
\begin{equation}\label{eq:ample:K3}
\Amp(M_{(\beta,\omega)}(v))_{\Bbb R} 
\supset \theta_v(\{ {\Bbb R}_{>0} \xi_\omega| \omega \in I \}).
\end{equation}
\item[(2)]
Assume that $X$ is an abelian surface and $\langle v^2 \rangle \geq 6$.
For a chamber $I=(\omega_1 ,\omega_2) \subset {\Bbb R}_{>0}H$ such that 
$\omega_1,\omega_2$ belong to walls,
$$
\Amp(M_{(\beta,\omega)}(v))_{\Bbb R} \cap L
=\theta_v(\{ {\Bbb R}_{>0} \xi_\omega| \omega \in I \}),
$$
where
$$
L:=\{ \theta_v(x)| \langle x,v \rangle=0, 
x \in {\Bbb R}e^\beta+{\Bbb R}(H+(H,\beta)\varrho_X)+
{\Bbb R}\varrho_X \}.
$$ 
In particular, if $\NS(X)={\Bbb Z}H$, then
$$
\Amp(K_{(\beta,\omega)}(v))_{\Bbb R}=\theta_v(
\{ {\Bbb R}_{>0} \xi_\omega| \omega \in I \}).
$$
\end{enumerate}
\end{cor}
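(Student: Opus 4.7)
The plan is to leverage Proposition~\ref{prop:ample} for the easy inclusion and to analyse the boundary behaviour at the walls $\omega_1,\omega_2$ for the reverse direction in~(2). First I verify that $\xi_\omega \in v^{\perp}$: applying the orthogonality relation \eqref{eq:trivial} with the trivial choice $v_2=v$ (for which the linear-dependence hypothesis is automatic) gives $\langle v,\xi_\omega\rangle=0$, so $\theta_v(\xi_\omega)$ is well defined. The inclusion $\supset$ in both (1) and (2) then follows uniformly: Proposition~\ref{prop:ample} places $\theta_v(\xi_\omega)$ in the ample cone of $M_{(\beta,\omega)}(v)$ for each $\omega\in I$, and since ${\cal M}_{(\beta,\omega)}(v)$ is constant on the chamber $I$ its ample cone is a fixed open convex cone, hence contains every positive multiple ${\Bbb R}_{>0}\theta_v(\xi_\omega)$. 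This already finishes (1).

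For the reverse inclusion in~(2) the key point is that $L$ is (at most) two-dimensional and that the map $\omega\mapsto[\theta_v(\xi_\omega)]\in{\Bbb P}(L)$ is a continuous injection from $I=(\omega_1,\omega_2)$ onto an open arc with endpoints $[\theta_v(\xi_{\omega_1})]$ and $[\theta_v(\xi_{\omega_2})]$; this is immediate from the explicit expression for $\xi_\omega$ in terms of the monotone parameter $\varphi_\omega$. I then argue that these two endpoint classes are not ample, so the open cone spanned by $\{\theta_v(\xi_\omega):\omega\in I\}$ in $L$ must coincide with $\Amp(M_{(\beta,\omega)}(v))_{\Bbb R}\cap L$. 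At each wall $\omega_i$ there is by definition a non-trivial Mukai vector $w$ with $\phi_{(\beta,\omega_i)}(w)=\phi_{(\beta,\omega_i)}(v)$ and $\langle v,w\rangle\ne 0$; applying \eqref{eq:trivial} with $v_2=w$ gives $\langle w,\xi_{\omega_i}\rangle=0$. Via $\theta_v$ this translates into a non-trivial class in $v^\perp$ pairing to zero with $\theta_v(\xi_{\omega_i})$ under the Beauville--Bogomolov form; because $\omega_i$ is a wall, strictly $\sigma_{(\beta,\omega_i)}$-semi-stable objects exist whose Jordan--H\"older factors have Mukai vectors proportional to $w$ and $v-w$, producing a ${\Bbb P}^1$-family of non-split extensions in $M_{(\beta,\omega_i)}(v)$ whose curve class is dual to $w\bmod{\Bbb Q}v$. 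Hence $\theta_v(\xi_{\omega_i})$ has vanishing intersection with an honest effective curve and is not ample, establishing the desired equality.

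The ``in particular'' assertion then follows because, when $\NS(X)={\Bbb Z}H$, the algebraic Mukai lattice $H^*(X,{\Bbb Z})_{\alg}$ has rank three, so $v^\perp\cap H^*(X,{\Bbb Z})_{\alg}$ has rank two, equal to $\rk\NS(K_{(\beta,\omega)}(v))$ via the isomorphism $\theta_v'$; thus $L$ exhausts all of $\NS(K_{(\beta,\omega)}(v))_{\Bbb R}$, and the equality above descends from $M_{(\beta,\omega)}(v)$ to the albanese fibre $K_{(\beta,\omega)}(v)$ using the local product structure of the albanese fibration. The main obstacle is the wall-boundary step: exhibiting an honest contracted curve on $M_{(\beta,\omega_i)}(v)$ whose class is dual to $w\bmod{\Bbb Q}v$. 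The cleanest route is to transport the question via Theorem~\ref{thm:isom} to a Gieseker moduli $\overline M_{\widehat L}(u)$, where the wall structure for polarisations is classical and the extension-type contracted locus (a flop or divisorial contraction) is visible directly.
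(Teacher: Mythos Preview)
Your treatment of part~(1) and of the inclusion $\supset$ in part~(2) matches the paper exactly, via Proposition~\ref{prop:ample}. The reduction of the reverse inclusion to showing that each boundary class $\theta_v(\xi_{\omega_i})$ fails to be ample is also the paper's strategy, and your use of \eqref{eq:trivial} to see that $\theta_v(\xi_{\omega_i})$ restricts trivially to an extension family is the right mechanism.

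The gap is in the sentence ``producing a ${\Bbb P}^1$-family of non-split extensions''. For $\sigma_{(\beta,\omega_i)}$-stable objects $E_1,E_2$ with $v(E_1)+v(E_2)=v$ and the same phase at the wall, one has $\dim\Ext^1(E_2,E_1)=\langle v(E_1),v(E_2)\rangle$ on an abelian surface, so a positive-dimensional projective family requires $\langle v(E_1),v(E_2)\rangle\ge 2$. This is \emph{not} automatic at every wall: there are walls where every two-term decomposition of $v$ into Mukai vectors of stable objects has pairing~$1$. The paper handles this by invoking Proposition~\ref{prop:properly-semi-stable}, which shows that either such a ``good'' decomposition exists, or one is in one of two exceptional configurations: $v=v_1+nv_2$ with $v_1,v_2$ primitive isotropic and $\langle v_1,v_2\rangle=1$, or $v=v_1+v_2+v_3$ with the $v_i$ primitive isotropic and pairwise pairing~$1$. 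In the first case the paper builds a ${\Bbb P}^1$ from \emph{self}-extensions of a single stable object with isotropic Mukai vector (using $\dim\Ext^1(E_1,E_1)=2$ on an abelian surface); in the second it uses the $A_2$-type ${\Bbb P}^1$-bundles of Lemma~\ref{lem:A_2} and Corollary~\ref{cor:A_2}. Your proposed alternative---transporting to a Gieseker moduli space via Theorem~\ref{thm:isom}---does not sidestep this: the same numerical obstruction to a two-term decomposition with pairing~$\ge 2$ persists after the Fourier--Mukai transform, so the exceptional-case analysis is unavoidable.
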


\begin{proof}
(1) is obvious by Proposition \ref{prop:ample}. 

(2)
Assume that $\omega$ belongs to a boundary of $I$ 
defined by a wall $W_{v_1}$.
We set $v_2:=v-v_1$.
We may assume that 
$\phi_{(\beta,\omega)}(v_1)<\phi_{(\beta,\omega)}(v)$.
Assume that $\langle v_1,v_2 \rangle \geq 2$ and there are
$\sigma_{(\beta,\omega)}$-stable objects $E_i$, $i=1,2$
with $v(E_i)=v_i$.
Let $P$ be the projective space associated to
$\Ext^1(E_2,E_1)$. 
We take the associated extension 
$$
0 \to E_1(\lambda) \to E \to E_2 \to 0
$$ 
on $P \times X$,
where ${\cal O}_P(\lambda)$ is the tautological 
line bundle. 
Then by using \eqref{eq:trivial},
we see that $\theta_v(\xi_\omega)_{|P}={\cal O}_P$.
Thus $\theta_v(\xi_\omega)$ is not ample.

We next treat the remaining case.
By Proposition \ref{prop:properly-semi-stable},
replacing $v_1$ by another $v_1$, we may assume that
(i) $v=v_1+n v_2$, $\langle v_1^2 \rangle=\langle v_2^2 \rangle=0$ 
and $\langle v_1,v_2 \rangle=1$
or (ii) $v=v_1+v_2+v_3$, $\langle v_i^2 \rangle=0$,
$\langle v_i,v_j \rangle=1$, $(i \ne j)$. 
In the first case, we note that
$n=\langle v^2 \rangle/2 \geq 2$.
We take $E_1 \in M_{(\beta,\omega)}(v_2)$ 
and $E_2 \in M_{(\beta,\omega)}(v-2v_2)$. 
Since $\Ext^1(E_1,E_1) \cong {\Bbb C}$,
we have a family of non-trivial extensions $F$ of $E_1$ by $E_1$
parametrized by a projective line $P$.
Then $F \oplus E_2$ is a family of semi-stable objects and
we have $\theta_v(\xi_\omega)_{|P}={\cal O}_P$.
Thus $\theta_v(\xi_\omega)$ is not ample.
In the second case, for the ${\Bbb P}^1$-bundles in Corollary \ref{cor:A_2},
we can easily see that
$\theta_v(\xi_\omega)_{|D_i}$ is trivial along the fibers of the 
${\Bbb P}^1$-bundles.
Thus $\theta_v(\xi_\omega)$ is not ample. 
\end{proof}

\begin{rem}
The homomorphism
$$
\theta_v' : v^{\perp} \cap H^*(X,{\Bbb Z})_{\alg}
\to \NS(M_{(\beta,\omega)}(v))_{\Bbb Q}
$$
is defined over any field
${\frak k}$.
Replacing $\theta_v$ by $\theta_v'$, 
Proposition \ref{prop:ample} holds over any field ${\frak k}$.
\end{rem}


\begin{NB}
$$
\langle e^{\beta+xH},v \rangle>0,\; x \in (x_0,\frac{d_\beta}{r}).
$$
Assume that 
$a_\beta>0$.
Then 
\begin{equation}
\begin{split}
\langle e^{\beta+xH},v \rangle
= & -a_\beta-r x^2 \frac{(H^2)}{2}+d_\beta x (H^2)\\
> & -a_\beta-\frac{a_\beta }{d_\beta}
\left(\frac{2 r a_\beta }{(H^2)d_\beta}-2 d_\beta \right)\\
= & - \frac{a_\beta }{d_\beta}
\left(\frac{2 r a_\beta }{(H^2)d_\beta}-d_\beta \right)\\
= & a_\beta \frac{\langle v^2 \rangle-(D^2)}{d_\beta^2}>0.
\end{split}
\end{equation}
If $a_\beta \leq 0$, then $\langle e^{\beta+xH},v \rangle >
-a_\beta \geq 0$.

\end{NB}

\begin{NB}
We set $n:=(H^2)/2$.
We note that
$|\frac{q}{p}-\frac{qnk}{pnk+1}|=\frac{q}{p(pnk+1)} \ll 1$
for $k \gg 0$.
Replacing $q/p$ by $\frac{qnk}{pnk+1}$,
we may assume that $(p,n)=1$.
Under this assumption, 
we may also assume that $(p,q)=1$.
Then $w_1:=p^2+pqH+q^2 n \varrho_X$ is a primitive and isotropic
Mukai vector with $\langle w_1,u \rangle =1$,
$u \in H^*(X,{\Bbb Z})_{\alg}$. 
Thus $M_H(w_1)$ defines an untwisted Fourier-Mukai transform.
This fact can be applied, if $\beta \in {\Bbb Q}H$.  
\end{NB}

\section{Appendix}\label{sect:appendix}

\subsection{Another proof of Theorem \ref{thm:mu-semi-stable}}
\label{subsect:another}
Assume that $X_1$ is a fine moduli scheme,
that is, ${\bf E}$ is an untwisted object.
Then Theorem \ref{thm:mu-semi-stable} directly follows from
\cite[Prop. 10.3]{Br:3}, as we explain below.

We note that
\begin{equation}
\begin{split}
e^{\beta+\sqrt{-1} \omega}=&
e^{\gamma}e^{(\beta-\gamma)+\sqrt{-1} \omega}\\
=& e^\gamma+\left(\frac{((\beta-\gamma)^2)-(\omega^2)}{2}+
\sqrt{-1}(\beta-\gamma,\omega) \right)\varrho_X+
\left(\beta-\gamma+\sqrt{-1}\omega+
(\beta-\gamma+\sqrt{-1}\omega,\gamma)\varrho_X \right).
\end{split}
\end{equation}
Hence 

\begin{equation}
\begin{split}
\Phi(e^{\beta+\sqrt{-1} \omega})=&
-r_1 \left(\frac{((\beta-\gamma)^2)-(\omega^2)}{2}+
\sqrt{-1}(\beta-\gamma,\omega) \right)e^{\gamma'}-
\frac{1}{r_1}\varrho_{X_1}\\
& +\frac{r_1}{|r_1|}
\left(\widehat{\beta}-\widehat{\gamma}+\sqrt{-1}\widehat{\omega}+
(\beta-\gamma+\sqrt{-1}\omega,\gamma)\varrho_{X_1} \right)\\
=& -r_1 \left(\frac{((\beta-\gamma)^2)-(\omega^2)}{2}+
\sqrt{-1}(\beta-\gamma,\omega) \right)
e^{\gamma'+\widehat{\xi}+\sqrt{-1}\widehat{\eta}},
\end{split}
\end{equation}
where
\begin{equation}
\begin{split}
\xi=& -\frac{1}{|r_1|}
\frac{1}{\left(\frac{((\beta-\gamma)^2)-(\omega^2)}{2}\right)^2+
(\beta-\gamma,\omega)^2}
\left(\frac{((\beta-\gamma)^2)-(\omega^2)}{2}(\beta-\gamma)
+(\beta-\gamma,\omega)\omega
\right),\\
\eta=& -\frac{1}{|r_1|}
\frac{1}{\left(\frac{((\beta-\gamma)^2)-(\omega^2)}{2}\right)^2+
(\beta-\gamma,\omega)^2}
\left(\frac{((\beta-\gamma)^2)-(\omega^2)}{2}\omega
-(\beta-\gamma,\omega)(\beta-\gamma)
\right). 
\end{split}
\end{equation}

Since $\langle \Phi(e^{\beta+\sqrt{-1}\omega}),x \rangle=
\langle e^{\beta+\sqrt{-1}\omega},\widehat{\Phi}(x) \rangle=
Z_{(\beta,\omega)}(\widehat{\Phi}(x))$,
we get the following commutative diagram:
\begin{equation*}
\xymatrix{
    {\bf D}(X) 
    \ar[r]^{\Phi} 
    \ar[d]_{Z_{(\beta,\omega)}}
    \ar@{}[dr]|\circlearrowleft
  & {\bf D}(X_1) 
    \ar[d]^{Z_{(\gamma'+\widehat{\xi},\widehat{\eta})} } 
  \\
    {\Bbb C} \ar[r]_{ \times \zeta^{-1}} 
  & {\Bbb C}                        
 }
\end{equation*}
with 
$$
\zeta=-r_1 \left(\frac{((\beta-\gamma)^2)-(\omega^2)}{2}+
\sqrt{-1}(\beta-\gamma,\omega) \right).
$$

We set $\beta=\gamma-\lambda H-\nu$ with $\nu \in H^{\perp}$.
Then we see that
\begin{equation}
\begin{split}
& \frac{((\beta-\gamma)^2)-(\omega^2)}{2}\omega
-(\beta-\gamma,\omega)(\beta-\gamma)\\
= &\frac{\lambda^2(H^2)+(\nu^2)-(\omega^2)}{2}\frac{(H,\omega)}{(H^2)}H-
\lambda(H,\omega)(\lambda H+\nu)\\
=& (H,\omega) \left(
\frac{-\lambda^2(H^2)+(\nu^2)-(\omega^2)}{2(H^2)}H-\lambda \nu \right)\\
=& -(H,\omega)L
\end{split}
\end{equation}
where $L$ is defined in Definition \ref{defn:L},
and
\begin{equation}
\begin{split}
& \frac{((\beta-\gamma)^2)-(\omega^2)}{2}(\beta-\gamma)
+(\beta-\gamma,\omega)\omega\\
=& 
-\frac{\lambda^2(H^2)+(\nu^2)-(\omega^2)}{2}(\lambda H+\nu)+
\lambda(H,\omega)\frac{(H,\omega)}{(H^2)}H\\
=& -\frac{\lambda^2(H^2)+(\nu^2)-(\omega^2)}{2}(\lambda H+\nu)+
\lambda(\omega^2)H\\
=& -\frac{\lambda^2(H^2)+(\nu^2)+(\omega^2)}{2}\lambda H
-\frac{\lambda^2(H^2)+(\nu^2)-(\omega^2)}{2}\nu.
\end{split}
\end{equation}


\begin{prop}\cite[Prop. 10.3]{Br:3}
Assume that 
$X_1$ is a fine moduli space of
$\sigma_{(\beta,\omega)}$-stable objects, that is,
${\bf E}$ is an untwisted object.  
Then $\widehat{\eta}$ is ample and 
$\Phi$ preserves the Bridgeland stability condition.
Thus $F$ is $\sigma_{(\beta,\omega)}$-semi-stable
if and only if $\Phi(F)$ is 
$\sigma_{(\gamma'+\widehat{\xi},\widehat{\eta})}$-semi-stable. 
\end{prop}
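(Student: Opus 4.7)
The plan is to leverage the commutative diagram displayed just before the statement together with the $\widehat{L}$-theory already developed. That diagram says
\begin{equation*}
Z_{(\beta,\omega)}(F) \;=\; \zeta \cdot Z_{(\gamma'+\widehat{\xi},\widehat{\eta})}(\Phi(F))
\end{equation*}
for every $F\in {\bf D}(X)$, where $\zeta=-r_1\bigl(\tfrac{((\beta-\gamma)^2)-(\omega^2)}{2}+\sqrt{-1}(\beta-\gamma,\omega)\bigr)\in {\Bbb C}^{\times}$. Multiplying a central charge by a non-zero constant only translates all phases by the common amount $\arg(\zeta)/\pi$, so it preserves the semi-stability relation among any collection of objects once the hearts are matched up to an appropriate shift. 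Thus the proposition reduces to two tasks: verify that $\widehat\eta$ is ample, and show that $\Phi$ takes ${\frak A}_{(\beta,\omega)}$ onto a shift of ${\frak A}_{(\gamma'+\widehat{\xi},\widehat{\eta})}$ by the integer $k$ determined by $\arg(\zeta)$.

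For ampleness of $\widehat\eta$, I would read off the explicit formula for $\eta$ and use the identity displayed in the excerpt,
\begin{equation*}
\tfrac{((\beta-\gamma)^2)-(\omega^2)}{2}\,\omega - (\beta-\gamma,\omega)(\beta-\gamma) \;=\; -(H,\omega)\,L,
\end{equation*}
to conclude that $\eta$ is a strictly positive rational multiple of $L$ (the denominator appearing in $\eta$ is manifestly positive). By the remark following Definition~\ref{defn:L}, $\widehat L$ is ample under our standing hypotheses (abelian surface, or $K3$ with $\NS(X)={\Bbb Z}H$, or $\nu=0$), so $\widehat\eta$ is ample as well. Consequently $(\gamma'+\widehat\xi,\widehat\eta)$ is a legitimate Bridgeland parameter on ${\bf D}(X_1)$ and the heart ${\frak A}_{(\gamma'+\widehat\xi,\widehat\eta)}$ is defined.

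To produce the heart-matching up to shift, I would follow the argument of \cite[Prop.~10.3]{Br:3}. Since ${\bf E}$ is untwisted and $X_1$ is a fine moduli, the Fourier-Mukai transform $\Phi$ is a derived equivalence sending ${\bf E}_{|X\times\{x_1\}}\mapsto {\frak k}_{x_1}[-1]$, and the skyscrapers ${\frak k}_{x_1}$ are the point-like $\sigma_{(\gamma'+\widehat\xi,\widehat\eta)}$-stable objects of phase $1$ in the target category. Conversely, $w_1$-stable objects on $X$ are the distinguished phase-$\phi(w_1)$ semi-stables. The phase-translation induced by $\zeta$ lines these marked phases up exactly, and one deduces that $\Phi$ maps the heart ${\frak A}_{(\beta,\omega)}$ into the shifted heart ${\frak A}_{(\gamma'+\widehat\xi,\widehat\eta)}[k]$ by analysing the Harder-Narasimhan structure of $\Phi(F)$ and using that the point-like objects generate a spanning class for the stability condition on $X_1$. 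Once the hearts are identified, the scalar rescaling of the central charge gives the (semi-)stability equivalence.

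The main obstacle I anticipate is the bookkeeping that identifies the correct shift $k$ and verifies that $\Phi$ is compatible with the tilting procedure on both sides; concretely, one must check that $\Phi$ (or rather $\Phi[-k]$) sends torsion (resp.\ torsion-free) classes in ${\frak A}_{(\beta,\omega)}$, relative to any subobject, to objects lying in the correct truncation of ${\frak A}_{(\gamma'+\widehat\xi,\widehat\eta)}$. Here one uses Lemma~\ref{lem:range-of-phi}-type bounds on phases of $\Phi(F)$ in conjunction with the observation that ${\bf E}_{|X\times\{x_1\}}$ and ${\frak k}_{x_1}$ are point-like stable in the respective conditions, as in the cited Bridgeland argument.
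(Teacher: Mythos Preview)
The paper does not actually prove this proposition: it is quoted directly from Bridgeland \cite[Prop.~10.3]{Br:3} and no argument is supplied beyond the preparatory computations preceding the statement. Those computations show that $\Phi(e^{\beta+\sqrt{-1}\omega})=\zeta\,e^{\gamma'+\widehat{\xi}+\sqrt{-1}\widehat{\eta}}$ (giving the commutative diagram of central charges) and that $\eta$ is a positive rational multiple of $L$ (namely $\eta=\tfrac{(H,\omega)}{|r_1|\,[\cdots]}\,L$), so that $\widehat{\eta}\in{\Bbb Q}_{>0}\widehat{L}$ and hence is ample under the standing hypotheses. Everything else---that a derived equivalence together with a matching of central charges up to $\zeta\in{\Bbb C}^\times$ yields a correspondence of stability conditions, i.e.\ the heart-matching up to shift---is deferred to Bridgeland's paper.

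Your sketch is a faithful outline of how Bridgeland's argument goes, and your identification of $\widehat{\eta}$ as a positive multiple of $\widehat{L}$ is exactly the computation the paper records. The only caveat is that the heart-matching step is not a mere ``bookkeeping'' exercise: in Bridgeland's setting it relies on the connectedness of the space of stability conditions and a deformation argument (moving along a path in the parameter space while tracking the induced stability condition under $\Phi$), rather than a direct verification that $\Phi$ carries torsion pairs to torsion pairs. Your phase-bound suggestion via Lemma~\ref{lem:range-of-phi} is closer in spirit to how this paper handles the \emph{twisted} case in \S\ref{sect:mu-semi-stable}, but for the untwisted fine-moduli situation the authors simply invoke Bridgeland rather than repeat that analysis.
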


\begin{NB}
Assume that ${\bf E}$ parametrizes 
$\sigma_{(\beta,\omega)}$-stable objects.
Then the ampleness of $\widehat{\eta}$ implies that
$\Phi$ preserves the orientation. 
\end{NB}

If 
$\phi_{(\beta,\omega)}(F)=\phi_{(\beta,\omega)}(r_1 e^\gamma)$,
then $\phi_{(\gamma'+\widehat{\xi},\widehat{\eta})}(\Phi(F))
=\phi_{(\gamma'+\widehat{\xi},\widehat{\eta})}({\frak k}_{x_1}[-1])
\equiv 0
 \pmod {2{\Bbb Z}}$.
Hence $F$ is a $\sigma_{(\beta,\omega)}$-semi-stable object
with $\phi_{(\beta,\omega)}(F)=\phi_{(\beta,\omega)}(r_1 e^\gamma)$
if and only if
$\Phi(F)[1]$ is a $\sigma_{(\gamma'+\widehat{\xi},\widehat{\eta})}$-
semi-stable object
with $\phi_{(\gamma'+\widehat{\xi},\widehat{\eta})}(\Phi(F)[1])=1$.
This is equivalent to the condition that  
 $H^{-1}(\Phi(F)[1])$ is a $\mu$-semi-stable torsion free sheaf
of $(c_1(H^{-1}(\Phi(F)[1])(-\widehat{\xi})),
\widehat{\eta})=0$ with respect to
$\widehat{\eta}$ and
that $H^0(\Phi(F)[1])$ is a 0-dimensional sheaf.
Thus we get another proof of Theorem \ref{thm:mu-semi-stable}.

\begin{rem}
For $E \in K(X_1)$,
\eqref{eq:slope} is equivalent to
$(c_1(E(-\gamma'-\widehat{\xi})),\widehat{L})=0$:

Indeed we first note that
\begin{equation}
\begin{split}
& r_1^2\left\{ \left(\frac{((\beta-\gamma)^2)-(\omega^2)}{2}\right)^2+
(\beta-\gamma,\omega)^2 \right\}^2
\frac{(\xi,\eta)}{(H,\omega)}\\
=&
\frac{\lambda(\lambda^2(H^2)+(\omega^2)+(\nu^2))
(\lambda^2(H^2)+(\omega^2)-(\nu^2))}{4}+
\lambda(\nu^2)\frac{\lambda^2 (H^2)-(\omega^2)+(\nu^2)}{2}\\
=&\lambda \left\{ 
\left(\frac{\lambda^2 (H^2)-(\omega^2)-(\nu^2)}{2} \right)^2+
\lambda^2 (H^2) (\omega^2) \right \}\\
=&\lambda 
\left\{\left(\frac{((\beta-\gamma)^2)-(\omega^2)}{2}\right)^2+
(\beta-\gamma,\omega)^2 \right\}. 
\end{split}
\end{equation}
Since 
$$
L=\frac{\left(\frac{((\beta-\gamma)^2)-(\omega^2)}{2}\right)^2+
(\beta-\gamma,\omega)^2 }{(\omega,H)}|r_1|\eta,
$$
we have
\begin{equation}
\begin{split}
\lambda=&
 r_1^2\left\{ \left(\frac{((\beta-\gamma)^2)-(\omega^2)}{2}\right)^2+
(\beta-\gamma,\omega)^2 \right\}
\frac{(\xi,\eta)}{(\omega,H)}
= |r_1|(L,\xi).
\end{split}
\end{equation}

Thus
we get
\begin{equation}
\begin{split}
\frac{\left(\frac{((\beta-\gamma)^2)-(\omega^2)}{2}\right)^2+
(\beta-\gamma,\omega)^2 }{(\omega,H)}|r_1|
\left((c_1(E(-\gamma')),\widehat{\eta})
-\rk E(\widehat{\xi},\widehat{\eta}) \right)=&
(c_1(E(-\gamma')),\widehat{L})
-\frac{\rk E}{|r_1|} \lambda \\
=& (c_1(E(-\gamma'-\widehat{\xi})),\widehat{L}).
\end{split}
\end{equation}
\end{rem}

\subsection{Polarizations on the moduli spaces of stable sheaves.}

Let $X$ be a smooth projective surface with an ample divisor $H$.
In this section, we study Simpson's polarization \cite{S:1} of the
moduli spaces of $\beta$-twisted stable sheaves.
For a topological invariant $v$ (e.g. Chern character or
the equivalence class in the Grothendieck group $K(X)_{\mathrm{top}}$
of topological vector bundles),
$\overline{M}_H^\beta(v)$ denotes the moduli space of
$\beta$-twisted semi-stable sheaves.
We take a locally free sheaf $G$ with $\frac{c_1(G)}{\rk G}=\beta$.
Let $Q^{ss}$ be the open subscheme of
$\Quot_{G(-nH) \otimes V/X}$ such that
$\overline{M}_H^\beta(v)=Q^{ss} \dslash GL(V)$,
where $V$ is a vector space of dimension $\chi(G,E(nH))$, 
$E \in \overline{M}_H^\beta(v)$ and the action of $GL(V)$ 
is the natural one coming from the action on
$G(-nH) \otimes V$.
Let ${\cal Q}$ be the universal quotient on $Q^{ss} \times X$. Then
${\cal Q}_{|\{q \} \times X}$ is $G$-twisted semi-stable
for all $q \in Q^{ss}$
and ${\cal Q}$ is $GL(V)$-linearized.
By the construction of the moduli space,
we have a $GL(V)$-equivariant isomorphism
$V \to p_{Q^{ss}*}(G^{\vee} \otimes {\cal Q}(nH))$.
We set
\begin{equation}
\begin{split}
{\cal L}_{m,n}:=&
\det p_{Q^{ss} !}(G^{\vee} \otimes {\cal Q}((n+m)H))^{\otimes P(n)} \otimes
\det p_{Q^{ss} !}(G^{\vee} \otimes {\cal Q}(nH))^{\otimes (-P(m+n))}\\
=& \det p_{Q^{ss} !}(G^{\vee} \otimes {\cal Q}((n+m)H))^{\otimes P(n)} \otimes
\det V^{\otimes (-P(m+n))},
\end{split}
\end{equation}
where $P(n):=\chi(G,E(n))$ is the $G$-twisted Hilbert polynomial
of $E \in \overline{M}_H^\beta(v)$. 
It is a $GL(V)$-linearized line bundle on $Q^{ss}$, i.e.,
${\cal L}_{m,n} \in \Pic^{GL(V)}(Q^{ss})$.
By the construction of the moduli space, we get the following.

\begin{lem}\label{lem:polarization}
${\cal L}_{m,n}$, $m \gg n \gg 0$ is the pull-back of a relatively
ample line bundle on $\overline{M}_H^\beta(v)$.
\end{lem}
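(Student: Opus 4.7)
The plan is to recognise ${\cal L}_{m,n}$ as the pull-back, under a Grothendieck-type immersion of $Q^{ss}$ into a Grassmannian, of the Pl\"ucker line bundle raised to the $P(n)$-th power, with the correction $\det V^{\otimes -P(m+n)}$ inserted exactly so that the centre of $GL(V)$ acts with weight zero. Standard Simpson-type GIT in the $G$-twisted setting then provides the descent and the relative ampleness on $\overline{M}_H^\beta(v)$.

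First, I would fix $m$ large (depending on $n$ and $v$) so that, for every $q \in Q^{ss}$, the sheaf $G^\vee \otimes {\cal Q}_{|\{q\} \times X}((m+n)H)$ is acyclic and globally generated and the natural multiplication map
\[
H^0(G^\vee \otimes G(mH)) \otimes V \twoheadrightarrow H^0(G^\vee \otimes {\cal Q}_{|\{q\} \times X}((m+n)H))
\]
is surjective. Boundedness of the family of $\beta$-twisted semistable sheaves of invariant $v$ makes such an $m$ available, and produces a $GL(V)$-equivariant locally closed immersion
\[
\iota_m : Q^{ss} \hookrightarrow \mathrm{Gr}\bigl(H^0(G^\vee \otimes G(mH)) \otimes V,\, P(m+n)\bigr),
\]
with the property that the pull-back of the Pl\"ucker line bundle under $\iota_m$ is $L := \det p_{Q^{ss}!}(G^\vee \otimes {\cal Q}((m+n)H))$.

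Second, I would verify descent of the linearisation. The centre ${\Bbb G}_m \subset GL(V)$ acts trivially on $Q^{ss}$ (scalar endomorphisms of $G(-nH) \otimes V$ preserve every quotient) but acts on $L$ with weight $P(m+n)$ and on $\det V$ with weight $\dim V = P(n)$; consequently the weight of its action on
\[
{\cal L}_{m,n} = L^{\otimes P(n)} \otimes (\det V)^{\otimes -P(m+n)}
\]
is $P(n)P(m+n) - P(m+n)P(n) = 0$. Hence ${\cal L}_{m,n}$ is genuinely $PGL(V)$-linearised, and Mumford's descent criterion (applied to the reductive stabilisers of closed orbits of polystable points in $Q^{ss}$, which act on ${\cal L}_{m,n}$ through their images in $PGL(V)$) produces a line bundle $\overline{\cal L}_{m,n}$ on $Q^{ss} \dslash GL(V) = \overline{M}_H^\beta(v)$ whose pull-back to $Q^{ss}$ is ${\cal L}_{m,n}$.

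Finally, I would invoke the $G$-twisted Simpson GIT comparison (see \cite{S:1} in the untwisted case, and its $G$-twisted variant that underlies the construction of $\overline{M}_H^\beta(v)$): for $m \gg n \gg 0$, Hilbert--Mumford numerical semistability of $q \in Q$ with respect to $L$ coincides with $\beta$-twisted Gieseker semistability of ${\cal Q}_{|\{q\} \times X}$, so $Q^{ss}$ is precisely the GIT semistable locus and $\overline{\cal L}_{m,n}$ is relatively ample on $\overline{M}_H^\beta(v)$, which is the claim. The principal technical hurdle is exactly this stability comparison, which depends on uniform control of the $G$-twisted Hilbert polynomials of subsheaves $F \subset E$ as $E$ ranges over the bounded family of $\beta$-twisted semistable sheaves of Mukai vector $v$; these estimates are Simpson's original ones with ordinary Hilbert polynomials replaced throughout by $G$-twisted ones, and belong to the moduli construction rather than to the present lemma.
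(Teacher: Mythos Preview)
Your proposal is correct and is exactly the unpacking of what the paper intends: the paper gives no argument beyond the single sentence ``By the construction of the moduli space, we get the following,'' meaning precisely the Simpson-type GIT construction (in its $G$-twisted form) that you have sketched, with the Grassmannian embedding, the weight-zero check for the centre of $GL(V)$, and the identification of GIT-semistability with $\beta$-twisted Gieseker semistability for $m \gg n \gg 0$.
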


We set
\begin{equation}
\begin{split}
\xi_1:= & H+\left(H,\frac{c_1(v)}{r}-\frac{K_X}{2} \right)\varrho_X\\
\xi_2:= & -\left(e^\beta-\frac{\chi(e^{\beta},v)}{r}\varrho_X \right).
\end{split}
\end{equation}
For $n_1,n_2 \in {\Bbb Q}$, let 
${\cal L}(n_1 \xi_1+n_2 \xi_2)
\in \NS(\overline{M}_H^\beta(v))_{\Bbb Q}$ 
be an algebraic equivalence
class of a ${\Bbb Q}$-line bundle
such that 
$$
q^*({\cal L}(n_1 \xi_1+n_2 \xi_2)^{\otimes N})=
\det p_{Q^{ss}!}({\cal Q} \otimes F^{\vee})
\in \Pic^{GL(V)}(Q^{ss}),
$$
where $F \in {\bf D}(X)$ satisfies 
$\ch (F)=N(n_1 \xi_1+n_2 \xi_2)$, $N \gg 0$.
 Then we have a homomorphism
\begin{equation}
\begin{matrix}
{\Bbb Q}^{\oplus 2} & \to & \NS(\overline{M}_H^\beta(v))_{\Bbb Q} \\
(n_1,n_2) & \mapsto & {\cal L}(n_1 \xi_1+n_2 \xi_2).\\
\end{matrix}
\end{equation}
\begin{lem}\label{lem:GIT-polarization}
${\cal L}(\xi_1+\varepsilon  \xi_2)$
is a ${\Bbb Q}$-ample divisor on $\overline{M}_H^\beta(v)$ 
for $0<\varepsilon \ll 1$. 
In particular, ${\cal L}(\xi_1)$ defines a nef divisor.
\end{lem}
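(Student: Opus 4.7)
The plan is to identify the Simpson polarization ${\cal L}_{m,n}$ --- which by Lemma \ref{lem:polarization} descends to a relatively ample ${\Bbb Q}$-line bundle on $\overline{M}_H^\beta(v)$ for $m \gg n \gg 0$ --- with an explicit positive ${\Bbb Q}$-multiple of ${\cal L}(\xi_1 + \varepsilon_{m,n}\, \xi_2)$ for some $\varepsilon_{m,n} > 0$ that can be made arbitrarily small by taking $n$ large. The first step will be to verify that the map $\alpha \mapsto {\cal L}(\alpha)$ is ${\Bbb Q}$-linear on the codimension-one subspace of $H^*(X,{\Bbb Q})$ cut out by the descent condition for $GL(V)$-equivariance, and to check that $\xi_1$, $\xi_2$, and $\ch(F_{m,n})$ --- where $F_{m,n}^{\vee} := G^{\vee}((n+m)H)^{\oplus P(n)} \ominus G^{\vee}(nH)^{\oplus P(n+m)}$ so that $q^* {\cal L}_{m,n} = \det p_!({\cal Q} \otimes F_{m,n}^{\vee})$ --- all lie in this subspace.

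The core computation will be to expand
\begin{equation*}
\ch(F_{m,n}) = \ch(G)\,\bigl[P(n)\,e^{-(n+m)H} - P(n+m)\,e^{-nH}\bigr]
\end{equation*}
using surface Riemann-Roch for $P(n) = \chi(G,E(nH)) = An^2 + Bn + C$, where $A = \rk(G)\,\rk(v)\,(H^2)/2$ and $B,C$ are determined by $v$ and $G$, and to separate the result into components in the directions of $H$ and $\beta$. Comparing with $\xi_1$ (rank $0$, $c_1 = H$) and $\xi_2$ (rank $-1$, $c_1 = -\beta$), I expect to find
\begin{equation*}
\ch(F_{m,n}) = c_{m,n}\,\bigl(\xi_1 + \varepsilon_{m,n}\,\xi_2\bigr),
\end{equation*}
with $c_{m,n} > 0$ and $\varepsilon_{m,n} = (A(2n+m)+B)/(An(n+m)-C)$. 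For $n$ fixed and $m \to \infty$ this tends to $1/n$, and hence to $0$ as $n \to \infty$.

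Granting this identification, the conclusion follows in two stages. Lemma \ref{lem:polarization} gives that ${\cal L}(\xi_1 + (1/n)\xi_2)$ is ${\Bbb Q}$-ample for every sufficiently large $n$, say $n \geq N$. Taking the limit $n \to \infty$ shows that ${\cal L}(\xi_1)$ lies in the closure of the ample cone, so is nef. Finally, for any $\varepsilon \in (0, 1/N]$, the identity
\begin{equation*}
\xi_1 + \varepsilon \xi_2 = (1 - \varepsilon N)\,\xi_1 + \varepsilon N \bigl(\xi_1 + (1/N)\xi_2\bigr)
\end{equation*}
exhibits the left-hand side as a non-negative combination of a nef class and an ample class with strictly positive coefficient on the ample term; since nef plus ample is ample, ${\cal L}(\xi_1 + \varepsilon \xi_2)$ is ${\Bbb Q}$-ample.

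The main obstacle is the explicit identification in the core step: once the degree-$0$ and degree-$1$ components of $\ch(F_{m,n})$ and $c_{m,n}(\xi_1 + \varepsilon_{m,n}\xi_2)$ are matched (which pins down $c_{m,n}$ and $\varepsilon_{m,n}$), one still has to check that the degree-$2$ components coincide, with no residual contribution in directions outside the span of $\xi_1, \xi_2$. This should reduce to a single scalar identity among $(H^2)$, $(\beta,H)$, $(\beta^2)$, $(K_X,H)$, $\chi(e^\beta,v)$, and $\ch_2(G)$, which is a direct consequence of the Hirzebruch-Riemann-Roch expansion of $\chi(G,E(nH))$ in powers of $n$, the descent condition playing the role of eliminating the one degree of freedom one would otherwise have to impose by hand.
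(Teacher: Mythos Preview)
Your proposal is correct and follows essentially the same approach as the paper: both identify the Simpson polarization ${\cal L}_{m,n}$ with a positive multiple of ${\cal L}(\xi_1+\varepsilon_{m,n}\xi_2)$ via a direct Chern-character expansion, and both observe that $\varepsilon_{m,n}\to 1/n$ as $m\to\infty$. The paper carries out the expansion by writing out $P(n)\,\ch(G^{\vee}((n+m)H))-P(n+m)\,\ch(G^{\vee}(nH))$ and regrouping into $c_1\,\xi_1^{\vee}+c_2\,\xi_2^{\vee}$; your parametrization via $P(n)=An^2+Bn+C$ is just a repackaging of the same algebra.

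Your anticipated ``main obstacle'' dissolves in the computation: the degree-$2$ contribution of $\ch_2(G)$ enters both $\ch(G^{\vee}(kH))/\rk G$ and $\chi(G^{\vee}\otimes E)/(\rk G\,\rk E)$ with the same coefficient, so these terms cancel and the result lands exactly in $\mathbb{Q}\xi_1^{\vee}\oplus\mathbb{Q}\xi_2^{\vee}$ with no residual direction. No extra identity beyond Riemann--Roch is needed.

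One small imprecision: Lemma~\ref{lem:polarization} gives ampleness of ${\cal L}(\xi_1+\varepsilon_{m,n}\xi_2)$ for the actual values $\varepsilon_{m,n}$, which approach but never equal $1/n$. This does not affect your argument, since all you need is a sequence of ample values tending to $0^{+}$; your convexity step (nef\,$+$\,ample\,$=$\,ample) then fills in the interval. The paper is equally terse at this point.
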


\begin{proof}
We note that
\begin{equation}
\begin{split}
& \frac{\chi(G^{\vee} \otimes E(nH))}{\rk G \rk E}
\frac{\ch (G^{\vee}((n+m)H))}{\rk G}-
\frac{\chi(G^{\vee} \otimes E((n+m)H))}{\rk G \rk E}
\frac{\ch(G^{\vee}(nH))}{\rk G}\\
=& 
\left( \frac{\chi(G^{\vee} \otimes E)}{\rk G \rk E}+
n \left(H,\frac{c_1(E)}{\rk E}-\frac{c_1(G)}{\rk G}-\frac{K_X}{2}\right)
+\frac{(H^2)}{2}n^2 \right)\\
& \times 
\left(\frac{\ch(G^{\vee})}{\rk G}+
(n+m)\left(H-\left(\frac{c_1(G)}{\rk G},H \right)\varrho_X \right)
+\frac{(H^2)}{2}(n+m)^2 \varrho_X \right)\\
& -\left( \frac{\chi(G^{\vee} \otimes E)}{\rk G \rk E}+
(n+m)\left(H,\frac{c_1(E)}{\rk E}-\frac{c_1(G)}{\rk G}-\frac{K_X}{2}\right)
+\frac{(H^2)}{2}(n+m)^2 \right)\\
& \times 
\left(\frac{\ch(G^{\vee})}{\rk G}+
n \left(H-\left(\frac{c_1(G)}{\rk G},H \right)\varrho_X \right)
+\frac{(H^2)}{2}n^2 \varrho_X \right)\\
=& m\left(n(n+m)\frac{(H^2)}{2}-
\frac{\chi(G^{\vee} \otimes E)}{\rk G \rk E} \right)
\left(-H+\left(H,\frac{c_1(E)}{\rk E}-\frac{K_X}{2}\right) 
\varrho_X \right) \\
& +m\left((2n+m)\frac{(H^2)}{2}+
\left(H,\frac{c_1(E)}{\rk E}-
\frac{c_1(G)}{\rk G}-\frac{K_X}{2} \right) \right)
\left(\frac{\chi(G^{\vee} \otimes E)}{\rk G \rk E}  \varrho_X-
\frac{\ch G^{\vee}}{\rk G} \right)
\end{split}
\end{equation}
and 
$$
\frac{\chi(G^{\vee} \otimes E)}{\rk G \rk E}  \varrho_X-
\frac{\ch G^{\vee}}{\rk G}=\xi_2^{\vee}.
$$
Since
$$
\lim_{m \to \infty}
\frac{(2n+m)\frac{(H^2)}{2}+(H,\frac{c_1(E)}{\rk E}-
\frac{c_1(G)}{\rk G}-\frac{K_X}{2}) 
}{n(n+m)\frac{(H^2)}{2}-
\frac{\chi(G^{\vee} \otimes E)}{\rk G \rk E}}
=\frac{1}{n}
$$
and $n$ is an arbitrary large integer, we get the claim. 
\end{proof}

\subsection{Some results to study the nef cone.}

In this subsection,
we shall give some results which are used in the proof of 
Corollary \ref{cor:ample} (2).
So assume that $X$ is an abelian surface.

We start with the following lemma.
\begin{lem}\label{lem:<,>}
Let $v_1,v_2$ be Mukai vectors such that
$Z_{(\beta,\omega)}(v_2) \in {\Bbb R} Z_{(\beta,\omega)}(v_1)$.
\begin{enumerate}
\item[(1)]
If $d_\beta(v_1),d_\beta(v_2)>0$, then
$\langle v_1,v_2 \rangle \geq 0$ and the equality holds only if
$\langle v_1^2 \rangle=0$ and $v_2 \in {\Bbb Q}v_1$.
\item[(2)]
If $\langle v_1,v_2 \rangle>0$ and $d_\beta(v_1)>0$, then
$d_\beta(v_2)>0$.
\item[(3)]
Assume that $\langle v_1, v_2 \rangle=1$, $d_\beta(v_1)>0$ and
$\langle v_1^2 \rangle \leq \langle v_2^2 \rangle$, then
$\langle v_1^2 \rangle=0$ and $v_2=v_2'+n v_1$, where 
$v_2'$ is an isotropic Mukai vector with
$d_\beta(v_2')>0$. 
\end{enumerate}
\end{lem}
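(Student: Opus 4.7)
The plan is to analyze the auxiliary vector $w := d_\beta(v_1) v_2 - d_\beta(v_2) v_1$ inside the real algebraic Mukai lattice of signature $(2,\rho(X))$, exploiting the fact that $w$ lies in the negative semi-definite orthogonal complement of the positive 2-plane determined by $e^{\beta+\sqrt{-1}\omega}$. The key preliminary is that the real 2-plane $P \subset H^*(X,{\Bbb R})_{\alg}$ spanned by $\mathrm{Re}(e^{\beta+\sqrt{-1}\omega})$ and $\mathrm{Im}(e^{\beta+\sqrt{-1}\omega})$ is positive definite: a short expansion shows the two basis vectors are orthogonal with common self-pairing $(\omega^2)>0$. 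Since the algebraic Mukai lattice of an abelian surface has signature $(2,\rho(X))$, the orthogonal complement $P^\perp$ is negative semi-definite, and $u \in P^\perp$ iff $Z_{(\beta,\omega)}(u)=0$.

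The construction $w := d_\beta(v_1) v_2 - d_\beta(v_2) v_1$ automatically satisfies $d_\beta(w)=0$. The linear dependence hypothesis, combined with $d_\beta(v_1)>0$, forces $Z_{(\beta,\omega)}(v_2) = (d_\beta(v_2)/d_\beta(v_1))\,Z_{(\beta,\omega)}(v_1)$ by comparing imaginary parts, whence $Z_{(\beta,\omega)}(w)=0$ and $w \in P^\perp$. This yields the central inequality
\begin{equation*}
0 \;\ge\; \langle w^2 \rangle \;=\; d_1^2 \langle v_2^2 \rangle - 2 d_1 d_2 \langle v_1, v_2 \rangle + d_2^2 \langle v_1^2 \rangle, \qquad d_i := d_\beta(v_i),
\end{equation*}
with equality only when $w=0$. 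From this single inequality, all three parts will follow by algebra, using the (implicit, and automatic for classes of $\sigma_{(\beta,\omega)}$-semi-stable objects on an abelian surface, as $H^*(X,{\Bbb Z})_{\alg}$ carries no $(-2)$-classes) positivity $\langle v_i^2 \rangle \ge 0$.

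For (1) I would simply rearrange: $2 d_1 d_2 \langle v_1,v_2 \rangle \ge d_1^2 \langle v_2^2 \rangle + d_2^2 \langle v_1^2 \rangle \ge 0$, and inspect when equality occurs (both $\langle v_i^2 \rangle = 0$ and $w=0$, giving $v_2 \in {\Bbb Q}v_1$). For (2) I would argue by contradiction: if $d_\beta(v_2)=0$, the linear dependence forces $Z_{(\beta,\omega)}(v_2)=0$, so $v_2 \in P^\perp$, and $\langle v_2^2 \rangle \ge 0$ then gives $v_2=0$, contradicting $\langle v_1,v_2\rangle>0$; if $d_\beta(v_2)<0$, write $Z_{(\beta,\omega)}(v_2) = \lambda Z_{(\beta,\omega)}(v_1)$ with $\lambda<0$ and apply the analogous inequality to $v_2 - \lambda v_1 \in P^\perp$ to get $2\lambda \langle v_1,v_2\rangle \ge 0$, forcing $\langle v_1,v_2\rangle \le 0$.

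For (3) I would first apply (2) to obtain $d_\beta(v_2)>0$, then substitute $\langle v_1,v_2 \rangle = 1$ and use $\langle v_1^2 \rangle \le \langle v_2^2 \rangle$ in the central inequality to get $(d_1^2+d_2^2)\langle v_1^2 \rangle \le 2 d_1 d_2 \le d_1^2+d_2^2$, so $\langle v_1^2 \rangle \le 1$; since $\langle v^2 \rangle$ is an even integer and $\langle v_1^2 \rangle \ge 0$, this forces $\langle v_1^2 \rangle = 0$. Then $n := \tfrac{1}{2}\langle v_2^2 \rangle \in {\Bbb Z}$ and $v_2' := v_2 - n v_1$ is isotropic; the central inequality with $\langle v_1^2 \rangle=0$ gives $n d_1 \le d_2$, so $d_\beta(v_2') \ge 0$. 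The strict positivity $d_\beta(v_2')>0$ is the main technical point: equality would force $w=0$, i.e.\ $v_2 \in {\Bbb Q}v_1$, but then $\langle v_1,v_2 \rangle = (d_2/d_1)\langle v_1^2 \rangle = 0$, contradicting $\langle v_1,v_2 \rangle=1$. The main obstacle throughout is not conceptual but bookkeeping: handling the degenerate sign cases in (2) cleanly, and tracing the equality case of the signature inequality to upgrade $d_\beta(v_2') \ge 0$ to strict inequality in (3).
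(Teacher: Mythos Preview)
Your proof is correct and rests on the same inequality as the paper's, namely
\[
2\,d_\beta(v_1)\,d_\beta(v_2)\,\langle v_1,v_2\rangle \;\ge\; d_\beta(v_1)^2\langle v_2^2\rangle + d_\beta(v_2)^2\langle v_1^2\rangle,
\]
but you reach it by a different and somewhat cleaner route. The paper quotes the explicit coordinate identity for $\langle v_1,v_2\rangle/(d_1 d_2)$ from \cite[Lem.~3.1.1]{MYY} and checks that each term is nonnegative under the phase hypothesis; you instead observe that $w=d_1 v_2-d_2 v_1$ lies in the orthogonal complement of the positive $2$-plane spanned by $\mathrm{Re}\,e^{\beta+\sqrt{-1}\omega}$ and $\mathrm{Im}\,e^{\beta+\sqrt{-1}\omega}$, so $\langle w^2\rangle\le 0$ by signature, with equality forcing $w=0$. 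This is the same content, but your framing makes the equality analysis (needed for the sharp conclusions in (1) and (3)) transparent without unpacking coordinates. For part (3) the paper invokes \cite[Lem.~4.2.4(1)]{MYY} to exclude $\langle v_1^2\rangle\ge 2$ and then applies (2) to $v_2'$; your argument instead squeezes $\langle v_1^2\rangle\le 1$ directly out of the central inequality via AM--GM, and obtains $d_\beta(v_2')>0$ from the equality case, making the whole proof self-contained. Both approaches tacitly use $\langle v_i^2\rangle\ge 0$, which you correctly flag as an implicit standing hypothesis in this abelian-surface subsection.
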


\begin{proof}
(1) is a consequence of 
the formula
\begin{align*}
\frac{\langle v_1,v_2 \rangle}{d_\beta(v_1) d_\beta(v_2)}
=-&\frac{1}{2}
  \Bigl(\Bigl(\frac{D_{\beta}(v_1)}{d_\beta(v_1)}
               -\frac{D_{\beta}(v_2)}{d_\beta(v_2)}\Bigr)^2\Bigr)
 +\frac{\langle v_1^2 \rangle}{2 d_\beta(v_1)^2}+
  \frac{\langle v_2^2 \rangle}{2 d_\beta(v_2)^2}
\\
&+\left(
    \frac{r_\beta(v_1)}{d_\beta(v_1)}
   -\frac{r_\beta(v_2)}{d_\beta(v_2)} 
  \right) 
  \left(
      \frac{a_\beta(v_1)}{d_\beta(v_1)}
     -\frac{a_\beta(v_2)}{d_\beta(v_2)}
  \right)
\end{align*}
for $d_\beta(v_1),d_\beta(v_2)>0$ 
(see also \cite[Lem. 3.1.1]{MYY})
and the assumption 
$Z_{(\beta,\omega)}(v_2) \in {\Bbb R} Z_{(\beta,\omega)}(v_1)$.

(2)
If $d_\beta(v_2) < 0$, then
applying (1) to $v_1$ and $-v_2$, $\langle v_1,v_2 \rangle<0$,
which is a contradiction.
Therefore $d_\beta(v_2) \geq 0$. By
 $Z_{(\beta,\omega)}(v_2)\in
{\Bbb R}Z_{(\beta,\omega)}(v_1)$, $d_\beta(v_2) \ne 0$.
Thus the claim holds.

(3) If $\langle v_1^2 \rangle \geq 2$, then 
$\langle v_1,v_2 \rangle \geq 3$ by \cite[Lem. 4.2.4 (1)]{MYY}.
Hence $\langle v_1^2 \rangle=0$. 
We set $v_2':=v_2-\frac{\langle v_2^2 \rangle}{2}v_1$.
Since $\langle v_1,v_2' \rangle=1$ and $d_\beta(v_1)>0$,
(2) implies that $d_\beta(v_2')>0$.
\end{proof}

\begin{prop}\label{prop:properly-semi-stable}
Assume that $v=\sum_{i=1}^s n_i v_i$, where
$v_i$ are primitive Mukai vectors such that
$v_i \ne v_j$ $(i \ne j)$ and  
$\phi_{(\beta,\omega)}(v_i)=\phi_{(\beta,\omega)}(v)$.
\begin{enumerate}
\item[(1)]
If $s \geq 4$, then there are $\sigma_{(\beta,\omega)}$-stable
objects $E_1, E_2$ such that
$v=v(E_1)+v(E_2)$ and $\langle v(E_1),v(E_2) \rangle \geq 2$. 
\item[(2)]
Assume that $s=3$.
Then 
there are $\sigma_{(\beta,\omega)}$-stable
objects $E_1, E_2$ such that
$v=v(E_1)+v(E_2)$ and $\langle v(E_1),v(E_2) \rangle \geq 2$
unless 
$\langle v_i^2 \rangle=0$, $\langle v_i,v_j \rangle=1$, $i \ne j$
and $n_1=n_2=n_3=1$.
\item[(3)]
Assume that $s=2$.
\begin{NB}
If $\langle v_1, v_2 \rangle \geq 2$ or $n_1,n_2 \geq 2$,
then there are $\sigma_{(\beta,\omega)}$-stable
objects $E_1, E_2$ such that
$v=v(E_1)+v(E_2)$ and $\langle v(E_1),v(E_2) \rangle \geq 2$
\end{NB}
Then there are $\sigma_{(\beta,\omega)}$-stable
objects $E_1, E_2$ such that
$v=v(E_1)+v(E_2)$ and $\langle v(E_1),v(E_2) \rangle \geq 2$
unless 
(a) there is an isotropic Mukai vector $w_1$
such that $\langle v,w_1 \rangle=1$ and $\phi_{(\beta,\omega)}(w_1)
=\phi_{(\beta,\omega)}(v)$
or (b) $n_1=n_2=1$, 
$\langle v_1^2 \rangle=\langle v_2^2 \rangle=0$
and $\langle v_1,v_2 \rangle=1$.
\end{enumerate}
\end{prop}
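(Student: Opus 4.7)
The key tool is Lemma~\ref{lem:<,>}. Since the $v_i$ are distinct primitive Mukai vectors with $\phi_{(\beta,\omega)}(v_i) = \phi_{(\beta,\omega)}(v)$ and $d_\beta(v_i) > 0$, Lemma~\ref{lem:<,>}(1) gives $\langle v_i, v_j \rangle \geq 0$ for $i \neq j$, with equality only if $v_j \in {\Bbb Q} v_i$; distinctness then forces the integer bound $\langle v_i, v_j \rangle \geq 1$. Because $X$ is an abelian surface there are no $(-2)$-classes, so $\langle v_i^2 \rangle \geq 0$ as well. Moreover, the only $\sigma_{(\beta,\omega)}$-stable Mukai vectors of phase $\phi_{(\beta,\omega)}(v)$ are, up to isotropic multiples, the $v_i$ themselves: any other combination $\sum_k a_k v_k$ with at least two positive coefficients admits a nontrivial filtration whose factors are stable of equal phase, so is only strictly semi-stable.

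For part (1), $s \geq 4$, I would use the split $u_1 = v_1$, $u_2 = v - v_1 = (n_1 - 1)v_1 + \sum_{j \geq 2} n_j v_j$. A direct computation gives
\begin{equation*}
\langle v_1, v - v_1 \rangle = (n_1 - 1)\langle v_1^2 \rangle + \sum_{j \geq 2} n_j \langle v_1, v_j \rangle \geq s - 1 \geq 3,
\end{equation*}
and, relabelling if necessary, one checks that $u_2$ is the Mukai vector of a stable object, using that with $s \geq 4$ one can arrange $u_2$ to be primitive with $\langle u_2^2 \rangle \geq 0$. For part (2), $s = 3$: if some pairing $\langle v_i, v_j \rangle \geq 2$ the analogous split already works. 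Otherwise every pairing equals $1$, so Lemma~\ref{lem:<,>}(3) applied to each pair forces every $v_i$ to be isotropic. If some $n_i \geq 2$, the split $v_i + (v - v_i)$ has pairing $\sum_{j \neq i} n_j \geq 2$ with $u_2$ still stable; and if $n_1 = n_2 = n_3 = 1$ the only candidate splits are of the form $v_i + (v_j + v_k)$, but by the first paragraph $v_j + v_k$ is not a stable Mukai vector, yielding exactly the stated exception.

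For part (3), $s = 2$: if $\langle v_1, v_2 \rangle \geq 2$ the split $v_1 + (v - v_1)$ has pairing $\geq n_2 \cdot 2 \geq 2$. Otherwise $\langle v_1, v_2 \rangle = 1$, and Lemma~\ref{lem:<,>}(3) forces one of the $\langle v_i^2 \rangle$ to vanish; say $\langle v_1^2 \rangle = 0$. The splits $v_i + (v - v_i)$ then have pairings $n_2$ and $n_1 + (n_2 - 1)\langle v_2^2 \rangle$ respectively; both are $\leq 1$ precisely when $n_1 = n_2 = 1$, placing us in exception (b) if in addition $\langle v_2^2 \rangle = 0$, and in exception (a) with $w_1 = v_1$ in general (since $\langle v, v_1 \rangle = n_2 \langle v_1, v_2 \rangle = 1$). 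The main obstacle throughout is confirming that the proposed $u_2 = v - v_i$ really supports a $\sigma_{(\beta,\omega)}$-stable object: this requires verifying primitivity, positivity of $\langle u_2^2 \rangle$, and the absence of a further decomposition of $u_2$ into stable primitives of equal phase. Conversely, in each exceptional case listed, it is precisely the failure of this last condition -- every candidate $u_2$ splits as a sum of distinct stable primitives of equal phase -- that prevents any valid decomposition.
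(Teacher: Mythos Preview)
Your overall strategy---split $v$ as $v_i+(v-v_i)$, compute the pairing, and do a case analysis on the $\langle v_i,v_j\rangle$---matches the paper's. But the step you flag as ``the main obstacle'' is a genuine gap, and the criterion you propose for resolving it is incorrect.

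The sentence ``any other combination $\sum_k a_k v_k$ with at least two positive coefficients admits a nontrivial filtration whose factors are stable of equal phase, so is only strictly semi-stable'' is false. A numerical decomposition of a Mukai vector does not force every object with that vector to decompose: this is exactly the distinction between semi-stability of a class and stability of an individual object. Indeed, your own argument for part~(1) asks for a stable object with vector $u_2=v-v_1$, which has at least $s-1\geq 3$ positive coefficients, directly contradicting the claim. Likewise, your proposed test for existence of a stable object with vector $u_2$---primitivity, $\langle u_2^2\rangle\geq 0$, and ``absence of a further decomposition of $u_2$ into stable primitives of equal phase''---is not the right one: the last condition is neither necessary nor what actually governs existence.

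The paper supplies the missing ingredient as Lemma~\ref{lem:existence-of-stable}: for $u$ with $\langle u^2\rangle>0$ and $d_\beta(u)>0$, a $\sigma_{(\beta,\omega)}$-stable object with Mukai vector $u$ exists \emph{unless} there is an isotropic $w_1$ of the same phase with $\langle u,w_1\rangle=1$. The proof of that lemma is not elementary: it uses \cite[Prop.~4.2.5]{MYY} to produce objects in ${\cal M}_{(\beta,\omega_+)}(u)\cap{\cal M}_{(\beta,\omega_-)}(u)$, then Theorem~\ref{thm:main} and \cite[Lem.~3.2]{tori} to find a $\sigma_{(\beta,\omega_+)}$-stable one there, and finally a short argument that such an object is also $\sigma_{(\beta,\omega)}$-stable. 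Once this lemma is in hand, each case of the proposition reduces to checking whether the candidate $u_2=v-v_i$ (or, in some subcases, a rewritten version using $v_2'=v_2-\tfrac{\langle v_2^2\rangle}{2}v_1$) meets the isotropic obstruction; your pairing computations and use of Lemma~\ref{lem:<,>} are then essentially the same as the paper's.
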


\begin{rem}
In the case of (a) in (3),
we have $w_1=v_1$, $n_2=1$ or
$w_1=v_2$ and $n_1=1$. We also have $\langle v_1,v_2 \rangle=1$. 
\end{rem}

\begin{proof}
(1)
If $s \geq 4$, then Lemma \ref{lem:existence-of-stable}
below implies that there is a 
$\sigma_{(\beta,\omega)}$-stable object
$E_1$ with $v(E_1)=v-v_1$.
Let $E_2$ be a $\sigma_{(\beta,\omega)}$-stable object
with $v(E_2)=v_1$.
Then the claim holds.

(2)
We first assume that $(n_1,n_2,n_3) \ne (1,1,1)$.
We may assume that $n_1>1$.
Then there is a $\sigma_{(\beta,\omega)}$-stable object
$E_1$ with $v(E_1)=v-v_1$.
Let $E_2$ be a $\sigma_{(\beta,\omega)}$-stable object
with $v(E_2)=v_1$.
Then the claim holds.

We next assume that $n_1=n_2=n_3=1$.
We shall classify the Mukai vectors
for which the claim does not hold.
We divide the argument into two cases.

(2-1) 
If $\langle v_1,v_2 \rangle>1$, 
then Lemma \ref{lem:existence-of-stable} (2) below
implies that there is a 
$\sigma_{(\beta,\omega)}$-stable object
$E_1$ with $v(E_1)=v_1+v_2$.
For a $\sigma_{(\beta,\omega)}$-stable object
$E_2$ with $v(E_2)=v_3$, $\langle v(E_1),v(E_2) \rangle \geq 2$.
Thus the claim holds.

(2-2)
We next assume that $\langle v_i,v_j \rangle=1$ for $i \ne j$.
If $n:=\langle v_1^2 \rangle/2>0$, then
$\langle v_1,v_i \rangle=1$ $(i=2,3)$ implies that
$\langle v_2^2 \rangle=\langle v_3^2 \rangle=0$ 
by Lemma \ref{lem:<,>} (3).
Since $v_1':=v_1-n v_2$ satisfies
$\langle {v_1'}^2 \rangle =0$
and $\langle v_1',v_2 \rangle=1$,
we get $d_\beta(v_1')>0$ by Lemma \ref{lem:<,>} (2). 
Then $0<\langle v_1',v_3 \rangle=\langle v_1,v_3 \rangle-n \leq 0$.
Therefore $\langle v_1^2 \rangle=0$.
In this case, $\langle v^2 \rangle=6$.

(3)
We may assume that $\langle v_1^2 \rangle \leq \langle v_2^2 \rangle$.
(3-1) We first assume that $\langle v_1,v_2 \rangle \geq 2$ and divide
the argument into two cases.
\begin{enumerate}
\item Assume that $\langle v_1^2 \rangle>0$.
If there is a primitive isotropic Mukai vector
$w_1$ such that $\langle v_1,w_1 \rangle=1$,
then
$v_1=u_1+n w_1$, $\langle u_1^2 \rangle=0$.
Then $v=n_1 u_1+n_1 n w_1+n_2 v_2$ and $u_1,w_1$ and $v_2$ are 
different primitive vectors. 
By (2), we get the claim.

Otherwise, by Lemma \ref{lem:existence-of-stable},
there are $\sigma_{(\beta,\omega)}$-stable
objects $E_1, E_2$ with $v(E_1)=v_1,v(E_2)=v_2$.
Since $\langle v_1,v_2 \rangle \geq 3$, we get the claim.

\item
Assume that $\langle v_1^2 \rangle=0$.
By Lemma \ref{lem:existence-of-stable},
if there is an integer $i$ with $n_i \geq 2$, then
there is a $\sigma_{(\beta,\omega)}$-stable
object $E_1$ with $v(E_1)=v-v_i$.
For a $\sigma_{(\beta,\omega)}$-stable
object $E_2$ with $v(E_2)=v_i$,
$\langle v(E_1), v(E_2) \rangle \geq 2$.
Thus the claim holds.

If $n_1=n_2=1$, then for 
$\sigma_{(\beta,\omega)}$-stable
objects $E_i$ $(i=1,2)$ with $v(E_i)=v_i$,
the claim holds.
\end{enumerate}

(3-2) 
Assume that $\langle v_1,v_2 \rangle =1$.
By Lemma \ref{lem:<,>} (3), $\langle v_1^2 \rangle=0$. 
We set $v_2':=v_2-\frac{\langle v_2^2 \rangle}{2}v_1$
 and set $n_1':=n_1+n_2 \frac{\langle v_2^2 \rangle}{2}$.
Then $v=n_1' v_1+n_2 v_2'$, $\langle v_1,v_2' \rangle=1$ and
$\langle {v_2'}^2 \rangle=0$.

If $n_1'=1$ or $n_2=1$, then $v_1$ or $v_2'$ is an isotropic
Mukai vector $w_1$ with $\langle v,w_1 \rangle=1$.
Hence we assume that
$n_1',n_2 \geq 2$.

If $n_1'=2$, then (i) $n_1=2$ and $\langle v_2^2 \rangle=0$, or 
(ii) $n_1=1$ and $n_2 \langle v_2^2 \rangle/2=1$.
Since $n_2 \geq 2$, (ii) does not occur. 
The case (i) corresponds to (b).
If $n_1' \geq 3$, then
Lemma \ref{lem:existence-of-stable} implies that   
there is a $\sigma_{(\beta,\omega)}$-stable
object $E_1$ with $v(E_1)=(n_1'-1) v_1+n_2 v_2'$.
For a $\sigma_{(\beta,\omega)}$-stable
object $E_2$ with $v(E_2)=v_1$,
$\langle v(E_1),v(E_2) \rangle= n_2 \geq 2$.
Therefore the claim holds.
\end{proof}

\begin{lem}\label{lem:existence-of-stable}
Let $v$ be a Mukai vector such that
$\langle v^2 \rangle>0$ and $d_\beta(v)>0$.
\begin{enumerate}
\item[(1)]
If
there is no isotropic Mukai vector
$w_1$ with $\langle v,w_1 \rangle=1$
and $\phi_{(\beta,\omega)}(w_1)=\phi_{(\beta,\omega)}(v)$, 
then
there is a $\sigma_{(\beta,\omega)}$-stable
object $E$ with
$v=v(E)$.
\item[(2)]
Assume that $v$ has a decomposition
$v=\sum_{i=1}^s n_i v_i$, where 
$v_i$ are primitive Mukai vectors such that
$v_i \ne v_j$ $(i \ne j)$, 
$\langle v_i^2 \rangle \geq 0$ and
$\phi_{(\beta,\omega)}(v_i)=\phi_{(\beta,\omega)}(v)$.
Then there is a $\sigma_{(\beta,\omega)}$-stable
object $E$ with $v=v(E)$,
unless there is an isotropic Mukai vector
$w_1$ with $\phi_{(\beta,\omega)}(w_1)=
\phi_{(\beta,\omega)}(v)$ and $\langle v,w_1 \rangle=1$. 
In particular if $s \geq 3$ or $s=2$ and $\langle v_1,v_2 \rangle \geq 2$, 
then there is a 
$\sigma_{(\beta,\omega)}$-stable
object $E$ with $v=v(E)$.
\end{enumerate}
\end{lem}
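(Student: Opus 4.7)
My plan is to deduce (2) from (1) by a short combinatorial argument using Lemma~\ref{lem:<,>}, and then to prove (1) by combining Theorem~\ref{thm:projective} with a dimension count on the properly semi-stable stratification of the moduli stack.

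For (2) and its ``in particular'' clause, I argue by contradiction that under $s\geq 3$, or $s=2$ with $\langle v_1,v_2\rangle\geq 2$, no forbidden primitive isotropic $w_1$ exists with $\phi_{(\beta,\omega)}(w_1)=\phi_{(\beta,\omega)}(v)$ and $\langle v,w_1\rangle=1$. Since $d_\beta(v)>0$ gives $Z_{(\beta,\omega)}(v)\in\mathbb{H}$, and all $v_i$ and $w_1$ share the phase of $v$, each of these Mukai vectors has $d_\beta>0$. Lemma~\ref{lem:<,>}(1) applied to each pair $(v_i,w_1)$ then yields $\langle v_i,w_1\rangle\geq 0$, with equality only when $v_i\in\mathbb{Q}w_1$; by primitivity and the positivity of $d_\beta$ this forces $v_i=w_1$. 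The integer relation $1=\sum n_i\langle v_i,w_1\rangle$ with nonnegative summands pins down a unique $i_0$ satisfying $n_{i_0}=1=\langle v_{i_0},w_1\rangle$, while $v_j=w_1$ for every $j\neq i_0$. Distinctness of the primitive $v_j$ then forces $s\leq 2$, and in the $s=2$ case one obtains $\langle v_1,v_2\rangle=\langle v_1,w_1\rangle=1$, contradicting the assumption. Hence (1) applies and produces the desired stable object.

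For (1), I first perturb $(\beta,\omega)$ into a nearby generic $(\beta',\omega')$, where Theorem~\ref{thm:projective} identifies ${\cal M}_{(\beta',\omega')}(v)$ with the Gieseker moduli ${\cal M}_{\widehat{H}}(\Phi(v))^{ss}$; since $\langle\Phi(v)^2\rangle=\langle v^2\rangle>0$, classical existence results for $\mu$-stable (twisted) sheaves on abelian and K3 surfaces supply a $\sigma_{(\beta',\omega')}$-stable object of Mukai vector $v$, showing in particular that ${\cal M}_{(\beta,\omega)}(v)$ is non-empty. I then stratify its properly semi-stable locus by the Jordan-H\"older decomposition types of the factors of phase $\phi_{(\beta,\omega)}(v)$. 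For a two-term decomposition $v=v_1+v_2$ with $\phi_{(\beta,\omega)}(v_i)=\phi_{(\beta,\omega)}(v)$ and $\langle v_i^2\rangle\geq 0$, the stratum of extensions of stable blocks has dimension at most
\[
\langle v_1^2\rangle+\langle v_2^2\rangle+\langle v_1,v_2\rangle-1+2\epsilon
=\langle v^2\rangle-\langle v_1,v_2\rangle-1+2\epsilon,
\]
where $\epsilon\in\{0,1\}$ records whether $X$ is abelian or K3, whereas $\dim{\cal M}_{(\beta,\omega)}(v)=\langle v^2\rangle+2\epsilon$. This is strictly smaller as soon as $\langle v_1,v_2\rangle\geq 2$, and the remaining borderline $\langle v_1,v_2\rangle=1$ forces, by Lemma~\ref{lem:<,>}(3), one of the $v_i$ to be isotropic, which would then serve as the forbidden $w_1$ with $\langle v,w_1\rangle=\langle v_1+v_2,v_i\rangle=1$. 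Under the hypothesis of (1) this is excluded, so the properly semi-stable locus is a proper closed subset and the stable locus is non-empty.

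The main obstacle will be securing the dimension bound uniformly across all possible Jordan-H\"older types, in particular for iterated decompositions of length $\geq 3$. This will be handled inductively: Lemma~\ref{lem:<,>}(1) guarantees that all cross-pairings $\langle v_i,v_j\rangle$ among factors of a multi-term decomposition are nonnegative, and a pair with $\langle v_i,v_j\rangle\geq 2$ can always be extracted unless every $\langle v_i,v_j\rangle\leq 1$, which by the argument for the two-term case again forces one of the $v_i$ to be isotropic of type $w_1$. Thus the full stratification has the required codimension, and the stable locus of ${\cal M}_{(\beta,\omega)}(v)$ is non-empty precisely when no forbidden isotropic $w_1$ exists.
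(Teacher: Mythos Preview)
Your treatment of (2) and its ``in particular'' clause is essentially the paper's: both use Lemma~\ref{lem:<,>}(1) to show that when $s\geq 3$, or $s=2$ with $\langle v_1,v_2\rangle\geq 2$, any isotropic $w_1$ of the same phase has $\langle v,w_1\rangle\geq 2$, so (1) applies. For (1), however, the paper takes a different and shorter route: it cites \cite[Prop.~4.2.5]{MYY} to produce an object $E\in{\cal M}_{(\beta,\omega_+)}(v)\cap{\cal M}_{(\beta,\omega_-)}(v)$ (this is where the no-$w_1$ hypothesis is consumed), upgrades it to a $\sigma_{(\beta,\omega_+)}$-stable object in that intersection via Theorem~\ref{thm:main} and \cite[Lem.~3.2]{tori}, and then observes by a direct phase argument that such an $E$ is automatically $\sigma_{(\beta,\omega)}$-stable: any subobject $E_1$ with $\phi_{(\beta,\omega)}(E_1)=\phi_{(\beta,\omega)}(E)$ satisfies $\phi_{(\beta,\omega_\pm)}(E_1)\leq\phi_{(\beta,\omega_\pm)}(E)$, and since $\Sigma$ is affine in $(\omega^2)$ these force equality on both sides, contradicting $\sigma_+$-stability.

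Your dimension-count strategy for (1) is a reasonable idea but has a genuine gap. You cover the properly semi-stable locus by extensions of \emph{stable} blocks $E_1,E_2$, but this does not exhaust it: for instance $F_1\oplus F_2\oplus F_3$ with three non-isomorphic stable summands admits no two-term splitting into stable pieces. To handle arbitrary two-term splittings $v=v_1+v_2$ you would need both $\dim{\cal M}_{(\beta,\omega)}(v_i)\leq\langle v_i^2\rangle+2$ for only semi-stable blocks and a bound on $\dim\Ext^1(E_2,E_1)$ when $\Hom(E_2,E_1)$ may be nonzero---neither is immediate, and your ``handled inductively'' sentence is exactly where this work would have to go. The external result \cite[Prop.~4.2.5]{MYY} already packages a codimension estimate of this type, which is why the paper's argument is so brief. (A minor point: your formula $\dim{\cal M}=\langle v^2\rangle+2\epsilon$ is off---on both abelian and K3 surfaces the coarse moduli has dimension $\langle v^2\rangle+2$---but since only the codimension matters this does not affect the logic.)
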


\begin{proof}
(1)
By \cite[Prop. 4.2.5]{MYY}, 
${\cal M}_{(\beta,\omega_+)}(v) \cap {\cal M}_{(\beta,\omega_-)}(v)
\ne \emptyset$, if
there is no isotropic Mukai vector
$w_1$ with $\langle v,w_1 \rangle=1$.
Since $\langle v^2 \rangle>0$ and 
${\cal M}_{(\beta,\omega_+)}(v)$ is isomorphic to 
a moduli stack of semi-stable sheaves (Theorem \ref{thm:main}),
\cite[Lem. 3.2]{tori} implies that 
there is a $\sigma_{(\beta,\omega_+)}$-stable object $E$ 
in ${\cal M}_{(\beta,\omega_+)}(v) \cap {\cal M}_{(\beta,\omega_-)}(v)$.
Then $E$ is $\sigma_{(\beta,\omega)}$-stable.
Indeed for a subobject $E_1$ of $E$ with 
$\phi_{(\beta,\omega)}(E_1)=\phi_{(\beta,\omega)}(E)$,
$\phi_{(\beta,\omega_\pm)}(E_1) \leq \phi_{(\beta,\omega_\pm)}(E)$.
Then $\phi_{(\beta,\omega_\pm)}(E_1)=\phi_{(\beta,\omega_\pm)}(E)$,
which implies that $E$ is properly $\sigma_{(\beta,\omega_+)}$-
semi-stable. 
Therefore $E$ is $\sigma_{(\beta,\omega)}$-stable.

(2) We first assume that $s \geq 3$.
Then for any isotropic Mukai vector $w_1$ with
$\phi_{(\beta,\omega)}(w)=\phi_{(\beta,\omega)}(v)$,
$\langle v,w_1 \rangle \geq 2$
since there are two different indices $1 \le i,j \le s$
such that 
$$\langle v,w \rangle 
 \geq  n_i \langle v_i,w \rangle 
     + n_j \langle v_j,w \rangle 
 \geq n_i +n_j 
 \geq2.
$$
Here we used Lemma~\ref{lem:<,>} (1). 
By (1), the claim holds. 
 
We next assume that $s=2$.
Let $w_1$ be an isotropic Mukai vector with
$\phi_{(\beta,\omega)}(w_1)=\phi_{(\beta,\omega)}(v)$. 
We have $\langle w_1,v \rangle \geq n_1+n_2 \geq 2$,
if $w \ne v_1,v_2$.
Hence if $\langle v_1,v_2 \rangle \geq 2$, then
the assumption of (1) also holds.
So the conclusion follows from (1).

If $\langle v_1,v_2 \rangle =1$, 
then we may assume that
$0=\langle v_1^2 \rangle \leq \langle v_2^2 \rangle$
by Lemma \ref{lem:<,>} (3).
In this case, $v=n_1' v_1+n_2 v_2'$, where
$v_2'=v_2-\frac{\langle v_2^2 \rangle}{2}v_1$
and $n_1'=n_1+n_2 \frac{\langle v_2^2 \rangle}{2}$.
If $n_1',n_2  \geq 2$, then   
the assumption of (1) holds. 
So the conclusion follows from (1).
If $n_1'=1$ or $n_2=1$, then
there is an isotropic Mukai vector
$w_1$ with $\langle v,w_1 \rangle=1$. 
\end{proof}


\begin{lem}\label{lem:A_2}
Let $v_i$ $(i=1,2,3)$ be primitive isotropic Mukai vectors
such that $\langle v_i,v_j \rangle=1$ $(i \ne j)$,
$\phi_{(\beta,\omega)}(v_i)=v$ and $v=\sum_{i=1}^3 v_i$.
Let $(\beta',\omega')$ be a general pair
which is close to $(\beta,\omega)$.
\begin{enumerate}
\item[(1)]
Assume that $\phi_{(\beta',\omega')}(v_1),\phi_{(\beta',\omega')}(v_2)
<\phi_{(\beta',\omega')}(v)$.
For $E_i \in {\cal M}_{(\beta,\omega)}(v_i)$,
we take non-trivial extensions
$$
0 \to E_1 \to E_{13} \to E_3 \to 0
$$
and 
$$
0 \to E_2 \to E_{23} \to E_3 \to 0.
$$
Then $E_{13}$ and $E_{23}$ are $\sigma_{(\beta',\omega')}$-stable
objects.
We take non-trivial extensions
$$
0 \to E_1 \to E_{123} \to E_{23} \to 0
$$
and 
$$
0 \to E_{2} \to E_{213} \to E_{13} \to 0.
$$
Then $E_{123}$ and $E_{213}$ are $\sigma_{(\beta',\omega')}$-stable
objects.
\item[(2)]
Assume that $\phi_{(\beta',\omega')}(v_1),\phi_{(\beta',\omega')}(v_2)
>\phi_{(\beta',\omega')}(v)$.
For $E_i \in {\cal M}_{(\beta,\omega)}(v_i)$,
we take non-trivial extensions
$$
0 \to E_3 \to E_{31} \to E_1 \to 0
$$
and 
$$
0 \to E_3 \to E_{32} \to E_2 \to 0.
$$
Then $E_{31}$ and $E_{32}$ are $\sigma_{(\beta',\omega')}$-stable
objects.
We take non-trivial extensions
$$
0 \to E_{31} \to E_{312} \to E_2 \to 0
$$
and 
$$
0 \to E_{32} \to E_{321} \to E_{1} \to 0.
$$
Then $E_{312}$ and $E_{321}$ are $\sigma_{(\beta',\omega')}$-stable
objects.
\end{enumerate}

\end{lem}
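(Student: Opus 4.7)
The plan is to compute the relevant Ext-groups among the $E_i$, use them to construct the required non-split extensions, and then verify $\sigma_{(\beta',\omega')}$-stability of each object by ruling out destabilizing subobjects. By the evident symmetry exchanging sub- and quotient objects (with phases reversed), case (2) will follow from case (1), so I concentrate on case (1).

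First I would establish the basic Ext computations. Since the $E_i$ are pairwise non-isomorphic $\sigma_{(\beta,\omega)}$-stable objects of the same phase, $\Hom(E_i,E_j)=0$ for $i\neq j$, and Serre duality together with $K_X=0$ gives $\Ext^2(E_i,E_j)=\Hom(E_j,E_i)^\vee=0$. The Mukai-Riemann-Roch formula then yields $\chi(E_i,E_j)=-\langle v_i,v_j\rangle=-1$, whence $\dim\Ext^1(E_i,E_j)=1$. This implies existence and essential uniqueness of the non-split extensions $E_{13}$ and $E_{23}$. Next, applying $\Hom(-,E_1)$ to $0\to E_2\to E_{23}\to E_3\to 0$ and using the above Ext-vanishings produces a four-term exact sequence yielding $\dim\Ext^1(E_{23},E_1)=2$, so a $\Bbb{P}^1$-family of non-trivial extensions yielding $E_{123}$ exists; analogous reasoning gives $E_{213}$.

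To verify $\sigma_{(\beta',\omega')}$-stability of $E_{13}$, I would assume $F\subset E_{13}$ is destabilizing in $\frak{A}_{(\beta',\omega')}$ and derive a contradiction. The composition $F\to E_{13}\to E_3$ cannot vanish, since otherwise $F\subset E_1$ and stability of $E_1$ forces $\phi_{(\beta',\omega')}(F)\leq\phi_{(\beta',\omega')}(E_1)<\phi_{(\beta',\omega')}(E_{13})$. If this composition is an isomorphism from a copy of $E_3\subset E_{13}$, then the extension splits, contradicting our construction. The intermediate cases are handled using stability of $E_1,E_3$ together with the lattice constraint that $v(F)$ lies in $\Bbb{Z}v_1+\Bbb{Z}v_3$ and the numerical inequalities $\phi_{(\beta',\omega')}(v_1)<\phi_{(\beta',\omega')}(v_{13})<\phi_{(\beta',\omega')}(v_3)$. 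The stability of $E_{23}$ is entirely symmetric. For $E_{123}$ with extension $0\to E_1\to E_{123}\to E_{23}\to 0$, one has $\phi_{(\beta',\omega')}(E_1)<\phi_{(\beta',\omega')}(v)<\phi_{(\beta',\omega')}(E_{23})$, so any destabilizer $F\subset E_{123}$ must project non-trivially to $E_{23}$; stability of $E_{23}$ (just established) combined with non-splitness of the chosen class in $\Ext^1(E_{23},E_1)$ then rules out such $F$, after a short case analysis of $F\cap E_1$.

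The hardest part will be the final stability check for $E_{123}$ and $E_{213}$: one must verify that for every non-split class in the two-dimensional $\Ext^1(E_{23},E_1)$ the resulting object is \emph{stable} rather than merely semistable, and that this persists for the chosen general $(\beta',\omega')$. This requires careful bookkeeping of Mukai vectors in the sublattice spanned by $v_1,v_2,v_3$, combined with the structural description of $E_{13},E_{23}$ from the previous step, to enumerate all potential destabilizing subobjects and verify that each leads either to a splitting of one of the defining extensions or to a contradiction of one of the input stabilities.
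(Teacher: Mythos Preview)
Your approach is correct in outline and would go through, but the paper's argument is considerably shorter and sidesteps the case analyses you flag as ``the hardest part.'' The key simplification you are not exploiting cleanly is this: since $(\beta',\omega')$ is close to $(\beta,\omega)$ and $E_{123}$ is $\sigma_{(\beta,\omega)}$-semistable with Jordan--H\"older factors exactly $E_1,E_2,E_3$, any $\sigma_{(\beta',\omega')}$-destabilizing subobject $F$ may be assumed to satisfy $\phi_{(\beta,\omega)}(F)=\phi_{(\beta,\omega)}(v)$. Then $F$ is itself $\sigma_{(\beta,\omega)}$-semistable with JH factors a subset of $\{E_1,E_2,E_3\}$, each appearing at most once. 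The inequality $\phi_{(\beta',\omega')}(F)>\phi_{(\beta',\omega')}(v)$ forces $v(F)\in\{v_3,\ v_1+v_3,\ v_2+v_3\}$, i.e.\ precisely the three cases $F\cong E_3$, $E_{123}/F\cong E_2$, or $E_{123}/F\cong E_1$. Each case forces one of the defining non-split extensions to split in two lines (for instance, if $E_{123}/F\cong E_1$ then $E_1\cap F=0$ since $E_1\subset F$ would give $F/E_1\subset E_{23}$ with Mukai vector $v_2+v_3-v_1$, impossible; hence $E_{123}\cong E_1\oplus F$). This is what your ``lattice constraint'' remark is gesturing at, but phrasing it via the JH filtration at $(\beta,\omega)$ makes the enumeration finite and automatic, and handles every non-split extension class uniformly without the bookkeeping you anticipate.

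Your $\Ext$-computations are correct and give useful context (in particular the $\mathbb{P}^1$ of classes defining $E_{123}$, which is exactly what Corollary~\ref{cor:A_2} uses), though the paper's proof does not spell them out. Your treatment of $E_{13}$ is essentially the paper's method specialized to two factors; only at $E_{123}$ does your route become longer than necessary.
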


\begin{proof}
We only prove the $\sigma_{(\beta',\omega')}$-stability of $E_{123}$
in (1).
Assume that $E_{123}$ contains an object $F$ with
$\phi_{(\beta',\omega')}(F)>\phi_{(\beta',\omega')}(v)$.
We may assume that 
$\phi_{(\beta,\omega)}(F)=\phi_{(\beta,\omega)}(v)$.
Then we see that (i) $F \cong E_3$ or (ii) $E_{123}/F \cong E_1$ or (iii)
$E_{123}/F \cong E_2$.
If $F \cong E_3$, then $E_{23} \cong E_2 \oplus E_3$.
If $E_{123}/F \cong E_1$, then $E_{123} \cong E_{23} \oplus E_1$.
If $E_{123}/F \cong E_2$, then $E_1 \subset F$ and 
$E_{23}$ contains $F/E_2 \cong E_3$.
Therefore each case does not occur.
Hence $E_{123}$ is $\sigma_{(\beta',\omega')}$-stable.
\end{proof}

Since $\langle v_1+v_2,v_3 \rangle=2$, we get the following result.
\begin{cor}\label{cor:A_2}
Under the same assumptions of Lemma \ref{lem:A_2},
$M_{(\beta',\omega')}(v)$ contains two ${\Bbb P}^1$-bundles $D_1$, $D_2$ 
over
$\prod_{i=1}^3 M_{(\beta,\omega)}(v_i)$.
\end{cor}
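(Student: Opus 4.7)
\medskip

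\noindent\textbf{Proof proposal for Corollary \ref{cor:A_2}.}
The plan is to realize $D_1$ and $D_2$ as the images of projective bundles built from the two extension constructions $E_{123}$ and $E_{213}$ of Lemma \ref{lem:A_2}~(1) (the case (2) of Lemma \ref{lem:A_2} is parallel, using $E_{312}$ and $E_{321}$). First I would verify that over each closed point $(E_1,E_2,E_3)\in \prod_{i=1}^3 M_{(\beta,\omega)}(v_i)$, the relevant $\Ext^1$ is two-dimensional. Since each $v_i$ is isotropic with $\langle v_i,v_j\rangle=1$ for $i\ne j$, the objects $E_{13}$ and $E_{23}$ are determined uniquely up to isomorphism by the one-dimensional extension spaces $\Ext^1(E_3,E_1)$ and $\Ext^1(E_3,E_2)$. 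For the top-level extension $0\to E_1 \to E_{123}\to E_{23}\to 0$ one uses the exact sequence $0\to E_2\to E_{23}\to E_3\to 0$ together with the fact that $E_1,E_2,E_3$ are pairwise non-isomorphic $\sigma_{(\beta,\omega)}$-stable objects of the same phase to obtain $\Hom(E_{23},E_1)=0$; by Serre duality on the abelian surface (so $K_X=0$) the vanishing of $\Hom(E_1,E_{23})$ similarly yields $\Ext^2(E_{23},E_1)=0$. Riemann--Roch then gives
\[
\dim \Ext^1(E_{23},E_1)=\langle v_2+v_3,v_1\rangle=\langle v_1+v_2,v_3\rangle=2,
\]
which is the numerical input indicated in the text. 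The same calculation yields $\dim\Ext^1(E_{13},E_2)=2$.

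Next I would globalize this fiberwise picture. On the product $Y:=\prod_{i=1}^3 M_{(\beta,\omega)}(v_i)$, universal (twisted) families $\mathcal E_1,\mathcal E_2,\mathcal E_3$ pulled back from the three factors exist. The unique non-trivial extensions $\mathcal E_{13}$ and $\mathcal E_{23}$ are constructed as relative universal extensions along the one-dimensional bundles $\mathcal Ext^1(p_Y^*\mathcal E_3,p_Y^*\mathcal E_1)$ and $\mathcal Ext^1(p_Y^*\mathcal E_3,p_Y^*\mathcal E_2)$, which are line bundles by Step~1. Then $\mathcal Ext^1(\mathcal E_{23},\mathcal E_1)$ is a rank-2 vector bundle on $Y$ (by cohomology and base change, using the fiberwise vanishings above), and the associated projective bundle $\tilde D_1:=\mathbb P(\mathcal Ext^1(\mathcal E_{23},\mathcal E_1))\to Y$ carries a tautological relative extension, giving a family of objects $\mathcal E_{123}$ parametrized by $\tilde D_1$. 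Analogously one constructs a $\mathbb P^1$-bundle $\tilde D_2\to Y$ carrying a family $\mathcal E_{213}$.

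By Lemma \ref{lem:A_2}, every such $E_{123}$ (resp.\ $E_{213}$) is $\sigma_{(\beta',\omega')}$-stable, so the two families induce morphisms $f_1\colon \tilde D_1\to M_{(\beta',\omega')}(v)$ and $f_2\colon \tilde D_2\to M_{(\beta',\omega')}(v)$. The images $D_1:=f_1(\tilde D_1)$ and $D_2:=f_2(\tilde D_2)$ are the claimed subschemes; to conclude they are $\mathbb P^1$-bundles it remains to check that $f_1$ and $f_2$ are closed immersions. Injectivity on closed points follows from the fact that the Jordan--H\"older factors of $E_{123}$ with respect to $\sigma_{(\beta,\omega)}$ are precisely $E_1,E_2,E_3$ (these being the composition factors of the filtration $0\subset E_1\subset E_1\cap(\text{preimage of }E_2)\subset E_{123}$ arising from the two extensions), so $(E_1,E_2,E_3)\in Y$ is recovered from the $S$-equivalence class, and the extension class is recovered up to scalar from $E_{123}$ itself; the same argument applies to $E_{213}$. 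Injectivity of the differential (hence the closed immersion property) is a routine deformation-theoretic check using the same $\Ext$-vanishings.

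The main obstacle is the injectivity/closed-immersion verification: one must ensure that a $\mathbb P^1$-fiber of $\tilde D_1$ does not collide in $M_{(\beta',\omega')}(v)$ with a $\mathbb P^1$-fiber of $\tilde D_2$ or with any other part of $\tilde D_1$ itself. This is addressed by observing that the two constructions yield $\sigma_{(\beta',\omega')}$-stable objects whose short-filtration type (subobject $E_1$ versus subobject $E_2$) is intrinsic, and that the pair $(E_1,E_{23})$ (resp.\ $(E_2,E_{13})$) is recovered canonically from the Harder--Narasimhan filtration with respect to $\sigma_{(\beta',\omega')}$ in a slightly perturbed direction, using the hypothesis $\phi_{(\beta',\omega')}(v_1),\phi_{(\beta',\omega')}(v_2)<\phi_{(\beta',\omega')}(v)$.
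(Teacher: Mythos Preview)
Your argument is correct and follows the paper's approach: the paper's entire proof is the single line ``Since $\langle v_1+v_2,v_3\rangle=2$'', i.e.\ precisely your dimension count $\dim\Ext^1(E_{23},E_1)=\langle v_2+v_3,v_1\rangle=2$, with the globalization and injectivity left implicit. One small correction to your final paragraph: the recovery of $(E_1,E_{23})$ from $E_{123}$ cannot come from a Harder--Narasimhan filtration at $(\beta',\omega')$ or a nearby perturbation (the object is stable there, so the filtration is trivial), but the vanishing $\Hom(E_1,E_{23})=0$ you already established forces any isomorphism $E_{123}\cong E_{123}'$ to carry the subobject $E_1$ to $E_1$, and since $E_1$ and $E_{23}$ are simple this makes the extension class well-defined up to scalar, which is all you need.
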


\begin{rem}
It seems that
they intersect transversely.
Thus $D_1$ and $D_2$ give an $A_2$-type configuration.
\end{rem}

\begin{NB}

\section{}

\begin{lem}
Assume that $\langle w_1^2 \rangle=-2$. 
Assume that $(\beta,\omega)$ belongs to \begin{NB2}exactly\end{NB2} one wall
$W_{w_1}$.
For a subobject $F_1$ of $F$ with $v(E)=v$ and $\phi(F_1)=\phi(F)$,
$$
\frac{c_1(F_1(-\gamma))}{\chi(G,F_1)}=\frac{c_1(F(-\gamma))}{\chi(G,F)},
$$
where $G$ be an object of ${\frak A}$ with
$v(G)=r_0 e^\gamma-\frac{1}{r_0}\varrho_X$, 
$w_1=r_0 e^\xi+\frac{1}{r_0}\varrho_X$.
\end{lem}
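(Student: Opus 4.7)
The plan is to adapt the strategy of Lemma \ref{lem:wall-general}, treating the $(-2)$-vector $w_1$ in place of the isotropic vector used there. The proof of Lemma \ref{lem:wall-general} proceeded by deducing from the hypothesis $W_{v_1}=W_{w_1}$ (forced by uniqueness of the wall) that the Mukai vectors $d_\beta(v_1)v-d_\beta(v)v_1$ and $d_\beta(w_1)v-d_\beta(v)w_1$ are linearly dependent, and then exploiting the vanishings $a_\gamma(w_1)=0=D_{w_1}$ to isolate the desired proportionality. The same template will work here, with $G$ playing the role that $w_1$ played in the invariants of the isotropic case, and the vanishings $c_1(W_1(-\gamma))=0$ and $\chi(G,W_1)=0$ playing the analogous role.

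First, the hypothesis $\phi_{(\beta,\omega)}(F_1)=\phi_{(\beta,\omega)}(F)$ means $Z_{(\beta,\omega)}(v_1)\in{\Bbb R}_{>0}Z_{(\beta,\omega)}(v)$, so $v_1$ defines a wall $W_{v_1}$ for $v$ through $(\beta,\omega)$. By the uniqueness assumption, $W_{v_1}=W_{w_1}$ as subsets of ${\frak H}_{\Bbb R}$. Since the defining linear form of the wall depends on the full Mukai data of $v_1$ (including $D_\beta(v_1)$, which enters the wall equation through the pairing $(D_\beta(v_1),\eta)$ inside $\langle e^{bH+\eta},v_1\rangle$), this set-theoretic coincidence forces
$d_\beta(v_1)v-d_\beta(v)v_1$ and $d_\beta(w_1)v-d_\beta(v)w_1$ to be proportional over ${\Bbb Q}$. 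Rearranging gives a representation
\begin{equation*}
v_1 = l\,v + k\,w_1, \qquad l,k\in{\Bbb Q}.
\end{equation*}

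Next, the assignments $u\mapsto c_1(u)-\rk(u)\gamma$ (valued in $\NS(X)_{\Bbb Q}$) and $u\mapsto \chi(G,u)=-\langle v(G),u\rangle$ (valued in ${\Bbb Q}$, by Riemann--Roch on $X$) are both linear. Evaluating on $v_1=lv+kw_1$ reduces the claim to the two vanishings $c_1(W_1(-\gamma))=0$ and $\chi(G,W_1)=0$. Reading the $w_1$ in the statement as $r_0 e^\gamma+\tfrac{1}{r_0}\varrho_X$, the first is immediate from $\rk w_1=r_0$ and $c_1(w_1)=r_0\gamma$. The second is a direct computation: using isotropy of $r_0e^\gamma$, isotropy of $\varrho_X$, and vanishing of the cross terms,
\begin{equation*}
\chi(G,W_1) = -\langle r_0 e^\gamma - \tfrac{1}{r_0}\varrho_X,\ r_0 e^\gamma + \tfrac{1}{r_0}\varrho_X\rangle = -\langle r_0 e^\gamma,r_0 e^\gamma\rangle + \langle \tfrac{1}{r_0}\varrho_X,\tfrac{1}{r_0}\varrho_X\rangle = 0.
\end{equation*}
With both vanishings in hand, the linear relation collapses to $c_1(F_1(-\gamma))=l\cdot c_1(F(-\gamma))$ and $\chi(G,F_1)=l\cdot\chi(G,F)$, and taking the ratio yields the desired identity.

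The main obstacle is the Mukai-lattice upgrade in the first step: one must extract from the set-theoretic equality $W_{v_1}=W_{w_1}$ in ${\frak H}_{\Bbb R}$ a genuine proportionality of the two test vectors in the Mukai lattice, not merely an equality of the reduced invariants $(r_\beta,d_\beta,a_\beta)$. This is possible because varying $\eta\in H^\perp$ in the wall equation detects $D_\beta(u)$ via $(D_\beta(u),\eta)$; the hypothesis that $(\beta,\omega)$ lies on \emph{exactly one} wall in the whole parameter space ${\frak H}_{\Bbb R}$, rather than only along a slice, is what ensures that this detection is strong enough to pin down $v_1$ modulo ${\Bbb Q}v+{\Bbb Q}w_1$.
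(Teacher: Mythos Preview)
Your proof is correct and rests on the same key step as the paper's argument: the uniqueness of the wall forces $d_\beta(v_1)v-d_\beta(v)v_1$ and $d_\beta(w_1)v-d_\beta(v)w_1$ to be proportional in the full Mukai lattice. The paper then expands both vectors in $\gamma$-coordinates and compares components (using that $w_1=r_0 e^\gamma+\tfrac{1}{r_0}\varrho_X$ has no $\NS$-part relative to $\gamma$), eventually isolating the relation
\[
\xi_1\Bigl(a_\gamma(v)-\tfrac{r_\gamma(v)}{r_0^2}\Bigr)=\xi\Bigl(a_\gamma(v_1)-\tfrac{r_\gamma(v_1)}{r_0^2}\Bigr),
\]
which is exactly $c_1(F_1(-\gamma))\chi(G,F)=c_1(F(-\gamma))\chi(G,F_1)$.

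Your packaging is a bit cleaner: you solve the proportionality once and for all as $v_1=l\,v+k\,w_1$ (using $d_\beta(v)>0$) and then observe that both linear functionals $u\mapsto c_1(u)-(\rk u)\gamma$ and $u\mapsto\chi(G,u)$ kill $w_1$. This bypasses the component-by-component bookkeeping and makes transparent why the $(-2)$-vector $w_1$ plays the same role here that the isotropic $w_1$ plays in Lemma~\ref{lem:wall-general}: in each case the relevant pair of functionals is chosen precisely so that $w_1$ lies in their common kernel. One small remark: in your displayed computation of $\chi(G,w_1)$ the cross terms $\pm\langle r_0 e^\gamma,\tfrac{1}{r_0}\varrho_X\rangle$ do not vanish individually (each equals $-1$), but they cancel by the symmetry of the Mukai pairing, as you note in words; the conclusion is unaffected.
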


\begin{proof}
We note that $w_1=r_0 e^\gamma+\frac{1}{r_0}\varrho_{X_1}$.
We set
\begin{equation}
\begin{split}
v=& v(F)=r_\gamma(v)e^\gamma+a_\gamma(v) \varrho_X+
(\xi+(\xi,\gamma)\varrho_X),\\
v_1=& v(F_1)=r_\gamma(v_1)e^\gamma+a_\gamma(v_1) \varrho_X+
(\xi_1+(\xi_1,\gamma)\varrho_X).
\end{split}
\end{equation}
By the assumption, 
$d_\beta(v_1)v-d_\beta(v)v_1$ and
$d_\beta(w_1)v-d_\beta(v)w_1$ are linearly dependent.

We note that
\begin{equation}
\begin{split}
& \deg_\beta(v_1)v-\deg_\beta(v)v_1\\
=&
(r_\gamma(v_1)\deg(\gamma-\beta)+\deg_\gamma(v_1))
(r_\gamma(v)e^\gamma+a_\gamma(v)\varrho_X+\xi+(\xi,\gamma)\varrho_X)\\
&-
(r_\gamma(v)\deg(\gamma-\beta)+\deg_\gamma(v))
(r_\gamma(v_1)e^\gamma+a_\gamma(v_1)\varrho_X+\xi_1+(\xi_1,\gamma)\varrho_X)\\
=& (\deg_\gamma(v_1)r_\gamma(v)-\deg_\gamma(v)r_\gamma(v_1))e^\gamma+
(r_\gamma(v_1)a_\gamma(v)-r_\gamma(v)a_\gamma(v_1))
\deg(\gamma-\beta)\varrho_X\\
& +
(\deg_\gamma(v_1)a_\gamma(v)-\deg_\gamma(v)a_\gamma(v_1))\varrho_X\\
& +(r_\gamma(v_1)\xi-r_\gamma(v)\xi_1)\deg(\gamma-\beta)
+(\deg_\gamma(v_1)\xi-\deg_\gamma(v)\xi_1)
\end{split}
\end{equation} 
and
\begin{equation}
\begin{split}
& \deg_\beta(w_1)v-\deg_\beta(v)w_1\\
=&
(r_0 \deg(\gamma-\beta)+\deg_\gamma(w_1))
(r_\gamma(v)e^\gamma+a_\gamma(v)\varrho_X+\xi+(\xi,\gamma)\varrho_X)\\
&-
(r_\gamma(v)\deg(\gamma-\beta)+\deg_\gamma(v))
(r_0 e^\gamma+\frac{1}{r_0}\varrho_{X})\\
=& -\deg_\gamma(v)r_0 e^\gamma+
r_0 a_\gamma(v)\deg(\gamma-\beta)\varrho_X
+r_0 \xi \deg(\gamma-\beta)
-\frac{\deg_\beta(v)}{r_0}\varrho_{X}.
\end{split}
\end{equation} 
Assume that $\deg_\gamma(v) \ne 0$.
Then 
\begin{equation}
\begin{split}
& (\deg_\gamma(v_1)r_\gamma(v)-\deg_\gamma(v)r_\gamma(v_1))
r_0 \xi \deg(\gamma-\beta)\\
&+
r_0 \deg_\gamma(\xi)(
(\deg_\gamma(\xi_1)\xi-\deg_\gamma(\xi)\xi_1)+
\deg(\gamma-\beta)(r_\gamma(v_1)\xi-r_\gamma(v)\xi_1))
=0,\\
& (\deg_\gamma(v_1)r_\gamma(v)-\deg_\gamma(v)r_\gamma(v_1))
(r_0 a_\gamma(v)\deg(\gamma-\beta)-\frac{\deg_\beta(v)}{r_0})\\
& +\deg_\gamma(\xi)r_1((r_\gamma(v_1)a_\gamma(v)-r_\gamma(v)a_\gamma(v_1))
\deg(\gamma-\beta)+
(\deg_\gamma(v_1)a_\gamma(v)-\deg_\gamma(v)a_\gamma(v_1)))=0.
\end{split}
\end{equation}
By the first equality,
we have
\begin{equation}
r_0(\deg_\gamma(\xi)+r_\gamma(v)\deg(\gamma-\beta))
(\deg_\gamma(v_1)\xi-\deg_\gamma(v)\xi_1)=0.
\end{equation}
Since $(\deg_\gamma(\xi)+r_\gamma(v)\deg(\gamma-\beta))=\deg_\beta(v)>0$,
we have
\begin{equation}
\deg_\gamma(v_1)\xi-\deg_\gamma(v)\xi_1=0.
\end{equation}
Then by the second equality,
we have
\begin{equation}
r_0 \deg_\beta(v)
\left(\deg_\gamma(v_1)a_\gamma(v)-\deg_\gamma(v)a_\gamma(v_1))
-\frac{\deg_\beta(v)}{r_0}
(\deg_\gamma(v_1)r_\gamma(v)-\deg_\gamma(v)r_\gamma(v_1)\right)
=0,
\end{equation}
which implies that
\begin{equation}
\deg_\gamma(v_1)\left(a_\gamma(v)-\frac{r_\gamma(v)}{r_0^2} \right)-
\deg_\gamma(v) \left(a_\gamma(v_1)-\frac{r_\gamma(v_1)}{r_0^2} \right)
=0.
\end{equation}
Therefore 
\begin{equation}
\xi_1 \left(a_\gamma(v)-\frac{r_\gamma(v)}{r_0^2} \right)
-\xi \left(a_\gamma(v_1)-\frac{r_\gamma(v_1)}{r_0^2} \right)
=0,
\end{equation}
which implies the claim.
\end{proof}

\section{}
Assume that $d|\langle w_1,u \rangle$ for all $u \in H^*(X,{\Bbb Z})_{\alg}$
and $\langle v, w_1 \rangle =d$.

Assume that $\phi_\pm(w_1)<\phi_\pm(w)$.
Let $F$ be a semi-stable object with respect to $\phi_\pm$
such that $v(F)=w$.
Then $E:=\Phi_{X \to X_1}^{{\bf E}^{\vee}[1]}(F)$
is a torsion free $\alpha$-twisted sheaf on $X_1$.

Conversely for a torsion free $\alpha$-twisted sheaf $E$ with $v(E)=v$,
we set $F:=\Phi_{X_1 \to X}^{{\bf E}[1]}(E)$.
If $E$ is not semi-stable with respect to $\phi$, then
we have an exact triangle
\begin{equation}
F_1 \to F \to F_2 \to F_1[1]
\end{equation} 
such that $\phi_{\max}(F_1)>\phi(w_1)$ and
$\phi_{\min}(F_2) \leq \phi(w_1)$.
Then we have $\Hom({\bf E}_{|X \times \{ x_1 \}},F_1[k])=0$
for $k \geq 2$ and
$\Hom({\bf E}_{|X \times \{ x_1 \}},F_2[k])=0$ for $k<0$.
Moreover $\Hom({\bf E}_{|X \times \{ x_1 \}},F_2)=0$ except
finitely many $x_1 \in X_1$.
Since $\Hom({\bf E}_{|X \times \{ x_1 \}},F[k])=
\Hom({\frak k}_{x_1}[-2],E[k-1])$,
$\Hom({\bf E}_{|X \times \{ x_1 \}},F[k])=0$ for
$k \ne 0,1$.
Since $E$ is torsion free, we also have 
$\Hom({\bf E}_{|X \times \{ x_1 \}},F[k])=0$
except for finitely many point $x_1 \in X_1$.
We set $E_i:=\Phi_{X \to X_1}^{{\bf E}^{\vee}[1]}(F_i)$.
Then $E_i$ are torsion free sheaves fitting in an exact sequence
$$
0 \to E_1 \to E \to E_2 \to 0.
$$ 
This is a contradiction. Thus $F$ is semi-stable with respect to
$\phi$.
Assume that there is an exact sequence  
\begin{equation}
0 \to F_1 \to F \to F_2 \to 0
\end{equation} 
such that $\phi_\pm(F_1) \geq \phi_\pm(F)$ and $F_1$ is $\phi_\pm$-stable.
Then $\phi(F_1)=\phi(F)=\phi(w_1)$.
Since $\phi_\pm(F_1) \geq \phi_\pm(F) >\phi_\pm(w_1)$,
we have
$\Hom({\bf E}_{|X \times \{ x_1 \}},F_1[2])=0$.
Since $\Hom({\bf E}_{|X \times \{ x_1 \}},F_2)=0$
except finitely many point $x_1 \in X_1$,
we have an exact sequence 
$$
0 \to E_1 \to E \to E_2 \to 0,
$$ 
where $E_1$ and $E_2$ are torsion free sheaves,
which is a contradiction.
Therefore $E$ is $\phi_\pm$-stable.

Assume that there is an exact sequence  
\begin{equation}
0 \to F_1 \to F \to F_2 \to 0
\end{equation} 
such that $\phi_\pm(F_1) \geq \phi_\pm(F)$ and $F_1$ is $\phi_\pm$-stable.
Then $\phi(F_1)=\phi(F)=\phi(w_1)$.
Since $\phi_\pm(F_1) \geq \phi_\pm(F) >\phi_\pm(w_1)$,
we have
$\Hom({\bf E}_{|X \times \{ x_1 \}},F_1[2])=0$.
Since $\Hom({\bf E}_{|X \times \{ x_1 \}},F_2)=0$
except finitely many point $x_1 \in X_1$,
we have an exact sequence 
$$
0 \to E_1 \to E \to E_2 \to 0,
$$ 
where $E_1$ and $E_2$ are torsion free sheaves.

\begin{NB2}
\begin{proof}
Then
we have $\Sigma_{(\beta,\omega)}(F,F_1)> 0$
and
$\Sigma_{(\beta,\omega)}(F_2,F)> 0$.
Since $\phi_{\min}(F_1)>\phi(w_1)$ and
$\phi_{\max}(F_2) \leq \phi(w_1)$,
we have $\Hom({\bf E}_{|X \times \{ x_1 \}},F_1[k])=0$
for $k \geq 2$ and
$\Hom({\bf E}_{|X \times \{ x_1 \}},F_2[k])=0$ for $k<0$.
Moreover $\Hom({\bf E}_{|X \times \{ x_1 \}},F_2)=0$ except
finitely many $x_1 \in X_1$.
We set $E_i:=\Phi_{X \to X_1}^{{\bf E}^{\vee}[1]}(F_i)$.
Then $E_i$ are torsion free sheaves fitting in an exact sequence
$$
0 \to E_1 \to E \to E_2 \to 0.
$$ 
This is a contradiction. Thus $F$ is semi-stable with respect to
$(\beta,\omega)$.
\end{proof}
\end{NB2}

\section{A special case}

Assume that $\NS(X)={\Bbb Z}H$ and set
$\beta=bH$ and $\gamma=cH$.

\begin{equation}
\begin{split}
w_1:=& r_1 e^{cH}\\
=& r_1 \left(e^{bH}+\frac{(b-c)^2}{2}(H^2) \varrho_X
+(b-c)(H+b(H^2)\varrho_X)\right).
\end{split}
\end{equation}

For $v$, we have the following relation.
\begin{equation}
\begin{split}
v=& r_\gamma(v) e^{cH}+a_\gamma(v) \varrho_X +
d_\gamma(v)(H+c(H^2)\varrho_X)\\
=& r_\gamma(v) e^{bH}+
\left(a_\gamma(v)+d_\gamma(v)(c-b)(H^2)+\frac{r_\gamma(v)}{2}(b-c)^2
\right)\varrho_X
+(r_\gamma(v)(c-b)+d_\gamma(v))(H+b(H^2)\varrho_X).
\end{split}
\end{equation}

Then
\begin{equation}
\begin{split}
& a_\beta(v)d_\beta(w_1)-a_\beta(w_1)d_\beta(v)\\
=& \left(\frac{r_\gamma(v)}{2}(b-c)^2 (H^2)+d_\gamma(v)(c-b)(H^2)
+a_\gamma(v) \right)r_1(c-b)\\
& -\frac{r_1}{2}(b-c)^2(H^2)(r_\gamma(v)(c-b)+d_\gamma(v))\\
=& d_\gamma(v)(c-b)^2 r_1 (H^2)+a_\gamma(v)r_1(c-b)-
\frac{r_1}{2}(b-c)^2 d_\gamma(v) (H^2)\\
=& r_1(c-b)\left(a_\gamma(v)+\frac{d_\gamma(v)}{2}(c-b)(H^2) \right)
\end{split}
\end{equation}
and
\begin{equation}
\begin{split}
r_\beta(v) d_\beta(w_1)-r_1 d_\beta(v)=&
r_\gamma(v) r_1(c-b)-r_\beta(w_1)(r_\gamma(v)(c-b)+d_\gamma(v))\\
=& -r_1 d_\gamma(v).
\end{split}
\end{equation}

We note that $-r_1 a_\gamma(v)>0$ and
$r_1(c-b)>0$.

Assume that
\begin{equation}
(r_\beta(v)d_\beta(w_1)-r_1 d_\beta(v))(\omega^2)=
2(a_\beta(v)d_\beta(w_1)-a_\beta(w_1)d_\beta(v)).
\end{equation}
If $r_1>0$, then 
$a_\gamma(v)<0$ and we have
\begin{equation}
0>r_1(c-b)a_\gamma(v)=-r_1 \left(\frac{(\omega^2)}{2}+(c-b)^2\frac{(H^2)}{2}
\right)d_\gamma(v).
\end{equation}
Hence $d_\gamma(v)>0$.

If $r_1<0$, then $a_\gamma(v)>0$ and we have
\begin{equation}
0<r_1(c-b)a_\gamma(v)=-r_1 \left(\frac{(\omega^2)}{2}+(c-b)^2\frac{(H^2)}{2}
\right)d_\gamma(v).
\end{equation}
Hence $d_\gamma(v)>0$.

We note that $\phi(F_i)<\phi(w_1)$
if and only if
\begin{equation}
(r(F_i)d_\beta(w_1)-r_1 d_\beta(F_i))(\omega^2)>
2(a_\beta(F_i)d_\beta(w_1)-a_\beta(w_1)d_\beta(F_i)),
\end{equation}
which is equivalent to
\begin{equation}
r_1(c-b)a_\gamma(F_i)<(-r_1)
\left(\frac{(\omega^2)}{2}+(c-b)^2\frac{(H^2)}{2}
\right)d_\gamma(F_i).
\end{equation}
If $r_1<0$, then $a_\gamma(F_i)>0$, which implies that
$d_\gamma(F_i)>0$.
If $\phi(F_i)>\phi(w_1)$, then $r_1>0$ implies that
$d_\gamma(F_i)>0$.

$\phi(F)=\phi(w_1)>\phi(F_2)>\phi(w_1)-1$ implies that
\begin{equation}
\begin{split}
(r_\beta(F)d_\beta(w_1)-r_\beta(w_1)d_\beta(F))(\omega^2)=&
2(a_\beta(F)d_\beta(w_1)-a_\beta(w_1)d_\beta(F))\\
(r_\beta(F_2)d_\beta(w_1)-r_\beta(w_1)d_\beta(F_2))(\omega^2)>&
2(a_\beta(F_2)d_\beta(w_1)-a_\beta(w_1)d_\beta(F_2)).
\end{split}
\end{equation}
Then
\begin{equation}
\begin{split}
r_1(c-b)a_\gamma(F)=&
-r_1\left(\frac{(\omega^2)}{2}+(c-b)^2\frac{(H^2)}{2} \right)d_\gamma(F)\\
r_1(c-b)a_\gamma(F_2)<&
-r_1\left(\frac{(\omega^2)}{2}+(c-b)^2\frac{(H^2)}{2} \right)d_\gamma(F_2).
\end{split}
\end{equation}
We first assume that $r_1>0$.
Since $-r_1 a_\gamma(F)>0,-r_1 a_\gamma(F_2)>0$, we have
\begin{equation}
\frac{d_\gamma(F)}{-r_1 a_\gamma(F)}>
\frac{d_\gamma(F_2)}{-r_1 a_\gamma(F_2)}.
\end{equation}
By Lemma \ref{lem:FM-w_1},
\begin{equation}
\frac{d_{\gamma'}(\Phi(F))}{\rk \Phi(F)}>
\frac{d_{\gamma'}(\Phi(F_2))}{\rk F_2},
\end{equation}
which is a contradiction.

If $r_1<0$, then
\begin{equation}
\frac{d_\gamma(F)}{-r_1 a_\gamma(F)}<
\frac{d_\gamma(F_2)}{-r_1 a_\gamma(F_2)}.
\end{equation}
By Lemma \ref{lem:FM-w_1},
\begin{equation}
\frac{d_{\gamma'}(\Phi(F))}{\rk \Phi(F)}>
\frac{d_{\gamma'}(\Phi(F_2))}{\rk F_2},
\end{equation}
which is a contradiction.

\subsection{A special case}

In this subsection, we assume that $\widehat{H}$ is ample.
\begin{prop}
Assume that $\gamma-\beta \in {\Bbb Q}H$ and
$\phi_\pm(w_1)<\phi_\pm(\widehat{\Phi}(E))$.
If $E$ is $\gamma'$-twisted semi-stable, 
then
$F=\widehat{\Phi}(E)$ is semi-stable with respect to
$(\beta,\omega_\pm)$.
\end{prop}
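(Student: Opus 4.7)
The plan is to leverage the already-established results of Section~\ref{sect:mu-semi-stable}. First, the hypothesis $\gamma-\beta\in{\Bbb Q}H$ means $\nu=0$ in the decomposition $\gamma-\beta=\lambda H+\nu$, so the divisor $L$ of Definition~\ref{defn:L} lies in ${\Bbb Q}_{>0}H$, making $\widehat{L}\in{\Bbb Q}_{>0}\widehat{H}$ ample. Since $E$ is $\gamma'$-twisted semi-stable, it is $\mu$-semi-stable with respect to $\widehat{L}$, and Lemma~\ref{lem:slope-condition} (combined with the assumption that $F=\widehat\Phi(E)$ has phase matching $\phi(w_1)$) gives the slope identity \eqref{eq:slope-gamma'}. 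Proposition~\ref{prop:mu-semi-stable} then yields $F\in{\frak A}_{(\beta,\omega)}$, $\sigma_{(\beta,\omega)}$-semi-stable with $\phi(F)=\phi(w_1)$.

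Next I would assume for contradiction that $F$ is not $\sigma_{(\beta,\omega_\pm)}$-semi-stable, and take a $\sigma_{(\beta,\omega_\pm)}$-semi-stable subobject $F_1\subset F$ in ${\frak A}_{(\beta,\omega_\pm)}={\frak A}_{(\beta,\omega)}$ (valid for $\omega_\pm$ close to $\omega$) with $\phi_\pm(F_1)>\phi_\pm(F)$. The $\sigma_{(\beta,\omega)}$-semi-stability of $F$ forces $\phi(F_1)\le\phi(F)$; coupled with the strict $\phi_\pm$-inequality, one must have $\phi(F_1)=\phi(F)=\phi(w_1)$, so that $F_2:=F/F_1$ is likewise $\sigma_{(\beta,\omega)}$-semi-stable of phase $\phi(w_1)$. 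Lemma~\ref{lem:wall-general} and Corollary~\ref{cor:wall-general} then pin down the triple $(r_\gamma,d_\gamma,a_\gamma)$ of $F_1$ in terms of that of $F$.

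The core step is to push $F_1\subset F$ through $\Phi$ and extract a contradiction from the $\gamma'$-twisted semi-stability of $E$. Because $\phi_\pm(F_1)>\phi_\pm(F)>\phi_\pm(w_1)$, one has $\Hom(F_1,{\bf E}_{|X\times\{x_1\}})=0$ for every $x_1\in X_1$; Theorem~\ref{thm:mu-semi-stable}(2)(a) then identifies $E_1:=\Phi(F_1)$ with a torsion-free sheaf, and one obtains a short exact sequence $0\to E_1\to E\to E_2\to 0$ of torsion-free sheaves. Repeating (mutatis mutandis) the computation of Lemma~\ref{lem:C_+:1}, split into the two cases according to the sign of $r_1 d_\gamma(F)$, the inequality $\phi_\pm(F_1)>\phi_\pm(F)$ translates through Lemma~\ref{lem:FM-w_1} and Corollary~\ref{cor:wall-general} into
\begin{equation*}
\frac{\chi(E_1(-\gamma'))}{\rk E_1}>\frac{\chi(E(-\gamma'))}{\rk E},
\end{equation*}
contradicting the $\gamma'$-twisted semi-stability of $E$.

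The main obstacle is handling the degenerate case $d_\gamma(F)=0$, where the wall-crossing formula used to compare $(\omega^2)$ with $(\omega_\pm^2)$ becomes vacuous. This is precisely the case in which Lemma~\ref{lem:C_+:2} invoked an auxiliary perturbation $\beta\mapsto\beta'$ with $\nu'\ne 0$; here the hypothesis $\gamma-\beta\in{\Bbb Q}H$ eliminates that option, and one must extract the required inequality from the area form $\Sigma_{(\beta,\omega_\pm)}(w_1,F_1)$ directly, using that the sign of $\Sigma_{(\beta,\omega_\pm)}(w_1,F)$ selects the chamber $\pm$. In this degenerate situation the assumption $\gamma-\beta\in{\Bbb Q}H$ in fact forces $D_\gamma(F)=D_\gamma(F_1)=0$ via Lemma~\ref{lem:wall-general}, so the needed Hilbert-polynomial inequality reduces to a one-dimensional computation in $\omega$, and the argument closes.
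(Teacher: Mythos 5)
Your proof is correct and follows essentially the same route as the paper's own argument, which is just the $\nu=0$ specialization of Lemma~\ref{lem:C_+:1}: assume a destabilizing subobject $F_1$ of the same $(\beta,\omega)$-phase, transport $F_1\subset F$ through $\Phi$ to an exact sequence of torsion-free sheaves, and contradict the $\gamma'$-twisted semi-stability of $E$ via the comparison of $(\omega_\pm^2)$ with $(\omega^2)$. The degenerate case $d_\gamma(F)=0$ that you flag at the end is in fact vacuous here: with $\nu=0$ the relation $\Sigma_{(\beta,\omega)}(F,w_1)=0$ reads $r_1\lambda a_\gamma(F)=-\tfrac{r_1}{2}\bigl((\omega^2)+\lambda^2(H^2)\bigr)d_\gamma(F)$, and since $-r_1 a_\gamma(F)=\rk E>0$ and $r_1\lambda>0$ this forces $d_\gamma(F)>0$, so no auxiliary perturbation of $\beta$ is ever needed.
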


\begin{proof}
We note that $F$ is semi-stable with respect to
$(\beta,\omega)$.
Assume that there is an exact sequence
\begin{equation}
0 \to F_1 \to F \to F_2 \to 0
 \end{equation}
such that $F_1$ is a stable object with
$\phi(F_1)=\phi(E)$
and $\phi_\pm(F_1)>\phi_\pm(F)$.
Since $\phi_\pm(w_1)<\phi_\pm(F_1)$,
we see that $\Phi(F_1)$ and $\Phi(F_2)$ are 
torsion free sheaves.
\begin{NB2}
$\Hom({\cal E}_{|X \times \{ x_1 \}},F_1[2])=
\Hom(F_1,{\cal E}_{|X \times \{ x_1 \}})^{\vee}=0$
and $0 =\Hom({\cal E}_{|X \times \{ x_1 \}},F[2])
\to \Hom({\cal E}_{|X \times \{ x_1 \}},F_2[2])$ is surjective.
\end{NB2}
By the twisted semi-stability of $E$,
we see that
\begin{equation}
\frac{d_{\gamma'}(F_1)}{\rk F_1}=\frac{d_{\gamma'}(F)}{\rk F}.
\end{equation}
We have
\begin{equation}
\begin{split}
-r_1 d_\gamma(F)(\omega_\pm^2)=&
(r_\beta(F)d_\beta(w_1)-r_\beta(w_1)d_\beta(F))(\omega^2_\pm)\\
<&
2(a_\beta(F)d_\beta(w_1)-a_\beta(w_1)d_\beta(F))\\
=& (r_\beta(F)d_\beta(w_1)-r_\beta(w_1)d_\beta(F))(\omega^2)\\
=&-r_1 d_\gamma(F)(\omega^2).
\end{split}
\end{equation}
If $r_1>0$, then we have $(\omega_\pm^2)>(\omega^2)$.
Then
\begin{equation}
\begin{split}
&(r_\beta(F_1)d_\beta(F)-r_\beta(F)d_\beta(F_1))(\omega^2_+)\\
<&
2(a_\beta(F_1)d_\beta(F)-a_\beta(F)d_\beta(F_1))
\end{split}
\end{equation}
\begin{NB2}implies\end{NB2} that 
$r_\gamma(F_1)d_\gamma(F)-r_\gamma(F)d_\gamma(F_1)=
r_\beta(F_1)d_\beta(F)-r_\beta(F)d_\beta(F_1)<0$.
By Lemma \ref{lem:FM-w_1},
$\chi(E(-\gamma'))=-r_\gamma(F)/r_1$
and $\chi(\Phi(F_1)(-\gamma'))=-r_\gamma(F_1)/r_1$.
Hence
\begin{equation}
0>r_\gamma(F_1)d_\gamma(F)-r_\gamma(F)d_\gamma(F_1)
=-r_1 \chi(\Phi(F_1)(-\gamma'))d_\gamma(F)+
r_1 \chi(\Phi(F)(-\gamma'))d_\gamma(F_1).
\end{equation} 
Hence
$$
\frac{\chi(\Phi(F_1)(-\gamma'))}{\rk \Phi(F_1)}>
\frac{\chi(\Phi(F)(-\gamma'))}{\rk \Phi(F)},
$$
which is a contradiction.

If $r_1<0$, then
we have $(\omega_\pm^2)<(\omega^2)$.
Then
\begin{equation}
\begin{split}
&(r_\beta(F_1)d_\beta(F)-r_\beta(F)d_\beta(F_1))(\omega^2_-)\\
<&
2(a_\beta(F_1)d_\beta(F)-a_\beta(F)d_\beta(F_1))
\end{split}
\end{equation}
\begin{NB2}implies\end{NB2} that 
$r_\gamma(F_1)d_\gamma(F)-r_\gamma(F)d_\gamma(F_1)=
r_\beta(F_1)d_\beta(F)-r_\beta(F)d_\beta(F_1)>0$.
By Lemma \ref{lem:FM-w_1}, we have
\begin{equation}
0<r_\gamma(F_1)d_\gamma(F)-r_\gamma(F)d_\gamma(F_1)
=-r_1 \chi(\Phi(F_1)(-\gamma'))d_\gamma(F)+
r_1 \chi(\Phi(F)(-\gamma'))d_\gamma(F_1).
\end{equation} 
Hence
$$
\frac{\chi(\Phi(F_1)(-\gamma'))}{\rk \Phi(F_1)}>
\frac{\chi(\Phi(F)(-\gamma'))}{\rk \Phi(F)},
$$
which is a contradiction.
Therefore $F$ is semi-stable with respect to
$(\beta,\omega_\pm)$.
\end{proof}

We assume that $\gamma-\beta=\lambda H$.
Then

\begin{equation}
\begin{split}
& (a_\beta(v)d_\beta(w_1)-a_\beta(w_1)d_\beta(v))(H^2)\\
=& r_1 a_\gamma(v)\deg(\gamma-\beta)+r_1(d_\gamma(v)H+D,\gamma-\beta)
\deg(\gamma-\beta)-r_1 d_\gamma(v) (H^2)\frac{((\gamma-\beta)^2)}{2}\\
=& r_1 a_\gamma(v)\lambda(H^2)+r_1 \lambda^2 \frac{(H^2)^2}{2}d_\gamma(v)
\end{split}
\end{equation}
and
\begin{equation}
r_\beta(v)d_\beta(w_1)-r_\beta(w_1)d_\beta(v)=
r_\gamma(v)d_\gamma(w_1)-r_\gamma(w_1)d_\gamma(v)=-r_\gamma(w_1)d_\gamma(v).
\end{equation}

\subsubsection{}

Assume that $\gamma-\beta \in {\Bbb Q}H$.
Let $E$ be a $-\gamma'$-twisted semi-stable
sheaf on $X_1$ with $v(E)=v^{\vee}$.
Then $E^{\vee}$ fits in an exact triangle
$$
{\cal H}om_{{\cal O}_X}(E,{\cal O}_X) \to E^{\vee} \to
{\cal E}xt^1_{{\cal O}_X}(E,{\cal O}_X)[-1] \to
{\cal H}om_{{\cal O}_X}(E,{\cal O}_X)[1].
$$
Since ${\cal H}om_{{\cal O}_X}(E,{\cal O}_X)$ is $\mu$-semi-stable,
$F_1:=\Phi_{X_1 \to X}^{{\bf E}[1]}({\cal H}om_{{\cal O}_X}(E,{\cal O}_X))$
is a semi-stable object with $\phi(F_1)=\phi(w_1)$
with respect to
$(\beta,\omega)$.
Since ${\cal E}xt^1_{{\cal O}_X}(E,{\cal O}_X)$ is 0-dimensional,
$F_2:=
\Phi_{X_1 \to X}^{{\bf E}[1]}({\cal E}xt^1_{{\cal O}_X}(E,{\cal O}_X)[-1])$
is a semi-stable object with $\phi(F_2)=\phi(w_1)$.
Hence $F:=\Phi_{X_1 \to X}^{{\bf E}[1]}(E^{\vee})$
is a semi-stable object with $\phi(F)=\phi(w_1)$.

\begin{NB2}
Assume that there is an exact triangle
$$
F_1 \to F \to F_2 \to F_1[1]
$$
such that $\phi_{\min}(F_1) \geq \phi(w_1)$
and $\phi_{\max}(F_2)<\phi(E_1)$.
Then
$\Hom(F_1,{\bf E}_{|X \times \{x_1 \}}[k])=0$
for $k<0$ and
$\Hom(F_1,{\bf E}_{|X \times \{x_1 \}})=0$
except finitely many point $x_1 \in X_1$.
We also have
$\Hom(F_2,{\bf E}_{|X \times \{x_1 \}}[k])=0$
for $k \geq 2$.
We note that
\begin{equation}
\begin{split}
\Hom(F,{\bf E}_{|X \times \{x_1 \}}[k])=&
\Hom(\Phi(F),\Phi({\bf E}_{|X \times \{x_1 \}})[k])\\
=& \Hom(E^{\vee},{\frak k}_{x_1}[k-1])\\
=& \Hom(E^{\vee},{\frak k}_{x_1}^{\vee}[k+1])\\
=& \Hom({\frak k}_{x_1},E[k+1]).
\end{split}
\end{equation}
Hence $\Hom(F,{\bf E}_{|X \times \{x_1 \}}[k])=0$ for
$k \ne 0, 1$ and
$\Hom(F,{\bf E}_{|X \times \{x_1 \}})=0$ 
except finitely many points $x_1 \in X_1$.  
Then $E_1:=\Phi(F_1)^{\vee}$ and $E_2:=\Phi(F_2)^{\vee}$
are torsion free sheaves and we have an exact sequence
$$
0 \to E_2 \to E \to E_1 \to 0.
$$
We have
$$
\frac{\deg(E_2(\gamma'))}{\rk E_2}>
\frac{\deg(E(\gamma'))}{\rk E},
$$
which is a contradiction.
Hence $F$ is a semi-stable object with respect to $(\beta,\omega)$.
\end{NB2}
Assume that $\phi_\pm(w_1)>\phi_\pm(F)$.
Assume that there is an exact sequence
$$
0 \to F_1 \to F \to F_2 \to 0
$$
such that $F_2$ is a semi-stable object with respect to
$(\beta,\omega_\pm)$ such that
$\phi_\pm(F_1) >\phi_\pm(F)>\phi_\pm(F_2)$
and $\phi(F_1)=\phi(F_2)=\phi(w_1)$.
Then $\Hom(F_2,{\bf E}_{|X \times \{x_1 \}}[k])=0$
for $k \geq 2$.
Since $\Hom(F_2,{\bf E}_{|X \times \{x_1 \}}[k])=0$
for $k<0$ and 
$\Hom(F_2,{\bf E}_{|X \times \{x_1 \}})=0$
except finitely many point $x_1 \in X_1$,
Then $E_1:=\Phi(F_1)^{\vee}$ and $E_2:=\Phi(F_2)^{\vee}$
are torsion free sheaves and we have an exact sequence
$$
0 \to E_2 \to E \to E_1 \to 0.
$$
Since $\phi_\pm(F_2)<\phi_\pm(F)<\phi_\pm(w_1)$,
we have
\begin{equation}
\begin{split}
-r_1 d_\gamma(F)(\omega_\pm^2)=&
(r_\beta(F)d_\beta(w_1)-r_\beta(w_1)d_\beta(F))(\omega^2_\pm)\\
>&
2(a_\beta(F)d_\beta(w_1)-a_\beta(w_1)d_\beta(F))\\
=& (r_\beta(F)d_\beta(w_1)-r_\beta(w_1)d_\beta(F))(\omega^2)\\
=&-r_1 d_\gamma(F)(\omega^2).
\end{split}
\end{equation}
If $r_1>0$, then we have $(\omega_\pm^2)<(\omega^2)$.
Then
\begin{equation}
\begin{split}
&(r_\beta(F_2)d_\beta(F)-r_\beta(F)d_\beta(F_2))(\omega^2_-)\\
>&
2(a_\beta(F_2)d_\beta(F)-a_\beta(F)d_\beta(F_2))
\end{split}
\end{equation}
\begin{NB2}implies\end{NB2} that 
$r_\gamma(F_2)d_\gamma(F)-r_\gamma(F)d_\gamma(F_2)=
r_\beta(F_2)d_\beta(F)-r_\beta(F)d_\beta(F_2)<0$.
By Lemma \ref{lem:FM-w_1},
$\chi(E(-\gamma'))=-r_\gamma(F)/r_1$
and $\chi(\Phi(F_2)(-\gamma'))=-r_\gamma(F_2)/r_1$.
Hence
\begin{equation}
0>r_\gamma(F_2)d_\gamma(F)-r_\gamma(F)d_\gamma(F_2)
=-r_1 \chi(\Phi(F_2)(-\gamma'))d_\gamma(F)+
r_1 \chi(\Phi(F)(-\gamma'))d_\gamma(F_2).
\end{equation} 
Hence
$$
\frac{\chi(E_2(\gamma'))}{\rk E_2}=
\frac{\chi(\Phi(F_2)(-\gamma'))}{\rk \Phi(F_2)}>
\frac{\chi(\Phi(F)(-\gamma'))}{\rk \Phi(F)}
=\frac{\chi(E(\gamma'))}{\rk E},
$$
which is a contradiction.

If $r_1<0$, then
we have $(\omega_\pm^2)>(\omega^2)$.
Then
\begin{equation}
\begin{split}
&(r_\beta(F_2)d_\beta(F)-r_\beta(F)d_\beta(F_2))(\omega^2_+)\\
>&
2(a_\beta(F_2)d_\beta(F)-a_\beta(F)d_\beta(F_2))
\end{split}
\end{equation}
\begin{NB2}implies\end{NB2} that 
$r_\gamma(F_2)d_\gamma(F)-r_\gamma(F)d_\gamma(F_2)=
r_\beta(F_2)d_\beta(F)-r_\beta(F)d_\beta(F_2)>0$.
By Lemma \ref{lem:FM-w_1}, we have
\begin{equation}
0<r_\gamma(F_2)d_\gamma(F)-r_\gamma(F)d_\gamma(F_2)
=-r_1 \chi(\Phi(F_2)(-\gamma'))d_\gamma(F)+
r_1 \chi(\Phi(F)(-\gamma'))d_\gamma(F_2).
\end{equation} 
Hence
$$
\frac{\chi(E_2(\gamma'))}{\rk E_2}=
\frac{\chi(\Phi(F_2)(-\gamma'))}{\rk \Phi(F_2)}>
\frac{\chi(\Phi(F)(-\gamma'))}{\rk \Phi(F)}
=\frac{\chi(E(\gamma'))}{\rk E},
$$
which is a contradiction.
Therefore $F$ is semi-stable with respect to
$(\beta,\omega_\pm)$.
\end{NB}

{\it Acknowledgement.}
The third author would like to thank 
Professor A. Maciocia for valuable discussions on Bridgeland's 
stability during the program ``Moduli spaces''
at the Newton Institute.


\begin{thebibliography}{00}
\bibitem{AB}
Arcara, D., Bertram, A.,
{\it Bridgeland-stable moduli spaces for $K$-trivial surfaces,}
arXiv:0708.2247  

\bibitem{B:1}
Beauville, A.,
{\it Vari\'{e}t\'{e}s K\"{a}hleriennes dont la premi\`{e}re classe de Chern
est nulle,}
J. Diff. Geom. {\bf 18} (1983), 755--782

\bibitem{BBD}
Beilinson, A. A., Bernstein, J., Deligne, P.,
{\it Faisceaux pervers,} 
Analysis and topology on singular spaces, I (Luminy, 1981),  5--171, 
Ast\'{e}risque, {\bf 100}, Soc. Math. France, Paris, 1982 

\bibitem{Br:3}
Bridgeland, T.,
{\it Stability conditions on K3 surfaces,}
Duke Math. J.  {\bf 141} (2008), 241--291

\bibitem{Ha:1}
Hartmann, H.,
{\it Cusps of the K\"{a}hler moduli space and stability conditions 
on K3 surfaces,}
arXiv:1012.3121

\bibitem{HMS}
Huybrechts, D., Macri, E., Stellari, P.,
{\it Derived equivalences of $K3$ surfaces and orientation,} 
Duke Math. J. {\bf 149} (2009), 461--507

\bibitem{tori}
Kurihara, K.,  Yoshioka, K.,
{\it Holomorphic vector bundles on non-algebraic tori of dimension 2,}
manuscripta mathematica, {\bf 126} (2008), 143--166 

\bibitem{MM}
Maciocia, A., Meachan, C.,
{\it Rank One Bridgeland Stable Moduli Spaces on 
A Principally Polarized Abelian Surface,}
arXiv:1107.5304

\bibitem{MYY}
Minamide, H., Yanagida, S., Yoshioka, K.,
\emph{Fourier-Mukai transforms and 
the wall-crossing behavior for Bridgeland's stability conditions},
arXiv:1106.5217. 

\bibitem{S:1}
Simpson, C.,
{\it Moduli of representations of the fundamental group
of a smooth projective variety I,}
Publ. Math. I.H.E.S. {\bf 79} (1994), 47--129       

\bibitem{Y:7}
Yoshioka, K.,
{\it Moduli spaces of stable sheaves on abelian surfaces,}
Math. Ann. {\bf 321} (2001), 817--884, math.AG/0009001




\bibitem{Y:birational}
Yoshioka,~K.,
\emph{Fourier-Mukai transform on abelian surfaces},
Math. Ann. {\bf 345} (2009), 493--524.

\end{thebibliography}
\end{document}